\documentclass[A4, 12pt]{amsart}

\usepackage{amsmath}    
\usepackage{graphicx} 
\usepackage{verbatim} 
\usepackage{color} 
\usepackage{subfigure}  
\usepackage{hyperref}  
\usepackage{fullpage}
\usepackage{bbm}
\usepackage{dsfont} 
\usepackage{amssymb}
\usepackage{mathrsfs}
\usepackage{enumerate}
\usepackage[super]{nth}

\newtheorem{theorem}{Theorem}
\newtheorem{proposition}{Proposition}
\newtheorem{lemma}{Lemma}

\newtheorem{corollary}{Corollary}
\newtheorem{eg}{Example}

\newcommand{\Q}{\mathbb{Q}}
\newcommand{\Z}{\mathbb{Z}}

\newcommand{\C}{\mathbb{C}}
\newcommand{\N}{\mathbb{N}}
\newcommand{\F}{\mathbb{F}}
\newcommand{\E}{\mathbb{E}}
\renewcommand{\P}{\mathbb{P}}

\newcommand{\cC}{\mathcal{C}}
\newcommand{\cF}{\mathcal{F}}

\newcommand{\cond}{{\rm cond}}

\newcommand{\Gal}{\mathop{\rm Gal}}
\newcommand{\eqdef}{\mathop{=}^{\rm def}}

\newcommand{\eps}{\varepsilon}
\newcommand{\es}[1]{\begin{equation}\begin{split}#1\end{split}\end{equation}}
\newcommand{\est}[1]{\begin{equation*}\begin{split}#1\end{split}\end{equation*}}
\newcommand{\GL}{{\rm GL}}
\newcommand{\SL}{{\rm SL}}
\newcommand{\ssum}{\mathop{\sum\nolimits^{*_{m}}}}
\newcommand{\primesum}{\mathop{\sum\nolimits^{'}}}

\newcommand{\tr}{{\rm Tr}}
\newcommand{\Aut}{{\operatorname{Aut}}}
\newcommand{\legen}[2]{\left(\frac{#1}{#2}\right)}

\newcommand{\fr}{{\rm frob}}

\makeatletter
\let\@@pmod\pmod
\DeclareRobustCommand{\pmod}{\@ifstar\@pmods\@@pmod}
\def\@pmods#1{\mkern4mu({\operator@font mod}\mkern 6mu#1)}
\makeatother

\title[Elliptic curves and the trace formula]{Elliptic curves over a finite field and the trace formula}
\author{Nathan Kaplan \and Ian Petrow}

\address{Nathan Kaplan -- Department of Mathematics, University of California\\ Irvine, CA  92697\\ \texttt{nckaplan@math.uci.edu} }
\address{Ian Petrow -- Ecole Polytechnique F\'ed\'erale de Lausanne\\ Section des Math\'ematiques \\ 1015 Lausanne, Switzerland\\ \texttt{ian.petrow@epfl.ch} }
\curraddr{ETH Z\"urich - Departement Mathematik HG G 66.4 R\"amistrasse 101 \\  8092 Z\"urich \\
Switzerland \\ \texttt{ian.petrow@math.ethz.ch}}

\dedicatory{Dedicated to Professor B.J. Birch on his 85th birthday}

\subjclass[2010]{11G20, 11F72, 14G15 (Primary); 11F25, 14H52 (Secondary)} 

\begin{document}
\maketitle

\begin{abstract}
We prove formulas for power moments for point counts of elliptic curves over a finite field $k$ such that the groups of $k$-points of the curves contain a chosen subgroup.  These formulas express the moments in terms of traces of Hecke operators for certain congruence subgroups of $\SL_2(\Z)$.    
As our main technical input we prove an Eichler-Selberg trace formula for a family of congruence subgroups of $\SL_2(\Z)$ which include as special cases the groups $\Gamma_1(N)$ and $\Gamma(N)$.  Our formulas generalize results of Birch and Ihara (the case of the trivial subgroup, and the full modular group), and previous work of the authors (the subgroups $\Z/2\Z$ and $(\Z/2\Z)^2$ and congruence subgroups $\Gamma_0(2),\Gamma_0(4)$).  
We use these formulas to answer 
statistical questions about point counts for elliptic curves over a fixed finite field, generalizing results of Vl{\v{a}}du{\c{t}}, Gekeler, Howe, and others.
\end{abstract}

\section{Introduction}
Let $E$ be an elliptic curve defined over a finite field $\F_q$ with $q$ elements. In this paper we always consider elliptic curves up to $\F_q$-isomorphism and whenever speaking of an elliptic curve $E$ we always implicitly mean the isomorphism class of $E$.  
With this convention in mind, let $\cC=\{E/\F_q\}$.  The finite set $\cC$  
is a probability space where a singleton $\{E\}$ occurs with~probability
\est{ \P_q(\{E\}) = \frac{1}{q \# \Aut_{\F_q}(E)}.}  
Let $t_E\in \Z$ denote the trace of the Frobenius endomorphism associated to $E$.  We have $t_E=q+1 -\#E(\F_q)$ and by Hasse's Theorem $t^2_E \leq 4q$.  For a non-negative integer $R$, we consider \est{\E_q(t_E^{2R}) = \frac{1}{q}\sum_{E\in \cC} \frac{t_E^{2R}}{\#\Aut_{\F_q}(E)}.} Birch \cite[equation (4)]{Birch} gave the following explicit formulas for $\E_q(t_E^{2R})$. 
\begin{theorem}[Birch]\label{birch} 
For prime $p\geq 5$ we have 
\est{p\E_p(1) = & p \\ p\E_p(t_E^{2}) = & p^2-1 \\ p\E_p(t_E^{4}) = & 2p^3-3p-1 \\ p \E_p(t_E^{6}) = & 5p^4-9p^2-5p-1 \\ p\E_p(t_E^{8}) = & 14p^5-28p^3-20p^2 -7p-1 \\ p \E_p(t_E^{10}) = & 42p^6 -90p^4-75p^3 -35p^2-9p-1 -\tau(p),} 
where $\tau(p)$ is Ramanujan's $\tau$-function. 
\end{theorem}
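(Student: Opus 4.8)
The plan is to reduce the computation of $\E_p(t_E^{2R})$ to the Eichler--Selberg trace formula on $\SL_2(\Z)$ by way of the mass formula for elliptic curves. First I would invoke Deuring's theorem: for $p \geq 5$ and any integer $t$ with $t^2 < 4p$, the automorphism-weighted count of curves with a prescribed trace of Frobenius is a Hurwitz class number,
\[
\sum_{\substack{E/\F_p \\ t_E = t}} \frac{1}{\#\Aut_{\F_p}(E)} = \frac{1}{2} H(4p - t^2),
\]
where $H$ is the Hurwitz class number (so that the supersingular locus and the special $j = 0, 1728$ are weighted consistently with $\#\Aut$). Summing $t^{2R}$ against this identity gives
\[
p\,\E_p(t_E^{2R}) = \frac{1}{2} \sum_{t^2 < 4p} t^{2R}\, H(4p - t^2).
\]
For $R \geq 1$ the term $t = 0$ drops out, so the supersingular curves contribute only in the normalizing case $R = 0$.

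Next I would expand the monomial $t^{2R}$ in the basis of Gegenbauer polynomials adapted to the trace formula. Writing $P_k(t,p)$ for the coefficient of $x^{k-2}$ in $(1 - tx + px^2)^{-1}$, one has $P_k(t,p) = t^{k-2} + (\text{lower order})$, so there is a triangular expansion
\[
t^{2R} = \sum_{j=0}^{R} c_{R,j}(p)\, P_{2j+2}(t,p),
\]
with coefficients $c_{R,j}(p) \in \Z[p]$ obtained by inverting a unipotent change of basis, the top coefficient being $c_{R,R} = 1$. The small cases read $t^2 = P_4 + p\,P_2$ and $t^4 = P_6 + 3p\,P_4 + 2p^2\,P_2$, and the pattern continues up to $k = 2R + 2$.

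The arithmetic content then enters through two evaluations of the resulting class-number sums. For even $k$ with $4 \leq k \leq 2R + 2$, the Eichler--Selberg trace formula at $p$ (using that $4p$ is not a perfect square and that the divisor term contributes $\sum_{dd' = p}\min(d,d')^{k-1} = 2$) yields
\[
\sum_{t^2 < 4p} P_k(t,p)\, H(4p - t^2) = -2\,\tr(T_p \mid S_k(\SL_2(\Z))) - 2,
\]
while the anomalous weight $k = 2$ term reduces, via the Hurwitz--Kronecker class number relation, to $\sum_{t^2 < 4p} H(4p - t^2) = \sum_{d \mid p}\max(d, p/d) = 2p$. Feeding in $\dim S_k(\SL_2(\Z)) = 0$ for even $k \leq 10$ and $\dim S_{12} = 1$ with $\tr(T_p \mid S_{12}) = \tau(p)$ (the eigenvalue of $\Delta$), every cusp-form contribution vanishes for $2R \leq 8$, and the first nonzero one appears at $2R = 10$ through the top term $P_{12}$ with $c_{5,5} = 1$, producing exactly the summand $\tfrac{1}{2}\cdot(-2\tau(p)) = -\tau(p)$. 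The remaining polynomial-in-$p$ terms assemble from the elementary contributions, and the leading coefficient works out to the Catalan number $C_R = \tfrac{1}{R+1}\binom{2R}{R}$, matching $1,1,2,5,14,42$.

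The main obstacle is the careful bookkeeping of the elementary terms rather than any single deep fact: one must pin down the normalization in the mass formula (the factor $\tfrac{1}{2}$, the supersingular $t = 0$ contribution, and the corrected weighting at $j = 0$ and $j = 1728$), treat the weight $k = 2$ case separately since it follows the Hurwitz--Kronecker relation rather than the cusp-form pattern, and verify that the coefficients $c_{R,j}(p)$ combine with the constants $-2$ (from each Eichler--Selberg evaluation) and $2p$ (from the weight-$2$ relation) to give the stated integer polynomials. The modular-forms input, including the appearance of $\tau(p)$, is then immediate from the weight-$12$ dimension count.
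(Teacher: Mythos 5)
Your proposal is correct and is essentially the argument this paper uses (and that Birch himself used): fiber over isogeny classes, apply the Deuring/Schoof mass formula $\sum_{t_E=t}1/\#\Aut_{\F_p}(E)=\tfrac12 H(4p-t^2)$, expand $t^{2R}$ in the normalized Chebyshev polynomials $U_{k-2}(t,p)$, and evaluate the resulting class-number sums via the level-one Eichler--Selberg trace formula, with the weight-$2$ case handled by the Hurwitz--Kronecker relation $\sum_{t^2<4p}H(4p-t^2)=2p$ --- this is exactly Steps 1--4 of Section \ref{sketchproof} specialized to $A=1$, $q=p\geq 5$, where the paper simply cites \cite{Birch}. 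Your coefficient bookkeeping checks out ($c_{R,R-j}$ in your indexing agrees with the paper's $a_{R,j}$, e.g.\ $1,9,35,75,90,42$ at $2R=10$), and the $-\tau(p)$ arises from $\tr(T_p\mid S_{12})$ exactly as you say.
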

\noindent Pairing a curve with its quadratic twist shows that for any $R \ge 0,\ \E_q\left(t_E^{2R+1}\right) = 0$.

To state the general formula for $\E_q(t_E^{2R})$ we introduce some more notation.  For $\Gamma\subset \SL_2(\Z)$ a congruence subgroup we write $S_k(\Gamma, \chi)$ for the $\C$-vector space of classical holomorphic weight $k$ cusp forms of nebentype character $\chi$ for the group $\Gamma$, and for $T$ a linear transformation of this vector space we write $\tr(T | S_k(\Gamma, \chi))$ for its trace.  If the nebentype character is trivial we may omit it from the notation.  Let $T_m$ be the $m$\textsuperscript{th} Hecke operator acting on such a space of cusp forms (see \cite[Ch.~3]{Shimura} or \cite[Ch.~5]{DiamondShurman}).  
Let 
\est{ a_{R,j} \eqdef \frac{2R-2j+1}{2R+1}\binom{2R+1}{j} =\binom{2R}{j} - \binom{2R}{j-1} .}
For $a,b \in \Z$ denote the indicator function of $a=b$ by $\delta(a,b)$, and for $c \in \N$ denote the indicator function of the congruence $a \equiv b \pmod*c$ by $\delta_c(a,b)$.
If $q=p^v$ is a prime power (we allow $v=0$ in which case $T_1$ is the identity operator) we define 
\est{\rho(q,k) \eqdef  -\tr(T_q | S_k(\SL_2(\Z))) + \frac{k-1}{12} q^{k/2-1} \delta_2(v,0)  -\frac{1}{2} \sum_{0 \leq i \leq v } \min(p^i,p^{v-i})^{k-1} + \sigma(q) \delta(k,2),}
and $\rho(p^{-1},k)=0$. In the prime field case, the following formula is \cite[equation (4)]{Birch}. The general finite field case is implicit in the work of Ihara \cite{Ihara}, see also \cite{KaplanPetrow1}.
\begin{theorem}[Birch, Ihara]\label{ihara}
For all $R\geq 0$ and $q=p^v$ with $p$ prime, $\E_q(t_{E}^{2R+1}) = 0$ and 
\begin{multline*}
\E_q(t_E^{2R}) =\sum_{j=0}^R a_{R,j}  q^{j-1} \Big( \rho(q,2R-2j+2)-p^{2R-2j+1}\rho(q/p^2,2R-2j+2)\Big) \\
+ \frac{p-1}{12q}(4q)^R \delta_2(v,0).
\end{multline*}   
In particular, as $q\rightarrow \infty$ we have 
\[
\E_q(t_E^{2R}) \sim C_R  q^{R},
\]
where $C_R = \frac{1}{R+1} \binom{2R}{R}$ is the $R$\textsuperscript{th} Catalan number. 
\end{theorem}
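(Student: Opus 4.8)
The plan is to convert the average over curves into an average over Frobenius traces, linearise $t^{2R}$ against the Gegenbauer polynomials appearing in the Eichler--Selberg trace formula, and then recognise the resulting class-number sums as values of $\rho$. Write $q=p^v$ and, for $t^2\le 4q$, put $W(q,t)=\sum_{E:\,t_E=t}1/\#\Aut_{\F_q}(E)$, so that $q\,\E_q(t_E^{2R})=\sum_{t^2\le 4q}t^{2R}W(q,t)$; the vanishing of the odd moments is the quadratic-twist involution $t_E\mapsto-t_E$ already noted. The arithmetic input is Deuring's theorem, in the quantitative form of Waterhouse and Schoof, which I would record as the uniform mass formula
\begin{equation*}
W(q,t)=\tfrac12 H(4q-t^2)-\tfrac{p}{2}\,H\!\left(\tfrac{4q-t^2}{p^2}\right)\qquad(t^2<4q),
\end{equation*}
where $H$ is the Hurwitz class number; the second term vanishes unless $p\mid t$ (otherwise $p\nmid 4q-t^2$), so it is an honest supersingular correction, present only for $v\ge 2$. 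When $v$ is even the two boundary traces $t=\pm2\sqrt q$ contribute a further total mass $\tfrac{p-1}{12}$.

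The combinatorial heart is the polynomial identity
\begin{equation*}
t^{2R}=\sum_{j=0}^R a_{R,j}\,q^{j}\,P_{2R-2j+2}(t,q),\qquad P_k(t,q)=\sum_{a+b=k-2}\alpha^a\beta^b,
\end{equation*}
with $\alpha,\beta$ the roots of $X^2-tX+q$. Setting $t=2\sqrt q\cos\theta$ gives $P_k(t,q)=q^{(k-2)/2}U_{k-2}(\cos\theta)$ for $U_m$ the Chebyshev polynomial of the second kind, so that the identity is the expansion of $(2\cos\theta)^{2R}$ in the $U_{2m}(\cos\theta)$; the telescoping $2\cos(2m\theta)=U_{2m}(\cos\theta)-U_{2m-2}(\cos\theta)$ yields precisely the coefficients $a_{R,j}=\binom{2R}{j}-\binom{2R}{j-1}$, the top one being $a_{R,R}=C_R$ with $P_2\equiv 1$. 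The same identity holds verbatim with $q$ replaced by $q/p^2$.

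Substituting the mass formula and expanding, $q\,\E_q(t_E^{2R})$ breaks into the two interior pieces plus the boundary. For the $\tfrac12 H(4q-t^2)$ piece I would quote the Eichler--Selberg trace formula for $\SL_2(\Z)$, which in the present normalisation reads $\tfrac12\sum_{t^2<4q}P_k(t,q)H(4q-t^2)=\rho(q,k)$ for all even $k$: the terms defining $\rho$ are exactly the diagonal term $-\tfrac12\sum_{dd'=q}\min(d,d')^{k-1}$, the square term $\tfrac{k-1}{12}q^{k/2-1}\delta_2(v,0)$ coming from $t^2=4q$ via $H(0)=-\tfrac1{12}$, and the weight-two correction $\sigma(q)\delta(k,2)$, equivalently the Hurwitz--Kronecker relation, which holds because $S_2(\SL_2(\Z))=0$. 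This piece contributes $\sum_{j}a_{R,j}q^{j}\rho(q,k_j)$ with $k_j=2R-2j+2$. For the correction $-\tfrac{p}{2}\sum_t t^{2R}H((4q-t^2)/p^2)$ only $t=pt'$ survive; factoring out $p^{2R}$, expanding $(t')^{2R}$ by the level-$q/p^2$ identity, and applying the trace formula at level $q/p^2$ turns it into $-p^{2R+1}\sum_j a_{R,j}(q/p^2)^{j}\rho(q/p^2,k_j)$, and the exponent identity $p^{2R+1}(q/p^2)^{j}=q^{\,j}p^{\,k_j-1}$ rewrites it as $-\sum_j a_{R,j}q^{\,j}p^{\,k_j-1}\rho(q/p^2,k_j)$; the boundary mass supplies $\tfrac{p-1}{12}(4q)^R\delta_2(v,0)$. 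Dividing by $q$ gives the stated formula. I expect the supersingular input---the corrected mass formula, in particular the second term $-\tfrac p2 H((4q-t^2)/p^2)$ and the boundary mass $\tfrac{p-1}{12}$---to be the main obstacle, since these curves are quaternionic and lie outside the naive Deuring dictionary; the total-mass identity $\sum_t W(q,t)=q$ serves as a check.

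For the asymptotic, the dominant contribution is $j=R$: here $k_R=2$ and $a_{R,R}=C_R$, and since $S_2(\SL_2(\Z))=0$ the definitions give $\rho(q,2)-p\,\rho(q/p^2,2)=q-\tfrac{p-1}{12}\delta_2(v,0)$ (equivalently the mass formula), so the $j=R$ term equals $C_R q^{R}+O(q^{R-1})$. For $j<R$ one has $k_j\ge 4$, whence Deligne's bound $|\tr(T_q\mid S_{k_j})|\ll_R q^{(k_j-1)/2}\log q$ together with the elementary size $O(q^{(k_j-1)/2})$ of the remaining terms in $\rho$ gives $q^{\,j-1}\rho(q,k_j)=O(q^{R-1/2}\log q)$, and likewise for the $\rho(q/p^2,\cdot)$ contributions and the final boundary term. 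Summing, $\E_q(t_E^{2R})=C_R q^{R}+O(q^{R-1/2}\log q)$, which is the claimed asymptotic.
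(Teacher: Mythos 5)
Your proposal is correct and takes essentially the same route as the paper's treatment (this theorem being the $A=1$ specialization of Theorem \ref{MT}): fibering over isogeny classes, the weighted Deuring--Schoof--Waterhouse class-number counts, the Chebyshev expansion \eqref{cheb} with coefficients $a_{R,j}$, and the level-one Eichler--Selberg formula, with the $-p^{k-1}\rho(q/p^2,\cdot)$ term arising from the supersingular descent exactly as in Proposition \ref{AnsAlpha}. Your uniform mass formula $W(q,t)=\tfrac12 H(4q-t^2)-\tfrac{p}{2}H\bigl((4q-t^2)/p^2\bigr)$ is a compact and correct repackaging of the case analysis the paper carries out in Lemma \ref{S46withweights} and Section \ref{assembly} (it reproduces the values at $t=0$ and $t^2=q,2q,3q$ and vanishes at disallowed supersingular traces, by Lemma \ref{cor728}), so no gap remains.
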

The constants $C_R$ match the moments of the Sato-Tate distribution, and by Carleman's condition since the moments do not grow too fast they determine the limiting probability distribution of the set $\{t_E/(2\sqrt{q})\ |\ E/\F_q\}\subset [-1,1]$
 as $q$ tends to infinity (see e.g. \cite[p. 126]{Koosis}).

In this paper we give a generalization of the theorems of Birch and Ihara where $\SL_2(\Z)$ is replaced with a congruence subgroup, and where we count only elliptic curves over a finite field whose group of rational points contains a subgroup isomorphic to a specified group.  Specifically, let $A$ denote a finite abelian group and let $\Phi_A$ be the function defined on $\cC$ by 
\[
\Phi_A(E) = \begin{cases} 1 & \text{if there exists an injective homomorphism } A \hookrightarrow E(\F_q) \\ 0 & \text{otherwise}. 
\end{cases}
\]  
The main result of this paper, Theorem \ref{MT}, is a generalization of Theorem \ref{ihara} to the expectations
\est{ \E_q(t^{R}\Phi_A) = \frac{1}{q}\sum_{\substack{E\in \cC \\ A \hookrightarrow E(\F_q)}} \frac{t_E^{R}}{\#\Aut_{\F_q}(E)}. }
The added flexibility of the function $\Phi_A$ opens up a host of applications of our Theorem \ref{MT}.  For example, we give an asymptotic formula for the average exponent (also called the first invariant factor) of $E(\F_q)$ over $\cC$.  We discuss this and other applications in Section \ref{applications}.

Before setting up the notation necessary to state Theorem \ref{MT} in full generality, we give two representative special cases.  

\begin{eg}\label{eg1}
Suppose that $q=p$ and $\ell$ are both prime with $\ell \neq p$.  Suppose that $A=\Z/\ell \Z$.  If $p\not \equiv 1 \pmod \ell$ then 
\begin{multline*} p\E_p(t^{2R}\Phi_{A}) = \frac{1}{\ell-1} C_R (p+1)p^{R} \\ - \sum_{j=0}^{R} a_{R,j} p^j  \bigg( \frac{\tr(T_p| S_{2R-2j+2}(\Gamma_1(\ell)))}{\ell-1} + 
\begin{cases} 1 & p  \equiv -1 \pmod*{\ell} \\ 1/2 & p \not \equiv -1 \pmod*{\ell}\end{cases} \bigg)
,\end{multline*} 
and if $p \equiv 1 \pmod \ell$ then
\begin{multline*}p\E_p(t^{2R}\Phi_{A}) =   \frac{\ell}{\ell^2-1}C_R(p+1)p^R -\frac{1}{\ell-1} \sum_{j=0}^{R} a_{R,j} p^j  \bigg( \tr(T_p|S_{2R-2j+2}( \Gamma_1(\ell))) \\  -\frac{1}{\ell+1}\tr(T_p| S_{2R-2j+2}(\Gamma(\ell)))  + 
\frac{1}{4}\left(3+(-1)^\ell\right)\bigg).\end{multline*} 
\end{eg}
The leading term here when $R=0$ gives the probability that an elliptic curve $E/\F_p$ has $\#E(\F_p)$ divisible by a prime $\ell$, a result originally due to Lenstra \cite[Proposition 1.14]{Lenstra}.  

\begin{eg}\label{eg2}
Suppose that $q=p$ and $ \ell$ are both prime with $\ell \neq p$.  Suppose that $A=\Z/\ell \Z \times \Z/ \ell \Z$.  If $p \not \equiv 1 \pmod \ell$ then $\E_p(t^{2R}\Phi_{A}) =0$, and if $p \equiv 1 \pmod \ell$ then 
\begin{multline*}p\E_p(t^{2R}\Phi_{A}) = \frac{1}{\ell(\ell^2-1)}C_R(p+1)p^{R}  -  \frac{1}{\ell(\ell^2-1)}\sum_{j=0}^{R} a_{R,j} p^j  \bigg( \tr (T_p |S_{2R-2j+2}(\Gamma(\ell))) \\  
+\frac{1}{4}(\ell^2-1) (3+(-1)^\ell)\bigg).
\end{multline*} 
\end{eg}
For $E/\F_q$ and $(n,q) = 1,\ E[n] \cong \Z/n\Z \times \Z/n\Z$ over $\overline{\F}_q$.  With respect to a $\Z/n\Z$-module basis of $E[n]$, the action of $\Gal(\overline{\F}_q/\F_q)$ gives a matrix $F \in \GL_2(\Z/n\Z)$ satisfying $\det(F) \equiv q \pmod{n}$ and $\tr(F) \equiv t_E \pmod{n}$.  We see that $\Z/n\Z \hookrightarrow E(\F_q)$ if and only if $1$ is an eigenvalue of $F$, and that $\Z/n\Z \times \Z/n\Z \hookrightarrow E(\F_q)$ if and only if $F$ is the $2\times 2$ identity matrix.  

The leading constants in Example \ref{eg1}, $1/(\ell-1)$ and $\ell/(\ell^2-1)$, have group-theoretic interpretations as the density in the coset $\{g \in \GL_2(\F_\ell) : \det(g) \equiv p \pmod*\ell\} \subset \GL_2(\F_\ell)$
of the matrices with $1$ as an eigenvalue, where the cases are distinguished by whether $p\equiv 1\pmod{\ell}$ or not.  Similarly, the leading constant $1/(\ell (\ell^2-1))$ in Example \ref{eg2} is equal to the density of the identity matrix in this coset (recall that we assume $p\equiv 1 \pmod*{\ell}$ in this case).  We explain a link to the Chebotarev density theorem in Section \ref{alternate}.

\subsection{Statement of the Main Result}
The set of functions  
$\{t^R\}_{R \ge 0}$ is not the most natural basis of continuous functions on the interval $[-1,1]$ in our situation.  Instead, we consider the space $L^2([-1,1],d\mu_\infty)$, where $\mu_\infty$ is the Sato-Tate measure on $[-1,1]$ given by 
\est{ d\mu_\infty = \frac{2}{\pi} \sqrt{1-t^2} \,dt.}  
This $L^2$ space admits a natural orthonormal basis of polynomials, called \emph{Chebyshev polynomials of the second kind}. For $j \geq 0$ these are defined as 
\est{ 
& U_0(t) =  1 \\ & U_1(t) = 2t \\ & U_{j+1}(t) =  2tU_j(t) - U_{j-1}(t).
} 

The Chebyshev polynomials are particularly natural from the monodromy point of view (see  
Section \ref{alternate}), as we now explain.  The underlying group from the monodromy perspective is ${\rm SU}_2$.  This group has a standard $2$-dimensional representation, which we denote by ${\rm Std}$, and has a unique finite dimensional irreducible representation ${\rm Sym}^j$ of dimension $j+1$ for each non-negative integer $j$ given by the $j$\textsuperscript{th} symmetric power of ${\rm Std}$.  Let $\chi_j$ denote the character of the representation ${\rm Sym}^j$. We have that $U_j( \cos \theta) = \chi_j(X_\theta)$ where $X_\theta$ is the conjugacy class in ${\rm SU}_2$ that has eigenvalues $\{e^{i \theta},e^{-i \theta}\}$. 

In this paper we define \emph{normalized} Chebyshev polynomials to be 
\es{\label{chebdef}
U_{k-2}(t,q) = q^{k/2-1} U_{k-2} \left(\frac{t}{2\sqrt{q}}\right)=\frac{\alpha^{k-1}-\overline{\alpha}^{k-1}}{\alpha-\overline{\alpha}} \in \Z[q,t],
}
where $\alpha, \overline{\alpha}$ are the two roots in $\C$ of $X^2-tX+q = 0$. Let 
\es{\label{chebcodef}
c_{R,j} = \begin{cases} a_{R/2,j} & \text{ if } R \text{ even} \\ a_{\frac{R-1}{2},j} + a_{\frac{R-1}{2},j-1} & \text{ if } R \text{ odd}\end{cases}
}
be the \emph{Chebyshev coefficients}.  By an induction argument we have 
\es{\label{cheb} t^{R} = \sum_{j=0}^{\lfloor R/2\rfloor} c_{R,j} q^j U_{R-2j}(t,q).} 
The formula \eqref{cheb} gives a dictionary between statements about the moments $\E_q(t_E^R\Phi_A)$ and the averages of Chebyshev polynomials $\E_q( U_{k-2}(t_E,q) \Phi_A)$, so it suffices to study the latter.  

The Chebyshev coefficients $c_{R,j}$ also have an interpretation in terms of the representation theory of ${\rm SU}_2$.   
We decompose ${\rm Std}^{\otimes k}$ into irreducibles: 
\est{ {\rm Std}^{\otimes k} = \bigoplus_{\substack{0 \leq j \leq k \\ j \equiv k \pmod*2}}\langle {\rm Sym}^j,{\rm Std}^{\otimes k}\rangle {\rm Sym}^j.} 
The multiplicity coefficients $\langle {\rm Sym}^j,{\rm Std}^{\otimes k}\rangle$ are, up to a change of variable, the Chebyshev coefficients defined in \eqref{chebcodef}.  Precisely, we have $c_{R,j} = \langle {\rm Sym}^{R-2j}, {\rm Std}^{\otimes R}\rangle$.

In Section \ref{traces} we prove a version of the Eichler-Selberg trace formula (see Theorem \ref{ESTF}) for the following congruence subgroups.
For positive integers $M \mid N$ let 
\begin{equation}\label{GamNM}
\Gamma(N,M) \eqdef \big\{ \left(\begin{smallmatrix} a& b \\ c & d \end{smallmatrix} \right)\in \SL_2(\Z) \text{ s.t. } a,d\equiv 1 \pmod N, \,\,\, c \equiv 0 \pmod {NM}\big\}.
\end{equation}
Note, for example, that $\Gamma(N,1)=\Gamma_1(N)$ and $\Gamma(N,N) \cong \Gamma(N)$ via conjugation by $\left( \begin{smallmatrix} N & 0 \\ 0 & 1 \end{smallmatrix}\right)$.  The Eichler-Selberg trace formula for $\SL_2(\Z)$
appears in Selberg's original paper on the trace formula \cite{SelbergTF}, the generalization to $\Gamma_0(N),\chi$ under the assumption that the index of the Hecke operator is relatively prime to $N$ was given by Hijikata \cite{Hijikata}, and the general case was achieved by Oesterl\'e \cite{Oest} (see the paper of Cohen \cite{Cohen} for a description).

Let $\psi(n) = [\SL_2(\Z) : \Gamma_0(n)] =  n\prod_{p \mid n}(1+1/p),\ \varphi(n) = |(\Z/n\Z)^\times | = n\prod_{p \mid n}(1-1/p)$, and $\phi(n)= n\prod_{p \mid n } (-\varphi(p)),$ where the products run over primes.  In particular, the function $\phi(n)$ has been chosen to be the Dirichlet convolution inverse to the function $\varphi(n^2)$.  For a positive integer $n$ and prime $p$ we write $v_p(n)$ for the $p$-adic valuation of $n$, and we write $(a,b)$ for the greatest common divisor of $a$ and $b$. 
For $d\in (\Z/n_1\Z)^\times$ let $\langle d \rangle$ denote the $d$\textsuperscript{th} diamond operator acting on $S_k(\Gamma(n_1,n_2))$. The operators $\langle d \rangle$ and $T_q$ with $(q,N)=1$ are normal and pairwise commute. For the definitions of $T_q$ and $\langle d\rangle$ see Section \ref{traces}.

We define a function $T_{n_1,\lambda}(q,d),$ which is motivated by the trace formula, as follows.  We write $q=p^v$ where $p$ is prime and $v$ is a non-negative integer.   
For $\lambda \mid (d^2q-1,n_1)$ (which is well-defined even though $d$ is not an integer) let  
\est{T_{n_1,\lambda}(q,d) = \frac{\psi(n_1^2/\lambda^2) \varphi(n_1/\lambda)}{\psi(n_1^2)} \left( -T_{\text{trace}} + T_{\text{id}} -  T_\text{hyp} + T_{\text{dual}}\right),}
with 
\est{T_\text{trace} = \frac{1}{\varphi(n_1)}\tr( T_q  \langle d \rangle | S_k(\Gamma(n_1,\lambda))),}
\est{T_\text{id} = \frac{k-1}{24}q^{k/2-1} \psi(n_1\lambda) \Big(\delta_{n_1}(q^{1/2} , d^{-1})+ (-1)^k \delta_{n_1}(q^{1/2} , -d^{-1})\Big),}
\begin{multline*}T_\text{hyp} = \frac{1}{4} \sum_{i=0}^v \min(p^i,p^{v-i})^{k-1}   \sum_{\substack{\tau \mid {n_1}\lambda \\  g \mid p^i-p^{v-i} }}  \frac{\varphi(g)\varphi\left({n_1}(\lambda,g)/g\right)} {\varphi({n_1})} \\ 
 \times \Big( \delta_{{n_1}(\lambda,g)/g}(y_i, d^{-1}) + (-1)^k\delta_{{n_1}(\lambda,g)/g}(y_i, -d^{-1}) \Big),\end{multline*}
\est{T_{\text{dual}} =  \frac{\sigma(q)}{\varphi({n_1})} \delta(k,2),}
 and where in the expressions above: 
 \begin{itemize}
 \item if $q$ is not a square then $\delta_{n_1}(q^{1/2} , \pm d^{-1}) = 0$,
\item $g = (\tau,{n_1}\lambda/ \tau)$,
\item $y_i$ is the unique element of $(\Z/({n_1}\lambda/g)\Z)^\times$ such that $y_i \equiv p^i \pmod{\tau}$ and $y_i \equiv p^{v-i} \pmod{{n_1}\lambda/\tau}$,
\end{itemize}
We also define $T_{{n_1},\lambda}(p^{-1},d)=0$. 
 
The main result of this paper is the following.  For a finite abelian group $A$, let $n_1=n_1(A)$ and $n_2=n_2(A)$ be its first and second invariant factors, respectively. That is to say, $n_1$ is the largest order of a cyclic subgroup of $A$.
\begin{theorem}\label{MT} 
Let $A$ be a finite abelian group of rank at most $2$.  Suppose that $(q,|A|)=1$ and $k \geq 2$.  If $q\equiv 1 \pmod {n_2(A)}$ we have 
\begin{multline*}
\E_q(U_{k-2}(t_E,q)\Phi_A) =  \frac{1}{q\varphi(n_1/n_2)} \sum_{\nu \mid \frac{(q-1,n_1)}{n_2}} \phi(\nu) \left(T_{n_1,n_2 \nu}\left(q,1\right) - p^{k-1}T_{n_1,n_2 \nu}\left(q/p^2,p\right)\right)
\\ + q^{k/2-1} \frac{(p-1)(k-1)}{24q}\left( 
\delta_{n_1}(q^{1/2}, 1)+  (-1)^{k}
\delta_{n_1}(q^{1/2}, -1)\right) 
\end{multline*}
and if $q\not \equiv 1 \pmod {n_2(A)}$ then $\E_q(U_{k-2}(t,q)\Phi_A)=0$.
 \end{theorem}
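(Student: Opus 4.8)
The plan is to convert the condition $A \hookrightarrow E(\F_q)$ into a congruence condition on the mod-$n_1$ Frobenius matrix and then run the Eichler--Selberg machinery. As in the discussion preceding Example \ref{eg1}, for $(q,|A|)=1$ the group $E[n_1]\cong(\Z/n_1\Z)^2$ carries a Frobenius action by a matrix $F\in\GL_2(\Z/n_1\Z)$ with $\det F\equiv q$ and $\tr F\equiv t_E\pmod{n_1}$, and $E(\F_q)\cap E[n_1]=\ker(F-I)$. Writing $A\cong\Z/n_1\Z\times\Z/n_2\Z$ and comparing invariant factors prime-by-prime, one checks that $A\hookrightarrow E(\F_q)$ if and only if $F\equiv I\pmod{n_2}$ and $F$ has a fixed vector of exact order $n_1$ in $(\Z/n_1\Z)^2$. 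Since $F\equiv I\pmod{n_2}$ forces $\det F\equiv 1\pmod{n_2}$, i.e. $q\equiv 1\pmod{n_2}$, this already yields the vanishing $\E_q(U_{k-2}(t_E,q)\Phi_A)=0$ when $q\not\equiv 1\pmod{n_2}$. In the remaining case the sum defining $\E_q(U_{k-2}(t_E,q)\Phi_A)$ becomes a weighted sum of $U_{k-2}(t_E,q)$ over curves whose Frobenius lies in a prescribed union of conjugacy classes of $\GL_2(\Z/n_1\Z)$.

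Next I would pass to the geometric side of the trace formula. By Deuring's theorem the weighted count $\sum_{E/\F_q,\,t_E=t}(\#\Aut_{\F_q}(E))^{-1}$ of curves with a fixed trace is a Hurwitz class number, and imposing the congruence condition on $F$ refines this into class numbers carrying level data. Summing these against the normalized Chebyshev polynomial $U_{k-2}(t,q)$ of \eqref{chebdef}---whose generating function $\tfrac{1}{1-tx+qx^2}$ makes it exactly the polynomial appearing in the elliptic term---expresses the desired quantity as a linear combination, indexed by a level $\lambda=n_2\nu$ and a diamond parameter $d$, of the \emph{elliptic terms} of the trace formulas for the groups $\Gamma(n_1,\lambda)$ twisted by $\langle d\rangle$. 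The diamond operator records the rescaling of the level structure by $\det F\equiv q$; this is why the two arguments $(q,1)$ and $(q/p^2,p)$ appear, as the two presentations of the similitude $q$, the second of which (matching $p^{k-1}=p^{2R-2j+1}$ in Theorem \ref{ihara}) is the source of the correction $p^{k-1}T_{n_1,n_2\nu}(q/p^2,p)$.

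With this in hand I would invoke the trace formula of Theorem \ref{ESTF} to solve for each elliptic term as $-T_{\text{trace}}+T_{\text{id}}-T_{\text{hyp}}+T_{\text{dual}}$; by construction this is exactly $T_{n_1,\lambda}(q,d)$ up to the normalizing factor $\psi(n_1^2/\lambda^2)\varphi(n_1/\lambda)/\psi(n_1^2)$, which counts the sublattices realizing the level structure. Finally I would carry out the M\"obius-type inversion over $\nu\mid (q-1,n_1)/n_2$ with weights $\phi(\nu)$: since $\phi$ is the Dirichlet inverse of $\varphi(n^2)$, this sum converts the condition that $F$ has \emph{some} fixed vector (which the raw level counts see) into the condition that $F$ has a fixed vector of \emph{exact} order $n_1$ demanded by $\Phi_A$, while reassembling the $\langle d\rangle$-contributions. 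The identity piece not captured by the cuspidal trace then produces the final term in $\delta_{n_1}(q^{1/2},\pm1)$.

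The main obstacle is the bookkeeping in this last step: matching the arithmetic factors $\psi,\varphi,\phi$ and the normalizing constant exactly, and---more seriously---reconciling Deuring's weighted curve count with the elliptic term of the trace formula in the supersingular range, where the Hurwitz class number and the true mass of curves with a given $t_E$ diverge. Getting these supersingular corrections right is precisely what forces the auxiliary arguments $T_{n_1,n_2\nu}(q/p^2,p)$ together with the term $\tfrac{(p-1)(k-1)}{24q}q^{k/2-1}\delta_{n_1}(q^{1/2},\pm1)$, and verifying that they combine correctly with the $\nu$-inversion is the crux of the argument. The specialization $n_1=n_2=1$ should reproduce Theorem \ref{ihara}, giving a consistency check, while Examples \ref{eg1} and \ref{eg2} pin down the leading constants $1/(\ell-1)$, $\ell/(\ell^2-1)$, $1/(\ell(\ell^2-1))$ as the densities of matrices with the required eigenvalue structure.
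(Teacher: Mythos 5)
You have reproduced the paper's strategy in outline --- fibering over isogeny classes, converting curve counts into the class numbers $H_{n_1,n_2}(t,q,d)$ of Section \ref{curves2classnumbers}, invoking the trace formula for $\Gamma(n_1,\lambda)$ twisted by diamond operators (Theorem \ref{ESTF}), and inverting its elliptic term with the weights $\phi(\nu)$ --- and your vanishing argument for $q\not\equiv 1 \pmod{n_2}$ is the same determinant/Weil-pairing observation the paper makes. But the step you defer as ``the main obstacle is the bookkeeping'' is not bookkeeping: it is the central comparison, Proposition \ref{AnsAlpha},
\[
\omega_{A}(q,1)+\omega^*_{A}(q,1)=\Sigma_{n_1,n_2}(q,1)-p^{k-1}\Sigma_{n_1,n_2}(q/p^2,p),
\]
and your sketch contains no mechanism that would produce it. In particular your stated reason for the second argument $(q/p^2,p)$ --- ``two presentations of the similitude $q$'' via diamond rescaling --- is not what happens. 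The elliptic term $\Sigma_{n_1,n_2}(q,1)$ runs over \emph{all} $t^2<4q$, including $p\mid t$; the paper stratifies by $i=v_p(t)$ and proves (Lemma \ref{omegalem}, resting on the nonmaximal-order class number relation of Lemma \ref{cor728}) that the stratum $v_p(t)=i$ equals $(p^i)^{k-1}\omega_{A}(q/p^{2i},p^i)$ plus boundary terms; subtracting $p^{k-1}\Sigma_{n_1,n_2}(q/p^2,p)$ telescopes away every stratum with $i\geq 1$, and one must then verify case by case ($n_2>2$, $n_2=2$, $n_2=1$; $v$ even or odd; $p=2,3$ separately) that the surviving terms at $t=0$, $t^2=q,2q,3q$ match the supersingular class numbers $H^*_{n_1,n_2}$ of Theorem \ref{SupersingularCount}. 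Likewise the clause ``imposing the congruence condition on $F$ refines this into class numbers carrying level data'' compresses the nontrivial inclusion--exclusion of Proposition \ref{CurveCount}, which generates exactly the correction factors $D_{n_1,\mu}(t)$ in \eqref{cndef}; the $\phi(\nu)$-inversion then works only because these coefficients are precisely Dirichlet-inverse to the array $\varphi(\Lambda^2)\varphi(N/(M\Lambda))/\varphi(N/M)$ in the elliptic term of Theorem \ref{ESTF}, an identity that must be checked rather than read off from the ``exact order'' heuristic.

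One concrete misstatement: the closing term $q^{k/2-1}\tfrac{(p-1)(k-1)}{24q}\left(\delta_{n_1}(q^{1/2},1)+(-1)^k\delta_{n_1}(q^{1/2},-1)\right)$ is \emph{not} ``the identity piece not captured by the cuspidal trace'' --- the identity contribution $T_{\text{id}}$ already sits inside $T_{n_1,\lambda}(q,d)$ and is consumed by the $\nu$-sum. That term is the contribution of the isogeny classes $t=\pm 2\sqrt{q}$ (present only for $v$ even), where the endomorphism ring over $\F_q$ is an order in a quaternion algebra and $E(\F_q)\cong \Z/(\sqrt{q}\mp 1)\Z\times\Z/(\sqrt{q}\mp 1)\Z$ forces the conditions $\delta_{n_1}(\sqrt{q},\pm 1)$; these classes lie strictly outside the range $t^2<4q$ of the elliptic term and are added by hand in Proposition \ref{s3mp}. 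Your final paragraph does gesture at the correct dichotomy between imaginary-quadratic and quaternionic endomorphism rings, so the diagnosis is sound; but since the proposal neither proves Proposition \ref{AnsAlpha} nor offers any substitute for it, what you have is a correct plan for the paper's proof, not a proof.
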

The following special case gives the flavor of the general formula.  
\begin{eg}
Suppose that $q=p$ and $\ell \neq p$ are primes.  Suppose that $A$ is a finite abelian group with rank at most $2$. 
Also suppose that $p \equiv 1 \pmod*{n_2(A)}$ and $(p-1,n_1(A))=n_2(A)$. Then
\begin{multline*}
\E_p(U_{k-2}(t_E,p)\Phi_A) = \frac{\psi(n_1^2/n_2^2)}{p\psi(n_1^2)\varphi(n_1) } \bigg( (p+1)\delta(k,2)- \tr(T_p | S_k(\Gamma(n_1, n_2)))  \\ 
 -\frac{1}{4}\left(1+ (-1)^k \delta(n_1,2) + \delta_{n_1}(p,1) + (-1)^k \delta_{n_1}(p,-1)\right) \varphi(n_1)\sum_{\substack{\tau \mid n_1n_2 \\ 
 \left(\tau , \frac{n_1n_2}{\tau}\right)\mid n_2}}  \varphi\left( \left(\tau , \frac{n_1n_2}{\tau}\right)\right)\bigg).
\end{multline*}
\end{eg}
Remarks:
\begin{enumerate}
\item 
The formula in Theorem \ref{MT} looks complicated but it is quite usable.  As $q\to \infty$ there are very few terms on the right hand side of this formula as compared to the left hand side.  Of these terms, only the ``trace'' term is mysterious, as the ``identity'', ``hyperbolic'', and ``dual'' terms are all given in terms of straightforward arithmetic functions.  

One knows quite a lot about the spaces of classical cusp forms $S_k(\Gamma(n_1,n_2))$, hence about the traces  
appearing in Theorem \ref{MT}. 
This gives a lot of information that is inaccessible from the starting formula on the left hand side.  As a first example, one can use Deligne's bound on individual Hecke eigenvalues and an estimate for the dimension of $S_k(\Gamma(n_1,n_2))$ to obtain estimates as $q \to \infty$ on the left hand side that are significantly better than the trivial upper bound of $\ll q^{k/2-1}$. We explain this and several other 
corollaries of Theorem \ref{MT} in greater detail in Section \ref{applications}.
\item 
As previously remarked, the special case $A= 1$ (the trivial group) of Theorem \ref{MT} goes back to work of Birch \cite{Birch} and Ihara \cite{Ihara}.  The cases $A = \Z/2\Z$ and $A = \Z/2\Z \times \Z / 2 \Z$ were previous work of the authors in \cite{KaplanPetrow1}, and the case of $A= \Z/n\Z \times \Z/n \Z$ over the prime field $\F_p$ in weight $k=2$ was given by Kowalski  
in terms of $\F_p$-points on the modular curve $Y(n)$ \cite[Section 6.2]{KowalskiAPEC}.  
\item Theorem \ref{MT} requires the hypothesis that $(q,|A|)=1$.  We are cautiously optimistic that the methods of this paper could be adapted to treat the case $p \mid |A|$ as well.  However, doing so would require working out in parallel to the already lengthy computations in Sections \ref{curves2classnumbers}, \ref{traces}, and \ref{assembly} the special case where $A$ is a $p$-group.  To avoid dividing our argument into additional cases we suppose that $p \nmid |A|$ in this paper.
\item 
As was noted in  
\cite[Section 6.2]{KowalskiAPEC}, the hyperbolic terms $T_{\text{hyp}}$ in Theorem \ref{MT} may have an interpretation in terms of the  
cusps on the modular curve $X(n_1,n_2)$ corresponding to the group $\Gamma(n_1,n_2)$.  For example, in Example \ref{eg2} the ``hyperbolic terms'' are 
\[
\frac{1}{4} \left(3+(-1)^\ell\right)\frac{\ell^2 - 1}{\ell(\ell^2-1)},
\] 
which is the number of cusps of $\Gamma(\ell)$ (see e.g.~\cite[Section 3.8]{DiamondShurman}), divided by $|\SL_2(\F_\ell)|$.
\item Theorem \ref{MT} could presumably be obtained working directly with modular curves, using Deligne's equidistribution theorem and the Chebotarev density theorem (see, e.g.~Katz-Sarnak \cite[Chapter 9]{KatzSarnak}).  For example, Howe obtains the main term for a similar counting problem in this way \cite{Howe}.  Our results generalize Howe's in that Theorem \ref{MT} is an explicit (not only asymptotic) formula in terms of traces of Hecke operators, and also in that we give formulas for all Chebyshev polynomials (not only $U_0$).  We give a few more details on the geometric approach in 
Section \ref{alternate}.  In 
Section \ref{apps1} and in Theorem \ref{MTn1} we give applications of these formulas for $k\geq 3$, and in 
Section \ref{apps3} we discuss applications of the explicit formulas for the ``error terms''.
\item 
On the other hand, our approach has some considerable advantages over the monodromy approach:  
\begin{itemize} 
\item The natural geometric setting is probably about algebraic stacks (note the automorphisms in the probability function $\P_q$) and the analytic approach here  hides the complications involved in that theory.  Dealing with the primes 2 and 3 geometrically might be quite involved, but our approach is not much more difficult for these primes.  
\item Any geometric proof that could reproduce the exact formulas in Theorem \ref{MT} would necessarily be quite delicate and complicated.
\item It is by no means clear that a geometric proof dealing only with the asymptotic order of magnitude would be as strong quantitatively in its applications, i.e.~that the error term in Corollary \ref{cor} below would be as uniform in $n_1$ and $n_2$.
\end{itemize}
\end{enumerate}

\subsection{Alternate Approach}\label{alternate}
In this  
section we sketch an alternate approach to some of our results using geometric techniques and monodromy computations.  This approach naturally explains some features of the computations that follow in the rest of the paper, and extends (at least to asymptotic order) to variants of the problem where it is not clear a trace formula approach would work.  

Consider any one-parameter family of elliptic curves $(E_\lambda)$ defined over $\F_q$ and consider those $\lambda$ such that $E_\lambda(\F_q)$ contains a subgroup isomorphic to $A$. The following proposition describes the special case where $(E_\lambda)$ is the Legendre family.  It was suggested by an anonymous referee.  
\begin{proposition}\label{alternateapproachprop}
Let $\mathcal{E} \to \mathbb{A}^1-\{0,1\}$ be the family of elliptic curves over $\Z[1/2]$ given by the Legendre family 
\[
E_\lambda : y^2 = x(x-1)(x-\lambda).
\]
For a finite field $\F_q$ with $(q,2)=1$ and any $\lambda \in \F_q -\{0,1\}$, let $t(\lambda)$ be the corresponding trace of Frobenius.  Let $A$ be a fixed finite abelian group of rank at most $2$ and with odd order.  As $q\to \infty$ though any sequence with $(q,2|A|)=1$ and $q \equiv 1 \pmod*{n_2(A)}$ the finite sets 
\[
\left\{\frac{t(\lambda)}{\sqrt{q}} : \lambda \in \F_q -\{0,1\} \,\text{ and } \,A \hookrightarrow E_\lambda(\F_q)\right\}
\]
become equidistributed with respect to the Sato-Tate measure.
\end{proposition}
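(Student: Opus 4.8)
The plan is to deduce the proposition from Deligne's equidistribution theorem together with the Chebotarev density theorem, applied to the rank-two lisse sheaf attached to the Legendre family, by separating the congruence condition $A \hookrightarrow E_\lambda(\F_q)$ (which is governed by the mod-$n$ monodromy) from the Sato--Tate angle (which is governed by the $\ell$-adic geometric monodromy). Write $Y = \A^1-\{0,1\}$ over $\F_q$, let $n = n_1(A)$, and fix a prime $\ell \nmid 2nq$. Let $\cF = R^1\pi_* \overline{\Q_\ell}$ be the lisse sheaf on $Y$ associated to the family, of rank $2$ and pure of weight $1$, with $\det \cF$ the cyclotomic character. For $\lambda \in Y(\F_q)$ the geometric Frobenius $\mathrm{Frob}_\lambda$ acts with eigenvalues $\alpha_\lambda,\overline{\alpha}_\lambda$ satisfying $\alpha_\lambda\overline{\alpha}_\lambda = q$ and $\alpha_\lambda+\overline{\alpha}_\lambda = t(\lambda)$, so writing $\alpha_\lambda = \sqrt q\, e^{i\theta_\lambda}$ we have, by \eqref{chebdef}, the identity $U_j(\cos\theta_\lambda) = q^{-j/2}\tr(\mathrm{Frob}_\lambda \mid \mathrm{Sym}^j\cF)$.

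First I would reformulate the constraint group-theoretically, exactly as explained after Example \ref{eg2}: the reduction of $\cF$ modulo $n$ is a representation $\rho_n:\pi_1(Y)\to \GL_2(\Z/n\Z)$, and the condition $A \hookrightarrow E_\lambda(\F_q)$ depends only on $\mathrm{Frob}_\lambda \bmod n$, cutting out a conjugation-invariant subset $C$ of the coset $\mathcal{G}_q = \{g\in \GL_2(\Z/n\Z): \det g \equiv q\}$, with $C\neq \emptyset$ precisely because $q\equiv 1 \pmod*{n_2(A)}$. Thus the set in the proposition is $S(q,A) = \{\lambda\in Y(\F_q): \mathrm{Frob}_\lambda \bmod n \in C\}$, and by Weyl's criterion (using that the $U_j$ form a uniformly bounded orthonormal family for $\mu_\infty$) it suffices to prove, for each fixed $j\ge 0$, that
\[
\frac{1}{q}\sum_{\lambda\in S(q,A)} U_j(\cos\theta_\lambda)\ \longrightarrow\ \frac{|C|}{|\mathcal{G}_q|}\,\delta(j,0)
\]
as $q\to\infty$ through the allowed residues, since $\int U_j\,d\mu_\infty = \delta(j,0)$ and, taking $j=0$, $|S(q,A)|\sim q\,|C|/|\mathcal{G}_q|$.

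The engine is the Grothendieck--Lefschetz trace formula and Deligne's Riemann Hypothesis (Weil II). Decomposing the indicator of $C$ into matrix coefficients of irreducible representations of the finite arithmetic monodromy group, each summand becomes $\sum_{\lambda\in Y(\F_q)}\tr(\mathrm{Frob}_\lambda \mid \mathcal{H}_\rho\otimes \mathrm{Sym}^j\cF)$ for a lisse sheaf $\mathcal{H}_\rho$ of weight $0$ factoring through $\rho_n$. I would evaluate each such sum by Lefschetz on the affine curve $Y$: the sheaf $\mathcal{H}_\rho\otimes \mathrm{Sym}^j\cF$ is pure of weight $j$, one has $H^0_c=0$, Weil~II bounds $H^1_c$ by weight $\le j+1$, and $H^2_c$ (of weight $j+2$) is the Tate-twisted coinvariants of geometric monodromy. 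Hence each sum equals a main term coming from $H^2_c$ plus an error $O(q^{(j+1)/2})$, so after dividing by $q^{1+j/2}$ only the $H^2_c$-contribution survives in the limit, and it is nonzero exactly when the geometric monodromy group has nonzero invariants in $\mathcal{H}_\rho\otimes \mathrm{Sym}^j\cF$.

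The crux, and the step I expect to be the main obstacle, is the big-monodromy/independence input: I must show that the geometric monodromy of $\cF$ surjects onto the full product $\SL_2(\Z/n\Z)\times \SL_2$, i.e. that the mod-$n$ and the $\ell$-adic geometric monodromies are independent. For the Legendre family the geometric monodromy is (conjugate to) $\Gamma(2)$, which for odd $n$ — guaranteed by the hypothesis that $A$ has odd order — surjects onto $\SL_2(\Z/n\Z)$, while its $\ell$-adic Zariski closure is $\SL_2$; a Goursat/Ribet-type argument, using that for $n$ coprime to $\ell$ the finite group $\SL_2(\Z/n\Z)$ and the connected group $\SL_2$ admit no common nontrivial quotient, then forces the image to be the full product. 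Granting this, the invariants factor as $(\mathcal{H}_\rho)^{\SL_2(\Z/n\Z)}\otimes(\mathrm{Sym}^j)^{\SL_2}$, and $(\mathrm{Sym}^j)^{\SL_2}=0$ for $j\ge 1$; so the $H^2_c$ main term vanishes for every $j\ge 1$, killing those sums, while for $j=0$ it reproduces the coset average $|C|/|\mathcal{G}_q|$ (this is the Chebotarev content, $\mathrm{Sym}^0\cF$ being trivial). Assembling the Chebyshev expansion of an arbitrary continuous test function then gives the claim. The coprimality hypotheses $(q,2|A|)=1$ are used precisely to keep $\ell$, $n$, and $q$ pairwise coprime and to guarantee the surjectivity of $\Gamma(2)$ onto $\SL_2(\Z/n\Z)$; uniformity in $A$ is not required here, which simplifies the control of $\dim H^1_c$ since $A$ is fixed.
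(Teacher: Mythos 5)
Your proposal is correct and follows essentially the same route as the paper's proof sketch: reformulating $A \hookrightarrow E_\lambda(\F_q)$ as a type-$A$ condition on the mod-$n$ Frobenius, killing the $j\geq 1$ Chebyshev sums via Grothendieck--Lefschetz and Deligne's Riemann Hypothesis once the geometric monodromy of $\rho\times\rho_n$ is shown to be the full product $\SL_2\times\SL_2(\Z/n\Z)$, and extracting the nonvanishing main term from the Chebotarev density theorem using $q\equiv 1\pmod*{n_2(A)}$. Your Goursat argument combined with the surjectivity of $\Gamma(2)$ onto $\SL_2(\Z/n\Z)$ for odd $n$ is the same content as the paper's appeal to Igusa's theorem plus its connected-component dimension count, and your choice $n=n_1(A)$ in place of the paper's $n=n_1n_2$ is immaterial since $n_2\mid n_1$.
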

\begin{proof}[Proof Sketch]
Let $E$ be an elliptic curve defined over $\F_q$. Let $A$ be a fixed finite abelian group of rank at most $2$, and let $n_1=n_1(A)$ and $n_2(A) = n_2$ be its first and second invariant factors.  Let $n=n_1n_2$.  The condition that $A \hookrightarrow E(\F_q)$ can be detected via the $\Gal(\overline{\F}_q/\F_q)$-action on $E[n]$.  Specifically, this action determines a well-defined conjugacy class $(\fr_{q}) \subset \GL_2(\Z/n\Z)$, and $A \hookrightarrow E(\F_q)$ if and only if $(\fr_q)$ has one eigenvalue that is $1$ modulo $n_1$ and $(\fr_{q})$ is trivial modulo $n_2$.  We say a conjugacy class of $\GL_2(\Z/n\Z)$ is of \emph{type $A$} if these conditions are satisfied.

Let $K=\F_q(t)$, $G_\text{ar}=\Gal(K^{\rm sep}/K)$ and $G_\text{geo}=\Gal(K^{\rm sep}/K.\overline{\F}_q)$ be the arithmetic and geometric Galois groups of $K$.  To show that the distribution of $t(\lambda)$ is independent of the Galois action on $n$-torsion points, we fix a prime $\ell \nmid q$ and consider two Galois representations: 
\begin{enumerate}
\item $$\rho: G_\text{ar} \to \GL( V),$$ given by the $\ell$-adic sheaf $\mathcal{F} =R^1 \pi_* \Q_\ell(1/2)$ lisse outside of $\{0,1,\infty \}$ (see e.g. \cite[Section 9.1.11]{KatzSarnak}) 
where $\pi :  \mathcal{E} \to \P^1$, and $V$ is the 2-dimensional $\Q_\ell$-vector space given intrinsically by the stalk $\mathcal{F}_{\overline{\eta}}$ for $\overline{\eta}$ a geometric generic point of $\P^1$. 
\item $$\rho_n:  G_\text{ar} \to \GL_2(\Z/n\Z)$$ given by the Galois action on $n$-torsion.
\end{enumerate}

Let $U\subset \P^1$ be the affine open of $\P^1$ avoiding $\{0,1,\infty\}$.  For each $\lambda \in U(\F_q)$ we have inertia and decomposition groups $I_\lambda \trianglelefteq D_\lambda \leq G_\text{ar}$ and an element $\fr_{\lambda,q} \in D_\lambda/I_\lambda \simeq \Gal(\overline{\F}_q/\F_q)$, which is the inverse image of the geometric Frobenius. 
Therefore for each $\lambda \in U(\F_q)$ we have a well-defined conjugacy class $\fr_{\lambda,q}$ in a quotient of $G_\text{ar}$ (the conjugacy class $\fr_{\lambda,q}$ in $G_\text{ar}$ itself depends on a choice of lift modulo $I_\lambda$).  We have in particular for all $\lambda \in U(\F_q)$ that $\tr \rho (\fr_{\lambda,q}) = -t(\lambda)/\sqrt{q},$ and that $\rho_n(\fr_{\lambda,q}) = (\fr_q)$, the conjugacy class of $\GL_2(\Z/n\Z)$ defined via the curve $E_\lambda$ in the first paragraph of this proof.

By Lemma \ref{equilem} (see Section \ref{apps1}), our goal is to show that for each $j \geq 1$ we have 
$$\frac{\sum_{\lambda \in U(\F_q)} U_j(t(\lambda)/2\sqrt{q}) \Phi_A(E_\lambda) }{\sum_{\lambda \in U(\F_q)} \Phi_A(E_\lambda)} \sim 0 $$ 
as $q \rightarrow \infty$.  To do this, we use the Lefschetz trace formula and the Riemann hypothesis of Deligne to estimate the quantity in the numerator.   

We begin by calculating the monodromy groups of $\rho$ and $\rho_n$.  By theorems of Deligne and because of the normalization by the Tate twist we have (see \cite[Section 10.1.16]{KatzSarnak}) that 
$$\overline{\iota \rho(G_{\text{geo}})} = \overline{\iota \rho(G_{\text{ar}})} = \SL_2(\C)$$
where $\iota:\Q_\ell \to\C$ is a fixed complex embedding and the bar denotes the Zariski closure.  We also let $G_n = \rho_n(G_\text{ar})$ and $G_{n,0} = \rho_n(G_{\text{geo}}) \trianglelefteq G_n$.  The geometric monodromy group $H$ of $\rho \times \rho_n$ is contained in $\SL_2 \times G_{n,0}$.  We claim it is equal to this group.   
The connected component of the identity $H^0$ is of the same dimension as $\SL_2(\C)$ so equals $\SL_2(\C) \times \{1\}$.  Since $H$ also surjects onto $G_{n,0}$ we conclude that $H= \SL_2(\C) \times G_{n,0}$.

Let $\Lambda$ be any irreducible representation of $\SL_2(\C)$ and $\pi$ any irreducible representation of $G_n$.  If $\Lambda \otimes \pi$ is not trivial on the geometric monodromy group of $\rho \times \rho_n$ we apply the Grothendieck-Lefschetz trace formula and the Riemann hypothesis of Deligne (see e.g.~\cite[9.2.6(2) and 9.2.6(3)]{KatzSarnak}) to deduce 
\es{\label{rh} 
\sum_{\lambda \in U(\F_q)} \tr (\Lambda (\rho (\fr_{\lambda,q}))) \tr (\pi (\rho_n(\fr_{\lambda,q}))) \ll_{\Lambda,n} \sqrt{q}.
}   
Here the implied constant depends on $n$ and $\Lambda$ but it can be shown using the Grothendieck-Ogg-Safarevich formula that this constant is $\ll |G_n| \dim (\Lambda) \dim(\pi) \ll |G_n|^{3/2} \dim (\Lambda),$ with an absolute implied constant.

Choosing $\Lambda$ to be the $j$\textsuperscript{th} symmetric power representation, we get $\tr (\Lambda (\rho(\fr_{\lambda,q}))) = U_j(t(\lambda)/2\sqrt{q})$.  By the first paragraph of the proof, we expand $\Phi_A(E_\lambda)$ spectrally into finite dimensional irreducible representations of $G_n$:
\[
\Phi_A(E_\lambda) = \begin{cases} 1 & \text{ if } \rho_n(\fr_{\lambda,q}) \text{ is of type } A \\ 0 & \text{ else}\end{cases}= \sum_{\pi}  c(A,\pi) \tr (\pi(\rho_n(\fr_{\lambda,q}))).
\]
We then have by \eqref{rh} that  
\[
\sum_{\lambda \in U(\F_q)} U_j(t(\lambda)/2\sqrt{q}) \Phi_A(E_\lambda)  \ll_{A,j} q^{-1/2}.
\]

The Chebotarev density theorem over finite fields (see \cite[Sections 9.7.10 and 9.7.11]{KatzSarnak})
 implies that 
\es{\label{chebotarev} \sum_{\lambda \in U(\F_q)} \Phi_{A}(E_\lambda) = (q-2) \frac{|\{g \in G_n : \overline{g} \text{ has type } A\}|}{|G_{n,0}|}+ O_{n} ( \sqrt{q}),} 
where $\overline{g}$ is the canonical projection $G_n \to G_{n}/G_{n,0}$.  To show equidistribution with respect to the Sato-Tate measure, it therefore suffices to show that the numerator in \eqref{chebotarev} is non-zero.    

A result of Igusa \cite[Theorem 3]{Igusa} states that for $n$ odd we have $G_{n,0}= \SL_2(\Z/n\Z)$.  Therefore the cosets $G_n/G_{n,0}$ correspond to determinants of elements, and as $\det(\fr_{\lambda,q}) = q^{-1}$ we have that the matrix 
\[
\left(\begin{array}{cc} q & 0 \\ 0 & 1 \end{array}\right) \in G_n \subseteq \GL_{2}(\Z/n\Z)
\]
has one eigenvalue equal to $1$ and is the identity modulo $n_2$, by our assumption that $q\equiv 1\pmod*{n_2(A)}$.  Therefore, the main term of \eqref{chebotarev} does not vanish, which completes the proof.

\end{proof}
\subsection{Sketch of the Proof of Theorem \ref{MT}}\label{sketchproof}
For $A$ an abelian group and $t \in \Z$ let $\mathcal{C}(A)= \{E/\F_q: A\hookrightarrow E(\F_q) \}$ and $\mathcal{C}(A,t) = \{ E/\F_q : A\hookrightarrow E(\F_q) \text{ and } t_E=t \}$.  These sets are empty unless $A$ is finite of rank at most $2$ and $t^2 \leq 4q$.  

\textbf{Step 1.} The first step in the proof of Theorem \ref{MT} is to fiber $\cC(A)$ over isogeny classes, which are parameterized by $t\in \Z$ with $t^2\leq 4q$.  We have 
\begin{eqnarray}\label{step1}
\E_q(U_{k-2}(t_E,q) \Phi_A) & = & \frac{1}{q}
\sum_{\substack{E/\F_q\\ A\hookrightarrow E(\F_q)}}  \frac{U_{k-2}(t_E,q)}{\#\Aut_{\F_q}(E)} = \sum_{t^2 \le 4q} U_{k-2}(t,q) 
 \left(\frac{1}{q}\sum_{\substack{t_E = t \\ A \hookrightarrow E(\F_q)}} \frac{1}{\#\Aut_{\F_q}(E)}\right) \nonumber \\
 & = & \sum_{t^2 \le 4q} U_{k-2}(t,q) \P_q(\cC(A,t)).
\end{eqnarray}
\textbf{Step 2.} In Section \ref{curves2classnumbers}, the sizes of the fibers $\P_q(\cC(A,t))$ are expressed in terms of sums of ideal class numbers of orders in imaginary quadratic fields.
When $A = \Z/n \Z \times \Z/n\Z$ such results go back to Deuring \cite{Deu}, Lenstra \cite{Lenstra}, Schoof \cite{Schoof}, and Waterhouse \cite{Waterhouse}, and are stated as Lemmas \ref{S46withweights}, \ref{Schoof1} and \ref{Schoof2} below.  We extend these results to general rank at most 2 finite abelian groups in Proposition \ref{CurveCount}.  To state these results precisely, we introduce in \eqref{cndef} certain class numbers $H_{n_1,n_2}(t,q,d)$ where $n_1=n_1(A)$ and $n_2=n_2(A)$.  

\textbf{Step 3.} In Section \ref{traces} we prove a version of the Eichler-Selberg trace formula for the groups $\Gamma(n_1,n_2)$ by summing the formula for $\Gamma_0(n),\chi$ over nebentype characters $\chi$.  The computation is very explicit and delicate, but in the end it turns out that the elliptic term involves a similar sum over the same class numbers $H_{n_1,n_2}(t,q,d)$.  The full formula is given in Theorem \ref{ESTF}.

\textbf{Step 4.} The sum over isogeny classes $t$ in \eqref{step1} is not exactly the same as the elliptic terms in the trace formula (Theorem \ref{ESTF}), but they are close enough that we can compare them explicitly in Section \ref{assembly}.  After some manipulation, the trace formula captures all of the elliptic curves in \eqref{step1} where the ring of endomorphisms \emph{over the base field} is an order in a imaginary quadratic field, and not only the ordinary curves.  The isogeny classes corresponding to $t^2=4q$ are the only ones where the endomorphism rings over the base field are orders in quaternion algebras, and these have to be considered separately in the proof of Theorem \ref{MT}.  We remark also that the comparison in Section \ref{assembly} is particularly simple in the case that $\F_q$ is a prime field with $p\geq 5$.  

\subsection{Index of Notation}

In this section we give an index of notation used throughout the paper.  If definitions are easy to state, we give them here, but for more involved notation we give only a location for where a full definition can be found.

\begin{center}
 \begin{tabular}{@{} | p{3cm} |  p{10cm}  |  p{1.6cm} |} 
\hline
Notation & Definition & Location \\
\hline\hline
$q$ & A prime power & Page 1\\
$\F_q$ & A finite field with $q$ elements & Page 1\\
$p$ & The characteristic of $\F_q$ & Page 2\\
$\ell$ & A prime $\ell \neq p$ & Page 3\\
$t$ & An integer satisfying $t^2 \leq 4q$ & Page 9\\
$A$ & Finite abelian group of rank at most $2$ & Page 3\\
$\prod_p$, $\prod_\ell$ & Products over primes & Page 5\\
$\sigma(n)$ & Sum of divisors function: $\sigma(n) = \sum_{d\mid n} d$ & Page 2 \\
$\varphi(n)$ & Euler phi function: $\varphi(n) = n\prod_{p \mid n}(1-1/p)$ & Page 5 \\
$\psi(n)$ & $n\prod_{p \mid n}(1+1/p)$ & Page 5 \\
$\phi(n)$ & $n\prod_{p \mid n } (-\varphi(p))$ &  Page 5\\
$\mu(n)$ & M\"obius function & Page 19\\
$v_p(n)$ & $p$-adic valuation of $n$ &  Page 5\\
$(a,b)$ & Greatest common divisor of $a$ and $b$ &  Page 5 \\
$\tau(n)$ & Number of divisors of $n$ &  Page 15\\
$\omega(n)$ & Number of distinct prime factors of $n$ & Page 24\\
$\lambda(n)$ & $(-1)^{\omega(n)}$ & Page 24 \\
$\lfloor x \rfloor$ & The greatest integer $\leq x$ & Page 14\\
$\delta(a,b)$ & Indicator function of $a = b$  & Page 2\\
$\delta_c(a,b)$ & Indicator function of $a \equiv b \pmod{c}$ & Page 2\\
$D(t;n)$ & $\delta_n(dq+d^{-1},t)$ & Page 23\\
$O_x$, $\ll_x$ & Big O notation. The constant may depend on $x$ & Page 13\\
$a_{R,j}$ & $\binom{2R}{j} - \binom{2R}{j-1}$ & Page 2\\
$c_{R,j}$ & Chebyshev coefficients &  Page 4 \\
$C_R$ & Catalan number: $a_{R,R}$ & Page 2\\
$U_j(t)$ & Chebyshev polynomials of the second kind & Page 4\\
$U_{k-2}(t,q)$ & Normalized Chebyshev polynomials & Page 4\\
$n_1(A),\ n_2(A)$ & First and second invariant factors of $A$ & Page 6\\
$n_1(E),\ n_2(E)$ & First and second invariant factors of $E(\F_q)$ & Page 14\\
$E/\F_q$ & Isomorphism class of the elliptic curve $E$ over $\F_q$ & Page 1\\
$t_E$ & Trace of the Frobenius endomorphism associated to $E$ & Page 1 \\
$\cC$ & Set of all isomorphism classes of elliptic curves over $\F_q$ & Page 1 \\
$\cC(A)$ & Set of all $E/\F_q$ where $\Phi_A(E) = 1$ & Page 9\\
$\cC(A,t)$ & $\{E/\F_q \colon \Phi_A(E) = 1 \text{ and } t_E = t\}$ & Page 9 \\
$\Phi_A$ & Indicator function on $\cC$ of $A \hookrightarrow E(\F_q)$ & Page 2\\
$\P_q(*)$ & Probability measure on $\cC$ & Page 1 \\
$\E_q(*)$ & Expectation of a random variable on $\cC$ & Page 1\\
$v(n_1,n_2)$ & Constant for main term from Corollary 1 & Page 13\\
$h(d)$ & Class number of the quadratic order of discriminant $d$ & Page 21 \\
$h_w(d)$ & Class number weighted by size of unit group & Page 21 \\
$H(\Delta)$ & Hurwitz-Kronecker class number & Page 21\\
$H_{n_1,n_2}(t,q,d)$ & Modified class numbers & Page 24\\
$H^*_{n_1,n_2}(t,q,d)$ & Class numbers for supersingular contribution & Page 26\\
$\legen{a}{n}$ & Kronecker symbol & Page 21\\
\hline
\end{tabular}
\end{center}

\begin{center}
 \begin{tabular}{@{} | p{3cm} |  p{10cm}  |  p{1.6cm} |} 
\hline
Notation & Definition & Location \\
\hline\hline
$\Gamma(N,M)$ &  Congruence subgroup of \eqref{GamNM} & Page 5 \\
$T_m$ & $m$\textsuperscript{th} Hecke operating acting on a space of cusp forms & Page 28\\
$\langle d \rangle$ & $d$\textsuperscript{th} diamond operator acting on $S_k(\Gamma(n_1, n_2))$ & Page 28\\
$S_k(\Gamma, \chi)$ & Space of weight $k$ cusp forms of character $\chi$ for $\Gamma$ & Page 2 \\
$\tr(T|S_k(\Gamma,\chi))$ & Trace of $T$ acting on the vector space $S_k(\Gamma,\chi)$ & Page 29\\
$T_{n_1,\lambda}(q,d)$ & Function motivated by the trace formula & Page 5\\
$\rho(q,k)$ & A special case of $T_{n_1,\lambda}(q,d)$ & Page 2\\
$\omega_{A}(q,d),\ \omega_{A}^*(q,d)$ & Functions defined in terms of class numbers & Page 27 \\
$\Sigma_{n_1, n_2}(q,d)$ & A sum of class numbers & Page 53\\
$T_{\text{trace}}$ & Part of $T_{n_1,\lambda}(q,d)$ & Page 5\\
$T_{\text{id}}$ & Part of $T_{n_1,\lambda}(q,d)$ & Page 5\\
$T_{\text{hyp}}$ & Part of $T_{n_1,\lambda}(q,d)$ & Page 5\\
$T_{\text{dual}}$ & Part of $T_{n_1,\lambda}(q,d)$ & Page 5\\
$T^{(*)}_{\chi}$  & Parts of Theorem 10, where $*=i,e,h,$ or $d$ & Page 31\\
$\nu || n_1$ & Full divisor: for all primes $\ell \mid \nu,\ v_{\ell}(\nu) = v_{\ell}(n_1)$ & Page 23\\
$a \mid b^\infty$ & $b$ is divisible by each prime dividing $a$ & Page 23\\
$\mu \prec \nu$ & A partial order on integers, see Section 3 & Page 23\\
$D_{\nu,\mu}(t)$ & A function related to $D(t;n)$, see Section 3 & Page 23\\
$\mu_{\chi}(t,m,q)$ & See Theorem 10 & Page 30\\
$\sum^{*m}$ & See Theorem 10 & Page 30\\
$\Delta$ & $t^2-4q$ & Page 32\\
$\Delta^*$  & $\Delta/4$ & Page 36\\
$m$ & An integer such that $m^2 \mid \Delta$ & Page 32\\
$\alpha$ & An integer such that $0 \leq \alpha \leq \beta \leq \gamma$ & Page 32\\
$\gamma$ & An integer such that $0 \leq \alpha \leq \beta \leq \gamma$ & Page 32\\
$\beta$ & The integer such that $(d^2q-1,\ell^\gamma)= \ell^\beta$ & Page 32\\
$\kappa$ & $v_\ell(m)$ & Page 32\\
$\nu$ & $v_\ell(\Delta)$ & Page 32\\
$W(d)$ & A sum involving $\sum^{*m}$, see Section 5.2 & Page 32\\ 
$C(t,q,d)$ & Used in the proof of Theorem 9, see Section 5.2 & Page 32\\ 
$c_{\kappa}(t,q,d)$ & Used in the proof of Theorem 9, see Section 5.5 & Page 34 \\ 
$V(\tau,d)$ & Used in the proof of Theorem 9, see Section 5.3 & Page 33\\
$S(a,n)$ & Number of solutions of $x^2-a\equiv 0 \pmod{n}$ & Page 36\\
$C_{K,N,M}(t,q,d)$ & Used in the proof of Theorem 9, see Section 5.6 & Page 51\\
\hline
\end{tabular}
\end{center}

\section{Applications}\label{applications}
 
 \subsection{Points on Elliptic Curves over Finite Field Extensions}
Our first application emphasizes expectations of functions of $t_E$.  We give a generalization of the work of Brock and Granville \cite{BrockGranville} on ``quadratic excess'', i.e. that there are extra points in quadratic extensions on curves over finite fields.  For a geometric explanation of this phenomenon, see the paper of Katz \cite{Katz}.

Let $E$ be an elliptic curve defined over a finite field $\F_q$, and $\F_{q^r}$ be the degree $r$ extension field.  Then the number of $\F_{q^r}$-points on $E$ is given by $E(\F_{q^r}) = q^r+1 - (\alpha^r+ \overline{\alpha}^r)$, where $\alpha, \overline{\alpha}$ are the two roots of $X^2-t_E X + q$.  In particular, $\alpha \overline{\alpha}  = q$.  We have by \eqref{cheb} that 
\[
(\alpha^r + \overline{\alpha}^r) = (\alpha^r + \overline{\alpha}^r) \frac{(\alpha-\overline{\alpha})}{(\alpha-\overline{\alpha})} = \begin{cases} U_{r}(t,q) -q U_{r-2}(t,q) & \text{ if } r \geq 2 \\ U_1(t,q)& \text{ if } r=1.\end{cases}
\]
We use this to compute the average number of points on elliptic curves over extension fields.  If $r \geq 2$ we have 
\[
\E_q(\# E(\F_{q^r}) \Phi_A) = (q^r+1)\E_q(\Phi_A) - \E_q(U_r(t_E,q)\Phi_A)+ q\E_q(U_{r-2}(t_E,q)\Phi_A),
\] 
and we will see in below 
Section \ref{apps1} that 
\[
\E_q(U_{k-2}(t_E,q)\Phi_A) = 
\begin{cases} 
\E_q(\Phi_A)+ O_{A,\eps}(q^{-1/2+ \eps}) & \text{ if } k=2 \\ O_{A,\eps}\left(q^{\frac{k-3}{2}+\eps}\right) & \text{ if } k\geq 3.  
\end{cases}
\] 
Therefore we have 
\[
\E_q(\# E(\F_{q^r}) \Phi_A) = 
\begin{cases} 
(q^2+q)\E_q(\Phi_A) + O_{A,\eps}(q^{1/2+\eps}) & \text{ if } r=2 \\ q^r \E_q(\Phi_A)  + O_{A,\eps}\left(q^{\frac{r-1}{2}+ \eps}\right) & \text{ if } r\geq 3,
\end{cases}
\]
recovering the Brock-Granville quadratic excess for the family of elliptic curves with {$A \hookrightarrow E(\F_q)$}.
 \subsection{Families of Curves over $\F_q$ and the Sato-Tate Distribution}\label{apps1}
Our second application is the Sato-Tate equidistribution of traces of the Frobenius endomorphism for several families of elliptic curves over $\F_q$.  Let $\mathcal{F} \subseteq \cC$ be a subset of elliptic curves over $\F_q$ and let $\Phi(\cF)$ be the indicator function of $\cF$, e.g., 
\[
\Phi(\cC(A)) = \Phi_A.
\]
Similarly, we often drop the $\cC$ from the notation in the families that we consider below.  We study the equidistribution of $t_E$ for $E\in \cF$ via the following lemma, which is immediate from the definition found in  
\cite[\S1]{SerreVertical}.
\begin{lemma}\label{equilem}
The traces of the Frobenius $t_E$ for $E\in \cF$ are  
equidistributed with respect to the Sato-Tate measure 
if for all $j \geq 1$ we have 
\[ 
\lim_{q\to\infty} \frac{\E_q\left(U_{j}(t_E,q) \Phi(\cF) \right)}{q^{j/2}\E_q( \Phi(\cF))} = 0.
\]
\end{lemma}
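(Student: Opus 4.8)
The plan is to recognize the stated condition as the convergence of moments against an orthonormal polynomial basis and to conclude by the Weierstrass approximation theorem, the whole argument being a translation through the normalization \eqref{chebdef}. First I would make the notion of equidistribution explicit. For each prime power $q$ introduce the discrete Borel probability measure $\nu_q$ on $[-1,1]$ that places mass $\P_q(\{E\})/\P_q(\cF)$ at the point $t_E/(2\sqrt{q})$ for each $E \in \cF$; since the factor $1/q$ in $\P_q$ cancels in the normalization, this mass equals $(\#\Aut_{\F_q}(E))^{-1}/\sum_{E' \in \cF}(\#\Aut_{\F_q}(E'))^{-1}$, and by Hasse's theorem the points $t_E/(2\sqrt{q})$ indeed lie in the compact interval $[-1,1]$. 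By the definition of equidistribution from \cite[\S1]{SerreVertical}, the traces $t_E$ for $E \in \cF$ are equidistributed with respect to $\mu_\infty$ exactly when $\nu_q \to \mu_\infty$ weakly as $q \to \infty$.

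Second I would reduce weak convergence to convergence of the moments against the Chebyshev basis. Because $[-1,1]$ is compact, the Weierstrass approximation theorem gives density of $\R[t]$ in $C([-1,1])$ for the sup norm, and $\{U_j\}_{j \ge 0}$ is a triangular (hence spanning) basis of $\R[t]$, with $U_j$ of degree $j$. Consequently $\nu_q \to \mu_\infty$ weakly if and only if $\int_{-1}^{1} U_j \, d\nu_q \to \int_{-1}^{1} U_j \, d\mu_\infty$ for every $j \ge 0$, the general case following by linearity and approximation from these. The target values are read off from orthonormality of $\{U_j\}$ in $L^2([-1,1],d\mu_\infty)$ together with $U_0 = 1$: one has $\int_{-1}^{1} U_j \, d\mu_\infty = \int_{-1}^{1} U_0 U_j \, d\mu_\infty = \delta(0,j)$.

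Third I would translate the left-hand moments into the ratio appearing in the lemma. Using $U_j(t,q) = q^{j/2}\,U_j(t/(2\sqrt{q}))$ from \eqref{chebdef}, the power of $q$ is common to numerator and denominator of $\int U_j \, d\nu_q$ and cancels identically, giving
\[
\int_{-1}^{1} U_j \, d\nu_q = \frac{\sum_{E \in \cF} U_j\!\left(t_E/(2\sqrt{q})\right)/\#\Aut_{\F_q}(E)}{\sum_{E \in \cF} 1/\#\Aut_{\F_q}(E)} = \frac{\E_q\!\left(U_j(t_E,q)\Phi(\cF)\right)}{q^{j/2}\,\E_q(\Phi(\cF))}.
\]
The case $j = 0$ is automatic, since $U_0(t,q) = 1$ forces both sides to equal $1 = \int_{-1}^{1} U_0 \, d\mu_\infty$; and for each $j \ge 1$ the required limit $\int_{-1}^{1} U_j \, d\nu_q \to 0 = \int_{-1}^{1} U_j \, d\mu_\infty$ is precisely the hypothesis. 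Combining all $j \ge 0$ yields weak convergence $\nu_q \to \mu_\infty$, which is the asserted equidistribution.

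There is no genuine analytic obstacle here: equidistribution on a compact interval is a standard consequence of convergence of all moments against a polynomial basis. The only point demanding care is the bookkeeping in the third step, namely keeping the automorphism weights consistent across $\P_q$, $\E_q$, and $\nu_q$, and verifying that the normalizing factor $q^{j/2}$ of \eqref{chebdef} cancels exactly between numerator and denominator so that no spurious power of $q$ survives and distorts the limit.
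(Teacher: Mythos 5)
Your proposal is correct and is essentially the paper's argument: the paper treats the lemma as immediate from the definition of equidistribution in \cite[\S1]{SerreVertical}, and your write-up is exactly the standard unpacking of that definition (automorphism-weighted probability measures on $[-1,1]$, Weierstrass approximation through the triangular basis $\{U_j\}$, orthonormality giving $\int_{-1}^{1} U_j\,d\mu_\infty=\delta(0,j)$, and exact cancellation of the $q^{j/2}$ normalization from \eqref{chebdef}). No gaps.
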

For $n_1,n_2$ two natural numbers with $n_2 \mid n_1$ and $q \equiv 1 \pmod*{n_2}$ let 
\[
v(n_1,n_2) = \frac{n_1}{\psi(n_1) \varphi(n_1)n_2^2} \prod_{\ell \mid \frac{n_1}{(q-1,n_1)}} \left(1+\ell^{-1-2v_\ell\left(\frac{(q-1,n_1)}{n_2}\right)} \right).
\]
We have the trivial estimate $\E_q(U_{k-2}(t_E,q) \Phi_A)\ll  q^{k/2-1}$.  The following is the main corollary of Theorem \ref{MT}.
 \begin{corollary}\label{cor}
Let $A$ be a finite abelian group of rank at most $2$ with $(q,|A|)=1$ and $k\geq 2$ an integer. When $q\equiv 1 \pmod{n_2(A)}$ we have
\est{\E_q(U_{k-2}(t_E,q) \Phi_A) =  v\left(n_1(A),n_2(A)\right)  \left( \delta(k,2) + O_{\eps}(kn_2(A) n_1(A)^{2+\eps}q^{\frac{k-3}{2}+\eps})\right).
}
In particular, the traces of the Frobenius $t_E$ for $E\in \cC(A)$ become 
equidistributed with respect to the Sato-Tate measure 
as $q\to \infty$ through prime powers $q\equiv 1\pmod*{n_2(A)}$.  The equidistribution is uniform in $A$ as soon as $q\gg n_2(A)^2 n_1(A)^{4+\delta}$ for some $\delta>0$.  
 \end{corollary}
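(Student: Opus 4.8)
The plan is to read off the right-hand side of Theorem~\ref{MT} and separate it into a main term and an error. Recall that each $T_{n_1,\lambda}(q,d)$ is a combination of $T_{\text{trace}},T_{\text{id}},T_{\text{hyp}},T_{\text{dual}}$. Among these only $T_{\text{dual}}$ carries the factor $\delta(k,2)$, and only it is of size $\asymp q$ (through $\sigma(q)$); every other piece will ultimately be shown to be $O_\eps(q^{(k-3)/2+\eps})$. So the entire main term must come from the dual contribution, which is present only when $k=2$, matching the shape $v(n_1,n_2)\delta(k,2)$.

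First I would evaluate the dual contribution exactly. For $k=2$ one has $p^{k-1}=p$, and writing $q=p^v$ the elementary identity $\sigma(q)-p\,\sigma(q/p^2)=q+1$ collapses the two dual terms. Feeding this into Theorem~\ref{MT} gives for the dual part
\[
\frac{q+1}{q\,\varphi(n_1/n_2)\varphi(n_1)\psi(n_1^2)}\sum_{\nu\mid\frac{(q-1,n_1)}{n_2}}\phi(\nu)\,\psi\!\left(\frac{n_1^2}{(n_2\nu)^2}\right)\varphi\!\left(\frac{n_1}{n_2\nu}\right).
\]
Because $\phi$ is by design the Dirichlet inverse of $n\mapsto\varphi(n^2)$, the $\nu$-sum is multiplicative and can be summed in closed form prime-by-prime; I would check that it reproduces exactly the Euler product defining $v(n_1,n_2)$. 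The stray factor $(q+1)/q=1+O(q^{-1})$ then contributes only $v(n_1,n_2)O(q^{-1})$, which is well inside the claimed error.

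Next I would bound the surviving pieces uniformly in $n_1,n_2,k$. For $T_{\text{trace}}$ I would use that $T_q$ and $\langle d\rangle$ are normal and commute, so they are simultaneously diagonalizable and $\bigl|\tr(T_q\langle d\rangle\mid S_k(\Gamma(n_1,\lambda)))\bigr|$ is at most $\dim S_k(\Gamma(n_1,\lambda))$ times the largest modulus of an eigenvalue of $T_q$; Deligne's bound gives that this modulus is $\ll_\eps\tau(q)q^{(k-1)/2}\ll_\eps q^{(k-1)/2+\eps}$, while $\dim S_k(\Gamma(n_1,\lambda))\ll k\,[\SL_2(\Z):\Gamma(n_1,\lambda)]\ll k\,n_1^2\lambda$ from the standard dimension and index formulas. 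The identity and hyperbolic terms I would bound elementarily, using $\sum_{i=0}^{v}\min(p^i,p^{v-i})^{k-1}\ll q^{(k-1)/2}$ and noting that the indicators $\delta_{n_1}(q^{1/2},\pm1)$ force $q$ to be a square and hence $p\le q^{1/2}$, which keeps the final line of Theorem~\ref{MT} at size $\ll q^{(k-3)/2}$. Carrying the prefactors $\frac{\psi(n_1^2/\lambda^2)\varphi(n_1/\lambda)}{\psi(n_1^2)\varphi(n_1)}$ and the weights $\phi(\nu)$ (with $|\phi(\nu)|\ll\nu^{1+\eps}$) through the $\nu$-sum, I expect every error piece to be $\ll_\eps v(n_1,n_2)\,k\,n_2\,n_1^{2+\eps}\,q^{(k-3)/2+\eps}$.

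Finally the equidistribution claims follow formally from Lemma~\ref{equilem}. For $j\ge1$ we have $k=j+2\ge3$, so $\delta(k,2)=0$ and $\E_q(U_j(t_E,q)\Phi_A)\ll_\eps v(n_1,n_2)q^{(j-1)/2+\eps}$, whereas $\E_q(\Phi_A)\sim v(n_1,n_2)$; the ratio in Lemma~\ref{equilem} is then $O_\eps(q^{-1/2+\eps})\to0$. For uniformity in $A$ one requires the $k=2$ error to beat the main term, i.e.\ $n_2 n_1^{2+\eps}q^{-1/2+\eps}<1$, which rearranges to $q\gg n_2^2 n_1^{4+\delta}$. I expect the genuine obstacle to be the last bookkeeping step: shepherding the arithmetic prefactors, the divisor sums inside $T_{\text{hyp}}$, and the $\nu$-sum through the estimates precisely enough to obtain the clean exponents $n_2 n_1^{2+\eps}$ with an absolute implied constant, rather than a bound that silently degrades with $\omega(n_1)$.
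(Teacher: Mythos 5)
Your proposal is correct and is essentially the paper's own argument: the paper derives Corollary \ref{cor} from Theorem \ref{MT} exactly as you describe, by isolating the dual term (the only piece of size $\asymp q$, present only when $k=2$, whose $\nu$-sum against the weights $\phi(\nu)$ evaluates in closed form prime-by-prime to $v(n_1,n_2)$ up to the harmless factor $(q+1)/q$, using $\sigma(q)-p\,\sigma(q/p^2)=q+1$) and bounding the trace term by Deligne's bound times a dimension/index estimate, with the identity and hyperbolic terms handled elementarily --- the model computation being the proof of Theorem \ref{SigMRQ}, where the same $D-T+I-H$ decomposition, the bound $\dim S_k(\Gamma(N))\le kN^3/12$, and the estimate $\sum_{i}\min(p^i,p^{v-i})^{k-1}\ll q^{(k-1)/2}$ all appear. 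The only slip is your assertion $|\phi(\nu)|\ll \nu^{1+\eps}$: in fact $|\phi(\nu)|=\nu\prod_{\ell\mid\nu}(\ell-1)$ is of size $\nu^2$ for squarefree $\nu$ (e.g.\ $\phi(\ell)=-\ell(\ell-1)$), but this is harmless because each $\phi(\nu)$ enters multiplied by $\psi(n_1^2/(n_2\nu)^2)/\psi(n_1^2)\asymp (n_2\nu)^{-2}$, so the $\nu$-sum is still $\ll n_2^{-2}\tau\bigl((q-1,n_1)/n_2\bigr)$ and your final exponents $k\,n_2\,n_1^{2+\eps}q^{(k-3)/2+\eps}$ are unaffected.
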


Corollary \ref{cor} follows from Theorem \ref{MT} and Deligne's bound on the Hecke eigenvalues of modular forms.  For a similar calculation, see the proof of Theorem \ref{SigMRQ}. One could obtain an asymptotic estimate from Theorem \ref{MT} without Deligne's work by using a bound of Selberg \cite{SelbergSCS} on Hecke eigenvalues as Birch does in~\cite{Birch}, but one would get a weaker error term. 

By inclusion-exclusion arguments we can also show the distribution of $t_E$ tends to the Sato-Tate distribution for several other families of elliptic curves.  We give some examples here.  

\begin{enumerate}
\item Howe computes the probability that $N \mid \#E(\F_q)$ as $q\rightarrow \infty$ \cite[Theorem 1.1]{Howe}.  Let $\cC(N \mid \#E(\F_q))\subset \cC$ denote the set of curves for which $N \mid \#E(\F_q)$, and let $\Phi(N \mid \#E(\F_q))$ be its indicator function. For a prime $\ell$, let $A_\ell(a, b)$ denote the group $\Z/\ell^a\Z \times \Z/\ell^b\Z$.  We see that
\es{\label{PhiN}
\Phi(N \mid \#E(\F_q)) = \prod_{\ell \mid N} \left(\Phi_{A_\ell(v_\ell(N),0)} + \sum_{k=1}^{\big\lfloor \frac{v_\ell(N)}{2}\big\rfloor}\bigg( \Phi_{A_{\ell}(v_\ell(N)-k+1,k)} -\Phi_{A_{\ell}(v_\ell(N)-k,k)}\bigg)\right).
}
Expanding $U_{k-2}(t_E,q)\Phi(N \mid \#E(\F_q)) $ using \eqref{PhiN} and applying Corollary \ref{cor}  to each term shows that for $q\rightarrow \infty$ sufficiently fast with respect to $N$, the $t_E$ for $E \in \cC(N \mid \#E(\F_q))$ 
become equidistributed with respect to the Sato-Tate measure.

\item Let $\cC(\ell-\text{part} (\alpha, \beta)) \subset \cC$ be the set of curves such that the $\ell$-primary part of $E(\F_q)$ is isomorphic to $\Z/\ell^\alpha \Z \times \Z/\ell^\beta \Z$.  Gekeler computes $\P_q(\cC(\ell-\text{part} (\alpha, \beta)))$ for $q\to \infty$ through primes \cite[Formula (3.9)]{Gekeler}.  Let $\Phi(\ell-\text{part} (\alpha, \beta))$ be the indicator function of $\cC(\ell-\text{part} (\alpha, \beta))$.  
 Then 
\begin{multline}\label{gek1}
\Phi(\ell-\text{part}(\alpha, \beta)) = 
\Phi(A_\ell(\alpha,\beta))  - \Phi(A_\ell(\alpha+1,\beta))- \Phi(A_\ell(\alpha,\beta +1)) \\ + \Phi(A_\ell(\alpha+1,\beta+1))
\end{multline}
when $\beta < \alpha$, and 
\es{\label{gek2}
\Phi(\ell-\text{part}(\alpha, \alpha)) = 
\Phi(A_\ell(\alpha,\alpha))  - \Phi(A_\ell(\alpha+1,\alpha))
}
when $\alpha = \beta$.  Applying Corollary \ref{cor} to \eqref{gek1} and \eqref{gek2}, we recover formulas of Gekeler for all finite fields $\F_q$ \cite{Gekeler}.  Expanding $U_{k-2}(t_E,q)\Phi(\ell-\text{part}(\alpha, \beta))$ and applying Corollary \ref{cor} to each term shows that the distribution of $t_E$ over $E \in \cC(\ell-\text{part} (\alpha, \beta))$ becomes equidistributed with respect to the Sato-Tate measure as 
$q\rightarrow \infty$ through prime powers $q\equiv 1\pmod*{\ell^\beta}$.  Applying Theorem \ref{MT} to \eqref{gek1} and \eqref{gek2} gives explicit formulas in terms of traces of Hecke operators for these counts.

\item Let $\Phi(n_2=m)$ be the indicator function of the family of elliptic curves $\cC(n_2 = m) = \{E/\F_q: n_2(E(\F_q)) = m \}$. In particular, $\cC(n_2=1)$ is the set of isomorphism classes of curves with cyclic group structure over $\F_q$.  In Theorem \ref{SigMRQ} we give asymptotic formulas for $\E_q(U_{k-2}(t_E,q)\Phi(n_2=m))$, which show by Lemma \ref{equilem} that the $t_E$ for curves $E \in \cC(n_2=m)$ 
become equidistributed with respect to the Sato-Tate measure 
as soon as $q \gg m^{6+\delta}$ for any $\delta>0$.
\end{enumerate}

\subsection{Averages for Invariant Factors of $E(\F_q)$}\label{apps2}

\begin{theorem}\label{MTn1} 
Let $c(q)$ be defined by 
\est{c(q) \eqdef \prod_{\ell^\alpha || q-1} \left(1- \frac{1}{\ell^2(\ell+1) } \frac{1-\ell^{-4\alpha}}{1-\ell^{-4}}\right).} 
We have that 
\[
\E_q(n_1(E)) = c(q) q + O_\eps(q^{1/2+\eps}).
\]
Note that for all $q$
\est{ .8758<  \prod_{\ell} \left(1 - \frac{\ell^2}{(\ell^4-1)(\ell+1)}\right) < c(q) \leq 1.
} 
\end{theorem}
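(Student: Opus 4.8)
The plan is to bypass the large invariant factor $n_1(E)$ entirely by exploiting the exact relation $n_1(E)\,n_2(E) = \#E(\F_q) = q+1-t_E$, trading $n_1(E)$ for the much smaller quantity $n_2(E)$. Writing $n_1(E) = (q+1-t_E)/n_2(E)$ and using linearity gives
\[
\E_q(n_1(E)) = (q+1)\,\E_q\!\left(\tfrac{1}{n_2(E)}\right) - \E_q\!\left(\tfrac{t_E}{n_2(E)}\right).
\]
Since $n_2(E)$ is prime to $p$ and $n_2(E)^2 \mid \#E(\F_q) \le (\sqrt q+1)^2$, the only values taken by $n_2(E)$ are $m \mid q-1$ with $m \le \sqrt q+1$, so both expectations are genuine finite sums. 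I would access them through the indicators $\Phi(n_2=m)$ of the families $\cC(n_2=m)$ and through the embedding indicators $\Phi_{(\Z/N\Z)^2}$, noting $\Phi_{(\Z/N\Z)^2}(E)=1$ exactly when $N \mid n_2(E)$.

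First I would dispose of the second term as an error. Expanding $1/n_2(E) = \sum_m m^{-1}\Phi(n_2=m)$ and resolving $\Phi(n_2=m) = \sum_k \mu(k)\,\Phi_{(\Z/mk\Z)^2}$ by M\"obius inversion over the divisibility poset, and recalling $t_E = U_1(t_E,q)$, each constituent $\E_q(t_E\,\Phi_{(\Z/N\Z)^2}) = \E_q(U_1(t_E,q)\Phi_{(\Z/N\Z)^2})$ is a weight $k=3$ average. Corollary \ref{cor} applies with $A=(\Z/N\Z)^2$ whenever $N \mid q-1$ (which forces $(q,|A|)=1$ and $q\equiv1\pmod{n_2(A)}$): its main term carries the factor $\delta(3,2)=0$ and vanishes, leaving a contribution $\ll_\eps v(N,N)N^{3+\eps}q^{\eps} \ll_\eps N^{\eps}q^{\eps}$ since $v(N,N)\asymp N^{-3}$. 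Summing against $m^{-1}$ and over the divisor variable $k$ with $N=mk\le\sqrt q$, the divisor bound and the convergence of $\sum_m m^{-2}$ yield $\E_q(t_E/n_2(E)) = O_\eps(q^{1/2+\eps})$.

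The heart of the argument is the first term. Collecting the M\"obius expansion by $N=mk$ gives $\E_q(1/n_2(E)) = \sum_{N} g(N)\,\E_q(\Phi_{(\Z/N\Z)^2})$, where $g$ is the multiplicative function with $g(\ell^e)=(1-\ell)\ell^{-e}$, so that $|g(N)|\le 1$. By Theorem \ref{MT} one has $\E_q(\Phi_{(\Z/N\Z)^2}) = 0$ unless $N\mid q-1$, and for $N\mid q-1$ the defining product of $v(N,N)$ is empty, giving $\E_q(\Phi_{(\Z/N\Z)^2}) = \frac{1}{N\psi(N)\varphi(N)} + O_\eps\!\big(N^{\eps}q^{-1/2+\eps}\big)$ via the $k=2$ case of Corollary \ref{cor}. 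The error terms sum to $O_\eps(q^{-1/2+\eps})$ by the same divisor-bound estimate, and extending the main sum from $N\le\sqrt q$ to all $N\mid q-1$ costs only $O(q^{-1})$, the tail being dominated by $\sum_{N>\sqrt q}N^{-3}$. The remaining main sum runs over all divisors of $q-1$ of a multiplicative summand, hence factors as $\prod_{\ell^{\alpha}\| q-1} L_\ell$ with
\[
L_\ell = 1 + \sum_{e=1}^{\alpha}\frac{(1-\ell)\ell^{-e}}{\ell^e\,\psi(\ell^e)\,\varphi(\ell^e)} = 1 - \frac{1}{\ell+1}\sum_{e=1}^{\alpha}\ell^{2-4e},
\]
and summing the finite geometric series reproduces exactly the local factor $1-\frac{1}{\ell^2(\ell+1)}\frac{1-\ell^{-4\alpha}}{1-\ell^{-4}}$ of $c(q)$. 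Thus $\E_q(1/n_2(E)) = c(q)+O_\eps(q^{-1/2+\eps})$, and feeding this together with the bound on the $t_E$-term back into the displayed identity gives $\E_q(n_1(E)) = (q+1)c(q) + O_\eps(q^{1/2+\eps}) = c(q)\,q + O_\eps(q^{1/2+\eps})$, since $c(q)=O(1)$. The numerical bounds are then immediate from the local factors: each $L_\ell$ lies in $(0,1]$ with $L_\ell=1$ precisely when $\ell\nmid q-1$, giving $c(q)\le1$; and since $\frac{1-\ell^{-4\alpha}}{1-\ell^{-4}}$ increases strictly to $\frac{1}{1-\ell^{-4}}$, each $L_\ell$ strictly exceeds $1-\frac{\ell^2}{(\ell^4-1)(\ell+1)}$, so $c(q) > \prod_\ell\big(1-\frac{\ell^2}{(\ell^4-1)(\ell+1)}\big)$, a convergent product one checks numerically to exceed $.8758$.

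I expect the main obstacle to be the uniform control of the error terms rather than the main-term evaluation. One must verify that the M\"obius expansions of $1/n_2(E)$ and of the $t_E$-twisted sum, combined with the error in Corollary \ref{cor} (which grows like a fixed power of $N$), still telescope to $O_\eps(q^{1/2+\eps})$ after division by $n_2(E)$ and summation over the divisor lattice; the division by $n_2(E)$ and the $\delta(3,2)=0$ cancellation are exactly what keeps these contributions of lower order. By contrast, the identification of $\sum_{N\mid q-1} g(N)/(N\psi(N)\varphi(N))$ with the Euler product $c(q)$, though the conceptual payoff, is a mechanical local computation once the function $g$ has been pinned down.
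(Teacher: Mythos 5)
Your proposal is correct, and it shares the paper's skeleton: the identity $n_1(E)=(q+1-t_E)/n_2(E)$, M\"obius inversion of the condition $n_2(E)=m$ into the embedding indicators $\Phi_{(\Z/N\Z)^2}$, and the trace-formula asymptotics descending from Theorem \ref{MT}. Where you genuinely diverge is the organization of the divisor sums. The paper first proves the standalone Theorem \ref{SigMRQ} --- an asymptotic for $\E_q(U_{k-2}(t_E,q)\Phi(n_2=m))$ for each fixed $m$, whose main term carries the two-fold product over primes dividing $(q-1)/m$ --- and then sums these over $m \mid q-1$, evaluating the resulting multiplicative function on prime powers in a second local computation. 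You instead interchange the $m$-sum with the M\"obius sum, writing $1/n_2(E)=\sum_N g(N)\,\Phi_{(\Z/N\Z)^2}(E)$ with $g(\ell^e)=(1-\ell)\ell^{-e}$ (and indeed $\sum_{N\mid n}g(N)=1/n$), so that only the cleanest case $A=(\Z/N\Z)^2$ of Corollary \ref{cor} is needed, where $v(N,N)=1/(N\psi(N)\varphi(N))$ has an empty extraneous Euler product; the constant $c(q)$ then appears from a single evaluation of $L_\ell$, and your factor $1-\frac{1}{\ell^2(\ell+1)}\frac{1-\ell^{-4\alpha}}{1-\ell^{-4}}$ agrees with the paper's $f(\ell^\alpha)$. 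Your error bookkeeping is sound and in fact slightly generous: restricting $N=mk$ to divisors of $q-1$ shows the $t_E$-term is $O_\eps(q^\eps)$, though your cruder $O_\eps(q^{1/2+\eps})$ suffices. The trade-off: the paper's detour through Theorem \ref{SigMRQ} yields a result it reuses for Theorem \ref{MTn2} and the uniform equidistribution statement, with the explicit error $O(kq^{(k-3)/2}\tau(q-1)\log q)$, while your inlined version is shorter for Theorem \ref{MTn1} alone and makes the Euler product for $c(q)$ transparent, at the cost of leaning on Corollary \ref{cor}, whose proof the paper only sketches (it is the same Deligne-bound computation carried out in the proof of Theorem \ref{SigMRQ}).
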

\noindent Asymptotic formulas for the higher moments $\E_q(n_1(E)^R)$ are also accessible by our methods.  
\begin{theorem}\label{MTn2}
Let $b(q)$ be defined by 
\est{ 
b(q) \eqdef \prod_{\ell^\alpha || q-1} \left(1+ \frac{1}{\ell(\ell+1)} \frac{1-\ell^{-2\alpha}}{1-\ell^{-2}}\right).
} 
We have that 
\[
\E_q(n_2(E)) = b(q)  + \frac{pq^{-1/2}}{12} \mathds{1}_{q=\square}+ O_\eps(q^{-1/3+\eps}),
\] 
where $\mathds{1}_{q=\square} = 1$ if $q$ is a square and is $0$ otherwise.
Note that for all $q$ 
\est{1 
\leq b(q) < \prod_{\ell} \left(1+ \frac{\ell}{(\ell^2-1)(\ell+1)}\right) < 1.45004.
} 
\end{theorem}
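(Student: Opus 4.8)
The plan is to reduce $\E_q(n_2(E))$ to the averages $\E_q(\Phi_A)$ that Theorem \ref{MT} computes, using the elementary identity $n=\sum_{m\mid n}\varphi(m)$. Since $\Z/m\Z\times\Z/m\Z\hookrightarrow E(\F_q)$ exactly when $m\mid n_2(E)$, and since rationality of the full $m$-torsion forces $\mu_m\subset\F_q$, i.e.\ $m\mid q-1$, I would first write
\[
\E_q(n_2(E)) = \E_q\Big(\sum_{m \mid n_2(E)}\varphi(m)\Big) = \sum_{m \mid q-1}\varphi(m)\,\E_q(\Phi_{\Z/m\Z\times\Z/m\Z}).
\]
For each $m\mid q-1$ the group $A=\Z/m\Z\times\Z/m\Z$ has $n_1(A)=n_2(A)=m$, the hypothesis $q\equiv 1\pmod{n_2(A)}$ holds automatically, and $(q,m)=1$, so Theorem \ref{MT} with $k=2$ (whence $U_0\equiv 1$) applies to every summand.

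Next I would extract the main term. With $n_1=n_2=m$ the sum over $\nu$ in Theorem \ref{MT} collapses to $\nu=1$, and a short computation shows that the only main-order contribution comes from the dual terms: combining $T_{\mathrm{dual}}$ at argument $q$ with its counterpart at $q/p^2$ gives $\sigma(q)-p\,\sigma(q/p^2)=q+1$, so the leading part of $\E_q(\Phi_{\Z/m\Z\times\Z/m\Z})$ is $\frac{q+1}{q}v(m,m)$ with $v(m,m)=\frac{1}{\psi(m)\varphi(m)m}$. Weighting by $\varphi(m)$ and summing produces $\frac{q+1}{q}\sum_{m\mid q-1}\frac{1}{\psi(m)m}$. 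This sum is multiplicative, and evaluating the local factor at each $\ell^\alpha\Vert q-1$ (using $\psi(\ell^j)\ell^j=\ell^{2j}(1+1/\ell)$ for $j\ge 1$) reproduces exactly the Euler factor defining $b(q)$; hence the main term is $b(q)+O(q^{-1})$. The bounds $1\le b(q)<1.45004$ then follow since every local factor exceeds $1$, and by majorizing the finite product by the convergent product $\prod_\ell\bigl(1+\frac{\ell}{(\ell^2-1)(\ell+1)}\bigr)$ over all primes, whose value I would estimate numerically.

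I would treat the classes with $t_E^2=4q$ separately, as in Step 4 of Section \ref{sketchproof}. When $q$ is a square these extremal supersingular classes have $\#E(\F_q)=(\sqrt q\mp 1)^2$ and group structure $(\Z/(\sqrt q\mp1)\Z)^2$, so $n_2(E)=\sqrt q\mp1$; multiplying this large invariant factor by the supersingular mass $\asymp\frac{p-1}{24}$ and dividing by $q$ yields the term $\frac{p\,q^{-1/2}}{12}\mathds{1}_{q=\square}$, while for non-square $q$ these classes are absent, matching the indicator.

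The hard part is the uniform error term. After removing the main term and the extremal contribution, I would split the sum over $m\mid q-1$ at a threshold $D$. For $m\le D$ the deviation of each $\E_q(\Phi_{\Z/m\Z\times\Z/m\Z})$ from $v(m,m)$ is governed by the Hecke-trace term $\frac{1}{q\,\psi(m^2)\varphi(m)}\tr(T_q\mid S_2(\Gamma(m,m)))$ (together with the explicit identity and hyperbolic terms), which Deligne's bound and $\dim S_2(\Gamma(m))\asymp m^3$ control by $O(q^{-1/2+\eps})$ per modulus; for $m>D$, where at most $O(\sqrt q/m^2+1)$ isogeny classes can contribute, I would instead bound the relevant Hurwitz class numbers $H((t^2-4q)/m^2)\ll (q/m^2)^{1/2+\eps}$ directly. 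The main obstacle is that naive triangle-inequality bounds on the small-$m$ range yield only $O(q^{-1/4+\eps})$, so reaching the claimed exponent $q^{-1/3+\eps}$ requires exploiting cancellation among the Hecke traces as $m$ varies (for instance a second-moment or large-sieve estimate for $\sum_{m\le D}|\tr(T_q\mid S_2(\Gamma(m,m)))|^2$) rather than bounding term by term; balancing this sharpened head estimate against the tail then fixes $D$ and produces the power saving.
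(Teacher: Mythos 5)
Your reduction and main-term computation are sound, and in one respect cleaner than the paper's: the paper writes $\E_q(n_2(E))=\sum_{m\mid q-1} m\,\E_q(\Phi(n_2=m))$ and feeds in Theorem \ref{SigMRQ}, which is itself obtained from Theorem \ref{MT} by a M\"obius inversion over the groups $\Z/md\Z\times\Z/md\Z$; your identity $n_2(E)=\sum_{m\mid n_2(E)}\varphi(m)$ bypasses that inversion and applies Theorem \ref{MT} with $k=2$ directly to $\Phi_{\Z/m\Z\times\Z/m\Z}$. Your identification of the main term is correct ($\sigma(q)-p\,\sigma(q/p^2)=q+1$, prefactor $1/\psi(m^2)\varphi(m)$ with $\psi(m^2)=m\psi(m)$), and $\sum_{m\mid q-1}\tfrac{1}{m\psi(m)}=b(q)$ is the same local computation as the paper's evaluation of $g(\ell^\alpha)$. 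The $t^2=4q$ bookkeeping giving $\tfrac{p-1}{12\sqrt q}=\tfrac{p}{12\sqrt q}+O(q^{-1/2})$ is also right.

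The genuine flaw is in your last paragraph. Term-by-term bounds \emph{do} reach $q^{-1/3+\eps}$, and no cancellation among Hecke traces is needed; your $q^{-1/4+\eps}$ comes from implicitly treating the head as a sum over all integers $m\le D$ (weighted mass $\asymp D^2$), whereas $m$ runs only over divisors of $q-1$, of which there are $\tau(q-1)\ll_\eps q^\eps$. Concretely, Deligne plus $\dim S_2(\Gamma(m))\ll m^3$ gives a per-modulus trace-term error $\ll q^{-1/2+\eps}m/\varphi(m)$ in $\E_q(\Phi_{\Z/m\Z\times\Z/m\Z})$, so after weighting by $\varphi(m)$ the head error is $\ll q^{-1/2+\eps}\sum_{m\le D,\ m\mid q-1}m\ll q^{-1/2+\eps}D\,\tau(q-1)$, while your own tail bound gives $\ll q^{\eps}\tau(q-1)/D^2+q^{-1/2+\eps}\tau(q-1)$. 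Splitting at $D=q^{1/6+\eps}$ --- exactly the paper's choice in the proof via Theorem \ref{SigMRQ} --- balances these at $O_\eps(q^{-1/3+\eps})$. As written your argument is incomplete, since the second-moment/large-sieve estimate for $\tr(T_q\mid S_2(\Gamma(m,m)))$ over varying level $m$ that you invoke is neither proved nor standard; but it is also unnecessary, and deleting that detour in favor of the divisor-count observation $\tau(q-1)\ll_\eps q^\eps$ completes the proof along the paper's lines.
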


Theorems \ref{MTn1} and \ref{MTn2} are both consequences of the following result.  
\begin{theorem}\label{SigMRQ} 
If $m \nmid q-1$ then $\E_q\left(U_{k-2}(t_E,q)\Phi(n_2 = m) \right)=0$.  If $m \mid q-1$ then 
\begin{multline*}
 \E_q\left(U_{k-2}(t_E,q)\Phi(n_2 = m) \right) = \frac{(q+1) \delta(k,2)}{q\psi(m^2)\varphi(m)} \prod_{\substack{ 
\ell \mid \frac{q-1}{m} \\  
\ell \nmid m}} \left(1 - \frac{1}{\ell(\ell^2-1)}\right) \prod_{\substack{
\ell \mid \frac{q-1}{m} \\  
\ell \mid m}} \left( 1- \frac{1}{\ell^3}\right) \\ + O( kq^{\frac{k-3}{2}} \tau(q-1)\log q),
\end{multline*}
where $\tau(n)$ is the number of divisors of $n$, and $5$ is an admissible constant in the $O$ notation.  In particular, we have that the $t_E$ for $E\in \cC(n_2=m)$ become equidistributed with respect to the Sato-Tate measure as $q\to \infty$ through prime powers $q\equiv 1\pmod*{m}$.  The equidistribution is uniform in $m$ when $q\gg m^{6+\delta}$ for some $\delta>0$.
\end{theorem}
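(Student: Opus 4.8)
The plan is to reduce Theorem \ref{SigMRQ} to Theorem \ref{MT} by a M\"obius inversion that rewrites the condition ``$n_2(E)=m$ exactly'' in terms of the embedding conditions controlled by $\Phi_A$. The key observation is that $\Z/m'\Z \times \Z/m'\Z \hookrightarrow E(\F_q)$ if and only if $m' \mid n_2(E)$, so $\Phi_{\Z/m'\Z \times \Z/m'\Z}$ is the indicator of $m' \mid n_2(E)$. Since $\Phi(m' \mid n_2) = \sum_{m' \mid m''}\Phi(n_2 = m'')$, M\"obius inversion over the divisibility poset gives
\[
\Phi(n_2 = m) = \sum_{d \geq 1}\mu(d)\, \Phi_{\Z/md\Z \times \Z/md\Z}.
\]
Applying $\E_q(U_{k-2}(t_E,q)\,\cdot\,)$ and invoking Theorem \ref{MT} termwise, every summand with $md \nmid q-1$ vanishes. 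In particular, if $m \nmid q-1$ all terms vanish, yielding the first assertion; otherwise only $d \mid (q-1)/m$ survive, a sum of at most $\tau((q-1)/m) \leq \tau(q-1)$ terms. Since $p\nmid q-1$, these surviving $d$ automatically satisfy $(q,md)=1$, so the hypotheses of Theorem \ref{MT} hold.

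For each surviving $d$ I would apply Theorem \ref{MT} with $A = \Z/md\Z \times \Z/md\Z$, so that $n_1 = n_2 = md$. Then $\varphi(n_1/n_2)=1$, and because $md \mid q-1$ forces $(q-1,n_1)/n_2 = 1$, the inner $\nu$-sum collapses to $\nu = 1$. The surviving dual terms of $T_{md,md}(q,1)$ and $-p^{k-1}T_{md,md}(q/p^2,p)$ combine, using the identity $\sigma(q) - p\,\sigma(q/p^2) = q+1$, to produce the main contribution $(q+1)\delta(k,2)\big/\big(q\,\psi((md)^2)\varphi(md)\big)$. Summing $\mu(d)$ against the multiplicative function $1/(\psi(n^2)\varphi(n))$ over $d \mid (q-1)/m$ factors as an Euler product: the local factor at a prime $\ell \mid (q-1)/m$ with $\ell \nmid m$ is $1 - 1/(\ell(\ell^2-1))$ (using $\psi(\ell^2)\varphi(\ell) = \ell(\ell^2-1)$), while at $\ell \mid m$ the ratio of consecutive $\ell$-power contributions is $1/\ell^3$, giving the factor $1 - 1/\ell^3$. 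This reproduces the stated main term exactly.

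The remaining pieces---the trace terms $\tr(T_q \mid S_k(\Gamma(md)))$, the identity and hyperbolic contributions inside each $T_{md,md}$, and the boundary term of Theorem \ref{MT}---all feed into the error, the dominant one being the trace term. Using that both $[\SL_2(\Z):\Gamma(md)]$ and $\psi((md)^2)\varphi(md)$ are $\asymp (md)^3$, so that $\dim S_k(\Gamma(md))/(\psi((md)^2)\varphi(md)) \ll k$, together with Deligne's bound $|\tr(T_q\mid S_k(\Gamma(md)))| \ll \dim S_k(\Gamma(md))\cdot(v+1)q^{(k-1)/2}$ for $q = p^v$, each term contributes $\ll k(v+1)q^{(k-3)/2}$; summing over the $\leq \tau(q-1)$ values of $d$ and using $v+1 \ll \log q$ yields the claimed $O(kq^{(k-3)/2}\tau(q-1)\log q)$. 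The identity, hyperbolic, and boundary terms are each $O(q^{(k-3)/2})$ per summand, since they either carry an extra factor $q^{-1/2}$ or are supported on $q$ a perfect square, so they are absorbed into the same bound. I expect the genuine obstacle to be bookkeeping rather than conceptual: verifying that every error contribution is truly uniform in $m$ and tracking constants closely enough to certify the admissible value $5$. Finally, the equidistribution assertion follows from Lemma \ref{equilem}: for $j \geq 1$ (hence $k \geq 3$) the main term vanishes, so the ratio $\E_q(U_j(t_E,q)\Phi(n_2=m))/(q^{j/2}\E_q(\Phi(n_2=m)))$ is $\ll k\,m^3\,\tau(q-1)\log q\cdot q^{-1/2}$, which tends to $0$ and is uniformly small once $q \gg m^{6+\delta}$.
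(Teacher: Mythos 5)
Your proposal is correct and follows essentially the same route as the paper's own proof: the Möbius inversion $\Phi(n_2=m)=\sum_d \mu(d)\Phi_{\Z/md\Z\times\Z/md\Z}$, termwise application of Theorem \ref{MT} with $n_1=n_2=md$ (so the $\nu$-sum collapses), the main term extracted from the dual terms via $\sigma(q)-p\,\sigma(q/p^2)=q+1$ and the Euler-product evaluation of $\sum_d \mu(d)/(\psi((md)^2)\varphi(md))$, and the error controlled by Deligne's bound together with $\dim S_k(\Gamma(N))\leq kN^3/12$, with the identity, hyperbolic, and supersingular boundary terms absorbed as you indicate. The only difference is that the paper carries out the explicit constant bookkeeping (e.g.\ $|T|\leq \tfrac{\zeta(2)}{6}kq^{(k-3)/2}\tau(q-1)\log q$, $|H|\leq 2q^{(k-3)/2}\tau(q-1)\log q$) that you flag as remaining, which is exactly what certifies the admissible constant.
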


We prove Theorems \ref{MTn1} and \ref{MTn2} in 
Section \ref{MTn1MTn2proofs} and \ref{SigMRQ} in 
Section \ref{SigMRQproof}.
Theorem \ref{SigMRQ} with $k=2$ and $m=1$ gives the number of $\F_q$-isomorphism classes of elliptic curves over $\F_q$ with cyclic group structure, recovering a result of Vl\v adu\c t \cite{Vladut}.
Cojocaru has shared with us a preprint \cite{Cojocaru1} in which an asymptotic formula for $\E_q\left(\Phi(n_2 = m)\right)$ is derived from the results of Howe \cite{Howe}. 
 
The method of proof of Theorem \ref{MTn2} also allows one to compute averages of several types of arithmetic functions of $n_2(E)$ over $\cC$, such as the divisor function, $\E_q\left(\tau(n_2(E))\right)$, or a Dirichlet character, $\E_q\left(\chi(n_2(E))\right)$.  Such applications were suggested to us by Cojocaru at the Arizona Winter School 2016, see the related work of her project group \cite{AWSCojocaru2016}. On the other hand, computing averages of arithmetic functions of $n_1(E)$ over $\cC$ seems more challenging.

\subsection{Other applications}\label{apps3}
We briefly sketch some applications of our work that we do not pursue further in this paper.

Gekeler \cite{Gekeler} studies elliptic curves not over one prime field $\F_p$, but rather takes averages over all fields $\F_p$ with $p \le X$, where $X>1$. In this situation he applies 
results of Howe for individual primes \cite{Howe}, to prove that
\es{\label{gek3}
\frac{\left|\{E/\F_p \mid p \leq X,\ E(\F_p)[\ell^\infty] \cong \Z/\ell^{\alpha}\Z \times \Z/\ell^\beta\Z\}\right|}{\left|\{E/\F_p \mid p \leq X\}\right|} 
= g^{(\ell)}(\alpha,\beta) +O_{\alpha,\beta,\ell}(X^{-1/2}),
} 
for an explicit constant $g^{(\ell)}(\alpha,\beta)$ given in \cite[Equation (2.3)]{Gekeler}.  Applying our Theorem \ref{MT} in place of the estimate for $w(m,n)$ found on page 245 of \cite{Howe} gives an explicit expression for the error in term in \eqref{gek3} in terms of eigenvalues of Hecke operators.  

Following the same steps as Section 3 of \cite{Gekeler} we estimate \eqref{gek3} but now in Gekeler's step (3.11) we may exploit cancellation among the eigenvalues of $T_p$ as $p$ varies over $p \leq X$ to give a better error term (using e.g.~Theorem 5.40 (or assuming GRH, Theorem 5.15) of \cite{IK}).  We may thus improve the error in Gekeler's result to $O_{\ell,\alpha,\beta}(X^{-1/2}\exp(-C\sqrt{\log X})),$ or under GRH to $O_{\ell, \alpha,\beta}(X^{-1}(\log X)^2).$  Interestingly, a similar calculation shows that 
\es{\label{gek4}
\frac{\left|\{E/\F_{p^2} \mid p^2 \leq X, E(\F_{p^2})[\ell^\infty] \cong \Z/\ell^{\alpha}\Z \times \Z/\ell^\beta\Z\}\right|}{|\{E/\F_{p^2} \mid p^2 \leq X\}|}} 
has a lower-order main term of size asymptotic to $c X^{-1/2}$ for some $c=c_{\ell,\alpha,\beta}$ depending on $\ell, \alpha, \beta$ which comes from the hyperbolic and supersingular terms of Theorem \ref{MT}.  

Next we mention briefly two standard applications of the Eichler-Selberg trace formula for $S_k(\Gamma_0(N),\chi)$ that should generalize to $S_k(\Gamma(N,M))$ using Theorem \ref{ESTF}.  

One can give simple formulas for the dimension of the space of cusp forms $\dim S_k( \Gamma_0(N),\chi)$ by studying the trace formula when $q=1$: the Hecke operator $T_1$ is just the identity on a space of cusp forms, and so   
$\tr(T_1|S_k( \Gamma_0(N),\chi)) =\dim S_k( \Gamma_0(N),\chi).$ 
See for example \cite[Cor 8]{Ross}, where Ross carefully derives this formula.  Theorem \ref{ESTF} can be used to give a similar formula for $\dim  S_k(\Gamma(N,M))$.  The dimension of the space of cusp forms for $\Gamma(N,M)$ may also be computed via the Riemann-Roch theorem, an approach worked out in detail by Quer \cite{Quer}.

Another interesting application of the Eichler-Selberg trace formula is the ``vertical'' equidistribution of eigenvalues of Hecke operators acting on $S_k(\Gamma_0(N),\chi)$.  It was proved independently around the same time by Conrey, Duke and Farmer \cite{ConreyDukeFarmer} and Serre \cite{SerreVertical} that for $p$ a fixed prime, as $k,N\rightarrow \infty$ through even weights $k$ and levels $N$ such that $p\nmid N$ that the eigenvalues of $T_p$ become equidistributed with respect to the measure 
\est{ d\mu_p  = \frac{p+1}{\pi} \frac{(1-x^2/4)^{1/2}}{(p^{1/2}+p^{-1/2})^2-x^2} \,dx} 
on $[-2,2]$ (which is not the Sato-Tate measure).  
Our Theorem \ref{ESTF} should yield a similar equidistribution result for Hecke eigenvalues of $T_p$ acting on the spaces $S_k(\Gamma(N,M))$.

We mention one more amusing application of the explicit formulas we prove, in particular an application of the simple case of Example \ref{eg1}.  Let $\ell,p$ be two primes with $\ell >(\sqrt{p}+1)^2$, which implies that $\ell \neq p$ and $p \not\equiv \pm1 \pmod{\ell}$.  For $\ell$ and $p$ in this range, the Hasse bound implies that an elliptic curve over $\F_p$ cannot have a point of order $\ell$ and so
$\cC(\Z/\ell\Z)$ is empty.
Then for any two primes $\ell,p$ satisfying $\ell>(\sqrt{p}+1)^2$, the result of Example \ref{eg1} implies that
\est{
\tr(T_p|S_2( \Gamma_1(\ell)))  = p+1-\frac{\ell-1}{2}.
} 
Another interesting choice is to take $\ell$ and $p$ to be primes such that $(\sqrt{p}-1)^2 < \ell < (\sqrt{p}+1)^2$ and $p-\ell \neq -1,0,1.$  In this case we get a formula for a Hurwitz-Kronecker class number (for the definition of $H(\Delta)$ see \eqref{HKcn}): 
\est{
H\left((p+1-\ell)^2-4p\right) = 
-\frac{\tr(T_p|S_2( \Gamma_1(\ell)))}{\ell-1} + \frac{p+1}{\ell-1} -\frac{1}{2}.
}

These types of examples show that one should only expect equidistribution with respect to the Sato-Tate measure when $p \to \infty$ much faster than the conductor.  It would be interesting to study with what uniformity one can expect Sato-Tate equidistribution when the conductor and $p$ tend to infinity simultaneously.

\subsection{Proofs of Theorems \ref{MTn1} and \ref{MTn2}}\label{MTn1MTn2proofs}
 
 We begin by using Theorem \ref{SigMRQ} to prove Theorems \ref{MTn1} and \ref{MTn2}. Throughout this section we suppose that $\F_q$ is a finite field of characteristic $p$, that $q = p^u$ for some integer $u \ge 1$.
 
 \begin{proof}[Proof of Theorem \ref{MTn1}]
We split up the sum defining $\E_q(n_1(E))$ based on the value of $n_2(E)$:
\es{\label{Tformula}
\E_q(n_1(E))
= 
\sum_{1 \leq m \leq \sqrt{q}+1}
\left(\frac{q+1}{m}  \E_q(\Phi(n_2 = m)) - \frac{1}{m} \E_q(\Phi(n_2 = m) t_E)\right).
}
If $m\mid q-1$ then applying Theorem \ref{SigMRQ} with $k=3$ we have $\E_q(\Phi(n_2 = m) t_E) \ll_\eps q^{\eps}$. Applying Theorem \ref{SigMRQ} with $k=2$  
gives 
\es{\label{antiexp1} 
\E_q(\Phi(n_2 = m))
=  \frac{1}{\psi(m^2)\varphi(m)} \prod_{\substack{v \text{ prime} \\ v \mid \frac{q-1}{m} \\ v \nmid m}} \left(1 - \frac{1}{v(v^2-1)}\right) \prod_{\substack{v \text{ prime} \\ v \mid \frac{q-1}{m} \\ v \mid m}} \left( 1- \frac{1}{v^3}\right) + O_\eps\left( q^{-\frac{1}{2}+\eps}\right).
}  
The error term here is uniform in $m$. If $m\nmid q-1$ then $\cC(n_2 = m)$ is empty and $\E_q(\Phi(n_2 = m)) = \E_q(\Phi(n_2 = m) t_E) = 0$.

We substitute \eqref{antiexp1} into \eqref{Tformula} and also see that 
\est{ 
\sum_{\substack{1 \leq m \leq \sqrt{q}+1\\ m \mid q-1 }}
\frac{1}{m} \E_q(\Phi(n_2 = m) t_E)
\ll_\eps q^{\eps},
}
so it suffices to estimate 
\es{\label{mt1}(q+1)
\sum_{\substack{1 \leq m \leq \sqrt{q}+1\\ m \mid q-1 }} 
\frac{\E_q(\Phi(n_2 = m))}{m}.
}  
The error term from \eqref{antiexp1} makes a contribution of size $O_\eps(q^{1/2+\eps})$ to \eqref{mt1}.  The main term from \eqref{antiexp1} gives a main term of $(q+1)$ times 
\est{
c(q) \eqdef \sum_{ m \mid q-1}\frac{1}{m\psi(m^2)\varphi(m)} \prod_{\substack{v \text{ prime} \\ v \mid \frac{q-1}{m} \\ v \nmid m}} \left(1 - \frac{1}{v(v^2-1)}\right) \prod_{\substack{v \text{ prime} \\ v \mid \frac{q-1}{m} \\ v \mid m}} \left( 1- \frac{1}{v^3}\right), } 
where we have extended the sum over $m$ to all $m\mid q-1$ at a cost of a small error term.   
Note that $c(q)$ is a multiplicative function of $q-1$, so let $c(q)=f(q-1)$ where $f$ is multiplicative.  For a prime power $\ell^\alpha$ we calculate 
\est{f(\ell^\alpha) 
= 1- \frac{1}{\ell^2(\ell+1) } \left( 1+ \frac{1}{\ell^4-1} - \frac{1}{\ell^{4\alpha-4} (\ell^4-1)}\right). }
\end{proof}

\begin{proof}[Proof of Theorem \ref{MTn2}]  
We have  
\begin{equation}\label{MTn2line1}
\E_q(n_2(E)) = 
\sum_{\substack{1\leq m \leq \sqrt{q}+1 \\ m \mid q-1}} m \E_q(\Phi(n_2 = m)).
\end{equation}
The calculation of $\E_{q}(\Phi(n_2 = m))$ in \eqref{antiexp1} gives a useful bound when $m \ll q^{1/6 + \eps}$.  When $m$ is larger, we need a trivial estimate.  This estimate will involve bounds for Hurwitz-Kronecker class numbers $H(\Delta)$, which will be defined in Section \ref{curves2classnumbers}.

For $m\geq 3$, we have by Lemma \ref{S46withweights} in Section \ref{curves2classnumbers} below and Lemma 4.8 of \cite{Schoof} that 
\est{
\left| 
q \E_q(\Phi(n_2 = m)) -\frac{p-1}{24} \delta(m, \sqrt{q}-1)
 - \frac{p-1}{24} \delta(m, \sqrt{q}+1) \right| & 
\leq \sum_{\substack{t^2<4q \\ 
t \equiv q+1 \pmod*{m^2}}} 
H\left(\frac{t^2-4q}{m^2}\right),
}
where the terms $\delta(m,\sqrt{q} \pm 1)$ are $0$ if $q$ is not a square, and the Hurwitz-Kronecker class number $H(\Delta)$ is defined in \eqref{HKcn}.  To bound this expression we note that 
\est{H(\Delta) \ll |\Delta|^{1/2+\eps}} 
by the class number formula and the upper bound $L(1,\chi) \ll \log |\Delta|$ for the associated $L$-functions. Then 
\es{\label{trivest} \sum_{\substack{t^2< 4q \\ t \equiv q+1 \pmod*{m^2}}} H\left(\frac{t^2-4q}{m^2}\right)  \ll_\eps \frac{q^{1/2+\eps}}{m}\left( 1+ \frac{q^{1/2}}{m^2}\right).}  
We split \eqref{MTn2line1} at $m=q^{1/6+\eps}$ and use \eqref{antiexp1} when $m \leq q^{1/6+\eps}$ and \eqref{trivest} when $m > q^{1/6+\eps}$:
\begin{multline*} 
\E_q(n_2(E))
= \sum_{m\mid q-1}   \frac{m}{\psi(m^2)\varphi(m)} \prod_{\substack{v \text{ prime} \\ v \mid \frac{q-1}{m} \\ v \nmid m}} \left(1 - \frac{1}{v(v^2-1)}\right) \prod_{\substack{v \text{ prime} \\ v \mid \frac{q-1}{m} \\  v \mid m}} \left( 1- \frac{1}{v^3}\right)  +\frac{p }{12\sqrt{q}}
\mathds{1}_{q=\square}\\ + O_\eps \left( q^{-1/3+\eps}\right),
\end{multline*}
where $\mathds{1}_{q = \square} = 1$ if $q$ is a square and is $0$ otherwise.
Setting 
\est{g(q-1) = \sum_{m\mid q-1}   \frac{m}{\psi(m^2)\varphi(m)} \prod_{\substack{v \text{ prime} \\ v \mid \frac{q-1}{m} 
\\ v \nmid m}} \left(1 - \frac{1}{v(v^2-1)}\right) \prod_{\substack{v \text{ prime} \\ v \mid \frac{q-1}{m}  \\ v \mid m}} \left( 1- \frac{1}{v^3}\right),} 
we see that $g(q-1) = b(q)$.  It suffices to compute $g$ on prime powers.  We calculate
\est{g(\ell^\alpha) =1+ \frac{1}{\ell(\ell+1)} \frac{1-\ell^{-2\alpha}}{1-\ell^{-2}} .}
\end{proof}

\subsection{Proof of Theorem \ref{SigMRQ}}\label{SigMRQproof}
Let $A(md,md)$ denote the group $\Z/md\Z \times\Z/md\Z$. Note that $\cC(A(md,md))$ is empty if $(md,q) >1$ since the set of isomorphism classes over $\overline{\F}_q,\ \{E/\overline{\F}_q:  A(md,md) \hookrightarrow E(\overline{\F}_q)\}$ is empty.  Also, $\cC(A(md,md))$ is empty if $md \nmid q-1$ by the first paragraph of the proof of Proposition \ref{alternateapproachprop}, or by a Weil pairing argument.  

In this proof we write $U(t,q)=U_{k-2}(t,q)$.  We have that 
\es{\label{thm6pf1}
\E_q(U(t_E,q)\Phi(n_2=m)) =  \sum_{d\geq 1} \mu(d) \E_q\left(U(t_E,q)\Phi_{A(md,md)}\right).
} 
We assume that $m \mid q-1$ for the remainder of this argument, since otherwise $\E_q(U(t_E,q)\Phi(n_2=m)) =0$. When $m \mid q-1$, the sum on the right hand side of \eqref{thm6pf1} is finite.

Applying Theorem \ref{MT} to the right hand side of \eqref{thm6pf1} shows that 
\begin{multline}\label{thm6pf2}
\E_q(U(t_E,q)\Phi(n_2=m)) = \frac{1}{q} \sum_{d \mid \frac{q-1}{m}}\mu(d) \left( T_{md,md}(q,1) - p^{k-1} T_{md,md}(q/p^2,p)\right) \\ + q^{k/2-1} \frac{(p-1)(k-1)}{24q} \sum_{d \mid \frac{q-1}{m}} \mu(d) \left( \delta_{md}(q^{1/2},1) + (-1)^k  \delta_{md}(q^{1/2},-1) \right).
\end{multline}
By M\"obius inversion, the second line of \eqref{thm6pf2} is equal to 
\es{\label{thm6ss}  q^{k/2-1} \frac{(p-1)(k-1)}{24q} \left( \delta( q,(m+1)^2) + (-1)^k \delta(q,(m-1)^2)\right)  \leq   \frac{(k-1)}{12}q^{\frac{k-3}{2}}.}

From the first line of \eqref{thm6pf2} and the definitions in the introduction we have that \est{ \frac{1}{q} \sum_{d \mid \frac{q-1}{m}}\mu(d) T_{md,md}(q,1) = D-T+I - H,} where \est{D= \frac{1}{q} \sum_{d \mid \frac{q-1}{m}}\mu(d)  \frac{\sigma(q) \delta(k,2)}{\psi((md)^2) \varphi(md)},}
\est{ T =  \frac{1}{q} \sum_{d \mid \frac{q-1}{m}}\mu(d)  \frac{\tr(T_q | S_k(\Gamma(md))) }{\psi((md)^2)\varphi(md)},}
\est{ I = \frac{1}{q} \sum_{d \mid \frac{q-1}{m}}\mu(d) \frac{(k-1)}{24} q^{k/2-1} \left( \delta_{md}(q^{1/2},1) + (-1)^k  \delta_{md}(q^{1/2},-1) \right) ,}
and \begin{multline*}H = \frac{1}{4q} \sum_{d \mid \frac{q-1}{m}}  \frac{\mu(d)}{\psi((md)^2) }\sum_{i=0}^v \min(p^i,p^{v-i})^{k-1}   \primesum_{\substack{\tau \mid (md)^2 \\ 
 g \mid p^i - p^{u-i}}} \varphi(g)  \left( 
 \delta_{md}(y_i,1)
 + (-1)^k 
 \delta_{md}(y_i,-1)
 \right),
 \end{multline*}
where the $'$ on the sum means that $g = (\tau,(md)^2/\tau)$, and $y_i$ is the unique element of $(\Z/(m^2d^2/g)\Z)^\times$ such that $y_i \equiv p^i \pmod{\tau}$ and $y_i \equiv p^{u-i} \pmod{m^2d^2/(g \tau)}$. 

We estimate each of the terms $D,T,I$, and $H$.  A short calculation shows that 
\es{\label{thm6d}D = \frac{\sigma(q)}{q}  \frac{1}{\psi(m^2)\varphi(m)} \prod_{\substack{ \ell \mid \frac{q-1}{m} \\ \ell \nmid m}} \left(1 - \frac{1}{\ell(\ell^2-1)}\right) 
\prod_{\substack{ \ell \mid \frac{q-1}{m}  \\ 
\ell \mid m}} \left( 1- \frac{1}{\ell^3}\right)  \delta(k,2).
} 
We apply Deligne's bound on Hecke eigenvalues to $T$ to get
\[
|T| \leq 2 q^{\frac{k-3}{2}} \log q \sum_{d \mid \frac{q-1}{m}} \frac{\dim S_k(\Gamma(md))}{\psi((md)^2) \varphi(md)}.
\] 
We have (e.g.~\cite[\S3.9]{DiamondShurman}) that 
\es{ \dim S_k(\Gamma(N)) \leq \frac{kN^3}{12}.} 
Applying this bound we find 
\es{\label{thm6t} |T| \leq \frac{\zeta(2)}{6}k q^{\frac{k-3}{2}}\tau(q-1) ( \log q ) .}
M\"obius inversion shows that \es{\label{thm6i} |I |= \frac{q^\frac{k-4}{2}(k-1)}{24} \left| \delta( q,(m+1)^2) + (-1)^k \delta(q,(m-1)^2)\right|  \leq \frac{(k-1)}{12}q^{\frac{k-4}{2}}.}
Lastly, we have \es{\label{thm6h} |H| \leq \frac{1}{q} \sum_{i=0}^v \min(p^i,p^{v-i})^{k-1} \sum_{d \mid \frac{q-1}{m}}  \frac{1}{\psi((md)^2)} \sum_{\tau \mid md} \varphi(\tau) \leq 2 q^{\frac{k-3}{2}} \tau(q-1)\log q.}

Very similar estimates hold for the term $$ \frac{p^{k-1}}{q} \sum_{d \mid \frac{q-1}{m}}\mu(d)  T_{md,md}(q/p^2,p)$$ of \eqref{thm6pf2} so we omit the calculation.  Drawing together \eqref{thm6pf2}, \eqref{thm6ss}, \eqref{thm6d}, \eqref{thm6t}, \eqref{thm6i}, \eqref{thm6h} we conclude the expression in the statement of Theorem \ref{SigMRQ}.

\section{Counting Curves Containing a Prescribed Subgroup}\label{curves2classnumbers}
Recall the definitions of the probability measure $\P_q$ and of the sets $\cC,\ \cC(A)$, and $\cC(A,t)$, from the introduction.
In Section \ref{sketchproof} we explained that our main goal is to give a formula for 
\es{\label{firststep}
\E_q\left( U_{k-2}(t_E,q) \Phi_A\right) = \frac{1}{q} \sum_{\substack{E/\F_q \\ A \hookrightarrow E(\F_q)}} \frac{U_{k-2}(t_E,q)}{\# \Aut_{\F_q}(E)} = \sum_{t^2\leq 4q} U_{k-2}(t,q) \P_q(\cC(A,t)),} 
where $U_{k-2}(t,q)$ are the normalized Chebyshev polynomials defined in \eqref{chebdef}.

In this section we give formulas for $\P_q(\cC(A,t))$ in terms of class numbers of orders in imaginary quadratic fields.  In the special case that $A \cong \Z/n\Z \times \Z/n\Z$ such results are due to Deuring \cite{Deu}, Lenstra \cite{Lenstra}, Schoof \cite{Schoof}, and Waterhouse \cite{Waterhouse}.  The following is a weighted version of Theorem 4.6 of \cite{Schoof}.  We begin with some definitions.

For $d<0$ with $d \equiv 0,1\pmod{4}$, let $h(d)$ denote the class number of the unique quadratic order of discriminant $d$.  Let 
\es{h_w(d)\eqdef 
\begin{cases} h(d)/3, &\text{ if } d = -3, \\ 
h(d)/2, & \text{ if } d=-4, \\ 
h(d) & \text{ if } d < 0,\ d \equiv 0,1 \pmod*{4}, \text{ and } d\neq -3,-4 \\
0 & \text{otherwise} \end{cases}
} 
and for $\Delta \equiv 0,1\pmod 4$ let 
\es{
\label{HKcn}H(\Delta) \eqdef \sum_{d^2 \mid \Delta} h_w\left(\frac{\Delta}{d^2}\right)
} 
be the Hurwitz-Kronecker class number. For $a\in \Z$ and $n$ a positive integer, the Kronecker symbol $\Big(\frac{a}{n}\Big)$ is defined to be the completely multiplicative function in $n$ such that if $p$ is an odd prime $\legen{a}{p}$ is the quadratic residue symbol and if $p=2$ 
\es{\left(\frac{a}{2}\right) \eqdef \begin{cases} 0 & \text{ if } 2 \mid a, \\ 1 & \text{ if } a \equiv \pm 1 \pmod 8, \\ -1 & \text{ if } a \equiv \pm5 \pmod 8 .\end{cases} }

\begin{lemma}\label{S46withweights}
Let $t\in \Z$.  Suppose $q = p^v$ where $p$ is prime and $v\geq 1$.  Then if $q$ is not a square
\begin{alignat*}{3}
\P_q(\cC(1,t))  = &  \frac{1}{2q}H(t^2-4q) \quad && \text{ if } t^2 < 4q \text{ and } p\nmid t,\\
 = & \frac{1}{2q}H(-4p)\quad  && \text{ if } t=0, \\
  = & \frac{1}{4q} \quad  && \text{ if } t^2=2q \text{ and } p =2, \\
 = & \frac{1}{6q} && \text{ if } t^2=3q \text{ and } p =3, 
\end{alignat*}
 and if $q$ is a square
\begin{alignat*}{3}
\P_q(\cC(1,t))  = &  \frac{1}{2q}H(t^2-4q)\quad  && \text{ if } t^2 < 4q \text{ and } p\nmid t,\\
 = & \frac{1}{4q}\left(1 - \legen{-4}{p}\right) \quad   && \text{ if } t=0, \\
 = & \frac{1}{6q}\left(1 - \legen{-3}{p}\right) \quad  && \text{ if } t^2 = q, \\
  = & \frac{p-1}{24q}\quad   && \text{ if } t^2 = 4q,
\end{alignat*}
 and $\P_q(\cC(1,t)) = 0$ in all other cases.
\end{lemma}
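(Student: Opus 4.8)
Since $A=1$ imposes no condition on $E(\F_q)$, we have $\cC(1,t) = \{E/\F_q : t_E = t\}$, so by definition $\P_q(\cC(1,t)) = \frac{1}{q}\sum_{t_E=t} \frac{1}{\#\Aut_{\F_q}(E)}$. The plan is to evaluate this $\Aut$-weighted count by fibering the isogeny class of trace $t$ over the possible endomorphism rings $\operatorname{End}_{\F_q}(E)$ and turning each fiber into a class number; this is exactly the reweighting that converts the naive class-number count of Theorem 4.6 of \cite{Schoof} into the Hurwitz--Kronecker normalization, so the statement is a reweighted form of that result. The Frobenius $\pi$ satisfies $\pi^2 - t\pi + q = 0$: when $t^2 < 4q$ and $p \nmid t$ it generates an order in the imaginary quadratic field $K = \Q(\sqrt{t^2-4q})$ (the ordinary case), when $t^2 = 4q$ it is the rational integer $\pm\sqrt q$ (so $\operatorname{End}_{\F_q}(E)$ is an order in the quaternion algebra $B_{p,\infty}$), and the remaining admissible $t$ are supersingular with $\pi \notin \Z$ but $p \mid t$. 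By Waterhouse's theorem \cite{Waterhouse} these are exactly the $t$ that occur, and it also pins down which orders $\cO \supseteq \Z[\pi]$ arise as $\operatorname{End}_{\F_q}(E)$; every $t$ off the list gives the empty set, hence $\P_q = 0$, disposing of the ``all other cases'' clause.

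The engine is Deuring's lifting theorem \cite{Deu} (see also \cite{Lenstra,Schoof}): for a fixed admissible quadratic order $\cO$, the $\F_q$-isomorphism classes of $E$ with $\operatorname{End}_{\F_q}(E) \cong \cO$ form a torsor under $\operatorname{Pic}(\cO)$, so there are $h(\cO)$ of them, each with $\#\Aut_{\F_q}(E) = \#\cO^\times$. The key elementary identity is $\frac{h(\cO)}{\#\cO^\times} = \frac12\, h_w(\disc \cO)$, valid uniformly in $\cO$ precisely because $h_w$ was normalized to absorb the extra units at $\disc = -3,-4$; this is what motivates the definition of $h_w$. In the ordinary case the weighted count is therefore $\frac12 \sum_{\Z[\pi]\subseteq\cO\subseteq\cO_K} h_w(\disc\cO)$. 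Writing $t^2-4q = f^2 \Delta_K$ with $\Delta_K$ the fundamental discriminant of $K$, the orders between $\Z[\pi]$ and $\cO_K$ are indexed by the divisors of $f$, and the substitution $d = f/\mathrm{cond}(\cO)$ identifies this sum with $\frac12 \sum_{d^2 \mid t^2-4q} h_w\!\left(\tfrac{t^2-4q}{d^2}\right) = \frac12 H(t^2-4q)$. This gives the first line of both halves of the lemma, $\P_q(\cC(1,t)) = \frac{1}{2q}H(t^2-4q)$.

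For the supersingular $t$ the same fibering applies, but the admissible rings are severely constrained and I expect to follow the explicit classification of \cite{Waterhouse,Deu} directly. When $\pi\notin\Z$ the ring $\operatorname{End}_{\F_q}(E)$ is an order in $\Q(\pi)$ that is maximal at $p$; for $t=0$ with $q$ non-square this $p$-maximality restricts the weighted class-number sum to orders of conductor prime to $p$, which collapses to $H(-4p)$ and explains why the discriminant is $-4p$ rather than $-4q$. The values $\tfrac14\bigl(1-\legen{-4}{p}\bigr)$ and $\tfrac16\bigl(1-\legen{-3}{p}\bigr)$ (for $q$ square, $t=0$ and $t^2=q$) and $\tfrac14,\ \tfrac16$ (for $t^2=2q$ with $p=2$ and $t^2=3q$ with $p=3$) count the supersingular curves with complex multiplication by $\Z[i]$ or $\Z[\zeta_3]$: the Kronecker symbols record whether these CM curves are supersingular at $p$, and the denominators $4,6$ are the orders of their automorphism groups. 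Finally, for $t=\pm 2\sqrt q$ the curve is supersingular with $\operatorname{End}_{\F_q}(E)$ a maximal order in $B_{p,\infty}$, and the Eichler--Deuring mass formula supplies $\sum 1/\#\Aut_{\F_q}(E) = \frac{p-1}{24}$, yielding $\frac{p-1}{24q}$.

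The main obstacle is precisely this supersingular bookkeeping. The ordinary case is a clean combination of Deuring's theorem with a combinatorial identity for $H$, but the supersingular terms require (i) the mass formula for $B_{p,\infty}$ together with a careful matching of the two traces $\pm2\sqrt q$ to curves whose full endomorphism ring is defined over $\F_q$, and (ii) the exact determination of which quadratic orders occur as $\operatorname{End}_{\F_q}(E)$ --- the $p$-maximality constraint and the small-discriminant cases $\disc = -3,-4$ --- where the extra automorphisms force the weighted normalizations and produce the Kronecker-symbol factors. Checking that these weighted counts match the stated right-hand sides rather than the uniform formula $\frac{1}{2q}H(t^2-4q)$, with correct treatment of the primes $2$ and $3$, is the delicate part and is where the cited work of Deuring, Lenstra, Schoof, and Waterhouse does the real work.
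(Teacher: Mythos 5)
Your proposal is correct and is essentially the argument the paper relies on: the paper gives no proof of this lemma at all, stating it as a weighted version of Theorem 4.6 of Schoof (which in turn rests on Deuring, Lenstra, and Waterhouse), and your sketch reconstructs exactly that standard derivation. In particular, your normalization identity $h(\cO)/\#\cO^\times = \tfrac{1}{2} h_w(\disc \cO)$ is the precise sense in which the lemma is the ``weighted version'' of Schoof's unweighted count, and your supersingular bookkeeping --- Waterhouse's classification of admissible traces, the $p$-maximality constraint on the orders, the identification of the $t^2=2q$ and $t^2=3q$ curves as having CM by $\Z[i]$ and $\Z[\zeta_3]$ (note $t^2-4q=-2q$ and $-q$ are $-4$ and $-3$ times even powers of $p$ there), and the Eichler--Deuring mass $\frac{p-1}{24}$ for $t^2=4q$ --- matches what the cited sources supply.
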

Next we state separately the case $n=2$.  Lemma \ref{Schoof1} is essentially Lemma 4.8 of \cite{Schoof}.  We write $A_{2,2} = \Z/2\Z \times \Z/2\Z$.
\begin{lemma}\label{Schoof1}
Let $q = p^v$ where $p \neq 2$ is prime. Suppose that $t\in \Z$ satisfies $t^2 \le 4q$.  If $q\equiv 3 \pmod*4$ then
\begin{alignat*}{3}
\P_q(\cC(A_{2,2},t)) = & \frac{1}{2q}H\left(\frac{t^2 - 4q}{4}\right) \quad && \text{ if } p\nmid t \text{ and } t\equiv q+1 \pmod*{4}, \\ 
= & \frac{h_w(-p)}{2q} \quad && \text{ if }t=0, \\ 
= & \P_q(\cC(1,t)) \quad && \text{ if } t^2=4q,
\end{alignat*}
and if $q \equiv 1\pmod*4$ 
\begin{alignat*}{3}
\P_q(\cC(A_{2,2},t)) = & \frac{1}{2q}H\left(\frac{t^2 - 4q}{4}\right) \quad && \text{ if } p\nmid t \text{ and } t\equiv q+1 \pmod*{4}, \\ 
= & \P_q(\cC(1,t)) \quad && \text{ if } t^2=4q,
\end{alignat*}
and $\P_q(\cC(1,t)) = 0$ in all other cases.
\end{lemma}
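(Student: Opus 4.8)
The plan is to reduce the count to rationality of the full $2$-torsion, and then transport Schoof's class-number count across the Deuring correspondence. First I would record the reduction. Since $(2,q)=1$ we have $E[2]\cong\Z/2\Z\times\Z/2\Z$ over $\overline{\F}_q$, so an injection $A_{2,2}\hookrightarrow E(\F_q)$ is exactly the statement $E[2]\subseteq E(\F_q)$, i.e.\ the Frobenius matrix $F\in\GL_2(\Z/2\Z)$ from the introduction is the identity. Writing $\pi$ for the Frobenius endomorphism, with $\pi^2-t\pi+q=0$, this is the condition $\pi\equiv 1\pmod 2$ in $\mathrm{End}_{\F_q}(E)$. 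It is the $m=2$ case of the eigenvalue criterion of the introduction, and it is precisely the hypothesis treated in Schoof's Lemma~4.8.

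For the generic ordinary classes, $t^2<4q$ with $p\nmid t$, I would invoke Schoof's Lemma~4.8 \cite{Schoof}. There the weighted sum $\sum 1/\#\Aut_{\F_q}(E)$ over curves in the isogeny class of trace $t$ with rational full $2$-torsion equals $H\!\left(\frac{t^2-4q}{4}\right)$ exactly when $2\mid q-1$ (automatic for odd $q$) and $t\equiv q+1\pmod 4$, and vanishes otherwise. A short check shows that the congruence $t\equiv q+1\pmod 4$ is exactly what makes $\frac{t^2-4q}{4}$ a discriminant (congruent to $0$ or $1$ modulo $4$), and locally at $2$ it is what allows $\pi\equiv 1\pmod 2$ in some order between $\Z[\pi]$ and the maximal order. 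Feeding this weighted count into the definition of $\P_q$ yields $\frac{1}{2q}H\!\left(\frac{t^2-4q}{4}\right)$, with the factor $\tfrac{1}{2q}$ exactly as in Lemma~\ref{S46withweights}.

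It remains to handle the two boundary classes. When $t^2=4q$, so $q$ is a square, the Frobenius is the rational integer $\pi=\pm\sqrt q$; as $p$ is odd, $\sqrt q$ is odd, so $\pi\equiv 1\pmod 2$ automatically and every curve in the class already has rational $2$-torsion. Thus $\cC(A_{2,2},t)=\cC(1,t)$, giving $\P_q(\cC(A_{2,2},t))=\P_q(\cC(1,t))$. When $t=0$ the curves are supersingular ($p\mid t$), so the ordinary correspondence does not apply; the congruence $t\equiv q+1\pmod 4$ holds only for $q\equiv 3\pmod 4$, which explains the absence of a $t=0$ line when $q\equiv 1\pmod 4$. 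For $q\equiv 3\pmod 4$ I would read the $t=0$ contribution off Schoof's supersingular analysis, which produces $\frac{h_w(-p)}{2q}$ rather than the naive $\frac{1}{2q}H(-q)$. Every remaining $t$ fails $t\equiv q+1\pmod 4$ or supports no curves, so $\P_q(\cC(A_{2,2},t))=0$.

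I expect the main obstacle to be the local analysis at $2$ in the ordinary case: matching $\pi\equiv 1\pmod 2$ with the replacement of $H(t^2-4q)$ by $H\!\left(\frac{t^2-4q}{4}\right)$, and verifying that precisely the orders $\cO$ with the correct $2$-adic index are selected. This is the content of Schoof's Lemma~4.8, so in practice the work is to confirm that our normalization and congruence conditions line up with his. The supersingular $t=0$ value $h_w(-p)$ is the other delicate point, since it reflects the quaternionic endomorphism structure rather than the imaginary quadratic order governing the ordinary classes.
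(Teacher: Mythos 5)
Your proposal is correct and follows the paper's own route: the paper gives no independent argument for this lemma, stating it as a weighted version of Lemma 4.8 of Schoof, which is exactly the reduction you carry out (rationality of $E[2]$, i.e.\ $\pi \equiv 1 \pmod*2$ in $\mathrm{End}_{\F_q}(E)$, Schoof's class-number count in the ordinary range, and the boundary cases $t=0$ and $t^2=4q$ handled by the quaternionic/maximal-order analysis and by $E(\F_q)\cong \bigl(\Z/(\sqrt{q}\mp 1)\Z\bigr)^2$ respectively). One bookkeeping remark: since $\P_q(\cC(A_{2,2},t))=\frac{1}{q}\sum_E 1/\#\Aut_{\F_q}(E)$, the weighted count over the isogeny class must equal $\frac{1}{2}H\bigl(\frac{t^2-4q}{4}\bigr)$ rather than $H\bigl(\frac{t^2-4q}{4}\bigr)$ as written mid-proposal, though the final constants $\frac{1}{2q}H\bigl(\frac{t^2-4q}{4}\bigr)$ and $\frac{h_w(-p)}{2q}$ you report are the correct ones.
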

We also highlight the case where $A= A_{n,n} = \Z/n\Z \times \Z/n\Z$ with $n>2$. This is a weighted version of Theorem 4.9 of \cite{Schoof}. 
\begin{lemma}\label{Schoof2}
Let $q = p^v$ where $p$ is prime and $n>2$ be a positive integer with $p\nmid n$. Suppose that $t\in \Z$ satisfies $t^2 \le 4q$. If $q$ is not a square then 
\begin{alignat*}{3}
\P_q(\cC(A_{n,n},t)) = &\frac{1}{2q}H\left(\frac{t^2 - 4q}{n^2}\right) \delta_{n}(q,1) \quad && \text{ if } p\nmid t, \text{ and }t\equiv q+1 \pmod{n^2},\\
=& 0 \quad && \text{ otherwise,}\end{alignat*} 
and if $q$ is a square then 
\begin{alignat*}{3}
\P_q(\cC(A_{n,n},t)) & = &\frac{1}{2q}H\left(\frac{t^2 - 4q}{n^2}\right) \delta_{n}(q,1) \quad && \text{ if } p\nmid t, \text{ and }t\equiv q+1 \pmod{n^2},\\
& = & \P_q(\cC(1,2\sqrt{q}))\delta_{n}(\sqrt{q},1) \quad &&\text{ if } t=2\sqrt{q}, \\
& = & \P_q(\cC(1,2\sqrt{q}))\delta_{n}(\sqrt{q},-1) \quad &&\text{ if } t=-2\sqrt{q}, \\
 & = & 0 \quad && \text{ otherwise,}
\end{alignat*} 
and $\P_q(\cC(1,t)) = 0$ in all other cases.
\end{lemma}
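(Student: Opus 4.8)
The plan is to translate the condition $A_{n,n} \hookrightarrow E(\F_q)$ into a statement about the Frobenius endomorphism $\pi$ acting on $\mathrm{End}_{\F_q}(E)$, and then to feed the result into the same weighted mass formula that underlies Lemma \ref{S46withweights}. Write $\Delta = t^2 - 4q$ and let $\pi$ be a root of $X^2 - tX + q$. In the ordinary case ($t^2 < 4q$ and $p \nmid t$) the ring $\cO = \mathrm{End}_{\F_q}(E)$ is an order in the imaginary quadratic field $K = \Q(\pi)$ containing $\Z[\pi]$, and by Lenstra's description of the group structure \cite{Lenstra} one has $E(\F_q) \cong \cO/(\pi - 1)\cO$. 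The first step is therefore to record that $A_{n,n} = \Z/n\Z \times \Z/n\Z \hookrightarrow E(\F_q)$ if and only if $(\pi - 1) \in n\cO$, i.e. $\tfrac{\pi - 1}{n} \in \cO$.

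Second, I would analyze the arithmetic of this divisibility. For $\tfrac{\pi-1}{n}$ to lie in $\cO_K$ its trace $\tfrac{t-2}{n}$ and norm $\tfrac{q+1-t}{n^2}$ must be integers; these two conditions are together equivalent to $q \equiv 1 \pmod{n}$ and $t \equiv q + 1 \pmod{n^2}$, which in turn force $n^2 \mid \Delta$ (indeed $\Delta \equiv (q-1)^2 \equiv 0 \pmod{n^2}$). A direct discriminant computation gives $(\pi - \bar\pi)^2/n^2 = \Delta/n^2$, so $\Z[\tfrac{\pi-1}{n}]$ is exactly the quadratic order of discriminant $\Delta/n^2$, and since $\pi \in \Z[\tfrac{\pi-1}{n}]$ we have $\Z[\pi] \subseteq \Z[\tfrac{\pi-1}{n}]$. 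Thus, among ordinary curves of trace $t$, those with $A_{n,n} \hookrightarrow E(\F_q)$ are precisely those whose endomorphism ring satisfies $\Z[\tfrac{\pi-1}{n}] \subseteq \cO \subseteq \cO_K$.

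Third, I would return to the weighted mass formula. Exactly as in the proof of Lemma \ref{S46withweights}, the weighted number of curves of trace $t$ with $\mathrm{End}_{\F_q}(E) = \cO$ is $\tfrac{1}{2q} h_w(\cO)$; summing over all orders sandwiched between $\Z[\tfrac{\pi-1}{n}]$ and $\cO_K$ collapses, by the definition \eqref{HKcn} of the Hurwitz-Kronecker class number, to $\tfrac{1}{2q} H(\Delta/n^2)$. This yields the ordinary-case formula, with the indicator $\delta_n(q,1)$ and the congruence $t \equiv q+1 \pmod{n^2}$ recording exactly when the sum is nonempty. The remaining cases are handled by hand. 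When $q$ is a square and $t = \pm 2\sqrt q$ the Frobenius $\pi = \pm\sqrt q$ is a rational integer acting as a scalar on $E[n]$, so $E[n] \subseteq E(\F_q)$ holds for every such curve precisely when $\sqrt q \equiv \pm 1 \pmod{n}$, giving the factor $\delta_n(\sqrt q, \pm 1)$ times $\P_q(\cC(1, \pm 2\sqrt q))$; and for the supersingular traces $t = 0$, $t^2 = q$, $t^2 = 2q$, $t^2 = 3q$ one has $\Delta \in \{-4q, -3q, -2q, -q\}$, so $n^2 \mid \Delta$ together with $(n,q)=1$ would force $n \le 2$, contradicting $n > 2$, whence the count vanishes.

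The step I expect to be the main obstacle is the third one: justifying that the weighted count of ordinary curves with a prescribed endomorphism ring is $\tfrac{1}{2q} h_w(\cO)$ uniformly, including the delicate cases $\Delta/n^2 \in \{-3, -4\}$ where extra automorphisms appear and where the passage from the class number $h$ to the weighted class number $h_w$ is essential. This is precisely the point where the Deuring correspondence and the automorphism weighting must be reconciled, and where citing the unweighted Theorem~4.9 of \cite{Schoof} does not suffice on its own.
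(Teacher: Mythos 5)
Your proposal is correct and takes essentially the route the paper intends: the paper states this lemma without proof, as a weighted version of Theorem 4.9 of \cite{Schoof}, and the argument encapsulated by that citation is exactly yours --- translate $A_{n,n}\hookrightarrow E(\F_q)$ into $(\pi-1)/n \in \mathrm{End}_{\F_q}(E)$, identify $\Z[(\pi-1)/n]$ as the quadratic order of discriminant $(t^2-4q)/n^2$, and sum the Deuring--Waterhouse mass $\frac{1}{2q}h_w(\mathrm{disc}\,\cO)$ over the orders $\cO$ between $\Z[(\pi-1)/n]$ and $\cO_K$, the weighted class numbers $h_w$ absorbing the automorphism factors precisely as in Lemma \ref{S46withweights}. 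Your treatment of the boundary cases also matches the statement: for $t=\pm 2\sqrt{q}$ the Frobenius is the scalar $\pm\sqrt{q}$, so $E(\F_q)\cong (\Z/(\sqrt{q}\mp 1)\Z)^2$ gives the factors $\delta_n(\sqrt{q},\pm 1)$, and for the remaining supersingular traces the necessary conditions $n^2 \mid q+1-t$ and $n \mid q-1$ force $n^2 \mid t^2-4q \in \{-q,-2q,-3q,-4q\}$, hence $n\le 2$, a contradiction with $n>2$.
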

We use an inclusion-exclusion argument to express $\cC(A,t)$ in terms of the sets $\cC(A_{n,n},t)$ and congruence conditions on $t$ and $q$.  This extends Lemmas \ref{S46withweights}, \ref{Schoof1}, and \ref{Schoof2} to a general finite abelian group $A$ of rank at most $2$.  See Theorems \ref{CurvesToCn} and \ref{SupersingularCount}.

We introduce a function $D(t;n)$ that plays a large role throughout the rest of the paper. Fix $n_1$ and $n_2$ with $n_2 \mid n_1$ and $d \in \left( \Z/n_1 \Z\right)^\times$.  If $n' \mid n_1$ and $d^2q \equiv 1 \pmod {n'}$ then by Lemma \ref{DtnWellDef} below $dq+d^{-1}$ is a well-defined residue class modulo $n_1n'$.  
For any $n \mid n_1n'$ we let 
\es{\label{Dtn}D(t;n) \eqdef \delta_n(dq+d^{-1},t) = \begin{cases} 1 & \text{if } n \mid dq+d^{-1}-t \\ 0 & \text{otherwise}.
\end{cases}} We must check that $D(t;n)$ is well-defined (\`a priori $dq+d^{-1}$ only makes sense modulo $n_1$), which we will do in Lemma \ref{DtnWellDef}.  This lemma will be used extensively throughout Section \ref{traces} with various choices of parameters.

For $m \leq n$ and $c \in \Z/\ell^{m}\Z$ we call the pre-image of $c$ in $\Z/\ell^{n}\Z$ under the canonical projection $\Z/\ell^{n}\Z \to \Z/\ell^{m}\Z$ the set of \emph{lifts} modulo $\ell^n$.   Choosing a particular $c_0$ modulo $\ell^n$ that is a lift of $c$, we can describe the set of all lifts of $c$ by $c_0 + j \ell^m \pmod{\ell^{n}}$ where $0 \le j < \ell^{n-m}$.

\begin{lemma}\label{DtnWellDef} 
Let $0\leq B \leq C$, $D \in (\Z/\ell^{C}\Z)^\times$, and $D^2q\equiv 1 \pmod {\ell^B}$.  For any $i$ satisfying $C \leq i \leq C+B$ we have that $Dq+D^{-1}\pmod {\ell^i}$ is the same residue class for any lift of $D$ to a residue class modulo $\ell^i$, and we say it is ``well-defined''.
\end{lemma}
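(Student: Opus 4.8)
The plan is to show directly that the residue class of $Dq+D^{-1}$ modulo $\ell^i$ is unchanged when one lift of $D$ is replaced by another. First I would fix a single lift $D_0 \in \Z/\ell^i\Z$ of $D$. Since $0 \le B \le C \le i$, every other lift has the form $D_0 + k\ell^C$ for some integer $k$, and because $D$ is a unit modulo $\ell^C$ we have $\ell \nmid D_0$, so each such lift is a unit modulo $\ell^i$ and its inverse $(D_0 + k\ell^C)^{-1}$ is defined modulo $\ell^i$. The goal then becomes to prove that $(D_0 + k\ell^C)q + (D_0 + k\ell^C)^{-1}$ is independent of $k$ modulo $\ell^i$.

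The key observation is that $i \le C+B \le 2C$, so $\ell^{2C} \equiv 0 \pmod{\ell^i}$, which lets me truncate the geometric-series expansion of the inverse. Writing $(D_0 + k\ell^C)^{-1} = D_0^{-1}(1 + k\ell^C D_0^{-1})^{-1}$ and expanding, every term involving $(\ell^C)^2 = \ell^{2C}$ dies modulo $\ell^i$, leaving $(D_0 + k\ell^C)^{-1} \equiv D_0^{-1} - k\ell^C D_0^{-2} \pmod{\ell^i}$. Substituting this in, the change in $Dq+D^{-1}$ between the lift $D_0 + k\ell^C$ and the lift $D_0$ collapses to the single term $k\ell^C(q - D_0^{-2})$, so the whole problem reduces to showing $\ell^C(q - D_0^{-2}) \equiv 0 \pmod{\ell^i}$.

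Finally I would invoke the hypothesis $D^2 q \equiv 1 \pmod{\ell^B}$. Since $B \le C$ and $D_0 \equiv D \pmod{\ell^C}$, we get $D_0^2 q \equiv 1 \pmod{\ell^B}$; multiplying $q - D_0^{-2}$ by the unit $D_0^2$ turns it into $D_0^2 q - 1 \equiv 0 \pmod{\ell^B}$, whence $q - D_0^{-2} \equiv 0 \pmod{\ell^B}$ and therefore $\ell^C(q - D_0^{-2}) \equiv 0 \pmod{\ell^{C+B}}$. As $i \le C+B$, this is $\equiv 0 \pmod{\ell^i}$, exactly the vanishing required, so $Dq + D^{-1}$ is well-defined modulo $\ell^i$.

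The only real subtlety, and the step I would watch most carefully, is the truncation of the inverse: it is precisely the assumption $B \le C$ (giving $i \le C+B \le 2C$) that forces $\ell^{2C} \equiv 0 \pmod{\ell^i}$ and hence guarantees that the expansion of $(1 + k\ell^C D_0^{-1})^{-1}$ terminates after its linear term. Everything else is routine bookkeeping with units modulo prime powers.
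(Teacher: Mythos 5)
Your proof is correct and follows essentially the same route as the paper: both parametrize lifts as $D + k\ell^C$, truncate the inverse to $D^{-1} - k\ell^C D^{-2}$ modulo $\ell^i$ using $i \leq C+B \leq 2C$, and kill the resulting correction term $k\ell^C(q - D^{-2})$ via the hypothesis $D^2 q \equiv 1 \pmod{\ell^B}$ together with $i \leq C+B$. The paper's version is merely terser, absorbing these steps into a single displayed congruence.
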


\begin{proof}
We write the set of lifts of $D \pmod{\ell^C}$ to residue classes modulo $\ell^i$ as $D+D'\ell^{C}$ with $0\le D' < \ell^{i-C}$.  Then 
\est{(D+D'\ell^{C})q+(D+D'\ell^{C})^{-1} \equiv & Dq+D^{-1} +(q-D^{-2})D' \ell^C \\ \equiv & Dq+D^{-1} \pmod {\ell^i}} 
since $C\leq i \leq C+B\leq 2C$, completing the proof.
\end{proof}

When $d =1$, the condition that $D(t;n) = 0$ is the same as $n \nmid q+1-t$, in which case $\cC(\Z/n\Z, t)$ is empty.  Note that $D(t;n)$ is multiplicative for fixed $t$, i.e.~if $(n,m)=1$ then $D(t;n)D(t;m)=D(t;nm)$.  

We say that $\nu \in \N$ is a full divisor of $n_1$ and write $\nu || n_1$ if for all primes $\ell \mid \nu$ we have $v_\ell(\nu) = v_\ell(n_1)$.  For $\mu, \nu \in \N$ we write $\mu \prec \nu$ if:
\begin{enumerate} 
\item The integer $\mu$ is divisible by all the primes dividing $\nu$ and no others, i.e.~$\mu \mid \nu^\infty$ and $\nu \mid \mu^\infty$, and  
\item 
For all primes $\ell \mid \nu$ we have $v_\ell(\mu) \leq v_\ell\left( n_1/n_2\right) - 1$. 
\end{enumerate} 
Finally, we define the function $D_{\nu,\mu}(t)$ (which also depends on $q, d, n_1,n_2$) to be 
\es{
D_{\nu,\mu}(t) \eqdef \prod_{\ell \mid \nu} \left( D(t;\ell^{v_\ell(n_1n_2 \mu)-1})-D(t;\ell^{v_\ell(n_1n_2\mu)})\right).
} 
When $d=1,\ D_{\nu,\mu}(t) = 1$ if for each prime $\ell$ dividing $\mu,\ v_{\ell}(q+1-t) = v_{\ell}(n_1 n_2 \mu) - 1$.

Now we define the following class numbers:
\begin{multline}\label{cndef} H_{n_1,n_2}(t,q,d) \eqdef \frac{1}{2} H\left( \frac{t^2-4q}{n_2^2}\right) \delta_{n_2}(d^2q,1) D(t;n_1n_2)\\ + \sum_{\substack{m || n_1 \\ m\geq 2}} \sum_{\mu \prec m }\lambda(m) \frac{1}{2}H\left(\frac{t^2-4q}{(n_2\mu)^2} \right)\delta_{n_2\mu }(d^2q,1)D_{n_1 ,\mu}(t),
\end{multline} 
where $\lambda(m) = (-1)^{\omega(m)}$ and $\omega(m)$ denotes the number of distinct prime factors of $m$ (so $\lambda(m)$ is almost the Liouville function).

Note in particular that when $n_1 = \ell^e$, and $n_2= \ell^\delta$ we have  \begin{multline}\label{HNMdef} 
H_{\ell^e,\ell^\delta}(t,q,d)  =  \frac{1}{2} H\left(\frac{t^2-4q}{\ell^{2\delta}}\right)\delta_{\ell^{\delta}}(d^2q,1)D(t;\ell^{e+\delta})  \\ 
 - \sum_{k = 1}^{e-\delta-1} \frac{1}{2} H\left(\frac{t^2-4q}{\ell^{2(\delta+k)}}\right)\delta_{\ell^{\delta+k}}(d^2q,1)  \left(D(t;\ell^{e+\delta+k-1})  - D(t;\ell^{e+\delta+k})\right).
 \end{multline} 

We can divide the set of isomorphism classes of elliptic curve into those that are ordinary and those that are supersingular. 
We have 
\[
\P_q(\cC(A,t)) = \begin{cases} 
\P_q(\cC(A,t), E \text{ ordinary}) & \text{ if } p \nmid t \\  \P_q(\cC(A,t), E \text{ supersingular}) & \text{ if } p \mid t, 
\end{cases}
\]
and we deal with these two cases separately.  

We now state one of the main results of this section, a formula for the number of isomorphism classes $E/\F_q$ such that $E(\F_q)$ contains a subgroup isomorphic to $A$, has $t_E = t$, and is ordinary.  We return to the supersingular case at the end of this section.
\begin{theorem}\label{CurvesToCn} 
For a finite abelian group $A$ of rank at most $2$ we denote by $n_1(A)$ and $n_2(A)$ the first and second invariant factors of $A$, respectively.  We have that 
\[
\P_q(\cC(A,t),\,E \text{ ordinary}) = 
\begin{cases} \frac{1}{q} H_{n_1(A),n_2(A)}(t,q,1) & \text{ if } p \nmid t \text{ and }t^2<4q \\ 0 & \text{ otherwise}.
\end{cases}
\] 
\end{theorem}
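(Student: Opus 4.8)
The plan is to reduce the general group $A$ to the square subgroups $A_{n,n}=\Z/n\Z\times\Z/n\Z$ treated in Lemmas \ref{S46withweights}, \ref{Schoof1}, and \ref{Schoof2} by an inclusion–exclusion on the invariant factors of the ordinary group $E(\F_q)$. Write $A\cong\Z/n_1\Z\times\Z/n_2\Z$ with $n_2\mid n_1$, and for an ordinary $E$ with $t_E=t$, $p\nmid t$, $t^2<4q$, record the $\ell$-primary part $E(\F_q)[\ell^\infty]\cong\Z/\ell^{a_\ell}\Z\times\Z/\ell^{b_\ell}\Z$ with $a_\ell\ge b_\ell$. The basic observation is that $A\hookrightarrow E(\F_q)$ if and only if $n_1\mid n_1(E)$ and $n_2\mid n_2(E)$, and that by the Chinese Remainder Theorem on torsion this factors as the product over $\ell\mid n_1$ of the local indicators $[a_\ell\ge e_\ell]\,[b_\ell\ge\delta_\ell]$, where $e_\ell=v_\ell(n_1)$ and $\delta_\ell=v_\ell(n_2)$. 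The second clause of the theorem is immediate: when $p\mid t$ or $t^2=4q$ the isogeny class is supersingular and lies outside the scope of Theorem \ref{CurvesToCn}, so the ordinary count is $0$; the hypothesis $p\nmid t$ is exactly what guarantees that $E$ is ordinary with $E(\F_q)\cong\cO/(\pi-1)\cO$ for an order $\cO$ in an imaginary quadratic field, so that the three lemmas apply.

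First I would isolate the two local events whose weighted contributions are available. The full-torsion event $b_\ell\ge j$, i.e.\ $\Z/\ell^j\Z\times\Z/\ell^j\Z\hookrightarrow E(\F_q)$, contributes $\tfrac12H((t^2-4q)/\ell^{2j})\delta_{\ell^j}(q,1)$ to $q\,\P_q$ by Lemma \ref{Schoof2}, the Weil-pairing constraint $q\equiv1\pmod{\ell^j}$ being recorded by the factor $\delta_{\ell^j}(q,1)$. The order-divisibility event $\ell^k\mid\#E(\F_q)=q+1-t$ is, with $d=1$, exactly the function $D(t;\ell^k)$; crucially, since every curve in a fixed isogeny class has the same group order, $D(t;\ell^k)=[a_\ell+b_\ell\ge k]$ is constant across the class and therefore pulls out of the weighted sum.

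The combinatorial heart is the local identity
\[
[a_\ell\ge e_\ell]\,[b_\ell\ge\delta_\ell]=[b_\ell\ge e_\ell]+\sum_{j=\delta_\ell}^{e_\ell-1}\big([b_\ell\ge j]-[b_\ell\ge j+1]\big)\,[a_\ell+b_\ell\ge e_\ell+j],
\]
obtained by conditioning on the exact value of $b_\ell$ and using $a_\ell\ge b_\ell$. After replacing each full-torsion indicator by its class-number weight and each $[a_\ell+b_\ell\ge k]$ by $D(t;\ell^k)$, a summation by parts (telescoping the $D$-factors) rewrites the right-hand side with its leading term at torsion level $\ell^{\delta_\ell}$ rather than $\ell^{e_\ell}$; one checks directly that in the prime-power case $n_1=\ell^e$, $n_2=\ell^\delta$ this reproduces the defining formula \eqref{HNMdef} for $H_{\ell^e,\ell^\delta}(t,q,1)$ term for term, using $D(t;\ell^{k})D(t;\ell^{k'})=D(t;\ell^{\max(k,k')})$ to collapse the redundant congruences carried by Schoof's lemma. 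Multiplying these local identities over all $\ell\mid n_1$ and expanding then combines the per-prime full-torsion events into a single event $E[n_2\mu]\subseteq E(\F_q)$ at the combined level $N=n_2\mu$, weighted by the single class number $\tfrac12H((t^2-4q)/(n_2\mu)^2)\delta_{n_2\mu}(q,1)$, while the accompanying $D$-factors multiply by the multiplicativity of $D(t;\cdot)$; this is precisely the sum over full divisors $m || n_1$ and $\mu\prec m$, with sign $\lambda(m)$ and weight $D_{n_1,\mu}(t)$, appearing in the definition \eqref{cndef} of $H_{n_1,n_2}(t,q,1)$.

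The step I expect to be the main obstacle is this last one, the multi-prime reorganization. Because the Hurwitz–Kronecker class number does not factor over primes, one cannot literally distribute weights across a product: the summation by parts must be carried out only after expanding the product of local indicators and grouping the resulting terms by their combined full-torsion level $N$. Arranging for the signs $\lambda(m)=(-1)^{\omega(m)}$, the precise ranges $\mu\prec m$ and $m || n_1$, and the exact exponents $v_\ell(n_1n_2\mu)$ inside $D_{n_1,\mu}(t)$ to emerge from this telescoping—without off-by-one errors in the powers of $\ell$—is the delicate bookkeeping that will occupy the bulk of the argument.
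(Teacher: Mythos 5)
Your proposal is correct and is essentially the paper's own argument: your local identity $[a_\ell\ge e_\ell][b_\ell\ge\delta_\ell]=[b_\ell\ge\delta_\ell][a_\ell+b_\ell\ge e_\ell+\delta_\ell]-\sum_k[b_\ell\ge\delta_\ell+k][a_\ell+b_\ell= e_\ell+\delta_\ell+k-1]$ is exactly the decomposition $X=X_0\setminus\bigsqcup_k X_1^k$ in the proof of Proposition \ref{CurveCount}, after which both arguments invoke Lemmas \ref{S46withweights}, \ref{Schoof1}, and \ref{Schoof2} and the constancy of $D(t;\cdot)$ on an isogeny class. The only organizational difference is that the paper handles the multi-prime bookkeeping you flag as delicate by inducting on the number of primes $\ell$ with $v_\ell(n_1)\neq v_\ell(n_2)$ (one prime at a time) rather than expanding the product of local identities all at once.
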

Note that $\P_q(\cC(A,t))=0$ unless $q\equiv 1 \pmod*{n_2(A)}$ by the definitions of these class numbers.  
The crux of the proof of Theorem \ref{CurvesToCn} is the following proposition.
\begin{proposition}\label{CurveCount}
Let $A_{n_1,n_2} = \Z/n_1\Z \times \Z/n_2\Z$. We have 
\begin{equation}\label{CurveCountEq}
\P_q\left(\cC(A_{n_1,n_2},t)\right) = \P_q\left(\cC(A_{n_2,n_2},t)\right) D(t;n_1n_2) + \sum_{\substack{d || n_1 \\ d \geq 2}} \sum_{e \prec d} \lambda(d) \P_q\left(\cC(A_{n_2e,n_2e},t)\right) D_{n_1,e}(t),
\end{equation} 
where $\lambda(n) = (-1)^{\omega(n)}$ and $d=1$ in the definition of $D(t; n)$.  
\end{proposition}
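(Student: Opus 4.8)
The plan is to reduce the identity to a pointwise relation among indicator functions on a single isogeny class, prove it one prime at a time, and then assemble the pieces multiplicatively. Fix $t$ with $t^2\le 4q$ and work inside the isogeny class $\{E/\F_q:t_E=t\}$. For such $E$ write $E(\F_q)\cong\prod_\ell(\Z/\ell^{a_\ell(E)}\Z\times\Z/\ell^{b_\ell(E)}\Z)$ with $b_\ell(E)\le a_\ell(E)$, so that $v_\ell(n_1(E))=a_\ell(E)$ and $v_\ell(n_2(E))=b_\ell(E)$. Since $(q,|A|)=1$, every prime $\ell\mid n_1$ is prime to $p$, and the $\ell$-part of $E(\F_q)$ is controlled by the Frobenius action on $E[\ell^\infty]$ exactly as in the first paragraph of the proof of Proposition \ref{alternateapproachprop}. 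Two facts drive everything: first, $A_{n_1,n_2}\hookrightarrow E(\F_q)$ if and only if $a_\ell(E)\ge v_\ell(n_1)$ and $b_\ell(E)\ge v_\ell(n_2)$ for all $\ell$, whereas $A_{m,m}\hookrightarrow E(\F_q)$ if and only if $b_\ell(E)\ge v_\ell(m)$ for all $\ell$; second, the sum $s_\ell\eqdef a_\ell(E)+b_\ell(E)=v_\ell(\#E(\F_q))=v_\ell(q+1-t)$ is constant on the isogeny class. Consequently $D(t;\ell^{m})$ equals $1$ when $s_\ell\ge m$ and $0$ otherwise, and $D(t;\ell^{m-1})-D(t;\ell^{m})$ is the indicator of $s_\ell=m-1$; these factors depend on $t$ alone.

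Next I treat a single prime. Writing $e=v_\ell(n_1)$ and $\delta=v_\ell(n_2)$, the local condition ``$a_\ell\ge e$ and $b_\ell\ge\delta$'' becomes, using $a_\ell=s_\ell-b_\ell$, the condition $\delta\le b_\ell\le s_\ell-e$. On the isogeny class $\Phi_{A_{\ell^{j},\ell^{j}}}-\Phi_{A_{\ell^{j+1},\ell^{j+1}}}$ is the indicator of $b_\ell=j$, so telescoping over $\delta\le j\le s_\ell-e$ gives $\Phi_{A_{\ell^{\delta},\ell^{\delta}}}-\Phi_{A_{\ell^{s_\ell-e+1},\ell^{s_\ell-e+1}}}$ when $s_\ell\ge e+\delta$ and $0$ otherwise. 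Re-expressing the subtracted term through the $D$-differences that pin down the exact value of $s_\ell$ yields
\[
\Phi_{A_{\ell^{e},\ell^{\delta}}}=\Phi_{A_{\ell^{\delta},\ell^{\delta}}}\,D(t;\ell^{e+\delta})-\sum_{k=1}^{e-\delta-1}\Phi_{A_{\ell^{\delta+k},\ell^{\delta+k}}}\bigl(D(t;\ell^{e+\delta+k-1})-D(t;\ell^{e+\delta+k})\bigr),
\]
an identity of functions on the isogeny class; the upper limit $e-\delta-1$ is forced because $\Phi_{A_{\ell^{j},\ell^{j}}}$ vanishes on the class once $j>s_\ell/2$. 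Applying $\P_q(\cdot\,,t)$ and inserting Lemma \ref{Schoof2} recovers the prime-power case \eqref{HNMdef} of the modified class numbers.

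To pass to general $A$ I use that $\Phi_{A_{n_1,n_2}}=\prod_{\ell\mid n_1}(\text{the }\ell\text{-local factor above})$ as functions on the isogeny class, since the embedding condition is prime-by-prime. Expanding the product, each term is a signed product in which at every prime $\ell$ one selects either the ``main'' piece (threshold $\delta_\ell$, with a factor $D(t;\ell^{e_\ell+\delta_\ell})$) or one ``correction'' piece (threshold $\delta_\ell+k_\ell$ with $1\le k_\ell\le v_\ell(n_1/n_2)-1$, carrying a $D$-difference and a sign $-1$). The product of the chosen diagonal indicators is exactly $\Phi_{A_{n_2 e,\,n_2 e}}$, where $e=\prod_\ell\ell^{k_\ell}$ collects the correction exponents; the set of primes carrying a correction is the support of $e$, which determines the full divisor $d=\prod_{\ell\mid e}\ell^{v_\ell(n_1)}\,||\,n_1$; the sign is $(-1)^{\omega(e)}=\lambda(d)$; and the constraints $1\le k_\ell\le v_\ell(n_1/n_2)-1$ are precisely the relation $e\prec d$. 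Taking $\P_q(\cdot\,,t)$ and pulling the $t$-only $D$-factors out of the average turns the all-main term into $\P_q(\cC(A_{n_2,n_2},t))\,D(t;n_1n_2)$ and the remaining terms into the double sum over $d\,||\,n_1$ and $e\prec d$ weighted by $\lambda(d)$, the surviving $D$-factors being collected into $D_{n_1,e}(t)$. This is the asserted formula.

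The main obstacle is the combinatorial bookkeeping in the last step: one must check that expanding the product over primes reproduces \emph{exactly} the indexing by full divisors $d\,||\,n_1$ and by $e\prec d$, that the accompanying product of $D$-factors is the function $D_{n_1,e}(t)$, and that the sign is $\lambda(d)$. Two points need care. First, the cap $b_\ell\le a_\ell$ (equivalently $b_\ell\le\lfloor s_\ell/2\rfloor$) is never imposed by hand; it is enforced automatically because $\P_q(\cC(A_{\ell^{j},\ell^{j}},t))=0$ once $j>s_\ell/2$ (then $\ell^{2j}\nmid q+1-t$, failing the congruence in Lemma \ref{Schoof2}), and this vanishing is exactly what truncates each $k_\ell$-range at $v_\ell(n_1/n_2)-1$. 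Second, primes with $v_\ell(n_1)=v_\ell(n_2)$ contribute no correction and should be tracked separately, and one should note that the whole reduction is purely group-theoretic and hence valid for supersingular $t$ as well, since only primes $\ell\ne p$ enter.
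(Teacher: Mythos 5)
Your proof is correct, but it reorganizes the argument compared with the paper. The paper proves the proposition by induction on the number of primes $\ell$ with $v_\ell(n_1)\neq v_\ell(n_2)$: at each inductive step a single new prime is adjoined, the set $\cC(A\times A_{e,d})$ is written as $X_0\setminus X_1$ with $X_0$ cut out by ``$A\times A_{d,d}$ embeds and $v_\ell(\#E(\F_q))\geq d+e$'', $X_1$ is stratified into pieces $X_1^k$ on which $v_\ell(\#E(\F_q))=d+e+k-1$ and $A_{d+k,d+k}$ embeds, and then the induction hypothesis is invoked. Your local identity of indicator functions on a fixed isogeny class (telescoping over $b_\ell$, truncated at $k\leq e-\delta-1$ because $\P_q(\cC(A_{\ell^j,\ell^j},t))=0$ once $\ell^{2j}\nmid q+1-t$) is exactly the content of the paper's single inductive step, and your key inputs are the same: the embedding condition factors over primes, and $v_\ell(\#E(\F_q))=v_\ell(q+1-t)$ is constant on the isogeny class, so the $D$-factors depend on $t$ alone and pull out of the average. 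What you do differently is global assembly: instead of induction you multiply the local identities pointwise and expand the product in one shot, which makes the final indexing fully explicit --- the support $S$ of the correction exponents determines the full divisor $d=\prod_{\ell\in S}\ell^{v_\ell(n_1)}\,\|\,n_1$, the constraints $1\leq k_\ell\leq v_\ell(n_1/n_2)-1$ are precisely $e\prec d$, and the sign is $(-1)^{|S|}=\lambda(d)$ --- whereas in the paper this bookkeeping is implicit in the induction. One genuine dividend of your route: the expansion pins down unambiguously what the factor $D_{n_1,e}(t)$ must be, namely the product of the differences $D(t;\ell^{v_\ell(n_1n_2e)-1})-D(t;\ell^{v_\ell(n_1n_2e)})$ at primes $\ell\mid e$ \emph{times} the plain factors $D(t;\ell^{v_\ell(n_1n_2)})$ at the remaining primes $\ell\mid n_1$; this is how the paper actually uses the symbol (compare \eqref{H*1odd}, where $D_{n_1,2}(t)=D(t;n_1)-D(t;2n_1)$), even though the displayed definition of $D_{\nu,\mu}(t)$, read literally as a product of differences over all $\ell\mid\nu$, would not reproduce it. Your closing remarks --- that the cap $b_\ell\leq a_\ell$ is enforced automatically by the vanishing in Lemma \ref{Schoof2}, that primes with $v_\ell(n_1)=v_\ell(n_2)$ contribute only the (redundant) main factor, and that the reduction is purely group-theoretic and so covers supersingular $t$ --- are all correct and match the paper's implicit handling of these points.
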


\begin{proof}[Proof of Proposition \ref{CurveCount}]
We prove the proposition by induction on the number of prime factors $\ell$ of $n_1$ for which $v_\ell(n_1) \neq v_\ell(n_2)$.  

First consider the base case in which the number of such prime factors is $0$, so $n_2 = n_1$.  Then \eqref{CurveCountEq} holds since the sum over $e \prec d$ on the right hand side is empty, and the factor $D(t; n_1 n_2)$ is redundant since Lemmas \ref{Schoof1} and \ref{Schoof2} imply that $\cC(A_{n_1,n_2},t)$ is empty unless $n_1 n_2 \mid q+1-t$.

Let $A$ be a finite abelian group of rank at most two.  We suppose that \eqref{CurveCountEq} holds when $n_1$ has at most $\omega(|A|)$ prime factors $\mathfrak{p}$ for which $v_\mathfrak{p}(n_1) \neq v_\mathfrak{p}(n_2)$, and show it also holds for $n_1$ having at most $\omega(|A|)+1$ such prime factors.  Let $\ell$ be a prime such that $\ell \nmid |A|$. For the rest of this proof we write $A_{e,d}= \Z/\ell^e\Z \times \Z/\ell^d \Z$ where $0 \leq d \leq e$.  We are interested in the probability of the set of elliptic curves with $A \times A_{e,d} \hookrightarrow E(\F_q)$ and $t_E=t$. We note that $A \times A_{e,d} \hookrightarrow E(\F_q)$ if and only if $A \hookrightarrow E(\F_q)$ and $ A_{e,d} \hookrightarrow E(\F_q)$.

Let 
\est{
X \eqdef \cC(A\times A_{e,d}) = \{E/\F_q : A \times A_{e,d} \hookrightarrow E(\F_q)\}.
}
We define a set of isomorphism classes that contains $X$:
\est{
X_0 \eqdef \{E/\F_q :  A \times A_{d,d} \hookrightarrow E(\F_q) \text{ and } v_{\ell}(\#E(\F_q)) \ge d+e\}.
} 
Let $X_1$ denote the difference of these sets, i.e. $X = X_0 \setminus X_1$.

If $E \in X_0$ and $v_{\ell}(\#E(\F_q))  \ge 2e-1$ then $A_{e,d} \hookrightarrow E(\F_q)$, which implies $E \in \cC(A \times A_{e,d})$.  Therefore, each $E \in X_1$ satisfies $v_{\ell}(\#E(\F_q)) = d+e+k-1$ for some $k$ satisfying $1 \le k \le e-d-1$.  Let $X_1^k$ denote the subset of $X_1$ with $v_{\ell}(\#E(\F_q))  = d+e+k-1$, so that $X_1$ is a disjoint union of these sets.

If $E \in X_1^k$, then since $A_{e,d} \not\hookrightarrow E(\F_q)$ we have $A_{d+k, d+k} \hookrightarrow E(\F_q)$.  Conversely, if $E \in X_0$ satisfies $A_{d+k, d+k} \hookrightarrow E(\F_q)$ and $v_{\ell}(\#E(\F_q)) = d+e+k-1$, then $A_{e,d} \not\hookrightarrow E(\F_q)$ and $E \in X_1^k$.  We conclude that
\es{ \label{X1kEq}
X_1^k  = \{E/\F_q : A_{d+k,d+k} \hookrightarrow E(\F_q) \text{ and } v_\ell\left(\#E(\F_q)\right)=e+d+k-1\}.
}

Fixing the value of $t$ fixes $v_\ell(\#E(\F_q)) = v_\ell(q+1-t)$, so 
\eqref{X1kEq} 
implies that
\est{ \P_q(\cC(A \times A_{e,d},t)) & =  \P_q(X_0,\, t_E=t) - \P_q(X_1,\,t_E=t) \\ 
& =  \P_q(X_0, \,t_E=t) - \sum_{k=1}^{e-d-1} \P_q(X_1^k,\,t_E=t).
}
Since $v_{\ell}(\#E(\F_q)) \ge d+e$ if and only if $D(t;\ell^{d+e}) = 1$, we have
\est{ 
& \P_q(X_0, \,t_E=t) - \sum_{k=1}^{e-d-1} \P_q(X_1^k,\,t_E=t)\\
& =  \P_q(\cC(A\times A_{d,d},t)) D(t;\ell^{d+e})  - \sum_{\substack{1 \leq k \leq e-d-1 \\ v_\ell(q+1-t) = e+d+k-1}} \P_q(\cC(A\times A_{d+k,d+k},t)). 
} 

The induction hypothesis is now applicable to each probability on the right hand side of this expression, from which we deduce the proposotion for $n_1$ having $\omega(|A|)+1$ prime factors for which $v_\mathfrak{p}(n_1) \neq v_\mathfrak{p}(n_2)$.  
\end{proof}

\begin{proof}[Proof of Theorem \ref{CurvesToCn}]
We apply Lemmas \ref{S46withweights}, \ref{Schoof1}, and \ref{Schoof2} to \eqref{CurveCountEq}.  Rewriting this expression using definition of $H_{n_1,n_2}(t,q,d)$ completes the proof. 
\end{proof}

In order to prove a result analogous to Theorem \ref{CurvesToCn} for supersingular curves, we define a version of the class numbers $H_{n_1,n_2}(t,q,d)$ that collects all of the contributions from supersingular curves.  This involves an analysis of many special cases.  

We define a function $H^*_{n_1,n_2}(t,q,d)$ as follows.  If $n_2>2$ we define \es{\label{H*0}
H^*_{n_1,n_2}(t,q,d) \eqdef 0,} 
and if $n_2=2$ we define 
\es{\label{H*2}H^*_{n_1,2}(t,q,d) \eqdef \begin{cases} \frac{1}{2} h_w(-p)D(t;2n_1)& \text{ if } t=0 \\ 0 & \text{ otherwise.} 
\end{cases}} 
If $n_2 =1$ and $q=p^v$ with $v$ even we define 
\es{\label{H*1even} 
H^*_{n_1,1}(t,q,d) \eqdef 
\begin{cases} 
\frac{1}{4} \left(1-\legen{-4}{p}\right)D(t;n_1) & \text{ if } t=0, \\   
\frac{1}{6} \left(1-\legen{-3}{p}\right)  D(t;n_1) & \text{ if } t^2=q, \\ 
0 & \text{ otherwise.}
\end{cases}}  
If $n_2=1$ and $q=p^v$ with $v$ odd then we define 
\begin{multline}\label{H*1odd}
H^*_{n_1,1}(t,q,d) \\ \eqdef 
\begin{cases}  
\frac{1}{2} H(-4p) D(t;n_1)- \delta_{4}(n_1,0) \frac{1}{2}h_w(-p) \left( D(t;n_1) - D(t;2n_1)\right) & \text{ if } t=0, \\ 
\frac{1}{4} D(t;n_1)& \text{ if } t^2=2q \text{ and }p=2, \\
 \frac{1}{6} D(t;n_1) & \text{ if } t^2=3q \text{ and } p=3, \\ 
 0 & \text{ otherwise.} 
 \end{cases} 
 \end{multline} 
 
 \begin{theorem}\label{SupersingularCount}
 For a finite abelian group $A$ of rank at most $2$ we denote by $n_1(A)$ and $n_2(A)$ the first and second invariant factors of $A$, respectively.  We have that
\[
\P_q(\cC(A,t),\,E \text{ supersingular})= 
\begin{cases} \frac{1}{q} H^*_{n_1(A),n_2(A)}(t,q,1) & \text{ if } t^2<4q \\ \frac{p-1}{24q} \delta_{n_1(A)}(\sqrt{q},\pm1) & \text{ if } t^2= 4q.
\end{cases}
\]
\end{theorem}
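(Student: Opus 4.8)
The plan is to establish Theorem \ref{SupersingularCount} as the supersingular counterpart of Theorem \ref{CurvesToCn}, substituting the supersingular rather than the ordinary cases of Lemmas \ref{S46withweights}, \ref{Schoof1} and \ref{Schoof2} into the inclusion--exclusion of Proposition \ref{CurveCount}. The starting observation is that a curve $E/\F_q$ with $t_E = t$ is supersingular precisely when $p \mid t$; since the trace is held fixed in $\cC(A,t)$, for $p \mid t$ every curve counted by $\P_q(\cC(A,t))$ is supersingular, so $\P_q(\cC(A,t),\, E \text{ supersingular}) = \P_q(\cC(A,t))$, while for $p \nmid t$ this probability is $0$ (and indeed $H^*_{n_1,n_2}(t,q,1)=0$ there). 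The supersingular traces with $t^2 \le 4q$ are exactly $t=0$, $t^2=q$, $t^2=2q$ (only if $p=2$), $t^2=3q$ (only if $p=3$), and $t^2=4q$, so it suffices to treat these.

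First I would dispose of $t^2=4q$ directly. Here $q$ is a square and $t=\pm 2\sqrt q$ forces the Frobenius to act as the scalar $\pm\sqrt q = \pm p^{v/2}$ on all prime-to-$p$ torsion; hence $E(\F_q)[n] = (\Z/\gcd(n,\sqrt q \mp 1)\Z)^2$ for every $n$ coprime to $p$. Thus $A \hookrightarrow E(\F_q)$ if and only if $n_1(A) \mid \sqrt q \mp 1$, that is $\delta_{n_1(A)}(\sqrt q, \pm 1)=1$, and combining this with $\P_q(\cC(1,\pm 2\sqrt q)) = \frac{p-1}{24q}$ from Lemma \ref{S46withweights} gives the $t^2=4q$ line of the theorem.

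For the remaining traces $t^2<4q$ with $p\mid t$ I would apply Proposition \ref{CurveCount}, whose identity is a purely set-theoretic statement about subgroup containment and $\ell$-adic valuations of $\#E(\F_q)$ and therefore holds verbatim here. The decisive simplification is that, for $p \mid t$ and $t^2 < 4q$, Lemma \ref{Schoof2} shows $\P_q(\cC(A_{n,n},t)) = 0$ for all $n \ge 3$, so only the diagonal groups $A_{1,1}$ and $A_{2,2}$ survive in the expansion. When $n_2 > 2$ every diagonal group appearing in Proposition \ref{CurveCount} has both invariant factors $\ge n_2 > 2$, so the whole sum vanishes, giving \eqref{H*0}. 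When $n_2=2$ only the leading term $\P_q(\cC(A_{2,2},t))\,D(t;2n_1)$ can contribute; inserting the supersingular line of Lemma \ref{Schoof1} (nonzero only at $t=0$, with value $h_w(-p)/2q$ when $q\equiv 3\pmod{4}$) yields \eqref{H*2}, where one checks that for $q\equiv 1\pmod{4}$ the factor $D(0;2n_1)$ already vanishes, since $4\mid 2n_1$, keeping the compact formula consistent with the vanishing of $\P_q(\cC(A_{2,2},0))$ in that range.

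The remaining case $n_2=1$ splits on the parity of $v$, and is where I expect the real work. Substituting the supersingular values of Lemma \ref{S46withweights} for $A_{1,1}$ produces the leading terms of \eqref{H*1even} (at $t=0$ and $t^2=q$ for $v$ even, carrying the $\legen{-4}{p}$ and $\legen{-3}{p}$ factors) and of \eqref{H*1odd} (at $t=0$, $t^2=2q$, $t^2=3q$ for $v$ odd), each multiplied by $D(t;n_1)$ from the leading term of Proposition \ref{CurveCount}. The genuinely delicate point is the $A_{2,2}$ contribution to \eqref{H*1odd} at $t=0$: since an odd prime $\ell$ can never satisfy $t\equiv 2\pmod{\ell}$ when $t=0$, no $A_{\ell,\ell}$ with $\ell$ odd embeds in a $t=0$ curve, so the only surviving summand of Proposition \ref{CurveCount} is $e=2$, present exactly when $4\mid n_1$ (hence $\delta_4(n_1,0)$) and carrying sign $\lambda(2^{v_2(n_1)})=-1$. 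This term must be reorganized, using the definition of $D_{n_1,e}(t)$ and Lemma \ref{DtnWellDef}, into $-\delta_4(n_1,0)\frac{1}{2}h_w(-p)\big(D(t;n_1)-D(t;2n_1)\big)$; concretely this difference records whether the full $2$-torsion is rational, i.e.\ whether the $2$-primary part of $E(\F_q)$ is cyclic or not. Verifying this reorganization, and checking that the surviving $\delta$- and $D$-factors automatically enforce all the congruence and Kronecker-symbol constraints ($q$ a square in \eqref{H*1even}, and $p\in\{2,3\}$ forcing the $t^2=2q,3q$ lines of \eqref{H*1odd}), is the main obstacle, as it is precisely where the intricate definitions \eqref{H*0}--\eqref{H*1odd} get pinned down term by term.
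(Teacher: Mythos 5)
Your proposal is correct and takes essentially the same route as the paper's proof: the paper likewise observes that supersingularity is detected by $p \mid t$, handles $t^2 = 4q$ via the group structure $E(\F_q) \cong \Z/(\sqrt{q}\mp 1)\Z \times \Z/(\sqrt{q}\mp 1)\Z$ (citing Schoof's Lemma 4.8(ii), where you argue equivalently via the scalar action of Frobenius), and for $t^2 < 4q$ substitutes the supersingular cases of Lemmas \ref{S46withweights}, \ref{Schoof1}, and \ref{Schoof2} into Proposition \ref{CurveCount}, splitting into the cases $n_2 > 2$, $n_2 = 2$, $n_2 = 1$ exactly as you do. Your identification of the lone surviving correction term $-\delta_4(n_1,0)\,\P_q(\cC(A_{2,2},t))\,D_{n_1,2}(t)$ at $t=0$, and its reorganization into the $\frac{1}{2}h_w(-p)\left(D(t;n_1)-D(t;2n_1)\right)$ term of \eqref{H*1odd}, matches the paper's computation.
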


\begin{proof}
We recall that $E/\F_q$ is supersingular if and only if $p \mid t_E$.  Lemma \ref{S46withweights} implies that such a curve must have $t_E^2 \in \{0,q,2q,3q,4q\}$, where $t_E^2 = 2q$ arises only when $p=2$ and $t_E^2 = 3q$ arises only when $p =3$.  Lemmas \ref{Schoof1} and \ref{Schoof2} imply that there are not many different group structures to consider.  We divide our argument into cases based on the value of $n_2$.

\vspace{12pt}
\noindent {\bf \emph{The case $t^2 < 4q$ and $n_2(A) > 2$.}}
\vspace{12 pt}

Lemma \ref{Schoof2} implies that for any $e > 2,\ \P_q\left(\cC(A_{e, e},t), E \text{ supersingular} \right) = 0$.  This implies that $\P_q(\cC(A,t),\,E \text{ supersingular}) = 0$, which matches the definition of $H^*_{n_1,n_2}(t,q,1)$ given by \eqref{H*0}.

\vspace{12pt}
\noindent {\bf \emph{The case $t^2 < 4q$ and $n_2(A) = 2$.}}
\vspace{12 pt}

Lemma \ref{Schoof1} implies that in this case $\P_q\left(\cC(A_{2,2},t), E \text{ supersingular} \right) = 0$ unless $t = 0$ and $q \equiv 3\pmod{4}$.  Lemma \ref{Schoof2} implies that for any $e \ge 2,\ \P_q\left(\cC(A_{2e,2e},t), E \text{ supersingular}\right) = 0$.  Therefore,
\[
\P_q\left(\cC(A_{n_1,2}, t),\ E \text{ supersingular}\right) = \P_q\left(\cC(A_{2,2},t),\ E \text{ supersingular}\right).
\]
The expression in Lemma \ref{Schoof1} matches the definition of $H^*_{n_1,2}(t,q,1)$ given by \eqref{H*2}.   

\vspace{12pt}
\noindent {\bf \emph{The case $t^2 < 4q$ and $n_2(A) = 1$.}}
\vspace{12 pt}

Lemma \ref{Schoof2} implies that for any $e \ge 3,\ \P_q\left(\cC(A_{e,e},t), E \text{ supersingular} \right)= 0$.  As above, Lemma \ref{Schoof1} implies that $\P_q\left(\cC(A_{2,2},t), E \text{ supersingular} \right) = 0$ unless $t = 0$ and $q \equiv 3\pmod{4}$.  If $2 \prec d$ then $d$ is a power of $2$ and $\lambda(d) = -1$. By Proposition \ref{CurveCount} we have 
\begin{eqnarray*}
& & \P_q\left(\cC(A_{n_1,1},t),\ E \text{ supersingular} \right) \\
& = &  \P_q\left(\cC(1,t),\ E \text{ supersingular}\right) D(t;n_1) - \delta_4(n_1,0)\P_q\left(\cC(A_{2,2},t),\ E \text{ supersingular}\right) D_{n_1,2}(t).
\end{eqnarray*}
Applying Lemmas \ref{S46withweights} and \ref{Schoof1} to this expression we check that it matches the remaining cases in the definition of $H^*_{n_1,1}(t,q,1)$.  

\vspace{12pt}
\noindent {\bf \emph{The case $t^2 = 4q$.}}
\vspace{12 pt}

In this case any $E/\F_q$ with $t_E=t = \pm 2\sqrt{q}$ has 
\[
E(\F_q) \cong \Z/(\sqrt{q} \mp 1)\Z \times \Z/(\sqrt{q} \mp 1)\Z
\] 
by  
\cite[Lemma 4.8(ii)]{Schoof}.  Therefore 
\[
\P_q(\cC(A_{n_1,n_2} , \pm 2\sqrt{q})) = \P_q(\cC(1,\pm 2\sqrt{q})) \delta_{n_1}(\pm \sqrt{q},1).
\] 
Applying Lemma \ref{S46withweights} to this expression concludes the proof of the theorem.
\end{proof}

We summarize the main results of this section in the following proposition.  We define two functions $\omega_{A}(q,d)$ and $\omega_{A}^*(q,d)$ that will be used extensively in Section \ref{assembly}.  Let 
\es{\label{omegadef} 
\omega_{A}(q,d) \eqdef \sum_{\substack{t^2<4q \\ p \nmid t}} U_{k-2}(t,q) H_{n_1(A),n_2(A)}(t,q,d),
}
and 
\es{\label{omega*def}
\omega^*_{A}(q,d) \eqdef  \sum_{t^2 <4q} U_{k-2}(t,q) H^*_{n_1(A),n_2(A)}(t,q,d).
} 
Note also $U_{k-2}(\pm 1) = (k-1)(\pm 1)^{k}.$ By \eqref{firststep} and Theorems \ref{CurvesToCn} and \ref{SupersingularCount} we have the following.
\begin{proposition}\label{s3mp}
We have \begin{multline*}\E_q(U_{k-2}(t,q) \Phi_A) = \frac{1}{q}\omega_{A}(q,d) + \frac{1}{q}\omega^*_{A}(q,1) \\ +  q^{k/2-1}\frac{(p-1)(k-1)}{24q} \left( \delta_{n_1(A)} (\sqrt{q}, 1) + (-1)^k \delta_{n_1(A)} (\sqrt{q}, -1) \right).
\end{multline*}
\end{proposition}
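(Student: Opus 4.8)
The plan is to begin from the fibering over isogeny classes \eqref{firststep}, which writes $\E_q(U_{k-2}(t_E,q)\Phi_A) = \sum_{t^2\le 4q} U_{k-2}(t,q)\,\P_q(\cC(A,t))$, and then to substitute the explicit evaluations of $\P_q(\cC(A,t))$ furnished by Theorems \ref{CurvesToCn} and \ref{SupersingularCount}. The organizing idea is to split each $\P_q(\cC(A,t))$ into its ordinary and supersingular parts according to whether $p\nmid t$ or $p\mid t$, and correspondingly to separate the range of summation into the interior $t^2<4q$ and the boundary $t^2=4q$. Everything then reduces to matching three pieces against the three summands on the right-hand side of the proposition.

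First I would treat the ordinary piece. Since an ordinary curve has $p\nmid t$ and hence $t^2<4q$, Theorem \ref{CurvesToCn} (applied with $d=1$) gives the contribution $\frac1q\sum_{t^2<4q,\,p\nmid t} U_{k-2}(t,q)\,H_{n_1(A),n_2(A)}(t,q,1)$, which is exactly $\frac1q\omega_A(q,1)$ by the definition \eqref{omegadef}, matching the first summand. Next I would treat the supersingular interior piece. For $p\mid t$ and $t^2<4q$, Theorem \ref{SupersingularCount} gives $\frac1q H^*_{n_1(A),n_2(A)}(t,q,1)$. The key bookkeeping point is that the case-by-case formulas \eqref{H*0}--\eqref{H*1odd} show $H^*_{n_1(A),n_2(A)}(t,q,1)$ is supported on the values $t=0,\ t^2=q,\ t^2=2q,\ t^2=3q$, all of which satisfy $p\mid t$. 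Hence the supersingular sum over $p\mid t$ may be harmlessly extended over all $t^2<4q$ without altering its value, producing $\frac1q\sum_{t^2<4q} U_{k-2}(t,q)\,H^*_{n_1(A),n_2(A)}(t,q,1)=\frac1q\omega^*_A(q,1)$ by \eqref{omega*def}. This also shows there is no double counting: on the interior the ordinary and supersingular supports are disjoint.

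The remaining piece is the boundary $t^2=4q$, which occurs only when $q$ is a square and then forces $t=\pm2\sqrt q$, so $p\mid t$; here Theorem \ref{SupersingularCount} gives the value $\frac{p-1}{24q}\delta_{n_1(A)}(\sqrt q,\pm1)$. The only computation needed is to evaluate the normalized Chebyshev polynomial at these endpoints: by \eqref{chebdef}, $U_{k-2}(\pm2\sqrt q,q)=q^{k/2-1}U_{k-2}(\pm1)=q^{k/2-1}(k-1)(\pm1)^k$. Feeding $t=2\sqrt q$ and $t=-2\sqrt q$ into the boundary sum and using $\delta_{n_1(A)}(-\sqrt q,1)=\delta_{n_1(A)}(\sqrt q,-1)$ assembles precisely the third summand $q^{k/2-1}\frac{(p-1)(k-1)}{24q}\bigl(\delta_{n_1(A)}(\sqrt q,1)+(-1)^k\delta_{n_1(A)}(\sqrt q,-1)\bigr)$. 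Adding the three pieces gives the stated identity.

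I do not anticipate any genuine analytic difficulty; the proposition is an assembly of the structural results already proved. The step most likely to cause error, and the one I would check most carefully, is the supersingular interior/boundary interface: one must verify that $H^*$ really does vanish for $p\nmid t$, so that the range extension to all $t^2<4q$ is legitimate and overlaps neither the ordinary sum nor the boundary, and that the endpoint values $U_{k-2}(\pm2\sqrt q,q)$ and their signs $(\pm1)^k$ line up with the correct parity in the two $\delta_{n_1(A)}(\sqrt q,\pm1)$ factors.
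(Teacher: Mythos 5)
Your proof is correct and takes essentially the same route as the paper, which states Proposition \ref{s3mp} as an immediate consequence of \eqref{firststep} together with Theorems \ref{CurvesToCn} and \ref{SupersingularCount}, plus the endpoint evaluation $U_{k-2}(\pm 2\sqrt{q},q)=q^{k/2-1}(k-1)(\pm 1)^k$ noted just before the proposition. Your explicit check that $H^*_{n_1(A),n_2(A)}(t,q,1)$ is supported only on $t$ with $p\mid t$ (so the supersingular sum extends harmlessly to all $t^2<4q$, matching \eqref{omega*def} without overlapping the ordinary part) is precisely the bookkeeping left implicit in the paper's one-line assembly.
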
  
\noindent Note that the right hand side above is $0$ if $q \not \equiv 1 \pmod*{n_2(A)}$.  Indeed, the congruence $q \equiv 1 \pmod*{n_2}$ is a necessary condition for $E(\F_q)$ to have a subgroup isomorphic to $\Z/n_2\Z \times \Z / n_2 \Z$ by, say, the Weil pairing on $E$.

\section{Trace Formulas}\label{traces}

In this section we derive an Eichler-Selberg trace formula for the congruence subgroups $\Gamma(N,M)$, which were defined in equation (\ref{GamNM}).  Our proof starts from the corresponding trace formula for $\Gamma_0(MN)$ with nebentype character.  
Let $S_k(\Gamma(N,M))$ be the space of weight $k \ge 2$ cusp forms for $\Gamma(N,M)$.  (Here we follow Sections 5.1 and 5.2 of the book of Diamond and Shurman \cite{DiamondShurman}.)  
We have an exact sequence \est{ 1 \rightarrow \Gamma(N,M)\rightarrow \Gamma_0(NM) \rightarrow (\Z/N\Z)^\times \rightarrow 1 .}  
The congruence subgroup $\Gamma_0(NM)$ acts on $S_k(\Gamma(N,M))$ via the slash operator with $\Gamma(N,M)$ acting trivially, so this action is via the quotient $(\Z/N\Z)^\times$.  Thus for each $d\in (\Z/N\Z)^{\times}$ we have the diamond operator \est{\langle d \rangle : S_k(\Gamma(N,M)) \rightarrow S_k(\Gamma(N,M))} given by \est{\langle d \rangle f = f_{|_{\gamma}}, \text{ for any } \gamma = \left(\begin{array}{cc} a & b \\ c & \delta \end{array} \right) \in \Gamma_0(NM) \text{ with } \delta \equiv d \pmod N. }  For a Dirichlet character $\chi$ mod $N$ (considered as a imprimitive character mod $NM$) the space of cusp forms of $\Gamma_0(NM)$ with nebentype character $\chi$ is the $\chi$-eigenspace of the diamond operators: \est{ S_k(\Gamma_0(NM),\chi) = \{ f\in S_k(\Gamma(N,M)) \text{ s.t. } \langle d \rangle f = \chi(d) f \text{ for all } d \in (\Z/N\Z)^\times \} . }   

We also define Hecke operators for this group.  For $p$ prime, let 
\est{
\alpha = \left( \begin{array}{cc} 1 & 0 \\ 0 & p \end{array}\right).
} 
Take a coset decomposition for the double coset \est{ \Gamma(N,M) \alpha \Gamma(N,M) = \bigsqcup_j \Gamma(N,M)\beta_j,} where $\beta_j \in \GL_2^+(\Q).$  Then 
\est{T_p  : S_k(\Gamma(N,M)) \rightarrow S_k(\Gamma(N,M))
} 
is defined by \est{ T_pf = \sum_{j} f_{|_{\beta_j}}.}  So long as $(p^v,N)=1$ we define the Hecke operators for prime powers inductively via \est{T_{p^v} = T_pT_{p^{v-1}} -p^{k-1}\langle p \rangle T_{p^{v-2}}.}

One can check that the diamond operators and Hecke operators commute with each other and that $T_p$ respects the decomposition \est{S_k(\Gamma(N,M)) = \bigoplus_{\chi \pmod* N} S_k(\Gamma_0(NM),\chi).}  
Comparing Propositions 5.2.2 and 5.3.1 of \cite{DiamondShurman} and Proposition 3.46 in the book of Knightly and Li \cite{KnightlyLi} shows that the Hecke operators defined above coincide with those of \cite{DiamondShurman}.
Thus if $(dq,NM)=1$ we have that 
\es{\label{GammaNM}\tr( \langle d \rangle T_q  | S_k(\Gamma(N,M))) = \sum_{\chi \pmod* N} \chi(d)\tr( T_q | S_k(\Gamma_0(NM), \chi)).} We use \eqref{GammaNM} to prove a trace formula for Hecke operators on the groups $\Gamma(N,M)$.

\begin{theorem}[Eichler-Selberg Trace Formula for $\Gamma(N,M)$]\label{ESTF} Let $q, M, N$ be positive integers, $d \in (\Z/N\Z)^\times$, and $k\geq 2$ an integer.  Let $T_q$ be the $q\textsuperscript{th}$ Hecke operator acting on $S_k(\Gamma(N,M))$ and $\langle d \rangle$ be the $d$-diamond operator acting on $S_k(\Gamma(N,M))$.  

Suppose that $M \mid N$, $(N,q)=1$, and $d^2q \equiv 1 \pmod*M$.  Let $L=(d^2q - 1,N)$. We have that
 \est{\frac{\tr( \langle d \rangle T_q| S_k(\Gamma(N,M)))}{\varphi(N)} = & \frac{k-1}{24}q^{k/2-1} \psi(NM) \left(\delta_N(q^{1/2} d, 1) + (-1)^k \delta_N(q^{1/2} d, -1)\right)  \\ 
 & -\frac{\psi(N^2)}{\psi(N^2/M^2)}\sum_{\Lambda \mid (L/M)} \frac{\varphi(\Lambda^2)\varphi(N/(M\Lambda))}{\varphi(N/M)} \sum_{t^2<4q 
 } U_{k-2}(t,q)H_{N,\Lambda M}(t,q,d) \\  
 &-\frac{1}{4} \sum_{b \mid q} \min(b,q/b)^{k-1}   \sum_{\substack{\tau \mid NM \\  g \mid (b-q/b) }}  \frac{\varphi(g)\varphi(N(M,g)/g)} {\varphi(N)}  \\ & \hspace{24pt} \times \Big( \delta_{N(M,g)/g}( y_\tau d, 1) + (-1)^k \delta_{N(M,g)/g}( y_\tau d, -1)\Big) \\  
 &+ \frac{\sigma(q)}{\varphi(N)}\delta(k,2),}  
where \begin{itemize}
\item $U_{k-2}(t,q)$ is a normalized Chebyshev polynomial (see \eqref{chebdef}), 
\item $g = (\tau,NM/\tau),$
\item $y_\tau$ is the unique element of $(\Z/(NM/g)\Z)^\times$ such that $y_\tau \equiv b \pmod{\tau}$ and $y_\tau \equiv q/b \pmod{NM/\tau}$.
\end{itemize}
\end{theorem}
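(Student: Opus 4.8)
The plan is to start from the classical Eichler--Selberg trace formula for $\Gamma_0(NM)$ with nebentype character $\chi$ (as worked out by Hijikata, Oesterl\'e and Cohen \cite{Hijikata,Oest,Cohen}) and to sum it against $\chi(d)$ over all Dirichlet characters $\chi$ modulo $N$, invoking the decomposition identity \eqref{GammaNM}. Writing the $\Gamma_0(NM),\chi$ formula as $\tr(T_q \mid S_k(\Gamma_0(NM),\chi)) = T^{(i)}_\chi + T^{(e)}_\chi + T^{(h)}_\chi + T^{(d)}_\chi$, a sum of identity, elliptic, hyperbolic and dual contributions, \eqref{GammaNM} reduces the theorem to evaluating the four character sums $\sum_{\chi \bmod N} \chi(d) T^{(*)}_\chi$ for $* = i,e,h,d$ and dividing by $\varphi(N)$. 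The uniform mechanism throughout is orthogonality of Dirichlet characters: each $T^{(*)}_\chi$ is built from values $\chi(x)$ on residues $x$ attached to the relevant conjugacy class, so that $\sum_\chi \chi(d)\chi(x) = \varphi(N)\,\delta_N(x,d^{-1})$ collapses the character sum into explicit congruence conditions. The one subtlety common to all four pieces is that $S_k(\Gamma_0(NM),\chi)$ vanishes unless $\chi(-1)=(-1)^k$; inserting the parity projector $\tfrac12(1+(-1)^k\chi(-1))$ and splitting it produces exactly the pairs $\delta_N(\,\cdot\,,1)+(-1)^k\delta_N(\,\cdot\,,-1)$ appearing in the statement.

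The identity and dual terms are immediate. The identity contribution is nonzero only when $q$ is a perfect square, where $T^{(i)}_\chi$ is a multiple of $\chi(q^{1/2})$ with coefficient $\tfrac{k-1}{12}\psi(NM)q^{k/2-1}$; summing $\chi(d)\chi(q^{1/2})$ over characters of the correct parity gives $\tfrac12\varphi(N)\bigl(\delta_N(q^{1/2}d,1)+(-1)^k\delta_N(q^{1/2}d,-1)\bigr)$, which after dividing by $\varphi(N)$ yields the first line of the theorem. For the dual term, only the principal character contributes, and since $(d,N)=1$ we have $\chi_0(d)=1$, so $\sum_\chi \chi(d)T^{(d)}_\chi = \sigma(q)\delta(k,2)$, giving $\tfrac{\sigma(q)}{\varphi(N)}\delta(k,2)$.

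The hyperbolic term requires more care but is still essentially bookkeeping. Here $T^{(h)}_\chi$ is a sum over factorizations $q=b\cdot(q/b)$ of $\min(b,q/b)^{k-1}$ against a character sum over residues $c \pmod{NM}$ that reduce to $b$ and $q/b$ on complementary parts of $NM$. Grouping these residues by the divisor $\tau \mid NM$ recording where $c\equiv b$, setting $g=(\tau,NM/\tau)$, and using the Chinese Remainder Theorem to assemble the unique unit $y_\tau$ with $y_\tau \equiv b \pmod\tau$ and $y_\tau \equiv q/b \pmod{NM/\tau}$, the orthogonality relation turns the sum over $\chi$ into the congruence conditions $\delta_{N(M,g)/g}(y_\tau d,\pm1)$ weighted by $\varphi(g)\varphi(N(M,g)/g)/\varphi(N)$, matching the third line of the statement; the constraint $g \mid (b-q/b)$ appears as the consistency condition for a common residue $y_\tau$ to exist.

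The main work, and the \emph{main obstacle}, is the elliptic term. In the $\Gamma_0(NM),\chi$ formula the elliptic contribution is a sum over $t$ with $t^2<4q$ of $U_{k-2}(t,q)$ times $\sum_{m^2 \mid t^2-4q} h_w\!\left(\tfrac{t^2-4q}{m^2}\right)\mu_\chi(t,m,q)$, where $\mu_\chi(t,m,q)$ is a weighted sum of $\chi$ over the roots $x \pmod{NM}$ of $X^2-tX+q\equiv 0$ lying in a prescribed class modulo $m$. The heart of the argument is to show that $\sum_{\chi \bmod N}\chi(d)\mu_\chi(t,m,q)$, after the parity split, is governed by the function $D(t;n)=\delta_n(dq+d^{-1},t)$ of \eqref{Dtn}: if $x$ is a root then the other root is $q/x$ and $t\equiv x+q/x$, so $x\equiv d^{-1}$ on a given modulus forces $t\equiv dq+d^{-1}$ there, which is the content of Lemma \ref{DtnWellDef} and explains why the hypothesis $d^2q\equiv 1\pmod M$ (extended to $\pmod L$ with $L=(d^2q-1,N)$) is exactly what is needed for $dq+d^{-1}$ to be well defined to the required precision. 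Carrying this out demands a prime-by-prime analysis of the solutions of the quadratic congruence, tracking simultaneously the valuations $v_\ell(m)$, the level $v_\ell(NM)$, and the depth $v_\ell(d^2q-1)$; reorganizing the resulting local counts by a divisor $\Lambda \mid (L/M)$ is what produces the outer sum and the arithmetic prefactor $\tfrac{\psi(N^2)}{\psi(N^2/M^2)}\cdot\tfrac{\varphi(\Lambda^2)\varphi(N/(M\Lambda))}{\varphi(N/M)}$, and assembling the class numbers $h_w$ with the $D$- and $D_{\nu,\mu}$-factors reconstructs precisely the modified class numbers $H_{N,\Lambda M}(t,q,d)$ of \eqref{cndef}. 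This matching of the character-summed root count against the inclusion--exclusion definition of $H_{N,\Lambda M}$ is the delicate combinatorial core of the proof.
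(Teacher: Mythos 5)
Your proposal follows the paper's own proof essentially step for step: it sums the $\Gamma_0(NM),\chi$ trace formula (Theorem \ref{ESTFKL}) against $\chi(d)$ via \eqref{GammaNM}, handles the identity and dual terms by orthogonality with the parity projector, treats the hyperbolic term as divisor bookkeeping, and correctly identifies the elliptic term's prime-by-prime analysis of the roots of $X^2-tX+q$ --- tracking $v_\ell(m)$, $v_\ell(NM)$, and $v_\ell(d^2q-1)$ and reassembling the local counts into the class numbers $H_{N,\Lambda M}(t,q,d)$ --- as the delicate core, which is exactly the content of the paper's Propositions \ref{MP} and \ref{MP2}. The one detail you gloss over is that the hyperbolic term cannot be collapsed by plain orthogonality modulo $N$, since the $\Gamma_0$ formula restricts $\tau$ by the conductor-dependent condition $(\tau,NM/\tau)\mid NM/\cond(\chi)$; the paper instead groups characters by conductor and uses the almost-orthogonality relation $\sum_{\cond(\chi)=c,\,\chi(-1)=(-1)^k}\chi(a)=\tfrac12\sum_{\delta\mid(a-1,c)}\varphi(\delta)\mu(c/\delta)+(-1)^k\tfrac12\sum_{\delta\mid(a+1,c)}\varphi(\delta)\mu(c/\delta)$ followed by a Dirichlet-convolution computation, but this is a refinement within your stated strategy rather than a departure from it.
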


We will prove Theorem \ref{ESTF} using the trace formula for $\Gamma_0(N)$ with nebentypus.  We refer to \cite{KnightlyLi} for the proof and also to \cite{LiWebpage} for the statement.
\begin{theorem}[Eichler-Selberg Trace Formula for $\Gamma_0(N),\chi$]\label{ESTFKL}Let $k\geq 2$ and $\chi(-1)=(-1)^k$. We have
\es{\label{TF} 
\tr (T_q | S_k(\Gamma_0(N),\chi))= & \frac{k-1}{12} \psi(N)\chi(q^{1/2})q^{k/2-1} \\ 
& -\frac{1}{2} \sum_{t^2<4q} U_{k-2}(t,q) \sum_{m^2 \mid (t^2-4q)}h_w\left(\frac{t^2-4q}{m^2}\right) \mu_\chi(t,m,q) \\ 
& - \frac{1}{2} \sum_{b \mid q} \min(b,q/b)^{k-1} \sum_{\tau} \varphi((\tau,N/\tau))\chi(y_\tau) \\ 
& + \delta(k,2) \mathds{1}_{\chi = 1} \sum_{\substack{ c \mid q \\ (N,q/c)=1}}c } 
where: 
\begin{itemize}  
\item $\chi(q^{1/2})=0$ if $q$ is not a perfect square,
\item $U_{k-2}(t,q)$ is a normalized Chebyshev polynomial (see \eqref{chebdef}),
\item 
\es{\mu_\chi(t,m,q) = \frac{\psi(N)}{\psi(N/(N,m))}\ssum_{c \in (\Z/N\Z)^\times} \chi(c), 
} 
where the $*_m$ indicates that $c$ runs through all elements of $(\Z/N\Z)^\times$ that lift to solutions of $c^2-tc+q\equiv 0 \pmod{N (N,m)}$,  
\item $\tau$ runs through all positive divisors of $N$ such that $(\tau,N/\tau)$ divides both $N/{\rm cond}(\chi)$ and $b-q/b$ where ${\rm cond}(\chi)$ is the conductor of $\chi$,
\item $y_\tau$ is the unique element of $\Z/(N/(\tau,N/\tau))\Z$ such that $y_\tau \equiv d \pmod \tau$ and $y_\tau \equiv q/d \pmod {N/\tau}$,
\item and $\mathds{1}_{\chi =1}$ is $1$ if $\chi$ is the trivial character and is $0$ otherwise.
\end{itemize}  
\end{theorem}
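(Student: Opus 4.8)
The plan is to prove Theorem~\ref{ESTFKL} by the classical Eichler--Selberg method: realize $T_q$ as an integral operator on $S_k(\Gamma_0(N),\chi)$ and compute its trace geometrically. First I would write down a reproducing (Poincar\'e-type) kernel $K_q(z,w)$ for the Hecke operator, formed as a weight-$k$ automorphic sum over integral matrices $\gamma = \left(\begin{smallmatrix} a & b \\ c & d\end{smallmatrix}\right)$ of determinant $q$ with $c \equiv 0 \pmod N$, each weighted by $\overline{\chi}(d)$, so that pairing against it reproduces the Hecke action $(T_q f)(z)$. Taking the Petersson trace then gives
\[
\tr(T_q \mid S_k(\Gamma_0(N),\chi)) = \int_{\Gamma_0(N)\backslash \mathbb{H}} K_q(z,z)\,(\imag z)^{k}\,\frac{dx\,dy}{y^2},
\]
and unfolding the $\Gamma_0(N)$-summation collapses this into a sum of orbital integrals indexed by the $\Gamma_0(N)$-conjugacy classes of determinant-$q$ matrices.

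Next I would organize the conjugacy classes by the characteristic polynomial $X^2 - tX + q$ of $\gamma$, that is, by the trace $t$ and the discriminant $t^2 - 4q$. Scalar matrices $\gamma = \pm q^{1/2} I$ occur only when $q$ is a square and produce the identity term $\tfrac{k-1}{12}\psi(N)\chi(q^{1/2}) q^{k/2-1}$, the normalized volume of the fundamental domain entering through $\psi(N) = [\SL_2(\Z):\Gamma_0(N)]$. Elliptic classes ($t^2 < 4q$) have a unique fixed point $z_\gamma \in \mathbb{H}$; the local orbital integral there is evaluated by a contour/residue computation and reduces, up to the counting factor, to the normalized Chebyshev polynomial $U_{k-2}(t,q) = (\alpha^{k-1} - \overline{\alpha}^{k-1})/(\alpha - \overline{\alpha})$ with $\alpha,\overline{\alpha}$ the roots of $X^2 - tX + q$, which is precisely where \eqref{chebdef} enters. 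The hyperbolic classes correspond to factorizations $q = b\cdot(q/b)$ with $t = b + q/b$ real; their contribution is the term weighted by $\min(b,q/b)^{k-1}$, with the divisor $\tau$ and the residue $y_\tau$ recording the two real fixed points modulo $N$.

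The heart of the argument, and the step I expect to be the main obstacle, is the arithmetic bookkeeping of the elliptic term: I must count the $\Gamma_0(N)$-conjugacy classes of integral matrices with given trace $t$ and determinant $q$, weighted by $\overline{\chi}(d)$. Over $\SL_2(\Z)$ this count is a Hurwitz--Kronecker class number, obtained by matching such matrices with $\SL_2(\Z)$-classes of binary quadratic forms of discriminant $t^2 - 4q$; summing over conductors $m$ with $m^2 \mid t^2 - 4q$ produces $\sum_{m} h_w\!\left(\tfrac{t^2-4q}{m^2}\right)$. Imposing the level and nebentype structure refines this: the congruence $c \equiv 0 \pmod N$ together with the $\chi$-weight forces one to count lifts of the solutions of $c^2 - tc + q \equiv 0 \pmod{N(N,m)}$ and to sum $\chi(c)$ over them, which is exactly the factor $\mu_\chi(t,m,q)$ of the statement. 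Carrying out this reduction uniformly in $m$ and $\chi$, and correctly accounting for the extra automorphisms at discriminants $-3$ and $-4$ (the weighting built into $h_w$), is the delicate combinatorial core.

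Finally, the kernel $K_q$ converges absolutely only for $k > 2$, so for $k = 2$ I would regularize by Hecke's trick, inserting a factor $(\imag z)^{s}$ and letting $s \to 0$ after analytic continuation. The non-holomorphic Eisenstein contribution surviving this limit is what produces the dual term $\delta(k,2)\,\mathds{1}_{\chi=1}\sum_{c\mid q,\ (N,q/c)=1} c$, which is absent for $k \geq 3$. Assembling the scalar, elliptic, hyperbolic, and regularization contributions with their correct signs then yields the four lines of the stated formula, completing the proof.
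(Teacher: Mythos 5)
The paper offers no proof of this statement at all: Theorem \ref{ESTFKL} is imported verbatim from the literature, with the proof deferred to Knightly--Li \cite{KnightlyLi} (and the statement taken from \cite{LiWebpage}), and the authors only \emph{use} it as input to derive their Theorem \ref{ESTF}. Your sketch reproduces precisely the standard argument carried out in that cited source --- reproducing kernel for $T_q$, Petersson trace and unfolding into conjugacy classes, scalar/elliptic/hyperbolic contributions with the elliptic bookkeeping done by Hijikata-style local counts (which is where the factor $\psi(N)/\psi(N/(N,m))$ and the lifting condition modulo $N(N,m)$ in $\mu_\chi$ actually come from, rather than plain $\SL_2(\Z)$-classes of forms), and Hecke's regularization producing the $k=2$ dual term --- so it is sound as a blueprint and matches the approach of the paper's reference rather than diverging from it.
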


\begin{proof}[Proof of Theorem \ref{ESTF}]
We define the following four terms, which correspond to the terms appearing on the right hand side of Theorem \ref{ESTFKL} applied to the group $\Gamma_0(MN)$:
\begin{enumerate}
\item \est{T^{(i)}_\chi \eqdef \frac{k-1}{12} \psi(MN)\chi(q^{1/2})q^{k/2-1},}
\item \est{T^{(e)}_\chi  \eqdef \frac{1}{2} \sum_{t^2<4q} U_{k-2}(t,q) T^{(e)}_\chi (t),}
where \est{ T^{(e)}_\chi (t) \eqdef \sum_{m^2 \mid (t^2-4q)}h_w\left(\frac{t^2-4q}{m^2}\right) \mu_\chi(t,m,q) ,}
\item \est{ T^{(h)}_\chi \eqdef \frac{1}{2} \sum_{b \mid q} \min(b,q/b)^{k-1} T^{(h)}_\chi (b),}
where 
\est{T^{(h)}_\chi(b) \eqdef \sum_{\tau} \varphi((\tau,MN/\tau))\chi(y),}
\item 
\est{T^{(d)}_\chi \eqdef \delta(k,2)  \mathds{1}_{\chi = 1} \sum_{\substack{ c \mid q \\ (MN,q/c)=1}}c.}
\end{enumerate}
Then by \eqref{GammaNM}, Theorem \ref{ESTFKL}, and the fact that $S_k(\Gamma_0(MN),\chi)$ is $\{0\}$ unless $\chi(-1)=(-1)^k$ we have that 
\es{\label{tracesstep1}\frac{1}{\varphi(N)}\tr(\langle d \rangle T_q | S_k(\Gamma(N,M))) = T^{(i)} -T^{(e)}-T^{(h)} + T^{(d)}} where 
\begin{enumerate}
\item \est{T^{(i)} \eqdef \frac{1}{\varphi(N)} \sum_{\substack{\chi \pmod* N \\ \chi(-1) = (-1)^k}} \chi(d) T_\chi^{(i)} ,} 
\item \est{T^{(e)}  \eqdef  \frac{1}{2} \sum_{t^2<4q} U_{k-2}(t,q) T^{(e)}(t) ,} where 
\est{T^{(e)}(t) \eqdef \frac{1}{\varphi(N)}\sum_{\substack{\chi \pmod* N \\ \chi(-1) = (-1)^k}} \chi(d) T^{(e)}_\chi(t),}
\item \est{T^{(h)}  \eqdef  \frac{1}{2} \sum_{b \mid q} \min(b,q/b)^{k-1} T^{(h)}(b),} where \est{ T^{(h)}(b) \eqdef \frac{1}{\varphi(N)} \sum_{\substack{\chi \pmod* N \\ \chi(-1) = (-1)^k}} \chi(d) T^{(h)}_\chi(b) ,} 
\item \est{T^{(d)} \eqdef \frac{1}{\varphi(N)} \sum_{\substack{\chi \pmod* N \\ \chi(-1) = (-1)^k}} \chi(d) T^{(d)}_\chi .}
\end{enumerate}
We compute each of the $T^{(*)}$ in a separate section and check that they match what is claimed in Theorem \ref{ESTF}.
\subsection{Identity Term}
We have the orthogonality relation
\es{\label{ortho}\frac{1}{\varphi(N)} \sum_{\substack{\chi \pmod*N\\ \chi(-1)=(-1)^k}}\chi(d) = \frac{1}{2} \left(\delta_N(d,1) + (-1)^k\delta_N(d,-1)\right),} 
which shows that 
\es{T^{(i)} = & \frac{k-1}{12} q^{k/2-1} \psi(MN) \frac{1}{\varphi(N)} \sum_{\substack{\chi \pmod* N \\ \chi(-1) = (-1)^k}} \chi(dq^{1/2})  \\
= & \frac{k-1}{24}q^{k/2-1} \psi(MN) \left(\delta_N(q^{1/2}d, 1) + (-1)^k \delta_N(q^{1/2}d, -1)\right).} 
This matches the claimed identity term of Theorem \ref{ESTF}.

\subsection{Elliptic Term}\label{ETerm1}
The elliptic term $T^{(e)}$ is by far the most difficult to compute of the four terms in the trace formula.  We start with the case that $N$ is a power of a prime $\ell$ with $\ell \nmid q$. The goal of this section is to reduce the proof of this prime power case to the proof of Proposition \ref{MP}, which we state below.  We prove Proposition \ref{MP} in Section \ref{ProofMP}.  Finally, in Section \ref{MPGeneral} we consider the case where $N$ is not necessarily a prime power.

Let $0 \leq \alpha \leq \beta \leq \gamma$ be non-negative integers such that $(d^2q-1,\ell^\gamma)=\ell^\beta$.  Let $\Delta = t^2-4q$.  Taking $N=\ell^\gamma$ and $M = \ell^\alpha$ in the definition of $T^{(e)}(t)$ we have that 
\es{\label{Tet1} 
T^{(e)}(t) = \sum_{m^2 \mid \Delta} h_w\left(\frac{\Delta}{m^2}\right) \frac{1}{\varphi(\ell^\gamma)} \sum_{\substack{\chi \pmod* {\ell^\gamma} \\ \chi(-1) = (-1)^k}}  \chi(d) \mu_\chi(t,m,q).
} 
Also let $\kappa = v_\ell(m)$ and $\nu = v_\ell(\Delta)$ so that $\kappa \leq \lfloor \nu/2 \rfloor$.  We then have $(m,\ell^{\alpha+\gamma}) = \ell^{\min(\kappa,\alpha+\gamma)}$, which occurs often below.  Let 
\es{\label{wts}W(d) \eqdef \ssum_{c\in (\Z/\ell^{\alpha+\gamma}\Z)^{\times}} \delta_{\ell^{\gamma}}(c,d^{-1})} 
where the $*_m$ notation was explained in the statement of Theorem \ref{ESTFKL}. Note that $W(d)$ not only depends on $d$, but also on $\ell, \gamma$, and $\alpha$.

Using the orthogonality relation \eqref{ortho} we have 
\es{\label{orthogonalityresult} \frac{1}{\varphi(\ell^\gamma)} \sum_{\substack{\chi \pmod* {\ell^\gamma} \\ 
\chi(-1)=(-1)^k}} \chi(d) \mu_\chi(t,m,q) 
= & \frac{\psi(\ell^{\alpha+\gamma})}{\psi(\ell^{\alpha+\gamma}/\ell^{\min(\kappa,\alpha+\gamma)})}\ssum_{c \in (\Z/\ell^{\alpha+\gamma} \Z)^\times}\frac{1}{\varphi(\ell^\gamma)} \sum_{\substack{\chi \pmod* {\ell^\gamma} \\ 
\chi(-1)=(-1)^k}} \chi(dc)  \\
= & \frac{\psi(\ell^{\alpha+\gamma})}{\psi(\ell^{\alpha+\gamma-\min(\kappa,\alpha+\gamma)})}\frac{1}{2} \left(W(d) + (-1)^kW(-d)\right) .}  
We also set \es{C(t,q,d) \eqdef    \sum_{m^2 \mid (t^2-4q)} h_w\left(\frac{\Delta}{m^2}\right) \frac{\psi(\ell^{\alpha+\gamma})}{\psi(\ell^{\alpha+\gamma-\min(\kappa,\alpha+\gamma)})} W(d)} so that by \eqref{Tet1} and \eqref{orthogonalityresult} we have \est{T^{(e)}(t) = \frac{1}{2}\left( C(t,q,d) + (-1)^kC(t,q,-d)\right).} 
Note that $C(t,q,d)$ not only depends on $t,q,d$ but also on $\ell,\alpha,\beta,\gamma$.

Recall the definition of $H_{n_1,n_2}(t,q,d)$ from \eqref{HNMdef}. The proof of the following proposition is the subject of Section \ref{ProofMP}. 
\begin{proposition}\label{MP}
We have \est{C(t,q,d) =  2\frac{\psi(\ell^{2\gamma})}{\psi(\ell^{2\gamma-2\alpha})}\sum_{j=0}^{\beta-\alpha} \frac{\varphi(\ell^{2j})\varphi(\ell^{\gamma-\alpha-j})}{\varphi(\ell^{\gamma-\alpha})}H_{\ell^{\gamma},\ell^{\alpha+j}}(t,q,d).}
\end{proposition} 
Since $U_{k-2}(t,q)$ is an even (resp.~odd) function of $t$ when $k$ is even (resp.~odd) and $H_{n_1,n_2}(t,q,-d)= H_{n_1,n_2}(-t,q,d)$ we have \es{ \sum_{t^2<4q} U_{k-2}(t,q) C(t,q,d) = (-1)^k  \sum_{t^2<4q} U_{k-2}(t,q) C(t,q,-d).} By Proposition \ref{MP}, \eqref{tracesstep1}, and the definitions that follow 
\est{
T^{(e)} = \frac{\psi(\ell^{2\gamma})}{\psi(\ell^{2\gamma-2\alpha})}\sum_{j=0}^{\beta-\alpha} \frac{\varphi(\ell^{2j})\varphi(\ell^{\gamma-\alpha-j})}{\varphi(\ell^{\gamma-\alpha})}\sum_{t^2<4q} U_{k-2}(t,q)H_{\ell^{\gamma},\ell^{\alpha+j}}(t,q,d),
} 
matching exactly what is claimed in Theorem \ref{ESTF}.

\subsection{Hyperbolic Term}
Let $\tau \mid MN$ and $g= (\tau,NM/\tau)$.  We have that 
\est{T^{(h)}(b) = \frac{1}{\varphi(N)} \sum_{\substack{\chi \pmod*N \\ \chi(-1)=(-1)^k}} \chi(d) \sum_{\substack{\tau \mid MN \\ g \mid (b-q/b) \\ g \mid (MN/\cond(\chi))}} \varphi(g) \chi(y_\tau)} 
where 
$y_\tau$ is defined to be the unique element of $(\Z/(MN/g)\Z)^\times$ that satisfies $y_\tau \equiv b \pmod*\tau$ and $y_\tau \equiv q/b \pmod*{MN/\tau}$.  Swapping the order of summation we have  
\est{T^{(h)}(b) = \sum_{\substack{\tau \mid MN \\ g \mid (b-q/b) }} \frac{\varphi(g)}{\varphi(N)} \sum_{\substack{c \mid N \\ g \mid (MN/c)}} \sum_{\substack{\cond(\chi) = c \\ \chi(-1)=(-1)^k}} \chi(d y_\tau).}  
Recall the following almost-orthogonality relation for sums of characters of a given conductor (see e.g.~\cite[Section 2]{HB}) 
\es{\sum_{\substack{\cond(\chi) = c \\ \chi(-1)=(-1)^k}} \chi(a)= \frac{1}{2} \sum_{\delta \mid (a-1,c)} \varphi(\delta)\mu\left(\frac{c}{\delta}\right) + (-1)^k \frac{1}{2} \sum_{\delta \mid (a+1,c)} \varphi(\delta)\mu\left(\frac{c}{\delta}\right).} Let \est{V(\tau,d) \eqdef \sum_{\substack{c \mid N \\ g \mid (MN/c)}}\sum_{\delta \mid (dy_\tau-1,c)} \varphi(\delta)\mu\left(\frac{c}{\delta}\right) } 
so that we have 
\est{T^{(h)}(b) = \frac{1}{2} \sum_{\substack{\tau \mid MN \\ g \mid (b-q/b) }} \frac{\varphi(g)}{\varphi(N)} \left(V(\tau,d) + (-1)^k V(\tau,-d)\right).} 
By commutativity and associativity of Dirichlet convolution we have 
\es{\label{vform2} V(\tau,d) = \sum_{\substack{ a_1 \mid N \\ a_1 \mid dy_\tau -1 }} \varphi(a_1) \sum_{\substack{a_2 \mid (N/a_1) \\ g \mid a_2 M}} \mu \left( \frac{N}{a_1a_2}\right).} 
For any positive integers $n,g,M$ we have 
\es{\sum_{\substack{ a\mid n \\ g \mid aM}} \mu\left(\frac{n}{a}\right) = \sum_{\substack{a \mid n \\  
(g/(g,M)) \mid a }} \mu\left(\frac{n}{a}\right) = 
\delta_{(g/(g,M))}(n,0)
\sum_{ a \mid (ng/(g,M)) } \mu \left( \frac{ng}{a(g,M)}\right)  = \delta\left(n,\frac{g}{(g,M)}\right),} 
and applying this to \eqref{vform2} \es{ V(\tau,d) = \varphi\left( \frac{N(g,M)}{g}\right) \delta_{N(g,M)/g} \left(dy_\tau,1\right).} 
Therefore we have 
\est{ T^{(h)}(b) = \frac{1}{2} \sum_{\substack{\tau \mid MN \\ g \mid (b-q/b) }} \frac{\varphi(g)}{\varphi(N)} \varphi\left( \frac{N(g,M)}{g}\right)  \left( \delta_{N(g,M)/g} (dy_\tau,1) + (-1)^k \delta_{N(g,M)/g} (dy_\tau,-1)\right),} 
as claimed in the statement of Theorem \ref{ESTF}.

\subsection{Dual Term}
We have immediately that 
\est{ T^{(d)} = \frac{\sigma(q)}{\varphi(N)}\delta(k,2)} 
as all terms in the sum over $\chi$ vanish except the identity character, and $(N,q)=1$ by hypothesis. 
\end{proof}

\subsection{The Proof of Proposition \ref{MP}}\label{ProofMP}

In this section we compute $C(t,q,d)$, which is defined as a sum over divisors $m^2$ of $\Delta$.  We split up this sum based on the value of $v_{\ell}(m)$.  Let 
\es{\label{ckdef} c_\kappa(t,q,d) \eqdef \sum_{\substack{m^2 \mid \Delta \\ v_\ell(m) = \kappa}} h_w\left( \frac{\Delta}{m^2}\right) \frac{\psi(\ell^{\alpha+\gamma})}{\psi(\ell^{\alpha+\gamma- \min(\kappa, \alpha+\gamma)})} W(d)} 
so that 
\es{ C(t,q,d) = \sum_{\kappa \geq 0 } c_\kappa(t,q,d).}

Our goal is to rewrite these $c_\kappa(t,q,d)$ so that we can express $C(t,q,d)$ as a sum involving the class numbers $H_{\ell^\gamma,\ell^{\alpha+j}}(t,q)$ defined in Section \ref{curves2classnumbers}.  We start with some preliminary lemmas.

\subsubsection{Preliminary Lemmas} We prove several lemmas that will be used in the proof of Proposition \ref{MP}.  Our first goal in simplifying $c_\kappa(t,q,d)$ is to compute $W(d)$. We do this in Lemma \ref{weightscalc}.
Lemmas \ref{sublem2}, \ref{resclbound}, and \ref{anysolnisbinv} are technical and will be used in the proof of Lemma \ref{weightscalc}.  

Lemma \ref{sublem2} allows us to convert the sums over solutions to the Hecke polynomial $c^2-tc+q$ implicit in the $*_m$ notation in \eqref{wts} into the indicator function of $t$ lying in a certain residue class.  We will apply this lemma for for several different choices of $B,C,$ and $D$. 
\begin{lemma}\label{sublem2} 
Let $0\leq B \leq C\leq i \leq B+C$, $D \in (\Z/\ell^{C}\Z)^\times$, $D^2q\equiv 1 \pmod {\ell^B}$, and $t\in \Z$.  
The following statements are equivalent: 
\begin{enumerate}  
\item There exists a lift $c$ modulo $\ell^i$ of $D^{-1} \pmod*{\ell^C}$ that satisfies $c^2-tc+q \equiv 0 \pmod {\ell^i}$. 
\item The integer $t$ satisfies $ t \equiv Dq+D^{-1} \pmod{\ell^i}$, which is well-defined by Lemma \ref{DtnWellDef}.
\end{enumerate}
If the above equivalent statements hold, then every one of the $\ell^{i-C}$ lifts of $D^{-1} \pmod {\ell^{C}}$ to residue classes $c$ modulo $\ell^i$ satisfies $c^2 - tc + q \equiv 0 \pmod{\ell^i}$.
\end{lemma}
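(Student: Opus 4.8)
The whole statement will follow from a single computation of $c + qc^{-1} \pmod{\ell^i}$. The starting observation is that any lift $c$ of $D^{-1} \pmod{\ell^C}$ is automatically a unit modulo $\ell^i$: since $D^{-1}$ is a unit we have $\ell \nmid c$. Hence, multiplying by $c^{-1}$, the Hecke congruence in (1) is equivalent to
\est{ t \equiv c + q c^{-1} \pmod{\ell^i}. }
The plan is therefore to show that $c + q c^{-1} \equiv Dq + D^{-1} \pmod{\ell^i}$ for \emph{every} such lift $c$; both equivalences and the final uniformity claim will then be immediate.

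First I would fix an integer representative of $D^{-1}$ and write an arbitrary lift as $c = D^{-1} + s$ with $\ell^C \mid s$. Factoring $c = D^{-1}(1 + sD)$ and inverting the geometric series, I would check that $c^{-1} \equiv D - sD^2 \pmod{\ell^i}$; here the quadratic tail is killed because $\ell^{2C} \mid s^2$ and $i \le 2C$ (using $i \le B+C \le 2C$). Substituting yields
\est{ c + q c^{-1} \equiv D^{-1} + qD + s\,(1 - qD^2) \pmod{\ell^i}. }
The crucial point is that the $s$-dependent correction vanishes: by hypothesis $\ell^B \mid (1 - qD^2)$ and by construction $\ell^C \mid s$, so $\ell^{B+C}$ divides $s(1 - qD^2)$, and the constraint $i \le B + C$ forces this term to be $0 \pmod{\ell^i}$. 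Thus $c + qc^{-1} \equiv Dq + D^{-1}$ independently of $s$, and this residue class is well-defined precisely by Lemma \ref{DtnWellDef}.

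With this in hand all three assertions drop out simultaneously. For (1)$\Rightarrow$(2): if some lift $c$ solves $c^2 - tc + q \equiv 0$, then $t \equiv c + qc^{-1} \equiv Dq + D^{-1}$. For (2)$\Rightarrow$(1): if $t \equiv Dq + D^{-1}$, then any lift $c$ satisfies $t \equiv c + qc^{-1}$, hence $c^2 - tc + q \equiv 0$. Finally, since the displayed congruence holds uniformly over all $\ell^{i-C}$ lifts, once (2) holds \emph{every} lift is a solution, which is the concluding ``moreover'' clause.

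I expect the only real care to be needed in the $\ell$-adic valuation bookkeeping --- verifying that both the quadratic tail in the expansion of $c^{-1}$ and the term $s(1-qD^2)$ are annihilated modulo $\ell^i$. Both use the inequality chain $C \le i \le B+C \le 2C$, so the hypotheses are exactly calibrated for the argument; apart from this, the proof is purely formal.
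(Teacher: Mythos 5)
Your proposal is correct and follows essentially the same route as the paper: the same parametrization of lifts $c = D^{-1} + s$ with $\ell^C \mid s$, and the same valuation bookkeeping ($\ell^{B+C} \mid s(1-qD^2)$, $\ell^{2C}\mid s^2$, with $C \le i \le B+C \le 2C$) that the paper uses when it solves for $t$ in one direction and substitutes the lifts into the quadratic in the other. Your only deviation is organizational --- deriving both implications and the ``moreover'' clause from the single identity $c + qc^{-1} \equiv Dq + D^{-1} \pmod{\ell^i}$, which is in substance the computation of Lemma \ref{DtnWellDef} that the paper invokes --- so this is a mild streamlining rather than a different argument.
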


\begin{proof}  
Suppose that $c= D^{-1} + c_0\ell^C$ is a solution to $c^2-tc+q\equiv 0 \pmod {\ell^i}$ for some $c_0$ satisfying $0 \le c_0 < \ell^{i-C}$. Then we solve for $t$:
\est{
t \equiv & (D^{-1} +c_0\ell^{C})^{-1} (q+ (D^{-1}+c_0\ell^{C})^2) \pmod{\ell^i}\\ 
\equiv & Dq+D^{-1}\pmod {\ell^i}.}

Now assume that $t\equiv Dq+D^{-1}\pmod {\ell^i}$.  We parametrize lifts of $D^{-1}$ to $\Z/\ell^{i}\Z$ by $D^{-1}+ c'\ell^C$ with $0 \le c' < \ell^{i-C}$.  Then \est{ (D^{-1}+c'\ell^C)^2 - (Dq+D^{-1})(D^{-1}+c'\ell^C) + q \equiv & (D^{-1}-Dq)c'\ell^C  \equiv  0 \pmod{\ell^i}} 
for any value of $c'$. Thus, all $\ell^{i-C}$ lifts of $D^{-1} \pmod{\ell^C}$ are solutions to this quadratic polynomial modulo $\ell^i$.  
\end{proof}
The second technical lemma, Lemma \ref{resclbound}, will be used both in the proof of Lemma \ref{weightscalc} and also repeatedly in the proof of Proposition \ref{MP}.  Recall $\nu = v_\ell(\Delta) = v_{\ell}(t^2-4q)$.  
\begin{lemma}\label{resclbound} \begin{enumerate} \item If $\nu<2\beta$ then $t\not \equiv dq+d^{-1}\pmod{\ell^{\nu+1}}$.
\item If $\ell=2$ and $\nu < 2\beta$ then $t\not \equiv dq+d^{-1}\pmod{2^{\nu}}.$ 
\item If $\beta<\gamma$ and $\nu \neq 2\beta$ then $t\not \equiv dq+d^{-1}\pmod{\ell^{2\beta+1}}.$ 
\end{enumerate}
\end{lemma}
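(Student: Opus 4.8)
The plan is to pass to the ring of $\ell$-adic integers and reduce everything to a single algebraic identity together with a comparison of $\ell$-adic valuations. First I would fix any lift of $d$ to a unit of $\Z_\ell$, so that $d^{-1}\in\Z_\ell^\times$ is exact, and set $P = dq+d^{-1}-t$ and $P' = dq+d^{-1}+t$. Expanding $(dq\pm d^{-1})^2$ and cancelling the cross term via $d\cdot d^{-1}=1$ gives $(dq+d^{-1})^2 = (dq-d^{-1})^2 + 4q$, and hence the identity $PP' = (dq+d^{-1})^2 - t^2 = (dq-d^{-1})^2 - \Delta$. Since $dq-d^{-1} = d^{-1}(d^2q-1)$ with $d^{-1}$ a unit, $v_\ell(dq-d^{-1}) = v_\ell(d^2q-1)$, and the hypothesis $(d^2q-1,\ell^\gamma)=\ell^\beta$ makes this valuation exactly $\beta$ when $\beta<\gamma$ and at least $\beta$ in all cases. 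Each assertion to be proved reads $v_\ell(P) < s$ for $s\in\{\nu+1,\ \nu,\ 2\beta+1\}$; since every such $s$ satisfies $s\le\gamma+\beta$ (using $\beta\le\gamma$ for the first two and $\beta<\gamma$ for the third), the residue $dq+d^{-1}\bmod \ell^s$ is the lift-independent one furnished by Lemma \ref{DtnWellDef}, so these are honest statements about $v_\ell(P)$.

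For part (1) I would argue as follows: if $\nu<2\beta$ then $v_\ell((dq-d^{-1})^2)\ge 2\beta > \nu = v_\ell(\Delta)$, so the two summands of $PP'=(dq-d^{-1})^2-\Delta$ have unequal valuations and $v_\ell(PP')=\nu$. As $v_\ell(P')\ge 0$, this forces $v_\ell(P)\le\nu$, which is exactly $t\not\equiv dq+d^{-1}\pmod{\ell^{\nu+1}}$; note this works for every $\ell$ and needs no relation between $\beta$ and $\gamma$. Part (3) runs on the same idea: assuming $\beta<\gamma$ makes $v_\ell((dq-d^{-1})^2)=2\beta$ exactly, and $\nu\ne 2\beta$ then gives $v_\ell(PP')=\min(2\beta,\nu)\le 2\beta$, whence $v_\ell(P)\le 2\beta$, i.e.\ $t\not\equiv dq+d^{-1}\pmod{\ell^{2\beta+1}}$.

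Part (2) is the one genuine refinement: it sharpens part (1) by a single power of $2$, and that extra power must come from a parity input absent for odd $\ell$. Here $q$ and $d$ are odd (as $(N,q)=1$ with $N=2^\gamma$), so $dq+d^{-1}$ is even; moreover $\Delta\equiv t^2\pmod 4$, so $\nu=0$ when $t$ is odd and $\nu\ge 2$ when $t$ is even. The substantive case is $t$ even (the congruence modulo $2^{\nu}=2^0$ being degenerate otherwise, and not arising in the applications where $t\equiv q+1$ is even), and then $P'=dq+d^{-1}+t$ is even, so $v_2(P')\ge 1$. Combining with $v_2(P)+v_2(P')=v_2(PP')=\nu$ from part (1) yields $v_2(P)\le\nu-1$, that is $t\not\equiv dq+d^{-1}\pmod{2^{\nu}}$. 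The only place demanding care in the whole argument is the well-definedness bookkeeping of the first paragraph; I expect this to be routine, so there is no real obstacle beyond stating the valuation comparisons and the $\ell=2$ parity step cleanly.
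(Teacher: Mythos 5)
Your proof is correct, and it reorganizes the paper's argument rather than reproducing it. Both rest on the same identity $(dq+d^{-1})^2=(dq-d^{-1})^2+4q$, but the paper argues by congruence contradiction: assuming $t\equiv dq+d^{-1}\pmod{\ell^s}$, it squares to get $\Delta\equiv(dq-d^{-1})^2\pmod{\ell^s}$ and contradicts $v_\ell(\Delta)=\nu$; for part (2) it invokes part (1) to write $t\equiv dq+d^{-1}+2^\nu\pmod{2^{\nu+1}}$ and checks that squaring absorbs the offset $2^\nu$ --- a step that silently uses $\nu\ge 1$, since the term $2^{2\nu}$ must vanish modulo $2^{\nu+1}$. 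You instead factor $(dq+d^{-1})^2-t^2=PP'$, so that $PP'=(dq-d^{-1})^2-\Delta$ has valuation $\min\left(v_\ell\bigl((dq-d^{-1})^2\bigr),\nu\right)$ whenever the two valuations differ, and all three parts become short consequences of additivity of valuations: $v_\ell(P)\le v_\ell(PP')$ gives (1) and (3), with the case split in (3) absorbed into the exactness $v_\ell(d^2q-1)=\beta$ when $\beta<\gamma$, while in (2) the extra power of $2$ is extracted structurally from $v_2(P')\ge 1$ via the parity of $t$, rather than computationally as in the paper. Your well-definedness bookkeeping ($s\le\gamma+\beta$ in each case, then Lemma \ref{DtnWellDef}) is the same appeal the paper makes. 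You also correctly flag the degenerate case: for $\ell=2$ and $t$ odd one has $\nu=0$, and part (2) then asserts a non-congruence modulo $2^0=1$, which is vacuously false as stated; the paper's proof carries the same implicit restriction to $\nu\ge1$ (its displayed congruence modulo $2^{\nu+1}$ fails at $\nu=0$), and in every application the modulus forces $t$ even, hence $\nu\ge2$, so nothing downstream is affected. Net comparison: the paper's version stays entirely inside congruence arithmetic, while yours is shorter, makes the source of each power of $\ell$ visible, and surfaces an edge case the paper elides.
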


\begin{proof}
\begin{enumerate}
\item If $\nu<2\beta$ then $dq+d^{-1}\pmod{\ell^{\nu+1}}$ is well-defined by Lemma \ref{DtnWellDef}.  If $t\equiv dq+d^{-1}\pmod{\ell^{\nu+1}}$ then $\Delta \equiv (dq-d^{-1})^2 \pmod{\ell^{\nu+1}}$.  By definition $(d^2q-1, \ell^\gamma) = \ell^\beta$, so $v_\ell((d^2q-1)^2) \ge  2\beta \ge \nu+1$.  Therefore, $\Delta \equiv 0 \pmod{\ell^{\nu +1}}$, a contradiction with the definition of $\nu$.

\item If $\ell=2$ and $t \equiv dq+d^{-1}\pmod{2^{\nu}}$ but not mod $2^{\nu+1}$ then we can write $t \equiv dq+d^{-1}+2^\nu \pmod{2^{\nu+1}}$.  We get $\Delta \equiv (dq-d^{-1})^2 \equiv 0 \pmod{2^{\nu+1}}$ nonetheless, and get a contradiction with the definition of $\nu$.  
\item The residue class $dq+d^{-1}\pmod{\ell^{2\beta+1}}$ is well-defined by Lemma \ref{DtnWellDef} because $\beta<\gamma$. If $t\equiv dq+d^{-1}\pmod{\ell^{2\beta+1}}$ then $\Delta \equiv (dq-d^{-1})^2 \pmod{\ell^{2\beta+1}}$ and thus $\nu=2\beta$ since $(d^2q-1,\ell^\gamma)=\ell^\beta$.   
\end{enumerate}
\end{proof}

The third technical lemma, Lemma \ref{anysolnisbinv}, concerns the situation where $\beta$ is as large as possible, i.e.~$\beta=\gamma$, and says that any solution modulo $\ell^i$ is a lift of the distinguished $d^{-1}$ modulo $\ell^\gamma$.
\begin{lemma}\label{anysolnisbinv} Let $ \gamma\geq 1$, $d\in (\Z/\ell^\gamma\Z)^\times$, $\ell\nmid q$, and suppose that $d^2q\equiv 1 \pmod {\ell^{\gamma}}$ and $t\equiv dq+d^{-1} \pmod {\ell^{2\gamma}}$, which is well-defined by Lemma \ref{DtnWellDef}. For any $i \geq 2 \gamma$, if $c$ is a solution to $c^2-tc+q\equiv 0 \pmod {\ell^{i}}$  then $c \equiv d^{-1}\pmod {\ell^\gamma}$.
\end{lemma}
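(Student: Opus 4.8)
The plan is to exploit the fact that, under these hypotheses, the Hecke polynomial $X^2-tX+q$ factors modulo $\ell^{2\gamma}$ with \emph{both} roots congruent to $d^{-1}$ modulo $\ell^\gamma$, so that any solution of the quadratic is forced to inherit this congruence. First I would fix an integer lift $\hat d$ of $d$ coprime to $\ell$, and let $\hat d^{-1}$ denote its inverse modulo $\ell^{2\gamma}$. Since $d^2q\equiv 1 \pmod{\ell^\gamma}$, Lemma \ref{DtnWellDef} applied with $B=C=\gamma$ and $i=2\gamma$ (which satisfies $\gamma\leq 2\gamma\leq 2\gamma$) guarantees that $dq+d^{-1}\pmod{\ell^{2\gamma}}$ is independent of the chosen lift, so the hypothesis $t\equiv dq+d^{-1}\pmod{\ell^{2\gamma}}$ reads $t\equiv \hat d q+\hat d^{-1}\pmod{\ell^{2\gamma}}$. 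Combining this with $\hat d q\cdot \hat d^{-1}\equiv q\pmod{\ell^{2\gamma}}$ yields the factorization
\[
X^2-tX+q \equiv (X-\hat d q)(X-\hat d^{-1}) \pmod{\ell^{2\gamma}}.
\]

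Next I would record the two congruences modulo $\ell^\gamma$ that drive the argument. Reducing $d^2q\equiv 1 \pmod{\ell^\gamma}$ gives $dq\equiv d^{-1}\pmod{\ell^\gamma}$, so both roots satisfy $\hat d q\equiv d^{-1}$ and $\hat d^{-1}\equiv d^{-1}\pmod{\ell^\gamma}$; in particular the two roots of the factorization are themselves congruent modulo $\ell^\gamma$.

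Finally I would run the valuation argument. Given a solution $c$ of $c^2-tc+q\equiv 0 \pmod{\ell^i}$ with $i\geq 2\gamma$, reducing modulo $\ell^{2\gamma}$ and using the factorization gives $(c-\hat d q)(c-\hat d^{-1})\equiv 0 \pmod{\ell^{2\gamma}}$, whence $v_\ell(c-\hat d q)+v_\ell(c-\hat d^{-1})\geq 2\gamma$. If $c\not\equiv d^{-1}\pmod{\ell^\gamma}$, then by the previous paragraph $c$ is incongruent to \emph{each} root modulo $\ell^\gamma$, so both $v_\ell(c-\hat d q)$ and $v_\ell(c-\hat d^{-1})$ are at most $\gamma-1$, forcing their sum to be at most $2\gamma-2<2\gamma$, a contradiction. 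Hence $c\equiv d^{-1}\pmod{\ell^\gamma}$, as claimed.

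I expect the only delicate point to be the bookkeeping around the well-definedness of $dq+d^{-1}$ and the passage between the residue class $d\in(\Z/\ell^\gamma\Z)^\times$ and a genuine integer lift $\hat d$ whose inverse is taken modulo $\ell^{2\gamma}$; once the factorization over $\Z/\ell^{2\gamma}\Z$ is in place, the heart of the proof is the short comparison of $\ell$-adic valuations above.
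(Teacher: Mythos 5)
Your proof is correct, and it takes a genuinely different route from the paper's. You factor the Hecke polynomial over $\Z/\ell^{2\gamma}\Z$ as $(X-\hat d q)(X-\hat d^{-1})$ --- legitimate since $\hat d\,\hat d^{-1}\equiv 1\pmod{\ell^{2\gamma}}$ and $t\equiv \hat d q+\hat d^{-1}\pmod{\ell^{2\gamma}}$, with the well-definedness correctly covered by Lemma \ref{DtnWellDef} at $B=C=\gamma$, $i=2\gamma$ --- note that both roots are $\equiv d^{-1}\pmod{\ell^\gamma}$ (this is exactly $d^2q\equiv 1\pmod{\ell^\gamma}$), and then compare valuations: a solution $c$ forces $v_\ell(c-\hat d q)+v_\ell(c-\hat d^{-1})\geq 2\gamma$, which is impossible if both factors have valuation at most $\gamma-1$. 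The paper instead completes the square: from $c_0^2-(dq+d^{-1})c_0+q\equiv 0\pmod{\ell^{2\gamma}}$ it derives $\left(2c_0-(dq+d^{-1})\right)^2\equiv (dq-d^{-1})^2\equiv 0\pmod{\ell^{2\gamma}}$ and extracts the congruence modulo $\ell^\gamma$; because this manipulation divides by $2$, the paper must run a separate, fiddlier argument for $\ell=2$, working with $2^{-1}(dq+d^{-1})$ defined only modulo $2^{2\gamma-1}$. Your factorization-plus-valuation argument never divides by $2$ and so treats all primes, including $\ell=2$, uniformly --- a genuine simplification for this lemma. Both proofs ultimately rest on the same arithmetic fact, namely that the discriminant $(dq-d^{-1})^2=d^{-2}(d^2q-1)^2$ has $\ell$-valuation at least $2\gamma$, i.e.~the polynomial has a double root modulo $\ell^\gamma$; your packaging simply avoids the case split. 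One small point worth making explicit: in the valuation step you should regard $c$, $\hat d q$, $\hat d^{-1}$ as integers (or allow valuation $\infty$); in the degenerate case where a factor vanishes, $c$ equals a root and the conclusion holds trivially, so nothing breaks.
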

\begin{proof}
Suppose first that $\ell\neq 2$.  If $c_0$ satisfies $c_0^2-tc_0+q\equiv 0 \pmod {\ell^{i}}$ then 
\est{c_0^2 -(dq+d^{-1})c_0 +q \equiv 0 \pmod {\ell^{2\gamma}}} because $i\geq 2\gamma$. By completing the square we find 
\est{ (2c_0 - ( dq+d^{-1}))^2\equiv  (dq-d^{-1})^2  \equiv  0 \pmod {\ell^{2\gamma}}.} 
We conclude $2c_0 \equiv dq +d^{-1}\pmod{\ell^\gamma}$. Since $dq \equiv d^{-1}\pmod{\ell^\gamma}$, we have $c_0 \equiv d^{-1} \pmod{\ell^\gamma}$.

Now consider the case $\ell=2$.  Since $2 \nmid q$ we have that $d,d^{-1},$ and $q$ are all odd. Hence $t\equiv dq+d^{-1}\pmod {2^{2\gamma}}$ is even.  Then $2^{-1}(dq+d^{-1})$ is defined modulo $2^{2\gamma-1}$, and $2^{-2}(dq+d^{-1})^2$ is defined modulo $2^{2\gamma}$.  By completing the square we have 
\est{\left(c_0- 2^{-1}(dq+d^{-1})\right)^2 \equiv  2^{-2}(dq-d^{-1})^2  \equiv 0 \pmod {2^{2\gamma}}.}   
Thus $c_0 \equiv  2^{-1} (dq+d^{-1})  \equiv  d^{-1} \pmod{2^\gamma}$.
\end{proof}

With Lemmas \ref{sublem2}, \ref{resclbound}, and \ref{anysolnisbinv} we may now evaluate $W(d)$. Recall the notation $\ssum$ is a sum over elements $c \in (\Z/\ell^{\alpha+\gamma}\Z)^{\times}$ that satisfy $c^2 - tq + q \equiv 0 \pmod{\ell^{\alpha+\gamma}}$ and lift to solutions of this polynomial modulo $\ell^{\alpha+\gamma+\min(\kappa,\alpha+\gamma)}$.  In particular, it depends on the value of $m$.  Also recall the definition $D(t;n)=\delta_n(t,dq+d^{-1})$ from \eqref{Dtn}.
Let $S(a,n)$ be the number of solutions to $x^2-a \equiv 0 \pmod n$.  Later, in Lemma \ref{sqrts}, we give an explicit evaluation for $S(a,n)$.  

\begin{lemma}\label{weightscalc}
\begin{enumerate}
\item If $\beta <\gamma$ then 
\est{\ssum_{c\in (\Z/\ell^{\alpha+\gamma}\Z)^{\times}} \delta_{\ell^{\gamma}}(c,d^{-1})  = 
\begin{cases} 
D(t;\ell^{\gamma+ \alpha+\min(\kappa,\alpha+\gamma)})  \ell^{\alpha} & \text{ if } \min(\kappa,\alpha+\gamma) \leq \beta-\alpha \\
D(t;\ell^{\beta+\gamma})  \ell^{\beta-\min(\kappa,\alpha+\gamma)} & \text{ if } \beta-\alpha \leq \min(\kappa,\alpha+\gamma).
\end{cases}}  

\item If $ \beta = \gamma $ then 
\est{ \ssum_{c\in (\Z/\ell^{\alpha+\gamma}\Z)^{\times}} \delta_{\ell^{\gamma}}(c,d^{-1})= 
\begin{cases}D(t;\ell^{\gamma+ \alpha+\min(\kappa,\alpha+\gamma)})  \ell^{\alpha} & \text{ if }\min(\kappa,\alpha+\gamma) \leq \gamma-\alpha \\
D(t;\ell^{2\gamma})  \frac{S\left(\Delta^*,\ell^{\alpha+\gamma+\min(\kappa,\alpha+\gamma)}\right)}{\ell^{\min(\kappa,\alpha+\gamma)}} & \text{ if } \gamma-\alpha \leq \min(\kappa,\alpha+\gamma),
\end{cases}}
where
\est{\Delta^*= \begin{cases} t^2-4q & \ell\neq 2 \\ \frac{t^2-4q}{4} & \ell=2. 
\end{cases}} 
\end{enumerate}
\end{lemma}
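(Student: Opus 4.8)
The plan is to unfold the $\ssum$ notation and reduce $W(d)$ to a lattice-point count for the roots of the Hecke polynomial $c^2-tc+q$, then feed that count through the three preliminary lemmas. First I would record what $W(d)$ counts. By the definition of $\ssum$ following Theorem \ref{ESTFKL} (applied with modulus $\ell^{\alpha+\gamma}$, so that $(\ell^{\alpha+\gamma},m)=\ell^{\min(\kappa,\alpha+\gamma)}$), $W(d)$ is the number of residue classes $c \pmod{\ell^{\alpha+\gamma}}$ with $c\equiv d^{-1}\pmod{\ell^\gamma}$ that lift to a solution of $c^2-tc+q\equiv 0\pmod{\ell^{\alpha+\gamma+\min(\kappa,\alpha+\gamma)}}$. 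Writing $\mu=\min(\kappa,\alpha+\gamma)$, $i_0=\alpha+\gamma$, and $i_1=\alpha+\gamma+\mu$, the governing quantity is the $\ell$-adic valuation of the derivative $2c-t$ at a root $c\equiv d^{-1}$; when $t\equiv dq+d^{-1}$ this valuation is $v_\ell(d^2q-1)$, which equals $\beta$ in the case $\beta<\gamma$ and is at least $\gamma$ in the case $\beta=\gamma$. This is precisely the level at which Hensel lifting branches, so the two cases of the lemma reflect whether this branch point is reached within the range $\gamma\le i\le\beta+\gamma$ covered by Lemma \ref{sublem2}.

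Next I would treat $\beta<\gamma$. Since $\alpha\le\beta$ we always have $\gamma\le i_0\le\beta+\gamma$, so Lemma \ref{sublem2} (with $B=\beta$, $C=\gamma$, $D=d$) applies at level $i_0$ and identifies solutions in the class $d^{-1}\pmod{\ell^\gamma}$ with the indicator $D(t;\ell^{i_0})$. If $\mu\le\beta-\alpha$ then also $i_1\le\beta+\gamma$, and Lemma \ref{sublem2} at level $i_1$ shows that whenever $D(t;\ell^{i_1})=1$ all $\ell^{i_0-\gamma}=\ell^{\alpha}$ lifts of $d^{-1}$ qualify, giving $W(d)=\ell^{\alpha}D(t;\ell^{i_1})$, the first subcase. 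If instead $\mu\ge\beta-\alpha$, then $i_1\ge\beta+\gamma$ exceeds the range of Lemma \ref{sublem2} and the real work begins: I would apply Lemma \ref{sublem2} at the edge level $\beta+\gamma$ to see that solutions in the class $d^{-1}$ exist only when $D(t;\ell^{\beta+\gamma})=1$, invoke Lemma \ref{resclbound}(3) to conclude $\nu=2\beta$ exactly (whence $\kappa\le\beta$, so $\mu\le\beta$), and then analyze the two branches $r_1\equiv d^{-1},\ r_2$ of the quadratic, which now splits over $\Z_\ell$ with $v_\ell(r_1-r_2)=\beta$. The branch through $d^{-1}$ contributes $\ell^{\beta}$ solutions modulo $\ell^{i_1}$, all lying in one class modulo $\ell^{i_1-\beta}$, whose reductions modulo $\ell^{i_0}$ number $\ell^{i_0-(i_1-\beta)}=\ell^{\beta-\mu}$; this yields $W(d)=D(t;\ell^{\beta+\gamma})\ell^{\beta-\mu}$, the second subcase.

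Finally I would treat $\beta=\gamma$, where $d^2q\equiv1\pmod{\ell^\gamma}$ and the derivative valuation is $\ge\gamma$. When $\mu\le\gamma-\alpha$ we have $i_1\le2\gamma$ and Lemma \ref{sublem2} applies exactly as in the first subcase above, giving $\ell^{\alpha}D(t;\ell^{i_1})$. When $\mu\ge\gamma-\alpha$, so $i_1\ge2\gamma$, I would instead use Lemma \ref{anysolnisbinv}: once $D(t;\ell^{2\gamma})=1$, every solution modulo $\ell^{i}$ with $i\ge2\gamma$ automatically reduces to $d^{-1}\pmod{\ell^\gamma}$, so the constraint $\delta_{\ell^\gamma}(c,d^{-1})$ becomes vacuous and $W(d)$ simply counts the reductions modulo $\ell^{i_0}$ of all solutions modulo $\ell^{i_1}$. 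Completing the square ($x=2c-t$ for odd $\ell$, with the factor $4$ forcing the definition $\Delta^*=(t^2-4q)/4$ when $\ell=2$) turns this into counting $x^2\equiv\Delta^*$, so there are $S(\Delta^*,\ell^{i_1})$ solutions modulo $\ell^{i_1}$, each qualifying residue modulo $\ell^{i_0}$ receiving exactly $\ell^{i_1-i_0}=\ell^{\mu}$ of them, which produces the factor $S(\Delta^*,\ell^{i_1})/\ell^{\mu}$.

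I expect the main obstacle to be the second subcase of $\beta<\gamma$, where the precision $i_1$ passes beyond the branch point $\beta+\gamma$ at which Lemma \ref{sublem2} ceases to apply: one must pin down $\nu=2\beta$ via Lemma \ref{resclbound} and then carefully track how the $\ell^{\beta}$ solutions in the correct branch collapse under reduction modulo $\ell^{i_0}$. The prime $\ell=2$ also requires care throughout, since completing the square costs a power of $2$; parts (2) of Lemma \ref{resclbound} and the explicit $2$-adic square-completion in Lemma \ref{anysolnisbinv} are designed precisely to absorb this, and I would carry the $\ell=2$ bookkeeping in parallel rather than as a wholly separate argument.
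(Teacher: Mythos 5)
Your overall architecture is sound, and in the easy regimes ($\min(\kappa,\alpha+\gamma)\le\beta-\alpha$ in part (1), $\le\gamma-\alpha$ in part (2)) your argument is exactly the paper's: a single application of Lemma \ref{sublem2} at level $i_1=\alpha+\gamma+\min(\kappa,\alpha+\gamma)$. In the hard subcase of (1), however, you take a genuinely different route from the paper. The paper never factors the quadratic: it fixes $c_0\equiv d^{-1}\pmod{\ell^\gamma}$, parameterizes $c=c_0+i\ell^\gamma$, applies Lemma \ref{sublem2} a second time with $D=c^{-1}$ (Lemma \ref{resclbound}(iii) is used there only to guarantee $k\le\alpha+\beta+\gamma$ so that the second application is legal, not to pin down $\nu=2\beta$), and then shows directly that the values $c^{-1}q+c\pmod{\ell^k}$ sweep uniformly through the lifts of $dq+d^{-1}\pmod{\ell^{\beta+\gamma}}$, because $v_\ell(c_0^{-2}q-1)=\beta$ exactly. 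Your $\Z_\ell$-branch picture gives the same count $\ell^{\beta-\mu}$ and is conceptually cleaner, but it rests on two facts you do not establish: that the polynomial splits over $\Z_\ell$ with $v_\ell(r_1-r_2)=\beta$, and that one root is $\equiv d^{-1}\pmod{\ell^\gamma}$.

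For odd $\ell$ these follow from Hensel ($\Delta\equiv(dq-d^{-1})^2\pmod{\ell^{\beta+\gamma}}$ makes $\Delta/\ell^{2\beta}$ a unit square mod $\ell$), but at $\ell=2$ there is a genuine gap: splitting over $\Z_2$ requires $\Delta/2^{2\beta}\equiv1\pmod 8$, and neither of the tools you cite delivers it --- Lemma \ref{anysolnisbinv} is proved only for $\beta=\gamma$, and Lemma \ref{resclbound}(2) addresses the regime $\nu<2\beta$. The needed fact is true: writing $t=dq+d^{-1}+2^{\beta+\gamma}w$ gives $\Delta=(dq-d^{-1})^2+2^{\beta+\gamma+1}(dq+d^{-1})w+2^{2(\beta+\gamma)}w^2$, and using $\gamma\ge\beta+1$ together with $v_2(dq+d^{-1})\ge2$ when $\beta=1$ one checks both error terms have valuation $\ge2\beta+3$; but this computation must be supplied, and the identification of the branch through $d^{-1}$ needs a matching of $\sqrt{\Delta}$ with $\pm(dq-d^{-1})$ at the correct precision. (The paper's parameterization argument sidesteps all of this, which is presumably why it was chosen.) Separately, in the subcase $\gamma-\alpha\le\min(\kappa,\alpha+\gamma)$ of part (2), your assertion that each qualifying residue modulo $\ell^{i_0}$ receives \emph{exactly} $\ell^{\mu}$ of the $S(\Delta^*,\ell^{i_1})$ solutions is precisely the point where the paper does its work: it proves, by two successive applications of Lemma \ref{sublem2} (with $C=\alpha+\gamma$, then $C=\alpha+2\gamma$), that each $c\equiv d^{-1}\pmod{\ell^\gamma}$ has either $\ell^{k-(\alpha+\gamma)}$ lifts to solutions modulo $\ell^k$ or none. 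Fibers of reduction maps on solution sets of congruences are not uniform in general, so this all-or-nothing lifting statement must be argued, not asserted; the final sentence of Lemma \ref{sublem2} is the right tool, but you need to run the two-step bootstrap to reach levels $k$ up to $2(\alpha+\gamma)$.
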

Note that in the case $\min(\kappa,\alpha+\gamma)=\beta-\alpha$ both ``if'' statements are true, and the evaluations for $W(d)$ coincide.   Note also that if $\ell = 2$ then $W(d)$ is only supported on $t\in 2\Z$ so that $\Delta^*\in \Z$, or else $W(d)$ is $0$.

\begin{proof}
When $\min(\kappa, \alpha+\gamma) \leq \beta-\alpha,$ the conclusion of Lemma \ref{weightscalc} is immediate from Lemma \ref{sublem2} taking $B=\beta$, $C=\gamma$, $D=d$ and $i=\alpha + \gamma + \min(\kappa,\alpha+\gamma)$.  For the rest of the proof we suppose that $\beta-\alpha \leq \min(\kappa, \alpha+ \gamma)$. 

\begin{enumerate}
\item Assume that $\beta<\gamma.$  Let $k=\alpha + \gamma+ \min(\kappa,\alpha+ \gamma)$.  We have already treated the cases $\alpha + \gamma \leq k \leq \beta + \gamma$, so we may assume that $\beta+\gamma < k$.  By Lemma \ref{sublem2} with $B=\beta$, $C=\gamma$, $D=d$ and $i=\alpha+\gamma$ we have \est{W(d) = \ssum_{c\in (\Z/\ell^{\alpha+\gamma}\Z)^{\times} } \delta_{\ell^\gamma} (c,d^{-1}) = D(t;\ell^{\alpha+\gamma}) \ssum_{c\in (\Z/\ell^{\alpha+\gamma}\Z)^{\times} } \delta_{\ell^\gamma} (c,d^{-1}) .}
By Lemma \ref{resclbound} (iii) we have that $k\leq \alpha + \beta+\gamma$ when the congruence condition above is satisfied, so we may apply Lemma \ref{sublem2} with $B = \beta$, $C=\alpha + \gamma$, $D=c^{-1}$ and $i=k$. Thus 
\est{W(d) = \sum_{c\in (\Z/\ell^{\alpha+\gamma}\Z)^{\times}  } \delta_{\ell^k}(t,c^{-1}q+c)  \delta_{\ell^\gamma} (c,d^{-1}) .} 

We can simplify this further.  Choose any $c_0 \in (\Z/\ell^{\alpha+\gamma}\Z)^{\times} $ such that $c_0 \equiv d^{-1}\pmod {\ell^\gamma}$.  We parameterize the $c \in (\Z/\ell^{\alpha+\gamma}\Z)^{\times} $ such that $c \equiv d^{-1}\pmod {\ell^\gamma}$ by $c=c_0 + i\ell^\gamma$, where $0 \le i < \ell^{\alpha}$.  Lemma \ref{DtnWellDef} implies that the residue class $c^{-1} q + c  \pmod{\ell^k}$ is well-defined.  Since 
\[
(c_0 + i\ell^\gamma)^{-1} \equiv c_0^{-1} - c_0^{-2} i \ell^{\gamma} \pmod{\ell^k},
\] 
we have
\begin{multline}
\{ c^{-1}q+c \pmod*{\ell^k} : c \in (\Z/\ell^{\alpha+\gamma}\Z)^{\times} \text{ and } c \equiv d^{-1}\pmod*{\ell^\gamma}\} \\ 
= \{ c_0^{-1} q + c_0 - (c_0^{-2} q -1)i \ell^\gamma \pmod*{\ell^k} : 0 \le i < \ell^{\alpha}\}.
\end{multline}

We have that $c_0 \equiv d^{-1}\pmod {\ell^\gamma},\ \beta< \gamma,\ k > \beta+\gamma$, and $d^2 q - 1 \equiv 0 \pmod{\ell^\beta}$, but ${d^2 q - 1} \not\equiv 0 \pmod{\ell^{\beta+1}}$.
Therefore, there exists $z$ with $0 \le z < \ell^{k-(\gamma+\beta)}$ such that 
\[
-(c_0^{-2} q -1) i \ell^\gamma \equiv z i \ell^{\beta+\gamma} \not\equiv 0 \pmod{\ell^k}.
\]

This implies 
\est{ W(d)  =  \sum_{i \in \Z/\ell^\alpha\Z} \delta_{\ell^k}(t, c_0^{-1}q + c_0+ i z \ell^{\beta + \gamma}).} 
As we vary over all $i$ satisfying $0 \le i < \ell^\alpha$, we see that $c_0^{-1}q + c_0 + zi\ell^{\beta+ \gamma} \pmod*{\ell^k}$ represents each residue classes modulo $\ell^k$ that is a lift of $c_0^{-1}q+c_0 \equiv dq+d^{-1}\pmod*{\ell^{\beta+\gamma}}$ exactly $\ell^{\alpha-(k-\gamma-\beta)}$ times. Thus 
\begin{eqnarray*}
W(d) & = & \ell^{\alpha - (k-\beta - \gamma)} \sum_{i \in \Z/\ell^{k-\beta-\gamma}\Z} \delta_{\ell^k}(t,c_0^{-1}q+c_0 + iz\ell^{\beta+\gamma}) \\ 
 & = & \ell^{\beta - \min(\kappa,\alpha+\gamma)} D(t;\ell^{\beta+\gamma}).
 \end{eqnarray*}

\item Assume that $\beta=\gamma$. Let $k = \alpha + \gamma + \min(\kappa, \alpha+ \gamma)$, so $\alpha+ \gamma \leq k \leq 2\alpha + 2\gamma$.  In fact we may assume that $2\gamma \leq k \leq 2 \alpha + 2 \gamma$ as the other cases have already been treated.  By Lemma \ref{sublem2} with $B=\gamma$, $C=\gamma$, and $D=d$ we have 
\est{ W(d) = \ssum_{c\in (\Z/\ell^{\alpha+\gamma}\Z)^{\times}} \delta_{\ell^{\gamma}}(c,d^{-1})= D(t;\ell^{2\gamma})\ssum_{c\in (\Z/\ell^{\alpha+\gamma}\Z)^{\times}} \delta_{\ell^{\gamma}}(c,d^{-1}).} 
We claim that for each $k$ satisfying $2\gamma \leq k \leq 2\alpha + 2\gamma$  and for each $c \in (\Z/\ell^{\alpha+\gamma}\Z)^\times$ such that $c\equiv d^{-1}\pmod*{\ell^\gamma}$ there are either $\ell^{k-(\alpha+\gamma)}$ lifts $\tilde{c}$ of $c$ satisfying $\tilde{c}^2-t\tilde{c}+q\equiv 0 \pmod{\ell^k}$ or $0$ such lifts.  

To see this, for each $c \in (\Z/\ell^{\alpha+\gamma}\Z)^\times$ such that $c\equiv d^{-1}\pmod*{\ell^\gamma}$ we apply Lemma \ref{sublem2} with $B=\gamma$, $C = \alpha + \gamma$ and $D=c^{-1}\pmod*{\ell^{\alpha+\gamma}}$.  For each lift $\tilde{c}\pmod*{\ell^{\alpha+2\gamma}}$ of $c$ thus produced, we apply Lemma \ref{sublem2} again with $B=\gamma$, $C=\alpha+ 2\gamma$, and $D=\tilde{c}^{-1}\pmod*{\ell^{\alpha+2\gamma}}$.  We get either exactly $\ell^{k-(\alpha+\gamma)}$ lifts of $c$, or none, for each $k$ satisfying $2\gamma \leq k \leq 2\alpha + 2\gamma\leq \alpha + 3\gamma$.  
  
Completing the square shows that there are $S(\Delta^*,\ell^k)$ total solutions to $c^2-tc+q \equiv 0 \pmod*{\ell^k}$.  Note that if $\ell =2$ we may assume $t$ is even because of the factor $\delta_{\ell^{2\gamma}} (t, dq+d^{-1})$, so we have that $\Delta^*\in \Z$, and completing the square makes sense.  By Lemma \ref{anysolnisbinv}, all solutions to $c^2-tc+q \equiv 0 \pmod*{\ell^k}$ with $k\geq 2\gamma$ have $c \equiv d^{-1} \pmod*{\ell^\gamma}$.  By the previous claim, this implies that exactly 
\est{\frac{S(\Delta^*,\ell^k)}{\ell^{k-(\alpha+\gamma)}}} 
of the $c \in (\Z/\ell^{\alpha+\gamma}\Z)^\times$ with $c\equiv d^{-1} \pmod*{\ell^\gamma}$ have lifts to solutions modulo $\ell^k$.  
\end{enumerate}
\end{proof}

The next lemma gives an evaluation of $S(a,n)$.  It is a special case of more general results in several variables going back at least to Jordan \cite{Jordan}.
\begin{lemma}\label{sqrts}
For $p$ an odd prime we have \est{S(a,p) = 1+\legen{a}{p}.}  
For an odd prime power $p^\epsilon$ with $\epsilon\geq 2$ we have 
\est{S(a,p^\epsilon) = \begin{cases} S(a,p) & \text{ if } v_p(a) = 0\\ 0 & \text{ if } v_p(a)=1 \\ p S(a/p^2,p^{\epsilon-2}) & \text{ if } v_p(a)\geq 2 . \end{cases}}  If $p=2$ we have\est{ S(a,2) & = 1 \\ S(a,4) & = \begin{cases} 2 & \text{ if } a \equiv 0,1 \pmod 4 \\ 0 & \text{ if } a \equiv 2,3 \pmod 4\end{cases} } 
and if 
$\epsilon\geq 3$ then 
\est{ S(a,2^\epsilon) = \begin{cases} 4 \delta_8(a,1) & \text{ if } v_2(a)=0 \\ 0 & \text{ if } v_2(a)=1 \\ 2 S(a/4,2^{\epsilon-2}) & \text{ if } v_2(a) \geq 2.\end{cases}}  
\end{lemma}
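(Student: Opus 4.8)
The plan is to prove all the stated evaluations by reducing, in each case, to the structure of the squares in the unit group $(\Z/p^\epsilon\Z)^\times$ together with a descent on the $p$-adic valuation $v_p(a)$. I would organize the argument around the trichotomy $v_p(a) = 0$, $v_p(a) = 1$, and $v_p(a) \geq 2$, which is precisely the shape of the claimed recursions, and treat $p$ odd and $p = 2$ separately.

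First I would dispatch the odd prime base case $\epsilon = 1$. By definition $S(a,p)$ counts solutions of $x^2 \equiv a \pmod p$: when $p \mid a$ the only solution is $x \equiv 0$ and $\legen{a}{p} = 0$, so $S(a,p) = 1 = 1 + \legen{a}{p}$; when $p \nmid a$ the number of solutions is $2$ or $0$ according as $a$ is or is not a quadratic residue, again $1 + \legen{a}{p}$. For $\epsilon \geq 2$ with $p$ odd I would argue as follows. If $v_p(a) = 0$, Hensel's lemma applies since the derivative $2x$ is a unit at every solution, so each of the $S(a,p)$ solutions modulo $p$ lifts uniquely to one modulo $p^\epsilon$, giving $S(a,p^\epsilon) = S(a,p)$. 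If $v_p(a) = 1$, any solution would force $v_p(x^2) = 1$, impossible since $v_p(x^2)$ is even, so $S = 0$. If $v_p(a) \geq 2$, write $a = p^2 a'$; any solution satisfies $p \mid x$, so the substitution $x = py$ reduces the congruence to $y^2 \equiv a' \pmod{p^{\epsilon-2}}$. Counting lifts—each $y$ modulo $p^{\epsilon-2}$ yields exactly $p$ values of $x$ modulo $p^\epsilon$—produces the factor $p$ and the recursion $S(a,p^\epsilon) = p\,S(a/p^2, p^{\epsilon-2})$.

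For $p = 2$ I would first record the base cases $\epsilon = 1, 2$ by direct enumeration of the squares modulo $2$ and $4$. For $\epsilon \geq 3$ the same valuation trichotomy applies. The case $v_2(a) = 1$ (parity obstruction, $S = 0$) and the case $v_2(a) \geq 2$ (substitution $x = 2y$ reducing to $y^2 \equiv a/4 \pmod{2^{\epsilon-2}}$, with two lifts each, hence $S(a,2^\epsilon) = 2\,S(a/4, 2^{\epsilon-2})$) go through verbatim. The remaining case $v_2(a) = 0$ is the one needing genuine input: here I would invoke the isomorphism $(\Z/2^\epsilon\Z)^\times \cong \langle -1 \rangle \times \langle 5 \rangle \cong \Z/2\Z \times \Z/2^{\epsilon-2}\Z$, under which the squares form the index-$4$ subgroup consisting exactly of the residues $\equiv 1 \pmod 8$; consequently an odd $a$ has either $4$ square roots (when $a \equiv 1 \pmod 8$) or none, which is $4\,\delta_8(a,1)$.

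The main obstacle is precisely this $p = 2$, $v_2(a) = 0$ subcase: unlike for odd $p$, Hensel's lemma fails at $2$ because the derivative $2x$ is never a unit, so one cannot lift square roots from $\pmod 2$, and the count instead rests on the non-cyclic structure of $(\Z/2^\epsilon\Z)^\times$ and the characterization of its squares as the classes $\equiv 1 \pmod 8$. Everything else is routine valuation bookkeeping and careful counting of lifts at each descent step.
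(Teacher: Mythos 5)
Your proof is correct and follows essentially the same route as the paper, whose proof is only the one-line sketch ``clear factors of $p$ common to $a$ and $p^\epsilon$, and apply Hensel's lemma'': your valuation trichotomy $v_p(a)=0,\ v_p(a)=1,\ v_p(a)\geq 2$ is exactly that clearing of common factors, with Hensel handling the odd unit case. The only cosmetic difference is that for $p=2$ and $v_2(a)=0$ you invoke the structure of $(\Z/2^\epsilon\Z)^\times$ (squares are precisely the units $\equiv 1 \pmod 8$, each with four square roots) rather than the strengthened $2$-adic form of Hensel's lemma, which is a standard equivalent.
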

\begin{proof}[Proof Sketch]
Clear factors of $p$ common to $a$ and $p^\epsilon$, and apply Hensel's lemma.
\end{proof}

The next lemma is a standard result relating class numbers of orders to the class numbers of the maximal orders containing them.  For a proof, see for example \cite[Corollary 7.28]{primesoftheform}.
\begin{lemma}\label{cor728}
For $d<0$, $d\equiv 0,1\pmod 4$ and $f \in \N$ we have \es{h_w(f^2d) = h_w(d) f \prod_{p \mid f} \left(1 - \legen{d}{p}\frac{1}{p}\right).}
In particular, if $f$ is a prime power dividing $d$ then \es{h_w(f^2d) = h_w(d) f } and if $(d,f)=1$ and $d$ is a square modulo $f$ (resp.~$4f$ if $2\mid f$) then \es{h_w(f^2d) = h_w(d) \varphi(f).} 
\end{lemma}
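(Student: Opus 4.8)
The plan is to derive the weighted identity from the classical formula for the class number of a non-maximal quadratic order, the point being that the unit weighting in $h_w$ is tailored precisely to absorb the unit indices that appear. First I would record that if $\mathcal{O}_d$ denotes the order of discriminant $d$ and $w(d)=\#\mathcal{O}_d^\times$, then directly from the definition of $h_w$ we have $h_w(d)=2h(d)/w(d)$, since $w(-3)=6$, $w(-4)=4$, and $w(d)=2$ for every other $d<0$ with $d\equiv 0,1\pmod 4$.

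Next I would write $d=f_0^2 d_K$, where $d_K$ is the fundamental discriminant of $K=\Q(\sqrt{d})$ and $f_0$ is the conductor of $\mathcal{O}_d$ in the maximal order $\mathcal{O}_K$, so that $f^2 d=(ff_0)^2 d_K$ and $\mathcal{O}_{f^2 d}$ has conductor $ff_0$. Applying the standard formula (e.g.\ \cite[Cor.~7.28]{primesoftheform})
\[
h(g^2 d_K)=\frac{h(d_K)\,g}{[\mathcal{O}_K^\times:\mathcal{O}_{g^2 d_K}^\times]}\prod_{p\mid g}\left(1-\legen{d_K}{p}\frac{1}{p}\right)
\]
once with $g=f_0$ and once with $g=ff_0$ and taking the quotient, the factor $h(d_K)$ cancels, the unit indices combine through $[\mathcal{O}_K^\times:\mathcal{O}_d^\times]=w(d_K)/w(d)$ into the single ratio $w(f^2 d)/w(d)$, and the quotient of the two Euler products collapses to $\prod_{p\mid ff_0,\ p\nmid f_0}\left(1-\legen{d_K}{p}\frac{1}{p}\right)$.

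The key step is to recognize this last product as $\prod_{p\mid f}\left(1-\legen{d}{p}\frac{1}{p}\right)$. Indeed, for $p\mid f_0$ we have $p^2\mid d$, so $\legen{d}{p}=0$ and the corresponding factor is $1$; while for $p\nmid f_0$ the multiplicativity of the Kronecker symbol in its top entry gives $\legen{d}{p}=\legen{f_0^2}{p}\legen{d_K}{p}=\legen{d_K}{p}$. Substituting $h=\tfrac12 w\,h_w$ throughout and cancelling the common factor $w(f^2 d)/w(d)$ then yields
\[
h_w(f^2 d)=h_w(d)\,f\prod_{p\mid f}\left(1-\legen{d}{p}\frac{1}{p}\right),
\]
as claimed.

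Finally the two displayed special cases drop out by evaluating the Kronecker symbol. If $f=p^k$ with $p\mid d$ then $\legen{d}{p}=0$, so the single Euler factor is $1$ and $h_w(f^2 d)=h_w(d)f$. If instead $(d,f)=1$ and $d$ is a square modulo $f$ (respectively modulo $4f$ when $2\mid f$), then $\legen{d}{p}=1$ for every odd $p\mid f$, and when $2\mid f$ the squareness modulo $4f$ forces $d\equiv 1\pmod 8$ and hence $\legen{d}{2}=1$ by the definition of the Kronecker symbol at $2$; thus every factor equals $1-1/p$, so $f\prod_{p\mid f}(1-1/p)=\varphi(f)$ and $h_w(f^2 d)=h_w(d)\varphi(f)$. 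I expect the only real obstacle to be the bookkeeping --- telescoping the Euler products over the conductor $f_0$ of the possibly non-maximal base order, together with the careful $p=2$ analysis in the second special case --- since everything else is formal once the identity $h_w=2h/w$ is in hand.
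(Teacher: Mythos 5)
Your proof is correct, and it takes essentially the approach the paper intends: the paper gives no argument for this lemma at all, simply calling it a standard result and citing \cite[Corollary 7.28]{primesoftheform}, which is exactly the input your derivation runs on. Your two applications of Cox's formula at conductors $f_0$ and $ff_0$, the cancellation of $h(d_K)$ and telescoping of the Euler products, the identity $h_w(d)=2h(d)/w(d)$ absorbing the unit indices into $w(f^2d)/w(d)$, and the Kronecker-symbol evaluations in the two special cases (including the $p=2$ check that squareness modulo $4f$ forces $d\equiv 1\pmod 8$) are all accurate.
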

The last lemma gives identities between class numbers that we will use later. 
\begin{lemma}\label{H} 
Let $0 \leq \alpha \leq \beta\leq \gamma$ and $\ell \nmid q$ prime.  Assume $(d^2q-1,\ell^{\gamma})=\ell^{\beta}$.  
If $\beta < \gamma$, then
\begin{multline*} 
2\sum_{j=0}^{\beta-\alpha} \frac{\varphi(\ell^{2j})\varphi(\ell^{\gamma-\alpha-j})}{\varphi(\ell^{\gamma-\alpha})}H_{\ell^{\gamma},\ell^{\alpha+j}}(t,q,d) \\ 
= H\left( \frac{\Delta}{\ell^{2\alpha}} \right) D(t;\ell^{\alpha+\gamma}) 
+ \sum_{j=1}^{\beta-\alpha} H\left( \frac{ \Delta}{\ell^{2(\alpha+j)}} \right) \left( \ell^{j}D(t;\ell^{\alpha+\gamma+j}) -\ell^{j-1}D(t;\ell^{\alpha+\gamma+j-1})\right) \end{multline*}
and if $\beta = \gamma$ we have 
\begin{multline*} 
2\sum_{j=0}^{\beta-\alpha} \frac{\varphi(\ell^{2j})\varphi(\ell^{\gamma-\alpha-j})}{\varphi(\ell^{\gamma-\alpha})}H_{\ell^{\gamma},\ell^{\alpha+j}}(t,q,d)  =H\left( \frac{\Delta}{\ell^{2\alpha}} \right)D(t;\ell^{\alpha+\gamma}) \\
+ \sum_{j=1}^{\gamma-\alpha-1} H\left( \frac{ \Delta}{\ell^{2(\alpha+j)}} \right) \left( \ell^{j}D(t;\ell^{\alpha+\gamma+j}) -\ell^{j-1} D(t;\ell^{\alpha+\gamma+j-1})\right)  + \ell^{\gamma-\alpha}D(t;\ell^{2\gamma})H\left( \frac{\Delta}{\ell^{2\gamma}}\right) .\end{multline*}
\end{lemma}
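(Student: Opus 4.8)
The plan is to substitute the definition \eqref{HNMdef} of $H_{\ell^\gamma,\ell^{\alpha+j}}(t,q,d)$ (with $e=\gamma$ and $\delta=\alpha+j$) into the left-hand side, interchange the order of summation, and collect the resulting terms according to which Hurwitz--Kronecker class number $H(\Delta/\ell^{2(\alpha+r)})$ they multiply, where $\Delta=t^2-4q$. The whole statement is an identity of elementary arithmetic bookkeeping, so no analytic input is needed; the content lies entirely in organizing the indices.

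First I would record two elementary facts. Writing $w_j=\varphi(\ell^{2j})\varphi(\ell^{\gamma-\alpha-j})/\varphi(\ell^{\gamma-\alpha})$ for the weight in the sum and using $\varphi(\ell^n)=\ell^{n-1}(\ell-1)$ for $n\ge 1$, one finds $w_0=1$, $w_j=\ell^{j-1}(\ell-1)$ for $1\le j\le\gamma-\alpha-1$, and $w_{\gamma-\alpha}=\ell^{\gamma-\alpha}$. The crucial consequence is the telescoping partial sum $\sum_{j=0}^{r-1}w_j=\ell^{r-1}$, together with $w_r+\ell^{r-1}=\ell^r$. Second, since $(d^2q-1,\ell^\gamma)=\ell^\beta$, the factor $\delta_{\ell^{\alpha+r}}(d^2q,1)$ equals $1$ exactly when $\alpha+r\le\beta$ and vanishes for $\beta<\alpha+r\le\gamma$; hence the nebentype indicators only serve to truncate the range of surviving indices to $r\le\beta-\alpha$, and can be discarded once that range is imposed. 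Throughout, the residue classes $dq+d^{-1}\pmod{\ell^i}$ entering each $D(t;\ell^i)$ are well-defined by Lemma \ref{DtnWellDef}, since the largest index that occurs is $\ell^{\gamma+\beta}$.

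Next I would fix $r\ge 0$ and assemble the total coefficient of $\frac12 H(\Delta/\ell^{2(\alpha+r)})$ in the re-indexed sum. The first piece of \eqref{HNMdef} contributes the ``diagonal'' term $j=r$, namely $w_r\,D(t;\ell^{\gamma+\alpha+r})$, while the internal $k$-sum of $H_{\ell^\gamma,\ell^{\alpha+j}}$ for $j<r$ and $k=r-j$ contributes $-\big(\sum_{j=0}^{r-1}w_j\big)\big(D(t;\ell^{\gamma+\alpha+r-1})-D(t;\ell^{\gamma+\alpha+r})\big)$. Applying the two identities from the previous step collapses this to $\ell^r D(t;\ell^{\gamma+\alpha+r})-\ell^{r-1}D(t;\ell^{\gamma+\alpha+r-1})$, which is exactly the coefficient claimed on the right-hand side (the overall factor $2$ cancels the $\tfrac12$). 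The case $r=0$ is the same computation with the off-diagonal part absent, yielding $H(\Delta/\ell^{2\alpha})D(t;\ell^{\alpha+\gamma})$.

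Finally I would separate the two boundary regimes. When $\beta<\gamma$ we have $\beta-\alpha\le\gamma-\alpha-1$, so for every surviving $r$ with $1\le r\le\beta-\alpha$ the internal $k$-sum is long enough to reach $r$; this produces precisely the stated sum up to $j=\beta-\alpha$. When $\beta=\gamma$ every $\delta$-factor is $1$, yet the off-diagonal contributions still only reach $r\le\gamma-\alpha-1$, since the $k$-sum for a given $j$ stops at $k=\gamma-\alpha-j-1$; the top index $r=\gamma-\alpha$ therefore receives only its diagonal term $w_{\gamma-\alpha}=\ell^{\gamma-\alpha}$, producing the extra summand $\ell^{\gamma-\alpha}D(t;\ell^{2\gamma})H(\Delta/\ell^{2\gamma})$. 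I expect the one genuinely error-prone step to be exactly this boundary analysis --- verifying that the truncation of the $k$-sum never interferes with the range $r\le\beta-\alpha$ when $\beta<\gamma$, yet does carve off the top term when $\beta=\gamma$ --- rather than any of the algebra, which is routine.
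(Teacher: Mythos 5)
Your proposal is correct and follows essentially the same route as the paper's own proof: substitute the definition \eqref{HNMdef}, swap the order of summation by collecting terms with $j+k=r$, use the partial-sum evaluation $\sum_{j=0}^{r-1}\varphi(\ell^{2j})\varphi(\ell^{\gamma-\alpha-j})/\varphi(\ell^{\gamma-\alpha})=\ell^{r-1}$ together with $w_r+\ell^{r-1}=\ell^r$, and separate the cases $\beta<\gamma$ and $\beta=\gamma$, where the truncation of the inner $k$-sum at $k\le\gamma-\alpha-j-1$ isolates the top term $\ell^{\gamma-\alpha}D(t;\ell^{2\gamma})H(\Delta/\ell^{2\gamma})$. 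Your boundary analysis and the role of the indicators $\delta_{\ell^{\alpha+j+k}}(d^2q,1)$ in cutting the range to $r\le\beta-\alpha$ match the paper's argument exactly.
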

\begin{proof}
The expression in the lemma is by definition 
\begin{multline*}
\sum_{j=0}^{\beta-\alpha}  \frac{\varphi(\ell^{2j})\varphi(\ell^{\gamma-\alpha-j})}{\varphi(\ell^{\gamma-\alpha})} 
\Bigg[H\left(\frac{\Delta}{\ell^{2(\alpha+j)}}\right)\delta_{\ell^{\alpha+j}}(1,d^2q)D(t;\ell^{\alpha+\gamma+j})   \\  
- \sum_{k = 1}^{\gamma-(\alpha+j)-1}  H\left(\frac{\Delta}{\ell^{2(\alpha+j+k)}}\right)\delta_{\ell^{\alpha+k+j}}(1,d^2q)\left(D(t;\ell^{\alpha+\gamma+j-k-1})   -D(t;\ell^{\alpha+\gamma+j+k}) \right)\Bigg]. 
\end{multline*}
Note that $\delta_{\ell^{\alpha+j}}(1,d^2q) = 1$ since this is equivalent to $j \leq \beta-\alpha$.  The 
term $\delta_{\ell^{\alpha+k+j}}(1,d^2q)$ is equal to $1$ if and only if 
$j+k \leq \beta-\alpha$.  Swapping order of summation and writing $j+k=i$ this is \begin{multline*} 
\sum_{j=0}^{\beta-\alpha} \frac{\varphi(\ell^{2j})\varphi(\ell^{\gamma-\alpha-j})}{\varphi(\ell^{\gamma-\alpha})} H\left(\frac{\Delta}{\ell^{2(\alpha+j)}}\right)D(t;\ell^{\alpha+\gamma+j}) \\ - \sum_{i=1}^{\min(\gamma-\alpha-1,\beta-\alpha)} \left(\sum_{j=0}^{i-1} \frac{\varphi(\ell^{2j})\varphi(\ell^{\gamma-\alpha-j})}{\varphi(\ell^{\gamma-\alpha})}\right) H\left(\frac{\Delta}{\ell^{2(\alpha+i)}}\right)\left(D(t;\ell^{\alpha+\gamma+i-1})   - D(t;\ell^{\alpha+\gamma+i}) \right).\end{multline*}
It is straightforward to show that
\est{
\sum_{j=0}^{i-1} \frac{\varphi(\ell^{2j})\varphi(\ell^{\gamma-\alpha-j})}{\varphi(\ell^{\gamma-\alpha})} = 
\begin{cases} \ell^{i-1} & \text{ if } i-1< \gamma- \alpha \\ 
\psi(\ell^{\gamma-\alpha}) &  \text{ if } i-1=\gamma-\alpha . 
\end{cases}}
We now consider two cases.  If $\beta < \gamma$ we have $\beta - \alpha \leq \gamma-\alpha-1$, so combining terms gives
\est{ 
H\left( \frac{\Delta}{\ell^{2\alpha}} \right)D(t;\ell^{\alpha+\gamma})
+ \sum_{j=1}^{\beta-\alpha} H\left( \frac{ \Delta}{\ell^{2(\alpha+j)}} \right) \left( \ell^{j}D(t;\ell^{\alpha+\gamma+j}) -\ell^{j-1} D(t;\ell^{\alpha+\gamma+j-1})\right).
}  
On the other hand if $\beta = \gamma$ the expression is 
\begin{multline*}
H\left( \frac{\Delta}{\ell^{2\alpha}} \right) D(t;\ell^{\alpha+\gamma}) 
+ \sum_{j=1}^{\gamma-\alpha-1} H\left( \frac{ \Delta}{\ell^{2(\alpha+j)}} \right) \left( \ell^{j}D(t;\ell^{\alpha+\gamma+j}) -\ell^{j-1}D(t;\ell^{\alpha+\gamma+j-1})\right) \\ 
+ \ell^{\gamma-\alpha} D(t;\ell^{2\gamma})H\left( \frac{\Delta}{\ell^{2\gamma}}\right).
\end{multline*}
\end{proof}

\subsubsection{Evaluating $C(t,q,d)$}
The proof of Proposition \ref{MP} breaks into three main cases:  $(\beta< \gamma)$, $(\beta = \gamma \text{ and } \nu<2(\alpha+\gamma))$, and $(\beta = \gamma \text{ and } \nu\geq 2(\alpha+\gamma))$.

\vspace{12pt}
\noindent {\bf \emph{The case $\beta <\gamma$.}}
\vspace{12 pt}

We further split into three cases according to the value of $\kappa$.  The ranges $\kappa<\beta-\alpha$, $\beta-\alpha\leq \kappa \leq \beta$, and $\kappa >\beta$ will each be treated differently.

\vspace{12 pt}
\noindent {\bf \emph{The case $\beta<\gamma$ and $\kappa >\beta$}.}\\
\vspace{12 pt}  

We have $\beta-\alpha \leq \beta < \kappa$ and $\beta-\alpha \leq \alpha + \gamma$ so that the second case of Lemma \ref{weightscalc} (i) applies. For such $\kappa$ we have 
\es{  
c_\kappa(t,q,d) = \sum_{\substack{m^2 \mid \Delta \\ v_\ell(m) = \kappa}} h_w\left( \frac{\Delta}{m^2}\right) \frac{\psi(\ell^{\alpha+\gamma})}{\psi(\ell^{\alpha+\gamma- \min(\kappa, \alpha+\gamma)})} \ell^{\beta-\min(\kappa, \alpha+\gamma)} D(t;\ell^{\beta+\gamma}).
} 
Since $2\beta < 2\kappa \leq \nu$ and $2\beta+1\leq \beta+\gamma$, Lemma \ref{resclbound} (iii) applies, and so $D(t;\ell^{\beta+\gamma}) = 0$.
We conclude that $c_\kappa(t,q,d)=0$ for all $\kappa>\beta$.  
 
\vspace{12 pt}
\noindent {\bf \emph{The case $\beta < \gamma$ and $\beta-\alpha \leq \kappa \leq \beta$}.}
\vspace{12 pt}

We have $\min(\kappa , \alpha+\gamma)= \kappa$ since $\kappa \leq \beta <\gamma \leq \alpha+\gamma$.  The second case of Lemma \ref{weightscalc} (i)  applies and we again have  
\est{  c_\kappa(t,q,d) = \sum_{\substack{m^2 \mid \Delta \\ v_\ell(m) = \kappa}} h_w\left( \frac{\Delta}{m^2}\right) \frac{\psi(\ell^{\alpha+\gamma})}{\psi(\ell^{\alpha+\gamma- \kappa})} \ell^{\beta-\kappa} D(t;\ell^{\beta+\gamma}).} 
Suppose that $\nu \neq 2\beta$.  Because $2\beta+1 \leq \beta+\gamma$, Lemma \ref{resclbound} (iii) again applies and we see that $D(t;\ell^{\beta+\gamma}) = 0$.  
Therefore for any $\beta-\alpha \leq \kappa \leq \beta$ with $\beta<\gamma$ we have 
\es{  
c_\kappa(t,q,d) = \delta(\nu, 2\beta) \sum_{\substack{m^2 \mid \Delta \\ v_\ell(m) = \kappa}} h_w\left( \frac{\Delta}{m^2}\right) \frac{\psi(\ell^{\alpha+\gamma})}{\psi(\ell^{\alpha+\gamma- \kappa})} \ell^{\beta-\kappa} D(t;\ell^{\beta+\gamma}).
}

\vspace{12 pt}
\noindent {\bf \emph{The case $\beta < \gamma$ and $\kappa < \beta-\alpha$}.}
\vspace{12 pt}
 
In this range of $\kappa$ we again have $\min(\kappa , \alpha+\gamma) = \kappa< \beta-\alpha$ since $\kappa < \beta-\alpha \leq \alpha+\gamma$.  The first case of Lemma \ref{weightscalc} (i) then applies and we have for $\kappa < \beta-\alpha$ that 
\es{\label{b<grange1} 
c_\kappa(t,q,d) =  \sum_{\substack{m^2 \mid \Delta \\ v_\ell(m) = \kappa}} h_w\left( \frac{\Delta}{m^2}\right) \ell^{\alpha+\kappa} D(t;\ell^{\alpha+\gamma+\kappa}).
}

We would like to apply Lemma \ref{cor728} to the class numbers occurring in (\ref{b<grange1}), but this requires a certain hypothesis to hold.  We verify this hypothesis with the following lemma.

\begin{lemma}\label{claim}
Suppose that $t \equiv dq+d^{-1} \pmod{\ell^{\alpha+\gamma+\kappa}},\ \beta < \gamma$ and $\kappa <\beta-\alpha$.  Then 
\est{v_\ell \left(\frac{\Delta}{\ell^{2\alpha}m^2}\right)\geq 2,} 
and 
\est{\frac{\Delta}{\ell^{2\alpha+2}m^2} \equiv 0,1 \pmod 4.}
\end{lemma}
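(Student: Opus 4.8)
The plan is to derive both assertions from the single congruence that the hypothesis on $t$ forces, namely
\[
\Delta \equiv (dq-d^{-1})^2 \pmod{\ell^{\alpha+\gamma+\kappa}}.
\]
First I would check this congruence is meaningful: since $\kappa \le \beta-\alpha-1$ we have $\gamma \le \alpha+\gamma+\kappa \le \gamma+\beta$, so Lemma \ref{DtnWellDef} (with $B=\beta$, $C=\gamma$, $D=d$) guarantees $dq+d^{-1} \pmod{\ell^{\alpha+\gamma+\kappa}}$ is well-defined. Squaring $t\equiv dq+d^{-1}$ and using the identity $(dq+d^{-1})^2-4q=(dq-d^{-1})^2$ gives the displayed congruence. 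Because $(d^2q-1,\ell^\gamma)=\ell^\beta$ with $\beta<\gamma$, the quantity $dq-d^{-1}=d^{-1}(d^2q-1)$ has $v_\ell=\beta$, so the right-hand side has $\ell$-valuation exactly $2\beta$.

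For part 1, the congruence gives $\nu=v_\ell(\Delta)\ge \min(2\beta,\alpha+\gamma+\kappa)$. The hypothesis $\kappa\le\beta-\alpha-1$ yields $2\beta\ge 2(\alpha+\kappa+1)$, and since $\alpha+\kappa\le\beta-1\le\gamma-2$ (using $\beta<\gamma$) it also yields $\alpha+\gamma+\kappa\ge 2(\alpha+\kappa+1)$. Hence $\nu\ge 2\alpha+2\kappa+2$, i.e. $v_\ell(\Delta/(\ell^{2\alpha}m^2))=\nu-2\alpha-2\kappa\ge 2$. In particular $\ell^{2\alpha+2}m^2\mid\Delta$, so $Y:=\Delta/(\ell^{2\alpha+2}m^2)$ is an integer and part 2 is a statement about $Y\bmod 4$.

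For part 2 I would reduce to a purely $2$-adic statement. Only the terms of \eqref{b<grange1} with $h_w(\Delta/m^2)\neq 0$ contribute, so I may assume $\Delta/m^2$ is a negative discriminant, i.e. $\Delta/m^2\equiv 0,1\pmod 4$. When $\ell$ is odd this finishes the job at once: $\ell^{2\alpha+2}$ is an odd square, hence $\equiv 1\pmod 8$, and since $\ell^{2\alpha+2}\mid\Delta/m^2$ one gets $Y\equiv \Delta/m^2\equiv 0,1\pmod 4$. When $\ell=2$ the congruence is itself $2$-adic and must be used directly. Writing $m=2^\kappa m'$ with $m'$ odd, it suffices (as $m'^2\equiv 1\pmod 8$) to show $Z:=\Delta/2^{2\alpha+2\kappa+2}\equiv 0,1\pmod 4$. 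I would split on whether $2\beta<\alpha+\gamma+\kappa$: in that range $\nu=2\beta$ exactly, so $v_2(Z)=2(\beta-\alpha-\kappa-1)$ is even, which already rules out $Z\equiv 2\pmod 4$; if this valuation is positive then $Z\equiv 0\pmod 4$, and if it is $0$ one identifies $Z$ with the odd unit $u^2\equiv 1\pmod 4$ appearing in $(dq-d^{-1})^2=2^{2\beta}u^2$. In the complementary range $2\beta\ge\alpha+\gamma+\kappa$ the bound on $\nu$ is strong enough to force $Z\equiv 0\pmod 4$.

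The main obstacle is exactly this last ($\ell=2$) analysis. Unlike the odd case, one cannot simply invoke ``$\Delta/m^2$ is a discriminant'', and the boundary sub-cases (e.g.\ $\beta=\alpha+\kappa+1$ with $\gamma=\beta+2$) are not resolved by the congruence alone: there one must control $\Delta$ modulo $2^{2\alpha+2\kappa+4}$ by combining the $2$-adic congruence with the defining relation $\Delta=t^2-4q=4\bigl((t/2)^2-q\bigr)$ and the exact valuation $v_2(d^2q-1)=\beta$ (which pins down $q$ modulo a power of $2$, e.g.\ forcing $q\equiv 3\pmod 4$ when $\beta=1$). Chaining these inputs is the delicate bookkeeping; every other step is a direct valuation count.
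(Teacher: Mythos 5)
Your part~1 is correct and runs on the same fuel as the paper's (the congruence $\Delta \equiv (dq-d^{-1})^2 \pmod{\ell^{\alpha+\gamma+\kappa}}$ with $v_\ell\bigl((dq-d^{-1})^2\bigr)=2\beta$; the paper packages this as Lemma \ref{resclbound}). Your reduction of part~2 to the contributing terms is also a good catch: the statement read literally for every $m$ in \eqref{b<grange1} can fail for odd $\ell$ when $m/\ell^\kappa$ is even --- e.g.\ $\ell=3$, $q=19$, $t=2$, $d=13$, $\gamma=3$, $\alpha=\kappa=0$ (so $\beta=1$, and $t\equiv dq+d^{-1}\equiv 2\pmod*{27}$ holds) with $m=2$ gives $\Delta/(\ell^{2}m^2)=-72/36=-2\equiv 2\pmod*4$ --- but then $h_w(\Delta/m^2)=h_w(-18)=0$, so nothing is lost; your restriction to $h_w\neq 0$ terms and the stripping of the odd square $m'^2\equiv 1 \pmod*8$ are exactly the right repairs, and they reduce everything to $Z=\Delta/2^{2\alpha+2\kappa+2}$ as you say.

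The genuine gap is in your $\ell=2$ analysis, and one asserted step is false as written. In the range $2\beta\geq\alpha+\gamma+\kappa$ you claim the valuation bound forces $Z\equiv 0\pmod*4$; but the congruence alone gives only $\nu\geq\alpha+\gamma+\kappa$, hence $v_2(Z)\geq\gamma-\alpha-\kappa-2\geq 2\gamma-2\beta-2$, which is $0$ when $\gamma=\beta+1$ --- a configuration allowed by the hypotheses (take $\kappa=\beta-\alpha-1$). Likewise your case-A identification $Z\equiv u^2\pmod*4$ needs $\alpha+\gamma+\kappa\geq 2\beta+2$, which fails at $\gamma=\beta+2$, the very boundary case you flag but leave open. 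The missing idea, which resolves all of these uniformly, is a two-power upgrade of the basic congruence special to $\ell=2$: since $q$ and $d$ are odd, $dq+d^{-1}$ and hence $t$ are even, so $v_2\bigl(t+(dq+d^{-1})\bigr)\geq 2$ and therefore $\Delta\equiv(dq-d^{-1})^2\pmod{2^{\alpha+\gamma+\kappa+2}}$. This is precisely what the paper deploys: once as Lemma \ref{resclbound}(ii), which in the case $\kappa\leq\beta-\alpha-2$ with $\nu<2\beta$ upgrades $\nu\geq\alpha+\gamma+\kappa$ to $\nu\geq\alpha+\gamma+\kappa+1$ and hence $v_2(Z)\geq\gamma-\beta+1\geq 2$; and once as the explicit computation $\Delta\equiv(dq-d^{-1})^2\pmod{2^{\beta+\gamma+1}}$ in the case $\kappa=\beta-\alpha-1$, giving $\Delta/2^{2\beta}\equiv u^2\pmod*4$ because $\gamma-\beta+1\geq 2$. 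Your alternative plan --- pinning down $q$ modulo powers of $2$ from $v_2(d^2q-1)=\beta$ and using $\Delta=4\bigl((t/2)^2-q\bigr)$ --- can be pushed through case by case (for $\beta=1$ it forces $q\equiv 3\pmod*4$, then $t\equiv 0\pmod*4$ and $(t/2)^2-q\equiv 1\pmod*4$), but as written it is a plan rather than a proof, and without the evenness trick the cases $\gamma=\beta+1$ and $\gamma=\beta+2$ remain unproved.
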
 
\begin{proof} 
Since $\kappa <\beta-\alpha$, if $\nu = 2\beta$ then $v_\ell(\Delta/(\ell^{2\alpha}m^2)) = 2\beta-2\alpha - 2\kappa \geq 2$.  
So suppose that $\nu \neq 2 \beta$.  Then by Lemma \ref{resclbound} (i) and (iii) we have that $\alpha+\gamma +\kappa \leq \min(\nu,2\beta)$.  Thus 
\est{
v_\ell\left( \frac{\Delta}{\ell^{2\alpha}m^2}\right) = \nu -2\alpha-2\kappa \geq 2 \gamma +\nu -2 \min (\nu,2\beta).}  If $2\beta <\nu$ then \est{ 2 \gamma +\nu -2\min(\nu,2\beta) = \nu + 2\gamma -4 \beta > 2(\gamma-\beta)\geq 2.} 
If $\nu < 2\beta$ then 
\est{2\gamma + \nu - 2\min(\nu,2\beta) = 2\gamma -\nu>2(\gamma-\beta)\geq 2.} 

If $\ell \neq 2$ then $\ell^2 \equiv 1 \pmod*4$ so the second statement of the lemma follows immediately from the first.  Now suppose $\ell=2$.  There are two cases, $0 \leq \kappa \leq \beta-\alpha-2$, and $\kappa = \beta - \alpha -1$. 

In the case $\kappa \leq \beta-\alpha-2$, if $\nu \geq 2\beta$ then \est{ \nu - 2\alpha-2\kappa \geq  2 \beta -2\alpha -2\kappa \geq 4} so that $\Delta / (\ell^{2\alpha}m^2) \equiv 0 \pmod*4$. If $\nu < 2\beta$ then by Lemma \ref{resclbound} (ii) we have $\alpha +\gamma + \kappa \leq \nu -1$, and so $\nu -2\alpha -2\kappa \geq 4$ so that $\Delta / (\ell^{2\alpha}m^2) \equiv 0 \pmod*4$ as well.

Now suppose that $\kappa = \beta-\alpha -1$.  In this case, by assumption $t\equiv dq+d^{-1} \pmod*{2^{\beta+\gamma-1}}$.  We calculate that 
\est{\Delta \equiv (dq-d^{-1})^2 \pmod{2^{\beta+\gamma + 1}}} 
so by definition of $\beta$ and the fact that $\beta<\gamma$ we have that $\Delta/2^{2\beta} = \Delta / \ell^{2(\alpha + \kappa +1)}$ is a square modulo $4$, as was to be shown.  \end{proof}

By the first part of Lemma \ref{claim}, we may apply Lemma \ref{cor728} to \eqref{b<grange1} with $f=\ell^\alpha$ and $d = \Delta/(\ell^{2\alpha}m^2)$, which is divisible by $\ell$.  Thus for $\beta<\gamma$ and $\kappa < \beta - \alpha$ we have \est{ c_\kappa(t,q,d) = \sum_{\substack{m^2 \mid \Delta \\ v_\ell(m) = \kappa}} h_w\left( \frac{\Delta}{\ell^{2\alpha}m^2}\right) \ell^{2\alpha+\kappa} D(t;\ell^{\alpha+\gamma+\kappa}).}

We now assemble the three cases for the ranges of $\kappa$.  The definition of the Hurwitz-Kronecker class number $H$ from \eqref{HKcn} implies that if $\nu \geq 2(\kappa+\alpha)$, then 
\begin{multline}\label{HWtrick} \sum_{\substack{m^2 \mid \Delta \\ v_\ell(m)=\kappa}} h_w \left( \frac{\Delta}{\ell^{2\alpha}m^2}\right) \\ = \begin{cases} H\left(\frac{\Delta}{\ell^{2(\alpha+\kappa)}}\right) - H\left( \frac{\Delta}{\ell^{2(\alpha+ \kappa+1)}}\right) & \text{ if } \nu \geq 2 (\alpha+\kappa+1)  \text{ and } \frac{\Delta}{\ell^{2(\alpha+\kappa+1)}}\equiv 0,1\pmod*{4} \\  
H\left(\frac{\Delta}{\ell^{2(\alpha+\kappa)}}\right) & \text{ otherwise.} 
\end{cases}
\end{multline}    
By the second part of Lemma \ref{claim} the conditions on the first line of \eqref{HWtrick} hold. So when $\beta < \gamma$ we have that 
\est{
C(t,q,d)=&  \sum_{\kappa \geq 0 } c_\kappa(t,q,d) \\ = & \sum_{\kappa=0}^{\beta-\alpha-1} \left(H\left( \frac{\Delta}{\ell^{2(\alpha+\kappa)}}\right) - H\left( \frac{\Delta}{\ell^{2(\alpha+\kappa+1)}}\right) \right) D(t;\ell^{\alpha+\gamma+\kappa}) \ell^{2\alpha+\kappa} \\ 
& + \delta(\nu, 2\beta) D(t;\ell^{\beta+\gamma} ) \ell^{\beta} \sum_{\kappa=\beta-\alpha}^{\beta} \sum_{\substack{m^2 \mid \Delta \\ v_\ell(m)=\kappa}} h_w\left(\frac{\Delta}{m^2}\right) .} 
Rearranging terms according to the argument of $H$ we have 
\es{\label{cfl}
C(t,q,d)
 = & H\left(\frac{\Delta}{\ell^{2\alpha}}\right)D(t;\ell^{\alpha+\gamma})\ell^{2\alpha} \\ 
& - \sum_{\kappa=1}^{\beta-\alpha-1} H\left( \frac{\Delta}{\ell^{2(\alpha+\kappa)}}\right) \left( \ell^{2\alpha+\kappa-1}D(t;\ell^{\alpha+\gamma+\kappa -1}) - \ell^{2\alpha+\kappa}D(t;\ell^{\alpha+\gamma+\kappa }) \right) \\ 
& - H\left(\frac{\Delta}{\ell^{2\beta}} \right) D(t;\ell^{\beta+\gamma-1})\ell^{\alpha+\beta-1} \\ 
& + \delta(\nu, 2\beta)D(t;\ell^{\beta+\gamma} ) \ell^{\beta} \sum_{\kappa=\beta-\alpha}^{\beta} \sum_{\substack{m^2 \mid \Delta \\ v_\ell(m)=\kappa}} h_w\left(\frac{\Delta}{m^2}\right).}

We claim that the last line of \eqref{cfl} is equal to
\[
\delta(\nu, 2\beta)D(t;\ell^{\beta+\gamma} ) \ell^{\beta + \alpha}H\left( \frac{\Delta}{\ell^{2\beta}}\right). 
\] 
Suppose that $\Delta/\ell^{2\beta} \in \Z$ and $t\equiv dq+d^{-1} \pmod{\ell^{\beta+\gamma}}$, since otherwise the last line of \eqref{cfl} vanishes.  Then 
\est{ \Delta \equiv 
 (dq-d^{-1})^2 \pmod*{\ell^{\beta+\gamma}},} 
 and since $\beta < \gamma$ we have that $\Delta/\ell^{2\beta}$ is a square modulo $\ell$.  If $\ell=2$ then since $\ell \nmid q$ we have 
 \est{ \Delta \equiv (dq-d^{-1})^2 \pmod*{2^{\beta+\gamma+2}}} 
 so that $\Delta/2^{2\beta}$ is a square modulo $8$.   Then Lemma \ref{cor728} applies with $f=\ell^{\beta-\kappa},$ and $d=\Delta/\ell^{2\beta}$, which is a square modulo $\ell$ (resp.~8).  So in the case $\beta -\alpha \leq \kappa \leq \beta$ we have 
 \es{ 
 \sum_{\substack{m^2 \mid \Delta \\ v_{\ell}(m) = \kappa}} h_w \left( \frac{\Delta}{m^2}\right)  = \sum_{\substack{ m^2 \mid \Delta \\ v_\ell(m) = \kappa}} h_w \left( \frac{ \Delta}{ \ell^{2(\beta-\kappa)m^2}}\right)\varphi(\ell^{\beta-\kappa}).} 
Therefore the last line of \eqref{cfl} simplifies as 
\begin{multline}\label{cfl2} 
\delta(\nu, 2\beta) D(t;\ell^{\beta+\gamma} ) \ell^{\beta} \sum_{\kappa=\beta-\alpha}^{\beta} \sum_{\substack{m^2 \mid \Delta \\ v_\ell(m)=\kappa}} h_w\left(\frac{\Delta}{m^2}\right) \\ = \delta(\nu, 2\beta) D(t;\ell^{\beta+\gamma} ) \ell^\beta \sum_{\kappa = \beta-\alpha}^\beta \sum_{\substack{m^2 \mid \Delta \\ v_\ell(m)=\kappa}}  h_w\left( \frac{\Delta}{\ell^{2(\beta-\kappa)}m^2}\right) \varphi(\ell^{\beta-\kappa}).\end{multline}  
If this term does not vanish we have $\nu = 2\beta$ and so 
 \est{ \sum_{\substack{m^2 \mid (\Delta / \ell^{2\beta}) \\ v_\ell(m) = 0}} h_w\left( \frac{\Delta}{\ell^{2\beta} m^2}\right) =H\left( \frac{\Delta}{\ell^{2\beta}}\right)
 } 
by the definition of Hurwitz-Kronecker class numbers \eqref{HKcn}.  Therefore the expression in \eqref{cfl2} equals 
\es{
\delta(\nu, 2\beta) D(t;\ell^{\beta+\gamma} ) \ell^\beta H\left( \frac{\Delta}{\ell^{2\beta}}\right) \sum_{\kappa  = \beta-\alpha }^\beta \varphi(\ell^{\beta-\kappa})  = \delta(\nu, 2\beta) D(t;\ell^{\beta+\gamma} ) \ell^{\beta + \alpha}H\left( \frac{\Delta}{\ell^{2\beta}}\right),
}  
proving the claim above.

Putting this back into \eqref{cfl}, when $\beta<\gamma$ we have 
\es{ 
C(t,q,d)= &\ell^{2\alpha}H\left( \frac{\Delta}{\ell^{2\alpha}}\right)\delta_{\ell^{\alpha+\gamma}}(t, dq+d^{-1}) \\ & - \ell^{2\alpha} \sum_{\kappa=1}^{\beta-\alpha} H\left( \frac{\Delta}{\ell^{2(\alpha+\kappa)}}\right) \left( \ell^{\kappa-1} \delta_{\ell^{\alpha+\gamma+\kappa-1}}(t, dq+d^{-1}) - \ell^{\kappa} \delta_{\ell^{\alpha+\gamma+\kappa}}(t, dq+d^{-1})\right).}  Finally, by applying Lemma \ref{H} we conclude that \est{C(t,q,d) = 2\ell^{2\alpha}\sum_{j=0}^{\beta-\alpha} \varphi(\ell^{j}) H_{\ell^{\gamma},\ell^{\alpha+j},d}(t,q),} 
which in the case $\beta<\gamma$ matches exactly the expression in Proposition \ref{MP}.

\vspace{12pt}
\noindent{\bf \emph{The case $\beta = \gamma$ and $\nu < 2(\alpha+\gamma)$.}}
\vspace{12pt}

Recall the definition of $c_\kappa(t,q,d)$ from \eqref{ckdef}, so that we have 
\est{C(t,q,d) = \sum_{\kappa \geq 0} c_\kappa(t,q,d).}  
We split into three cases according to the value of $\kappa$: each of the ranges $\kappa < \gamma-\alpha$, $\gamma - \alpha \leq \kappa < 2\lfloor \frac{\nu + 1}{2} \rfloor - (\gamma - \alpha)$, and $\kappa \geq 2\lfloor \frac{\nu + 1}{2} \rfloor - (\gamma - \alpha)$ will be treated differently.  Specifically we write \est{ {\rm I} \eqdef \sum_{\kappa < \gamma-\alpha} c_\kappa(t,q,d),\,\,\,\,\, {\rm II} \eqdef \sum_{\kappa = \gamma - \alpha }^{2\lfloor \frac{\nu + 1}{2} \rfloor - (\gamma - \alpha)-1} c_\kappa(t,q,d),\,\,\,\,\, {\rm III} \eqdef \sum_{\kappa \geq 2\lfloor \frac{\nu + 1}{2} \rfloor - (\gamma - \alpha)} c_{\kappa}(t,q,d)} so that $C(t,q,d) = {\rm I} + {\rm II} + {\rm III}.$  Note that the second range of $\kappa$ above has been chosen to have an even number of terms.  

\vspace{12pt}
\noindent{\bf \emph{The case $\kappa   \geq 2\lfloor \frac{\nu + 1}{2} \rfloor - (\gamma - \alpha)\geq \gamma - \alpha$, i.e. the sum {\rm III}.}}
\vspace{12pt}
  
We have $\kappa \leq \lfloor \nu / 2 \rfloor < \alpha+\gamma$ by hypothesis, so $\min(\kappa, \alpha+ \gamma) = \kappa$.  Therefore the second case of Lemma \ref{weightscalc} (ii) applies and we have for $\beta = \gamma$, $\nu < 2(\alpha+\gamma)$ and $\kappa   \geq 2\lfloor \frac{\nu + 1}{2} \rfloor - (\gamma - \alpha)$ that 
\es{ 
c_\kappa(t,q,d) = D(t;\ell^{2\gamma})\sum_{\substack{ m^2 \mid \Delta \\ v_\ell(m) = \kappa}} h_w\left(\frac{\Delta}{m^2}\right) S(\Delta^*,\ell^{\alpha+\gamma + \kappa}),
} 
where we note that if $\ell=2$ then $\Delta^* \in \Z$ otherwise $c_\kappa(t,q,d)=0$.  A repeated application of Lemma \ref{sqrts} shows that when $\kappa   \geq 2\lfloor \frac{\nu + 1}{2} \rfloor - (\gamma - \alpha)$ we have that 
\es{\label{Sevaluation2}
S(\Delta^*,\ell^{\alpha+\gamma+\kappa})=\begin{cases} \ell^{\nu/2} & \nu \text{ even, } \kappa=\nu-(\alpha+\gamma) \text{ and } \Delta/\ell^\nu \equiv 0,1\pmod 4 \\ \ell^{\nu/2} \left(1+\legen{\Delta/\ell^{\nu}}{\ell} \right) & \nu \text{ even, } \kappa>\nu-(\alpha+\gamma) \text{ and } \Delta/\ell^\nu \equiv 0,1\pmod 4 \\ 0 & \nu \text{ odd, or } \Delta/\ell^\nu \equiv 2,3\pmod 4.
\end{cases}
} 
If $t \equiv dq+d^{-1} \pmod {\ell^{\alpha+\gamma+\kappa}}$ then Lemma \ref{resclbound} (i) implies that $\nu\geq 2\gamma$.  Therefore for $\kappa  \geq 2\lfloor \frac{\nu + 1}{2} \rfloor - (\gamma - \alpha)$ we have that: 
\begin{enumerate}
\item If $ \nu \text{ even, } \kappa=\nu-(\alpha+\gamma), 2\gamma \leq \nu, \text{ and } \Delta/\ell^\nu \equiv 0,1\pmod*4 $ then \est{ c_\kappa(t,q,d)=
D(t;\ell^{2\gamma}) \ell^{\nu/2} \sum_{\substack{ m^2 \mid \Delta \\ v_\ell(m) = \kappa}} h_w\left(\frac{\Delta}{m^2}\right).  } 
\item If $  \nu \text{ even, } \kappa>\nu-(\alpha+\gamma), 2\gamma \leq \nu, \text{ and } \Delta/\ell^\nu \equiv 0,1\pmod*4 $ then \est{ c_\kappa(t,q,d)=
D(t;\ell^{2\gamma}) \ell^{\nu/2}   \left(1+\legen{\Delta/\ell^{\nu}}{\ell} \right) \sum_{\substack{ m^2 \mid \Delta \\ v_\ell(m) = \kappa}} h_w\left(\frac{\Delta}{m^2}\right). }  
\item If $ \nu \text{ odd, or }2\gamma < \nu,  \text{ or } \Delta/\ell^\nu \equiv 2,3\pmod*4 $ then $c_\kappa(t,q,d) = 0$.
\end{enumerate}

If $\nu$ is even, $\kappa = \nu - (\alpha+ \gamma)$, $2 \gamma \leq \nu$ and $\Delta/\ell^\nu \equiv 0,1\pmod*4$ then we may apply Lemma \ref{cor728} with $d = \Delta/ (\ell^{\nu + 2\kappa} m^2)$ and $f = \ell^{\nu/2-\kappa}$ to find 
\es{\label{b=grange31} c_\kappa(t,q,d) = & D(t;\ell^{2\gamma}) \ell^{\nu-\kappa} \sum_{\substack{ m^2 \mid \Delta \\ v_\ell(m) = \kappa}} h_w\left(\frac{\Delta}{\ell^{\nu - 2\kappa }m^2}\right) \left( 1- \legen{\frac{\Delta}{\ell^{\nu - 2\kappa }m^2}}{\ell} \frac{1}{\ell}\right) \\ 
 = &D(t;\ell^{2\gamma}) \ell^{\nu-\kappa} \sum_{\substack{ m^2 \mid (\Delta/\ell^\nu) \\ v_\ell(m) = 0}} h_w\left(\frac{\Delta}{\ell^{\nu  }m^2}\right) \left( 1- \legen{\frac{\Delta}{\ell^{\nu  }m^2}}{\ell} \frac{1}{\ell}\right)  \\
= & D(t;\ell^{2\gamma}) \ell^{\nu-\kappa}\left( 1- \legen{\Delta/\ell^{\nu  }}{\ell} \frac{1}{\ell}\right)  \sum_{\substack{ m^2 \mid (\Delta/\ell^\nu) \\ v_\ell(m) = 0}} h_w\left(\frac{\Delta}{\ell^{\nu  }m^2}\right), } 
where the second equality follows from the change of variables $m \to m\ell^\kappa$ and the third equality follows from the fact that $m \not\equiv 0 \pmod{\ell}$, so $\legen{m^2}{\ell}=1$.  

Similarly, in the case that $\nu$ is even, $ \nu - (\alpha+ \gamma) < \kappa < \nu/2$, $2 \gamma \leq \nu$ and $\Delta/\ell^\nu \equiv 0,1\pmod*4$ we have that 
\es{\label{b=grange32}
c_\kappa(t,q,d) =D(t;\ell^{2\gamma}) \ell^{\nu-\kappa}\left( 1- \legen{\Delta/\ell^{\nu  }}{\ell} \frac{1}{\ell}\right) \left(1+\legen{\Delta/\ell^{\nu}}{\ell} \right) \sum_{\substack{ m^2 \mid (\Delta/\ell^\nu) \\ v_\ell(m) = 0}} h_w\left(\frac{\Delta}{\ell^{\nu  }m^2}\right). 
} 
If $\nu$ is even, $\kappa = \nu /2$, $2 \gamma \leq \nu$ and $\Delta/\ell^\nu \equiv 0,1\pmod*4$ then we do not apply Lemma \ref{cor728} to $c_\kappa(t,q,d)$, but change variables $m \to m \ell^\kappa$.  We have 
\es{\label{b=grange33} 
c_\kappa(t,q,d) =D(t;\ell^{2\gamma}) \ell^{\nu/2}   \left(1+\legen{\Delta/\ell^{\nu}}{\ell} \right) \sum_{\substack{ m^2 \mid (\Delta/\ell^\nu) \\ v_\ell(m) = 0}} h_w\left(\frac{\Delta}{\ell^\nu m^2}\right) .
}

Observe that the following is a telescoping sum: \begin{multline}\label{telescope} \psi(\ell^{\alpha+ \gamma}) = \ell^{\alpha+ \gamma}\left( 1- \legen{\Delta/\ell^{\nu  }}{\ell} \frac{1}{\ell}\right)  + \sum_{\kappa = \nu-(\alpha+\gamma)+1}^{\nu/2-1} \ell^{\nu-\kappa}\left( 1- \legen{\Delta/\ell^{\nu  }}{\ell} \frac{1}{\ell}\right) \left(1+\legen{\Delta/\ell^{\nu}}{\ell} \right) \\ + \ell^{\nu/2}   \left(1+\legen{\Delta/\ell^{\nu}}{\ell} \right). \end{multline} Thus, taking the sum over $\kappa \geq 2\lfloor \frac{\nu + 1}{2} \rfloor - (\gamma - \alpha)$ of the expressions \eqref{b=grange31}, \eqref{b=grange32}, and \eqref{b=grange33}, we have by \eqref{telescope} that if $\beta = \gamma$ and $\nu <2(\alpha+\gamma)$, then 
\est{ {\rm III} = 
\begin{cases}D(t;\ell^{2\gamma}) \psi(\ell^{\alpha+ \gamma}) \sum_{\substack{ m^2 \mid (\Delta/\ell^\nu) \\ v_\ell(m) = 0}} h_w\left(\frac{\Delta}{\ell^\nu m^2}\right) & \text{ if } \nu \text{ is even and }\nu \geq 2\gamma \\
0 & \text{ if } \nu \text{ is odd or } \nu < 2\gamma. \end{cases}
} 
Note that the condition $\Delta/\ell^\nu\equiv 0,1 \pmod*4$ is now implicit in the definition of $h_w$.   

\vspace{12pt}
\noindent{\bf \emph{The case $\gamma - \alpha \leq \kappa < 2 \lfloor \frac{\nu + 1}{2}\rfloor - (\alpha + \gamma)$, i.e. the sum {\rm II}.}}
\vspace{12pt}
  
We have $\kappa \leq \lfloor \nu / 2 \rfloor < \alpha+\gamma$ by hypothesis, so $\min(\kappa, \alpha+ \gamma) = \kappa$.  Therefore the second case of Lemma \ref{weightscalc} (ii) applies, so for $\beta = \gamma$, $\nu < 2(\alpha+\gamma)$ and $\gamma - \alpha \leq \kappa < 2 \lfloor \frac{\nu + 1}{2}\rfloor - (\alpha + \gamma)$ we have 
\es{ 
c_\kappa(t,q,d) = D(t;\ell^{2\gamma})\sum_{\substack{ m^2 \mid \Delta \\ v_\ell(m) = \kappa}} h_w\left(\frac{\Delta}{m^2}\right) S(\Delta^*,\ell^{\alpha+\gamma + \kappa}).
}  
Now, for $\gamma - \alpha \leq \kappa < 2 \lfloor \frac{\nu + 1}{2}\rfloor - (\alpha + \gamma)$ we have by Lemma \ref{sqrts} that 
\es{\label{Sevaluation1}S(\Delta^*,\ell^{\alpha+\gamma+\kappa}) = \begin{cases}\ell^{\lfloor \frac{\alpha+\gamma+\kappa}{2}\rfloor} & \text{ if }\Delta/\ell^{2\lfloor \frac{\alpha+\gamma+\kappa}{2}\rfloor}\equiv 0,1 \pmod*4 \\ 0 & \text{ if }\Delta/\ell^{2\lfloor \frac{\alpha+\gamma+\kappa}{2}\rfloor}\equiv 2,3 \pmod*4. 
\end{cases} 
} 
If $t \equiv dq+d^{-1} \pmod {\ell^{\alpha+\gamma+\kappa}}$ then Lemma \ref{resclbound} (i) implies that $\nu\geq 2\gamma$.  Therefore if $\Delta/\ell^{2\lfloor \frac{\alpha+\gamma+\kappa}{2}\rfloor}\equiv 0,1 \pmod*4$ and $\nu \geq 2\gamma$ we have for $\gamma - \alpha \leq \kappa < 2 \lfloor \frac{\nu + 1}{2}\rfloor - (\alpha + \gamma)$ that \es{c_\kappa(t,q,d) = D(t;\ell^{2\gamma})\sum_{\substack{ m^2 \mid \Delta \\ v_\ell(m) = \kappa}} h_w\left(\frac{\Delta}{m^2}\right) \ell^{\lfloor \frac{\alpha+\gamma+\kappa}{2}\rfloor},} and if $\nu < 2\gamma$ or $\Delta/\ell^{2\lfloor \frac{\alpha+\gamma+\kappa}{2}\rfloor}\equiv 2,3 \pmod*4$ then $c_\kappa(t,q,d)=0$.

We simplify $c_\kappa(t,q,d)$ in two cases according to whether $\alpha+\gamma + \kappa$ is even or odd.  First, assume that $\alpha+\gamma + \kappa$ is even.  Then the assumption $\kappa < 2 \lfloor \frac{\nu + 1}{2}\rfloor - (\alpha + \gamma)$ implies that $\nu - (\alpha + \gamma +\kappa)>0$.  If $\Delta/\ell^{2\lfloor \frac{\alpha+\gamma+\kappa}{2}\rfloor}\equiv 0,1 \pmod*4$ we apply Lemma \ref{cor728} with $d = \frac{\Delta}{\ell^{\alpha+ \gamma -\kappa} m^2}$ and $f = \ell^{\alpha +\gamma - (\frac{\alpha+\gamma+\kappa}{2})}.$ Therefore \es{\label{cktd1}c_\kappa(t,q,d) = D(t;\ell^{2\gamma})\ell^{\alpha+\gamma} \sum_{\substack{ m^2 \mid \Delta \\ v_\ell(m) = \kappa}} h_w\left(\frac{\Delta}{\ell^{\alpha+\gamma - \kappa}m^2}\right).}  

Now suppose $\alpha+\gamma+\kappa$ is odd.  The assumption $\kappa < 2 \lfloor \frac{\nu + 1}{2}\rfloor - (\alpha + \gamma)$ implies that $\nu - (\alpha + \gamma +\kappa-1)>0$. If $\Delta/\ell^{2\lfloor \frac{\alpha+\gamma+\kappa}{2}\rfloor}\equiv 0,1 \pmod*4$ we apply Lemma \ref{cor728} with $d = \frac{\Delta}{\ell^{\alpha+ \gamma -\kappa-1} m^2}$ and $f = \ell^{\alpha +\gamma - (\frac{\alpha+\gamma+\kappa+1}{2})}.$ Thus 
\es{\label{cktd2}c_\kappa(t,q,d) = D(t;\ell^{2\gamma})\ell^{\alpha+\gamma-1} \sum_{\substack{ m^2 \mid \Delta \\ v_\ell(m) = \kappa}} h_w\left(\frac{\Delta}{\ell^{\alpha+\gamma - \kappa-1}m^2}\right).}
Putting the even and odd cases for $\alpha+\gamma +\kappa$ together, we see that if $\alpha+ \gamma+\kappa \equiv 0 \pmod*2$ and $2\gamma \leq \nu$, then 
\es{\label{cktd3} c_\kappa(t,q,d) + c_{\kappa+1}(t,q,d)  = D(t;\ell^{2\gamma}) \psi(\ell^{\alpha+\gamma})  \sum_{\substack{ m^2 \mid \Delta \\ v_\ell(m) = \kappa}} h_w\left(\frac{\Delta}{\ell^{\alpha+\gamma - \kappa}m^2}\right),}
and if $\nu < 2\gamma$ then  $c_\kappa(t,q,d)=0$.  

Pairing up the terms in the sum ${\rm II}$ two at a time, we have that if $2 \gamma \leq \nu$
\es{\label{cktd4} {\rm II} & = \sum_{\kappa = \gamma - \alpha }^{2\lfloor \frac{\nu + 1}{2} \rfloor - (\gamma - \alpha)-1} c_\kappa(t,q,d) \\ 
& = \sum_{j=0}^{\lfloor \frac{\nu+ 1}{2}\rfloor  - \gamma - 1} \left( c_{\gamma-\alpha + 2j} (t,q,d) + c_{\gamma - \alpha +2j +1}(t,q,d)\right) \\ 
& = D(t;\ell^{2\gamma}) \psi(\ell^{\alpha+\gamma}) \sum_{j=0}^{\lfloor \frac{\nu+ 1}{2}\rfloor  - \gamma - 1}     \sum_{\substack{ m^2 \mid \Delta \\ v_\ell(m) = \gamma - \alpha + 2j}} h_w\left(\frac{\Delta}{\ell^{\alpha+\gamma - (\gamma - \alpha + 2j)}m^2}\right) \\
& = D(t;\ell^{2\gamma}) \psi(\ell^{\alpha+\gamma}) \sum_{j=0}^{\lfloor \frac{\nu+ 1}{2}\rfloor  - \gamma - 1}     \sum_{\substack{ m^2 \mid (\Delta/\ell^{2\gamma}) \\ v_\ell(m) = j}} h_w\left(\frac{\Delta}{\ell^{2\gamma}m^2}\right), }
and if $2\gamma>\nu$ then ${\rm II} = 0$.  The definition of $H$ from \eqref{HKcn} implies that if $\nu <2(\alpha+ \gamma)$, then 
\es{\label{IIandIII}{\rm II } + {\rm III} = 
\begin{cases} D(t;\ell^{2\gamma}) \psi(\ell^{\alpha+\gamma}) H\left(\frac{\Delta}{\ell^{2\gamma}}\right) & \text{ if } 2\gamma \leq \nu  \\ 0 & \text{ if } \nu < 2\gamma .
\end{cases}}

\vspace{12pt}
\noindent{\bf \emph{The case $\kappa < \gamma - \alpha$, i.e. the sum {\rm I}.}}
\vspace{12pt}
  
These terms are treated similarly to the ($\beta<\gamma$ and $\kappa < \beta-\alpha$) case from before but there are several details that change, so we repeat the argument.  

We have that $\kappa = \min(\kappa, \alpha + \gamma)$ because $\kappa < \gamma-\alpha$.  The first case of Lemma \ref{weightscalc} (ii) holds, which implies
\es{c_\kappa(t,q,d) =D(t;\ell^{\alpha+\gamma+ \kappa}) \ell^{\alpha+\kappa} \sum_{\substack{m^2 \mid \Delta \\ v_\ell(m) = \kappa}} h_w\left( \frac{\Delta}{m^2}\right).}  

We want to apply Lemma \ref{cor728} to this expression, but need to check a certain hypothesis.

\begin{lemma}\label{claim2}
\begin{enumerate} 
\item If $t\equiv dq+d^{-1} \pmod*{\ell^{\alpha+\gamma+\kappa}}$ and $\kappa <\gamma-\alpha$ then $v_\ell(\Delta/(\ell^{2\alpha}m^2)) \geq 1$ and $\Delta/(\ell^{2\alpha}m^2) \equiv 0,1 \pmod*4$.  

\item If $\kappa < \gamma-\alpha -1$ then $v_\ell(\Delta/(\ell^{2\alpha}m^2)) \geq 2$ and $\Delta/(\ell^{2\alpha+2}m^2) \equiv 0,1 \pmod*4$.
\end{enumerate}
\end{lemma}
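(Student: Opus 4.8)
The plan is to prove Lemma \ref{claim2} in close analogy with Lemma \ref{claim}, but to exploit the extra rigidity of the case $\beta=\gamma$, which forces $\ell^{\gamma}\mid d^2q-1$ and hence makes $(dq-d^{-1})^2$ vanish to $\ell$-adic order at least $2\gamma$. Throughout I work under the congruence $t\equiv dq+d^{-1}\pmod{\ell^{\alpha+\gamma+\kappa}}$, which is exactly the hypothesis of part (1) and which is automatically in force on the support of $c_\kappa(t,q,d)$ (that term carries the factor $D(t;\ell^{\alpha+\gamma+\kappa})$). This residue class is well-defined by Lemma \ref{DtnWellDef}, since $\kappa<\gamma-\alpha$ gives $\alpha+\gamma+\kappa<2\gamma$. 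Notice that, unlike in Lemma \ref{claim}, we will not need Lemma \ref{resclbound}: the hypothesis $\beta=\gamma$ already produces the required high-order vanishing.

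First I would dispose of the valuation assertions, which are uniform in both parts and both parities of $\ell$. Since $\beta=\gamma$ we have $v_\ell(d^2q-1)\geq\gamma$, so $v_\ell\big((dq-d^{-1})^2\big)=2v_\ell(d^2q-1)\geq 2\gamma$. Squaring the congruence gives $\Delta\equiv(dq-d^{-1})^2\pmod{\ell^{\alpha+\gamma+\kappa}}$, and because $2\gamma>\alpha+\gamma+\kappa$ the right-hand side vanishes, whence $\nu=v_\ell(\Delta)\geq\alpha+\gamma+\kappa$. Therefore
\[
v_\ell\!\left(\frac{\Delta}{\ell^{2\alpha}m^2}\right)=\nu-2\alpha-2\kappa\geq\gamma-\alpha-\kappa,
\]
which is $\geq 1$ when $\kappa<\gamma-\alpha$ (part 1) and $\geq 2$ when $\kappa<\gamma-\alpha-1$ (part 2). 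For the congruence modulo $4$ when $\ell$ is odd, I argue exactly as in Lemma \ref{claim}: since $\ell^{2\alpha}\equiv\ell^{2\alpha+2}\equiv 1\pmod 8$, dividing $\Delta/m^2$ by these odd squares preserves its class modulo $4$, so both $\Delta/(\ell^{2\alpha}m^2)$ and $\Delta/(\ell^{2\alpha+2}m^2)$ are congruent to $\Delta/m^2$, and $\Delta/m^2\equiv 0,1\pmod 4$ for every $m$ with $h_w(\Delta/m^2)\neq 0$ (the only terms contributing to $c_\kappa$). This settles the odd case with no further work.

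The main obstacle is $\ell=2$, where dividing out powers of $2$ genuinely shifts residues modulo $4$ and the bare bound $v_2\geq\gamma-\alpha-\kappa$ is too weak (it can equal $1$ when $\kappa=\gamma-\alpha-1$). Here I would sharpen the analysis using that $a:=dq+d^{-1}=d^{-1}(d^2q+1)$ is even, because $d^2q$ is odd. Writing $t=a+2^{\alpha+\gamma+\kappa}s$ and expanding,
\[
\Delta=(dq-d^{-1})^2+2^{\alpha+\gamma+\kappa+1}as+2^{2(\alpha+\gamma+\kappa)}s^2 .
\]
The three summands have $2$-adic valuation $\geq 2\gamma$, $\geq\alpha+\gamma+\kappa+2$ (using that $a$ is even), and $2(\alpha+\gamma+\kappa)$ respectively. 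When $\kappa\leq\gamma-\alpha-1$ (part 1) each of these is $\geq\alpha+\gamma+\kappa+1$, so $\nu\geq\alpha+\gamma+\kappa+1$ and hence $v_2(\Delta/(4^{\alpha}m^2))\geq\gamma-\alpha-\kappa+1\geq 2$; when $\kappa\leq\gamma-\alpha-2$ (part 2, where also $\alpha+\gamma+\kappa\geq 2$) each is in fact $\geq\alpha+\gamma+\kappa+2$, so $\nu\geq\alpha+\gamma+\kappa+2$ and $v_2(\Delta/(4^{\alpha+1}m^2))\geq\gamma-\alpha-\kappa\geq 2$. In either case the relevant quotient is divisible by $4$, hence $\equiv 0\pmod 4$, which is what is asserted. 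The delicate point to get right is precisely this term-by-term valuation bookkeeping for $\ell=2$; everything else is formal.

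With Lemma \ref{claim2} in hand, the class numbers in the sum ${\rm I}$ satisfy the hypotheses needed for Lemma \ref{cor728} (the relevant argument is divisible by $\ell$ and is a genuine discriminant), so one may strip the factor $\ell^{\alpha}$ from $h_w(\Delta/m^2)$ exactly as in the case $\beta<\gamma$, reducing ${\rm I}$ to a sum of the class numbers $H_{\ell^{\gamma},\ell^{\alpha+j}}(t,q,d)$ via Lemma \ref{H}.
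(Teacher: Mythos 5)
Your proof is correct, and while the underlying mechanism is the same as the paper's, your organization is a genuine streamlining. The paper proves the valuation bounds by splitting into the cases $\nu \geq 2\gamma$ and $\nu < 2\gamma$, invoking Lemma \ref{resclbound} (i) in the latter case, and for $\ell=2$ it invokes Lemma \ref{resclbound} (ii) in part (1) and a separate $\epsilon$-expansion $t \equiv dq+d^{-1}+\epsilon 2^{\alpha+\gamma+\kappa}$ in part (2). You instead extract everything in one stroke from $\Delta \equiv (dq-d^{-1})^2 \pmod{\ell^{\alpha+\gamma+\kappa}}$ together with $v_\ell\left((dq-d^{-1})^2\right) = 2v_\ell(d^2q-1) \geq 2\beta = 2\gamma$, which gives the uniform bound $\nu \geq \alpha+\gamma+\kappa$ with no case split on $\nu$; and for $\ell = 2$ your single expansion $\Delta = (dq-d^{-1})^2 + 2^{\alpha+\gamma+\kappa+1}as + 2^{2(\alpha+\gamma+\kappa)}s^2$ with $a = dq+d^{-1}$ even handles both parts by term-by-term valuation bookkeeping (your checks are right: the third summand needs $\alpha+\gamma+\kappa \geq 1$ in part (1) and $\geq 2$ in part (2), both of which follow from $\gamma \geq 1$, resp.\ $\gamma \geq 2$, forced by the range of $\kappa$). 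Since Lemma \ref{resclbound} is itself proved from the same congruence $\Delta \equiv (dq-d^{-1})^2$, nothing is gained logically, but your version eliminates an intermediate lemma and two case analyses. One point where you are actually more careful than the paper: for odd $\ell$ the congruence conclusion $\Delta/(\ell^{2\alpha}m^2) \equiv 0,1 \pmod*4$ fails for literally every admissible $m$ — e.g.\ $q=13$, $\ell=3$, $\gamma=\beta=1$, $\alpha=\kappa=0$, $d=1$, $t=-4$, $m=2$ gives $\Delta/m^2 = -9 \equiv 3 \pmod*4$ — and only holds on the support of $h_w(\Delta/m^2)$, which is all that the application to $c_\kappa$ and Lemma \ref{cor728} requires; you state this restriction explicitly, whereas the paper's deduction ``$\ell^2 \equiv 1 \pmod*4$, so $\Delta/(\ell^{2\alpha}m^2) \equiv 0,1 \pmod*4$'' leaves it implicit.
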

\begin{proof}
\begin{enumerate}
\item Suppose $\nu \geq 2\gamma$.  Then we have directly that 
\est{ \nu - 2\alpha -2\kappa \geq \nu - 2\gamma +2 \geq 2.} 
Now suppose $\nu < 2\gamma$.  By Lemma \ref{resclbound} (i) we have $\alpha+ \gamma +\kappa \leq \nu$, from which it follows that 
\est{ \nu - 2\alpha - 2\kappa \geq \gamma -\alpha - \kappa \geq 1.}  
If $\ell\neq 2$ then $\ell^2 \equiv 1 \pmod*4$, and so $\Delta/(\ell^{2\alpha} m^2) \equiv 0,1\pmod*4$.  

Suppose $\ell=2$.  If $\nu \geq 2\gamma$ we already have $\nu -2\gamma -2\kappa \geq 2$ so $\Delta / (2^{2\alpha} m^2) \equiv 0 \pmod*4$.  If $\nu < 2\gamma$ then by Lemma \ref{resclbound} (ii) and $\kappa <\gamma-\alpha$ we have that $\nu-2\alpha -2\kappa\geq 2$, and so $\Delta / (2^{2\alpha} m^2) \equiv 0 \pmod*4$ as well.

\item We now consider the stronger condition that $\kappa \leq \gamma - \alpha- 2$.  This set of $\kappa$ is non-empty only if $\gamma \geq 2$ so we assume this.  If $\nu\geq 2\gamma$ we have directly \est{\nu -2\alpha -2\kappa \geq \nu -2\gamma + 2 \geq 2} as above.  If $\nu< 2\gamma$ then we have 
\est{\nu -2\alpha -2\kappa \geq \gamma - \alpha - \kappa \geq 2} 
by Lemma \ref{resclbound} (i).  If $\ell \neq 2$ then $\ell^2 \equiv 1 \pmod*4$ so we have $\Delta / (\ell^{2(\alpha + 1)} m^2) \equiv 0,1\pmod*4$.  

If $\ell =2$ then we write $t\equiv dq+d^{-1} + \epsilon 2^{\alpha + \gamma +\kappa} \pmod*{2^{\alpha + \gamma + \kappa +2}}$ with $\epsilon \in \Z/4\Z$.  Then because $\gamma \geq 2$ and $q$ is odd we have that 
\est{ \Delta \equiv (dq-d^{-1})^2 \pmod*{2^{\alpha + \gamma + \kappa +2}}.} Since \est{\alpha + \gamma + \kappa + 2 -2(\alpha+ \kappa +1) \geq \gamma - \alpha -\kappa \geq 2} 
we have that $\Delta / 2^{2(\alpha + \kappa +1 )} $ is a square modulo $4$, as was to be shown.
\end{enumerate}
\end{proof}

By Lemma \ref{claim2} we may apply Lemma \ref{cor728} to $c_\kappa(t,q,d)$ with $d = \frac{\Delta}{\ell^{2\alpha}m^2}$ and $f = \ell^\alpha$.  For $k < \gamma-\alpha$ this implies
\es{\label{b=gksmall}
c_\kappa(t,q,d) = D(t;\ell^{\alpha+\gamma+ \kappa}) \ell^{2\alpha+\kappa} \sum_{\substack{m^2 \mid \Delta \\ v_\ell(m) = \kappa}} h_w\left( \frac{\Delta}{\ell^{2\alpha} m^2}\right).}    
We apply the expression \eqref{HWtrick} to \eqref{b=gksmall}.  If $\kappa < \gamma-\alpha-1$ then the second part of Lemma \ref{claim2} implies that 
\es{\label{ck1}c_\kappa(t,q,d) = D(t;\ell^{\alpha+\gamma+ \kappa}) \ell^{2\alpha+\kappa} \left( H \left( \frac{\Delta}{\ell^{2\alpha+2\kappa}}\right) - H\left(\frac{\Delta}{\ell^{2\alpha+2\kappa+2}}\right)\right).}
It remains for us to consider the case $\kappa = \gamma-\alpha-1$. 

\begin{lemma}\label{claim3}
Suppose that $\beta = \gamma$.   
We have that $t\equiv dq+d^{-1} \pmod*{\ell^{2\gamma}}$ if and only if $t \equiv dq+d^{-1}\pmod*{\ell^{2\gamma-1}}$, $\nu \geq 2\gamma$, and $\Delta/\ell^{2\gamma} \equiv 0,1 \pmod*4$.  
\end{lemma}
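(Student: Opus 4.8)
The plan is to prove both implications by reducing $\Delta=t^2-4q$ to the perfect square $(dq-d^{-1})^2$ and then tracking $\ell$-adic valuations, with the prime $\ell=2$ forcing the extra mod-$4$ hypothesis into play. First I record the basic set-up: since $\beta=\gamma$ we have $d^2q\equiv 1\pmod{\ell^\gamma}$, so by Lemma \ref{DtnWellDef} (with $B=C=\gamma$, $D=d$) the class $dq+d^{-1}$ is well-defined modulo $\ell^i$ for every $\gamma\le i\le 2\gamma$; in particular both congruences in the statement make sense, using $2\gamma-1\ge\gamma$ (here $\gamma\ge1$). The identity driving everything is $(dq+d^{-1})^2-4q=(dq-d^{-1})^2$, together with $j:=v_\ell(dq-d^{-1})=v_\ell(d^2q-1)\ge\gamma$.

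For the forward implication, assume $t\equiv dq+d^{-1}\pmod{\ell^{2\gamma}}$. The congruence modulo $\ell^{2\gamma-1}$ is then immediate. Writing $t=(dq+d^{-1})+\ell^{2\gamma}w$ and expanding gives $\Delta\equiv(dq-d^{-1})^2\pmod{\ell^{2\gamma}}$, and since $v_\ell\bigl((dq-d^{-1})^2\bigr)=2j\ge 2\gamma$ this yields $\nu\ge 2\gamma$. For $\Delta/\ell^{2\gamma}\equiv 0,1\pmod4$ I would split on the parity of $\ell$: for odd $\ell$ it is automatic, as $\ell^{2\gamma}\equiv1\pmod4$ forces $\Delta/\ell^{2\gamma}\equiv\Delta\equiv0,1\pmod4$; for $\ell=2$, using that $dq+d^{-1}$ is even one refines the expansion to $\Delta\equiv(dq-d^{-1})^2\pmod{2^{2\gamma+2}}$, and since an odd square is $\equiv1\pmod8$ one reads off $\Delta/2^{2\gamma}\equiv1\pmod4$ when $j=\gamma$ and $\equiv0\pmod4$ when $j>\gamma$.

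The reverse implication is the substance of the lemma. Assuming the three right-hand conditions, I write $t=(dq+d^{-1})+\ell^{2\gamma-1}e$; the goal is to show $\ell\mid e$, which upgrades the congruence to modulus $\ell^{2\gamma}$. Expanding,
\[
\Delta=(dq-d^{-1})^2+2(dq+d^{-1})\ell^{2\gamma-1}e+\ell^{4\gamma-2}e^2.
\]
For odd $\ell$ this is quick: $dq+d^{-1}=d^{-1}(d^2q+1)$ is an $\ell$-adic unit (as $d^2q+1\equiv2\pmod{\ell^\gamma}$ and $\ell$ is odd), so if $\ell\nmid e$ the middle term has valuation exactly $2\gamma-1$ while the other two have valuation $\ge 2\gamma$ (using $\gamma\ge1$); hence $\nu=2\gamma-1$, contradicting $\nu\ge2\gamma$. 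Thus the mod-$4$ hypothesis is not even needed for odd $\ell$.

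The main obstacle is $\ell=2$, where the middle term can share the valuation of $(dq-d^{-1})^2$, so valuations alone do not pin down $e$. Here I would use that $\nu\ge2\gamma$ permits dividing by $2^{2\gamma}$ and reading $\Delta/2^{2\gamma}\pmod4$. The crucial arithmetic input is the dichotomy coming from $(dq+d^{-1})\pm(dq-d^{-1})\in\{2dq,\,2d^{-1}\}$, each of $2$-adic valuation exactly $1$: precisely one of $v_2(dq+d^{-1})$ and $v_2(dq-d^{-1})=j$ equals $1$ and the other is $\ge2$. Splitting into $j=\gamma$ versus $j>\gamma$, and treating $\gamma=1$ separately from $\gamma\ge2$ so that the contribution of the $\ell^{4\gamma-2}e^2$ term is controlled modulo $4$, a short computation shows that if $e$ were odd then $\Delta/2^{2\gamma}\equiv 2$ or $3\pmod4$, contradicting $\Delta/2^{2\gamma}\equiv0,1\pmod4$. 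Therefore $e$ is even, which completes the reverse implication and the proof.
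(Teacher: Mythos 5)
Your proposal is correct and follows essentially the same route as the paper's own proof: both directions expand $\Delta$ around $(dq-d^{-1})^2$ using $\beta=\gamma$ (so $v_\ell(dq-d^{-1})\geq\gamma$), with a valuation argument settling the forward direction and the reverse direction for odd $\ell$ (where, as you note and as the paper's argument implicitly confirms, the mod-$4$ hypothesis is not needed), and a mod-$4$ analysis of $\Delta/\ell^{2\gamma}$, splitting $\gamma=1$ from $\gamma\geq 2$, settling $\ell=2$. If anything you are slightly more careful than the paper at $\ell=2$: your explicit dichotomy that exactly one of $v_2(dq+d^{-1})$ and $v_2(dq-d^{-1})$ equals $1$ forces you to treat the subcase $v_2(dq-d^{-1})=\gamma$, where $(dq-d^{-1})^2/2^{2\gamma}\equiv 1 \pmod{4}$ survives in the expansion — a contribution the paper's displayed congruences compress away, though its conclusion is unaffected.
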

\begin{proof}
We first prove the ``only if'' direction.  Suppose that $t\equiv dq+d^{-1} \pmod*{\ell^{2\gamma}}$.  Clearly $t\equiv dq+d^{-1} \pmod*{\ell^{2\gamma-1}}$, so Lemma \ref{resclbound} (i) directly shows that $\nu \geq 2\gamma$.  If $\ell \neq 2$ then $\Delta/\ell^{2(\alpha+\kappa+1)} \equiv 0,1 \pmod*4$ is automatically satisfied.    
If $\ell=2$ one may calculate from $t\equiv dq+d^{-1} \pmod*{2^{2\gamma}}$ and the fact that $q$ is odd that \est{\Delta \equiv (dq-d^{-1})^2 \pmod*{2^{2\gamma + 2}},} which by the assumption $\beta=\gamma$ implies that $\Delta/2^{2\gamma}$ is a square modulo $4$.

We now prove the ``if'' direction.  We have that $t \equiv dq+d^{-1}\pmod*{\ell^{2\gamma-1}}$, which we write as $t \equiv dq+d^{-1} +\epsilon \ell^{2\gamma-1} \pmod*{\ell^{2\gamma}}$ with $\epsilon \in \Z/\ell\Z$.  Our goal is to show that $\epsilon =0$.  

We have that 
\est{ \Delta \equiv (dq-d^{-1})^2 + 2 \epsilon(dq+d^{-1})\ell^{2\gamma-1} \equiv 0 \pmod*{\ell^{2\gamma}}.}  
Then $\beta=\gamma$ implies that $2 \epsilon(dq+d^{-1}) \equiv 0 \pmod*\ell$.  We have $dq+d^{-1} \equiv 2d^{-1} \pmod*\ell$, so if $\ell \neq 2$ we must have that $\epsilon =0$ as was to be shown.  

If $\ell= 2$ then assume $t \equiv dq+d^{-1} +\epsilon 2^{2\gamma-1} \pmod*{2^{2\gamma+2}}$ where $\epsilon \in \Z/8\Z$.  Our goal is to show $\epsilon$ is even. If $\gamma\geq 2$ we have by the hypothesis $\nu \geq 2\gamma$ that 
\est{\Delta / 2^{2\gamma} \equiv \epsilon (dq+d^{-1}) \equiv 0,1 \pmod*4.} Since $q$ is odd $dq+d^{-1} \equiv 2 \pmod*4,\ \epsilon$ must be even and hence $t \equiv dq+d^{-1} \pmod*{2^{2\gamma}}.$  If $\gamma=1$ then we have that \est{\Delta/2^{2\gamma} \equiv \epsilon (dq+d^{-1} +\epsilon)\equiv 0,1\pmod*4,} 
from which one checks using $dq+d^{-1} \equiv 2 \pmod*4$ that $\epsilon$ must be even.
\end{proof}

By Lemma \ref{claim3} and \eqref{HWtrick}, when $\kappa = \gamma -\alpha-1$ we have that 
\es{\label{ck2}c_\kappa(t,q,d) = D(t;\ell^{\alpha+\gamma+ \kappa}) \ell^{2\alpha+\kappa}  H \left( \frac{\Delta}{\ell^{2\alpha+2\kappa}}\right) - D(t;\ell^{\alpha+\gamma+ \kappa+1}) \ell^{2\alpha+\kappa} H\left(\frac{\Delta}{\ell^{2\alpha+2\kappa+2}}\right).}

Now putting together the formulas \eqref{ck1} and \eqref{ck2} and rearranging according to the argument of $H$ we have that 
\est{{\rm I} = & \sum_{\kappa < \gamma-\alpha} c_\kappa(t,q,d) \\ 
= & H\left(\frac{\Delta}{\ell^{2\alpha}}\right) D(t;\ell^{\alpha+\gamma})\ell^{2\alpha} \\ & - \sum_{\kappa=1}^{\gamma-\alpha-1} H\left( \frac{\Delta}{\ell^{2(\alpha+\kappa)}}\right) \left( \ell^{2\alpha+\kappa-1}D(t;\ell^{\alpha+\gamma+\kappa-1}) - \ell^{2\alpha+\kappa}D(t;\ell^{\alpha+\gamma+\kappa}) \right) \\ & - H\left(\frac{\Delta}{\ell^{2\gamma}}\right) D(t;\ell^{2\gamma})\ell^{\gamma+\alpha-1}
.}
Adding this together with the result for ${\rm II} + {\rm III}$ from \eqref{IIandIII} we find that 
\es{\label{bgctd} C(t,q,d) = & {\rm I} + {\rm II} + {\rm III} \\ 
= & H\left(\frac{\Delta}{\ell^{2\alpha}}\right)D(t;\ell^{\alpha+\gamma})\ell^{2\alpha} \\ & - \sum_{\kappa=1}^{\gamma-\alpha-1} H\left( \frac{\Delta}{\ell^{2(\alpha+\kappa)}}\right) \left( \ell^{2\alpha+\kappa-1}D(t;\ell^{\alpha+\gamma+\kappa-1}) - \ell^{2\alpha+\kappa}D(t;\ell^{\alpha+\gamma+\kappa}) \right) \\ & + H\left(\frac{\Delta}{\ell^{2\gamma}}\right) D(t;\ell^{2\gamma})\ell^{\gamma+\alpha}
.}
By Lemma \ref{H} this matches exactly the claimed formula from Proposition \ref{MP}.

\vspace{12pt}
\noindent {\bf \emph{The case $\beta = \gamma$ and $\nu \geq 2(\alpha+\gamma)$.  }}
\vspace{12pt}

Recall the definition of $c_\kappa(t,q,d)$ from \eqref{ckdef}.  We split into three cases according to the value of $\kappa$: each of the ranges $\kappa < \gamma-\alpha$, $\gamma - \alpha \leq \kappa < \gamma + \alpha$, and $\kappa \geq \gamma + \alpha$ will be treated differently.  Specifically we write \est{ {\rm I} \eqdef \sum_{\kappa < \gamma-\alpha} c_\kappa(t,q,d),\,\,\,\,\, {\rm II} \eqdef \sum_{\kappa = \gamma - \alpha }^{\gamma+ \alpha-1} c_\kappa(t,q,d),\,\,\,\,\, {\rm III} \eqdef \sum_{\kappa \geq \gamma +\alpha} c_{\kappa}(t,d)} so that $C(t,q,d) = {\rm I} + {\rm II} + {\rm III}.$  Note that the second range of $\kappa$ above has been chosen to have an even number of terms.

\vspace{12pt}
\noindent{\bf \emph{The case $\kappa \geq \gamma + \alpha$, i.e. the sum {\rm III}.}}
\vspace{12pt}
  
In this case we have $\gamma -\alpha \leq \gamma + \alpha = \min(\kappa,\alpha + \gamma)$ so the second case of Lemma \ref{weightscalc} (ii) applies. We have that
\es{ 
c_\kappa(t,q,d) = D(t;\ell^{2\gamma}) \sum_{\substack{m^2 \mid \Delta \\ v_\ell(m) = \kappa}} h_w \left(\frac{\Delta}{m^2}\right) \frac{\psi(\ell^{\alpha + \gamma})}{\ell^{\alpha+\gamma}} S\left(\Delta^*,\ell^{2(\alpha+\gamma)}\right).
} 
By assumption $\nu \geq 2(\alpha+ \gamma)$, which implies that that $\Delta^* \equiv 0 \pmod*{\ell^{2(\alpha+\gamma)}}$.  For $\kappa \geq \gamma + \alpha$, Lemma \ref{sqrts} implies that
\est{
S\left(\Delta^*,\ell^{2(\alpha+\gamma)}\right) = S\left(0,\ell^{2(\alpha+\gamma)}\right) =\ell^{\alpha+\gamma}.
}   
Thus for $\kappa \geq \gamma+\alpha,\ \beta=\gamma$, and $\nu \geq 2(\alpha+\gamma)$ we have 
\es{
c_\kappa(t,q,d) =  D(t;\ell^{2\gamma}) \psi(\ell^{\alpha+\gamma}) \sum_{\substack{m^2 \mid \Delta \\ v_\ell(m) = \kappa}} h_w \left(\frac{\Delta}{m^2}\right),
} 
and therefore,
\es{{\rm III} = \sum_{\kappa \geq \alpha + \gamma} c_\kappa(t,q,d) = \sum_{j \geq \alpha} c_{\gamma+j}(t,q,d) =D(t;\ell^{2\gamma}) \psi(\ell^{\alpha+\gamma}) \sum_{j \geq \alpha} \sum_{\substack{m^2 \mid (\Delta/\ell^{2\gamma}) \\ v_\ell(m) = j}} h_w \left(\frac{\Delta}{\ell^{2\gamma} m^2}\right) .
} 

\vspace{12pt}
\noindent{\bf \emph{The case $\gamma -\alpha \leq \kappa < \gamma + \alpha$ i.e. the sum {\rm II}.}}
\vspace{12pt}
  
In this case we have $\kappa = \min (\kappa, \alpha + \gamma)$ so the second case of Lemma \ref{weightscalc} (ii) applies. This implies
\es{ c_\kappa(t,q,d) = D(t;\ell^{2\gamma})\sum_{\substack{ m^2 \mid \Delta \\ v_\ell(m) = \kappa}} h_w\left(\frac{\Delta}{m^2}\right) S(\Delta^*,\ell^{\alpha+\gamma + \kappa}).}  
By our assumption that $\nu\geq 2(\alpha+\gamma)$, we have $ 2\gamma \leq \alpha + \gamma + \kappa < 2(\alpha+ \gamma) \leq \nu$ so exactly as in \eqref{Sevaluation1} we have 
\es{S(\Delta^*,\ell^{\alpha+\gamma+\kappa}) = \begin{cases}\ell^{\lfloor \frac{\alpha+\gamma+\kappa}{2}\rfloor} & \text{ if }\Delta/\ell^{2\lfloor \frac{\alpha+\gamma+\kappa}{2}\rfloor}\equiv 0,1 \pmod*4 \\ 0 & \text{ if }\Delta/\ell^{2\lfloor \frac{\alpha+\gamma+\kappa}{2}\rfloor}\equiv 2,3 \pmod*4 .\end{cases} } 
If $\Delta/\ell^{2\lfloor \frac{\alpha+\gamma+\kappa}{2}\rfloor}\equiv 0,1 \pmod*4$  then we have 
\es{c_\kappa(t,q,d) = D(t;\ell^{2\gamma})\sum_{\substack{ m^2 \mid \Delta \\ v_\ell(m) = \kappa}} h_w\left(\frac{\Delta}{m^2}\right) \ell^{\lfloor \frac{\alpha+\gamma+\kappa}{2}\rfloor},} 
and again $\nu \geq 2(\alpha+\gamma)$ and $\kappa < \alpha +\gamma$ give $\nu -(\alpha + \gamma + \kappa) > 0 $. Therefore, if $\Delta/\ell^{2\lfloor \frac{\alpha+\gamma+\kappa}{2}\rfloor}\equiv 0,1 \pmod*4$, then we may apply Lemma \ref{cor728} and pair the terms exactly as in \eqref{cktd1},  \eqref{cktd2},  \eqref{cktd3}, and  \eqref{cktd4}. In this case we have
\est{{\rm II} =  \sum_{\kappa = \gamma-\alpha }^{\gamma+\alpha -1} c_\kappa(t,q,d) = D(t;\ell^{2\gamma}) \psi(\ell^{\alpha + \gamma}) \sum_{j\leq \alpha -1}\sum_{\substack{ m^2 \mid (\Delta/ \ell^{2\gamma}) \\ v_\ell(m) = j}} h_w\left(\frac{\Delta}{\ell^{2\gamma}m^2}\right).}  
Therefore by the definition of $H$ from \eqref{HKcn}, in the case ($\beta = \gamma$ and $\nu \geq 2(\alpha+ \gamma)$) we have  
\est{ {\rm II} + {\rm III} =D(t;\ell^{2\gamma}) \psi(\ell^{\alpha + \gamma}) H\left( \frac{\Delta}{\ell^{2\gamma}}\right).}

\vspace{12pt}
\noindent{\bf \emph{The case $\kappa < \gamma - \alpha$, i.e. the sum {\rm I}.}}
\vspace{12pt}
  
When we treated the case ($\beta = \gamma$, $\nu < 2(\alpha + \gamma)$ and $\kappa < \gamma-\alpha$) we did not use the assumption $\nu <2(\alpha + \gamma)$ at all.  Therefore the same proof goes through verbatim in the present case ($\beta = \gamma$, $\nu \geq 2(\alpha + \gamma)$ and $\kappa < \gamma-\alpha$).  We take $C(t,q,d) = {\rm I} + {\rm II} + {\rm III}$ and conclude that \eqref{bgctd} holds whenever $\beta = \gamma$. Thus, by Lemma \ref{H} we conclude Proposition \ref{MP} in all cases. 

\subsection{The General Case of Proposition \ref{MP}}\label{MPGeneral}

We now discuss the computation of $T^{(e)}(t)$ in the case of general levels $M$ and $N$ without giving full details.  Recall that $(N,q)=1$, $d\in (\Z/N\Z)^\times$, and $L=(d^2q-1,N)$. Let $\ell \mid MN$ be a prime, $\alpha_\ell = v_\ell(M),\ \beta_\ell= v_\ell(L)$ and $\gamma_\ell = v_\ell(N)$.   Let 
\est{W(d) \eqdef \ssum_{c \in (\Z/MN\Z)^\times} \delta_N(c,d^{-1})} 
and for positive integers $K$ such that $K^2 \mid \Delta$, let 
\es{C_{K,N,M}(t,q,d) \eqdef \sum_{m^2 \mid (\Delta/K^2)} h_w \left( \frac{\Delta}{K^2 m^2}\right) \frac{\psi(MN)}{\psi(MN/(MN,m))} W(d).} 
Note that with this definition we have 
\est{ C_{K,1,1}(t,q,d) = H\left( \frac{\Delta}{K^2}\right)} and \est{C_{1,\ell^\gamma,\ell^\alpha }(t,q,d) = C(t,q,d).} 
Following the same steps as \eqref{Tet1} through \eqref{orthogonalityresult} we find that 
\est{T^{(e)}(t) = \frac{1}{2} \left( C_{1,N,M}(t,q,d) + (-1)^k C_{1,N,M}(t,q,-d)\right),} 
so it suffices to compute $ C_{1,N,M}(t,q,d)$.  
\begin{proposition}\label{MP2}
We have that \begin{multline*} C_{K,N,M}(t,q,d)\\
 = \frac{\psi(\ell^{2\gamma_\ell})}{\psi(\ell^{2(\gamma_\ell-\alpha_\ell)})} \sum_{j=0}^{\beta_\ell -\alpha_\ell} \frac{\varphi(\ell^{2j})\varphi(\ell^{\gamma_\ell-\alpha_\ell-j})}{\varphi(\ell^{\gamma_\ell-\alpha_\ell})} \Bigg( C_{\ell^{\alpha_\ell}K,N/\ell^{\gamma_\ell},M/\ell^{\alpha_\ell}} (t,q,d) \delta_{\ell^{\alpha_\ell}}(d^2q,1) D(t;\ell^{\alpha_\ell+\gamma_\ell}) \\  - \sum_{k=1}^{\gamma_\ell-\alpha_\ell -1}  C_{\ell^{\alpha_\ell+k}K,N/\ell^{\gamma_\ell},M/\ell^{\alpha_\ell}} (t,q,d)  \delta_{\ell^{\alpha_\ell+k}}(d^2q,1) \left(D(t;\ell^{\alpha_\ell+\gamma_\ell+k-1}) - D(t;\ell^{\alpha_\ell+\gamma_\ell+k}) \right) \Bigg).\end{multline*}
\end{proposition}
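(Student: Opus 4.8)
The plan is to reduce Proposition \ref{MP2} to the single-prime-power computation already carried out in the proof of Proposition \ref{MP}, by isolating one prime $\ell \mid MN$ and factoring the remaining data through the Chinese Remainder Theorem. Fix such an $\ell$ and abbreviate $\alpha = \alpha_\ell$, $\gamma = \gamma_\ell$, $\beta = \beta_\ell$, so that $(d^2q-1,\ell^\gamma) = \ell^\beta$ exactly matches the hypotheses of Proposition \ref{MP}. The key observation is that $C_{K,N,M}(t,q,d)$ is assembled from three ingredients — the weighted count $W(d)$, the ratio $\psi(MN)/\psi(MN/(MN,m))$, and the class number $h_w(\Delta/(K^2m^2))$ — summed over $m$ with $m^2 \mid \Delta/K^2$; the first two factor cleanly over $\ell$, while the third is handled, as before, by Lemma \ref{cor728}.

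First I would establish the local factorization. Writing $MN = \ell^{\alpha+\gamma} \cdot (MN/\ell^{\alpha+\gamma})$ with coprime factors, both the lifting condition defining the $*_m$ summation and the congruence $\delta_N(c,d^{-1})$ split under $(\Z/MN\Z)^\times \cong (\Z/\ell^{\alpha+\gamma}\Z)^\times \times (\Z/(MN/\ell^{\alpha+\gamma})\Z)^\times$, so that $W(d) = W_\ell(d) \cdot W_{\ell'}(d)$, where $W_\ell(d)$ is precisely the single-prime weight evaluated in Lemma \ref{weightscalc}. Likewise, since $\psi$ is multiplicative and $(MN,m)$ factors over primes, the ratio $\psi(MN)/\psi(MN/(MN,m))$ splits as an $\ell$-local factor times a factor away from $\ell$. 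Splitting the summation variable as $m = \ell^\kappa m'$ with $(\ell,m')=1$ then organizes $C_{K,N,M}$ into an outer sum over $\kappa = v_\ell(m)$ of $\ell$-local data, multiplied by an inner sum over $m'$ carrying the away-from-$\ell$ data.

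Next I would run the $\ell$-local computation verbatim as in Section \ref{ProofMP}: the evaluation of $W_\ell(d)$ via Lemma \ref{weightscalc}, the division into ranges of $\kappa$, the case analysis $\beta < \gamma$ versus $\beta = \gamma$ (with the further split according to whether $\nu < 2(\alpha+\gamma)$ or $\nu \geq 2(\alpha+\gamma)$), and the repeated use of Lemma \ref{cor728} to peel the factor $\ell^{2\kappa}$ out of the argument of $h_w$, all proceed exactly as before and produce the $\ell$-local prefactor $\tfrac{\psi(\ell^{2\gamma})}{\psi(\ell^{2(\gamma-\alpha)})}\sum_{j}\tfrac{\varphi(\ell^{2j})\varphi(\ell^{\gamma-\alpha-j})}{\varphi(\ell^{\gamma-\alpha})}$. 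The one new feature is that after each application of Lemma \ref{cor728} the residual class number is $h_w(\Delta/((\ell^{\alpha+j}K)^2 m'^2))$ rather than a pure Hurwitz class number; summing these over $m'$ with $(\ell,m')=1$ against the surviving factors $W_{\ell'}(d)$ and the away-from-$\ell$ $\psi$-ratio reproduces precisely $C_{\ell^{\alpha+j}K,\,N/\ell^\gamma,\,M/\ell^\alpha}(t,q,d)$ by its definition. Thus the telescoping identities \eqref{HWtrick} and Lemma \ref{H}, which collapsed the single-prime sum into Hurwitz class numbers $H$, now collapse it into the function $C$ at the reduced level — which is the content of Proposition \ref{MP2}. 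In particular, when $N$ and $M$ are powers of $\ell$ and $K=1$, the reduced $C_{\cdot,1,1}$ equals $H(\Delta/\cdot^2)$ and one recovers Proposition \ref{MP}.

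The main obstacle is bookkeeping rather than any genuinely new idea: one must re-verify that the divisibility and square-class hypotheses of Lemma \ref{cor728} persist when the argument of $h_w$ carries the extra factor $K^2$. These are exactly the conditions checked in Lemmas \ref{claim}, \ref{claim2}, and \ref{claim3} for the single-prime case. Because those verifications concern only the $\ell$-adic valuation of $\Delta$ and its residue modulo $4$ (or $8$ when $\ell = 2$), and because the $\ell$-part of $K$ is dictated by the range of $\kappa$ being extracted while the prime-to-$\ell$ part of $K$ is a harmless spectator in $h_w(\Delta/(K^2m^2))$, the same arguments go through unchanged. Iterating the one-prime reduction over all primes dividing $MN$ and terminating at the base case $C_{K,1,1}(t,q,d) = H(\Delta/K^2)$ then yields the general evaluation of $T^{(e)}(t)$.
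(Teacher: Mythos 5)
Your proposal is correct and follows essentially the same route as the paper, which proves Proposition \ref{MP2} by rerunning the argument of Proposition \ref{MP} with each instance of $H(\Delta/K^2)$ replaced by $C_{K,N/\ell^{\gamma_\ell},M/\ell^{\alpha_\ell}}(t,q,d)$. Your explicit CRT factorization of $W(d)$ and the $\psi$-ratio, together with the check that the hypotheses of Lemma \ref{cor728} verified in Lemmas \ref{claim}, \ref{claim2}, and \ref{claim3} are insensitive to the prime-to-$\ell$ part of $K$, merely spells out the bookkeeping the paper leaves implicit in its remark that the general proof is ``nearly the same but notationally more cumbersome.''
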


Proposition \ref{MP} is the special case of Proposition \ref{MP2} with $K=1,$ $N= \ell^\gamma$ and $M = \ell^\alpha$.  The proof of Proposition \ref{MP2} is nearly the same as the proof of Proposition \ref{MP} but notationally more cumbersome.  It suffices to replace within the proof of Proposition \ref{MP} the several instances of $H(\Delta/K^2)$ with $C_{K,N/\ell^{\gamma_\ell},M/\ell^{\alpha_\ell}} (t,q,d)$ to pass to the proof of Proposition \ref{MP2}. 

Applying Proposition \ref{MP2} recursively, one checks that the result matches the definition of the class numbers $H_{N,M}(t,q,d)$ in \eqref{cndef}. Therefore  
\es{ C_{1,N,M}(t,q,d) = 2\frac{\psi(N^2)}{\psi(N^2/M^2)}\sum_{\Lambda \mid (L/M)} \frac{\varphi(\Lambda^2)\varphi(N/(M\Lambda))}{\varphi(N/M)}H_{N,\Lambda M}(t,q,d)} and \est{T^{(e)}(t) = \frac{\psi(N^2)}{\psi(N^2/M^2)}\sum_{\Lambda \mid (L/M)} \frac{\varphi(\Lambda^2)\varphi(N/(M\Lambda))}{\varphi(N/M)}\left( H_{N,\Lambda M}(t,q,d) + (-1)^k H_{N,\Lambda M}(t,q,-d) \right).} 
Note from the definition that $H_{N,M}(t,q,-d)=H_{N,M}(-t,q,d)$ and also that $U_{k-2}(t,q)$ is an even (resp.~odd) function of $t$ when $k$ is even (resp.~odd).  Therefore 
\es{\frac{1}{2} \sum_{t^2<4q } U_{k-2}(t,q)\left( H_{N,\Lambda M}(t,q,d) + (-1)^k H_{N,\Lambda M}(t,q,-d) \right)  =  \sum_{t^2<4q } U_{k-2}(t,q)H_{N,\Lambda M}(t,q,d)} and \est{ T^{(e)} = \frac{\psi(N^2)}{\psi(N^2/M^2)}\sum_{\Lambda \mid (L/M)} \frac{\varphi(\Lambda^2)\varphi(N/(M\Lambda))}{\varphi(N/M)}\sum_{t^2<4q } U_{k-2}(t,q) H_{N,\Lambda M}(t,q,d),} 
as claimed in the statement of Theorem \ref{ESTF}.

\section{Comparison of Class Number Sums}\label{assembly}
Recall the definitions of $\omega_{A}(q,d)$ and $\omega_{A}^*(q,d)$ from \eqref{omegadef} and \eqref{omega*def}.  The following expression was the main result of Section \ref{curves2classnumbers}, expressing elliptic curve counts in terms of class numbers.  See Proposition \ref{s3mp}.  We have
\begin{multline}\label{section3result}\E_q(U_{k-2}(t,q) \Phi_A) = \frac{1}{q}\omega_{A}(q,d) + \frac{1}{q}\omega^*_{A}(q,1) \\ +  q^{k/2-1}\frac{(p-1)(k-1)}{24q} \left( \delta_{n_1(A)} (\sqrt{q}, 1) + (-1)^k \delta_{n_1(A)} (\sqrt{q}, -1) \right).
\end{multline} 

The main result of Section \ref{traces}, Theorem \ref{ESTF}, expresses $\tr(T_q \langle d \rangle | S_k(n_1,n_2))$ as a sum of similar class numbers.  Specifically, for $n_2 \mid n_1$ let 
\es{
\Sigma_{n_1,n_2}(q,d)\eqdef \sum_{t^2<4q } U_{k-2}(t,q)H_{n_1,n_2}(t,q,d),
} 
and if $q=p^{-1}$ we set $\Sigma_{n_1,n_2}(q,d)=0$.  Recall the definition of $\phi(n)$ as the Dirichlet convolution inverse to $\varphi(n^2)$ and the definition of  $T_{n_1,\lambda}(q,d)$ in terms of traces of Hecke operators.  Suppose $d^2q\equiv 1 \pmod*{n_2}$.  The trace formula, Theorem \ref{ESTF}, and the definition of $\phi$ imply that 
\es{\label{tfresult}
\Sigma_{n_1,n_2}(q,d) =  \frac{1}{\varphi(n_1/n_2)} \sum_{\nu \mid \frac{(d^2q-1,n_1)}{n_2}} \phi(\nu) T_{n_1,n_2 \nu}\left(q,d\right).
}  
The goal of this section is the comparison of the expressions \eqref{section3result} and \eqref{tfresult} given by Proposition \ref{AnsAlpha} below.  The main result of the paper, Theorem \ref{MT}, follows directly from \eqref{section3result}, \eqref{tfresult}, and Proposition \ref{AnsAlpha}.
\begin{proposition}\label{AnsAlpha}
Let $A$ a finite abelian group of rank at most $2$ and $q$ be a power of a prime $p$ such that $(q,|A|)=1$. Then we have 
\[
\omega_{A}(q,1)+ \omega^*_{A}(q,1) = \Sigma_{n_1(A),n_2(A)}(q,1) -p^{k-1}\Sigma_{n_1(A),n_2(A)}(q/p^2,p).
\]
\end{proposition}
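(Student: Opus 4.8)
The plan is to exploit the fact that $\Sigma_{n_1,n_2}(q,1)$ ranges over all $t$ with $t^2<4q$, whereas $\omega_A(q,1)$ is exactly its restriction to the ordinary range $p\nmid t$. Writing $n_1=n_1(A),\ n_2=n_2(A)$, the definition \eqref{omegadef} and that of $\Sigma_{n_1,n_2}$ give immediately
\[
\Sigma_{n_1,n_2}(q,1)=\omega_A(q,1)+S_{\mathrm{ss}},\qquad S_{\mathrm{ss}}\eqdef\sum_{\substack{t^2<4q\\ p\mid t}}U_{k-2}(t,q)\,H_{n_1,n_2}(t,q,1),
\]
so the proposition is equivalent to the identity $S_{\mathrm{ss}}-p^{k-1}\Sigma_{n_1,n_2}(q/p^2,p)=\omega^*_A(q,1)$. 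First I would record the bijection $t\mapsto t_1=t/p$ between $\{t:p\mid t,\ t^2<4q\}$ and $\{t_1:t_1^2<4q/p^2\}$; under it the roots of $X^2-tX+q$ scale by $p$, so from \eqref{chebdef} the normalized Chebyshev polynomials satisfy $U_{k-2}(t,q)=p^{k-2}U_{k-2}(t_1,q/p^2)$, and the discriminant factors as $t^2-4q=p^2(t_1^2-4q/p^2)$.

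Next I would compare the two arguments $d=1$ at level $q$ and $d=p$ at level $q/p^2$ of the modified class numbers. Both produce the same value $d^2q=q$, so every factor $\delta_{n_2}(d^2q,1)$ in the definition \eqref{cndef} of $H_{n_1,n_2}$ matches; meanwhile $D(t;n)=\delta_n(q+1,t)$ at $(d,q)=(1,q)$ and $\delta_n(q/p+p^{-1},t_1)$ at $(d,q)=(p,q/p^2)$ are compatible after multiplying the latter congruence by $p$, with well-definedness guaranteed by Lemma \ref{DtnWellDef}. The crux is then a class number recursion: applying Lemma \ref{cor728} to each Hurwitz class number $H\bigl((t^2-4q)/(n_2\mu)^2\bigr)=H\bigl(p^2(t_1^2-4q/p^2)/(n_2\mu)^2\bigr)$ appearing in \eqref{cndef} and \eqref{HKcn} lets me rewrite the $p^2$-times-larger discriminant in terms of the discriminant at level $q/p^2$, producing a main term $p\,H_{n_1,n_2}(t_1,q/p^2,p)$ together with a Kronecker-symbol correction and a further contribution when $p^2\mid(t^2-4q)$.

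Summing the main term over $t_1$ and using the Chebyshev scaling reassembles precisely $p^{k-2}\cdot p\cdot\Sigma_{n_1,n_2}(q/p^2,p)=p^{k-1}\Sigma_{n_1,n_2}(q/p^2,p)$, which is exactly the term subtracted in the statement. What remains of $S_{\mathrm{ss}}$ is a sum over $p\mid t$ of the leftover Kronecker-symbol and $p^2\mid\Delta$ corrections, and I would show this equals $\omega^*_A(q,1)$. Since the surviving corrections force $t$ into the genuinely supersingular traces, the residual sum collapses onto $t\in\{0,\pm\sqrt q,\pm\sqrt{2q},\pm\sqrt{3q}\}$, and I would check termwise, following the case division by $n_2>2$, $n_2=2$, and $n_2=1$ (with $v=v_p(q)$ even or odd, and the exceptional primes $p=2,3$), that it reproduces the explicit values \eqref{H*0}--\eqref{H*1odd} of $H^*_{n_1,n_2}(t,q,1)$.

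The main obstacle will be this last matching: the Kronecker symbols $\bigl(\tfrac{-4}{p}\bigr)$ and $\bigl(\tfrac{-3}{p}\bigr)$, the parity of $v$, and the special behaviour at $p=2,3$ must all be tracked through the recursion so that the residual corrections land exactly on the piecewise definition of $H^*$; the bookkeeping of the nested congruence factors $D(t;\ell^{\,\cdot})$ through the substitution $t=pt_1$ (again via Lemma \ref{DtnWellDef}) is the other delicate point. As a sanity check and base case I would first treat the prime field $q=p$ with $p\ge5$, where $q/p^2=p^{-1}$ forces $\Sigma_{n_1,n_2}(q/p^2,p)=0$ by convention and $p\mid t,\ t^2<4q$ forces $t=0$, so that the identity reduces to the single equality $H_{n_1,n_2}(0,p,1)=H^*_{n_1,n_2}(0,p,1)$.
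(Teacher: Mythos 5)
Your proposal is correct and is essentially the paper's own argument: your one-step peeling of the $p\nmid t$ stratum, the scaling $U_{k-2}(pt_1,q)=p^{k-2}U_{k-2}(t_1,q/p^2)$, and the class-number relation from Lemma \ref{cor728} (exact with no correction when $p\nmid t_1$, because the inner discriminant is then a nonzero square modulo $p$) together constitute exactly Lemma \ref{omegalem} of the paper, while your terminal matching of the residual traces $t\in\{0,\pm\sqrt{q},\pm\sqrt{2q},\pm\sqrt{3q}\}$ against \eqref{H*0}--\eqref{H*1odd}, tracking $\legen{-4}{p}$, $\legen{-3}{p}$, the parity of $v$, and the primes $p=2,3$, is the paper's case analysis by $n_2>2$, $n_2=2$, and $n_2=1$. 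The only difference is organizational: where you peel one layer and iterate, the paper unrolls the recursion completely into the two parallel identities \eqref{lem12e1} and \eqref{lem12e2}, in which every stratum $v_p(t)=i$ with $0\leq 2i<v$ contributes $(p^i)^{k-1}\left(\omega_A(q/p^{2i},p^i)+\omega^*_A(q/p^{2i},p^i)\right)$ plus common boundary terms, and obtains the proposition by subtraction.
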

\begin{proof} 
To lessen the notational burden, within this proof we write $U(t,q)=U_{k-2}(t,q)$ for the Chebyshev polynomials, $n_1=n_1(A)$ and $n_2=n_2(A)$, and take the $d$ implicit in the $D(t;n)$ notation to be $d=1$.  Let $q=p^v$ with $v \in \Z_{\geq 0 }$.  To prove Proposition \ref{AnsAlpha} it suffices to show that 
\begin{multline}\label{lem12e1}
\Sigma_{n_1,n_2}(q,1) = \sum_{0 \leq 2i<v} (p^i)^{k-1}
\left( \omega_{A}(q/p^{2i},p^i) + \omega^*_{A}(q/p^{2i},p^i)\right) \\ 
+ \frac{1}{2}p^{v/2}\delta(n_2,1)\delta_2(v,0)\left(\frac{1}{2}U(0,q)D(0;n_1)  + \frac{1}{3}U(q^{1/2},q)D(q^{1/2},n_1) \right. \\ 
\left.+ \frac{1}{3}U(-q^{1/2},q)D(-q^{1/2},n_1)\right)
\end{multline}
and 
\begin{multline}\label{lem12e2}
p^{k-1}\Sigma_{n_1,n_2}(q/p^2,p) = \sum_{0 < 2i<v} (p^i)^{k-1}\left( \omega_{A}(q/p^{2i},p^i) + \omega^*_{A}(q/p^{2i},p^i)\right) \\ 
+ \frac{1}{2}p^{v/2}\delta(n_2,1)\delta_2(v,0)\left(\frac{1}{2}U(0,q)D(0;n_1) + \frac{1}{3}U(q^{1/2},q)D(q^{1/2},n_1) \right. \\ 
\left. + \frac{1}{3}U(-q^{1/2},q)D(-q^{1/2},n_1)\right).
\end{multline}

\begin{lemma}\label{omegalem}
Let $q=p^v$.  For any $i$ satisfying $0\leq 2i<v$, we have that 
\es{\label{omegaiformula} (p^i)^{k-1} \omega_{A}(q/p^{2i},p^id) = \sum_{\substack{ v_p (t)=i \\ t^2<4q }}U(t,q) H_{n_1,n_2}(t,q,d).}  
\end{lemma}
\begin{proof}
Rearranging terms in the definition of $U_{k-2}(t,q)$ shows that
\es{\label{omegalemproofeq1} (p^i)^{k-1} U_{k-2}(t,q/p^{2i}) = p^i U_{k-2}(p^it,q).} 
We claim that if $v_p(q/p^{2i})\geq 1$ and $p\nmid t$ then
\es{\label{omegalemproofeq2}
p^iH_{n_1,n_2}(t,q/p^{2i},p^i) = H_{n_1,n_2}(p^i t,q,1).
}  
The lemma follows immediately after the change of variables $p^it \to t$.  

We now prove the claim. For any $(n,q)=1$ such that $n^2 \mid ((p^it)^2-4q)$ we have 
\begin{align*} 
H\left( \frac{(p^it)^2 -4q}{n^2} \right) = & \sum_{\delta^2 \mid \frac{(p^it)^2 -4q}{n^2} } h_w \left(  \frac{(p^it)^2 -4q}{n^2\delta^2}\right) \\ 
=&  \sum_{\kappa= 0}^{i} \sum_{\substack{\delta^2 \mid \frac{(p^{i-\kappa}t)^2 -4q/p^{2\kappa}}{n^2}\\ v_p(\delta) = 0} } h_w \left(  \frac{(p^{i-\kappa}t)^2 -4q/p^{2\kappa}}{n^2\delta^2}\right).
\end{align*}
For any $\kappa$ satisfying $0\leq \kappa \leq i-1$ we apply Lemma \ref{cor728} with $d = \frac{t^2-4q/p^{2i}}{n^2}$ and $f=p^{i-\kappa}$.  Since $(n,p)=1$ and $p\nmid t$ we have that $\frac{t^2-4q/p^{2i}}{n^2}$ is a non-zero square modulo $p$.  This implies
\begin{multline*}
H\left( \frac{(p^it)^2 -4q}{n^2} \right)  =  \sum_{\substack{\delta^2 \mid \frac{t^2 -4q/p^{2i}}{n^2}\\ v_p(\delta) = 0} } h_w \left(  \frac{t^2 -4q/p^{2i}}{n^2\delta^2}\right)\\ + \sum_{\kappa= 0}^{i-1}p^{i-\kappa} \left(1-\frac{1}{p}\right) \sum_{\substack{\delta^2 \mid \frac{t^2 -4q/p^{2i}}{n^2}\\ v_p(\delta) = 0} } h_w \left(  \frac{t^2 -4q/p^{2i}}{n^2\delta^2}\right) \\ 
=   p^i H\left( \frac{t^2-4q/p^{2i}}{n^2}\right).
\end{multline*}
We also have that $n \mid (p^{-i}d q - p^{-i}d^{-1} -t)$ if and only $n\mid (dq - d^{-1} -p^it)$ since $(n,p)=1$.  This completes the proof of the claim.
\end{proof}

\vspace{12pt}
\noindent {\bf \emph{Verifying \eqref{lem12e1}.}}
\vspace{12 pt}

We prove that equation \eqref{lem12e1} holds by considering several cases.  Recall that $q = p^v$.

If $q=p^v$ with $v$ even, then taking the sum of \eqref{omegaiformula} over $i$ gives
\begin{multline}\label{veven}
\Sigma_{n_1,n_2}(q,1) = \sum_{0 \leq 2i<v} (p^i)^{k-1}\omega_{A}(q/p^{2i},p^i) + U(-q^{1/2},q)H_{n_1,n_2}(-q^{1/2},q,1)\\ + U(0,q)H_{n_1,n_2}(0,q,1)  + U(q^{1/2},q)H_{n_1,n_2}(q^{1/2},q,1).
\end{multline}
If $q = p^v$ with $v$ odd, then taking the sum of \eqref{omegaiformula} over $i$ gives
\begin{multline}\label{vodd}
\Sigma_{n_1,n_2}(q,1) = \sum_{0 \leq 2i<v} (p^i)^{k-1}\omega_{A}(q/p^{2i},p^i)   + U(0,q) H_{n_1,n_2}(0,q,1)
\\
 + \begin{cases} 0& \text{ if } p \neq 2,3 \\  
 U(-2^{\frac{v+1}{2}},q) H_{n_1,n_2}(-2^{\frac{v+1}{2}},q,1) + U(2^{\frac{v+1}{2}},q) H_{n_1,n_2}(2^{\frac{v+1}{2}},q,1) & \text{ if } p =2 \\ 
 U(-3^{\frac{v+1}{2}},q) H_{n_1,n_2}(-3^{\frac{v+1}{2}},q,1)   + U(3^{\frac{v+1}{2}},q) H_{n_1,n_2}(3^{\frac{v+1}{2}},q,1) & \text{ if } p = 3. 
 \end{cases}
 \end{multline}
Similarly, it follows from Lemma \ref{omegalem} when $v$ is even  that 
\begin{multline}\label{veven2}
p^{k-1}\Sigma_{n_1,n_2}(q/p^2,p) = \sum_{0 < 2i<v} (p^i)^{k-1}\omega_{A}(q/p^{2i},p^i) \\ 
+ p^{k-1}\left( U(-q^{1/2}/p,q/p^2)H_{n_1,n_2}(-q^{1/2}/p,q/p^2,p) + U(0,q/p^2)H_{n_1,n_2}(0,q/p^2,p) \right. \\ 
\left. + U(q^{1/2}/p,q/p^2)H_{n_1,n_2}(q^{1/2}/p,q/p^2,p)\right),
\end{multline}
and when $v$ is odd that
\begin{multline}\label{vodd2}p^{k-1}
\Sigma_{n_1,n_2}(q/p^2,p) = \sum_{0 < 2i<v} (p^i)^{k-1}\omega_{A}(q/p^{2i},p^i)   + p^{k-1}U(0,q/p^2) H_{n_1,n_2}(0,q/p^2,p)
\\
 + p^{k-1}\begin{cases} 0& \text{ if } p \neq 2,3 \\  U(-2^{\frac{v-1}{2}},2^{v-2}) H_{n_1,n_2}(-2^{\frac{v-1}{2}},2^{v-2},2) \\ \hspace{48pt} + U(2^{\frac{v-1}{2}},2^{v-2}) H_{n_1,n_2}(2^{\frac{v-1}{2}},2^{v-2},2) & \text{ if } p =2 \\ 
 U(-3^{\frac{v-1}{2}},3^{v-2}) H_{n_1,n_2}(-3^{\frac{v-1}{2}},3^{v-2},3)  \\ \hspace{48pt} + U(3^{\frac{v-1}{2}},3^{v-2}) H_{n_1,n_2}(3^{\frac{v-1}{2}},3^{v-2},3) & \text{ if } p = 3. 
 \end{cases}
 \end{multline}
 
 Returning to \eqref{veven} and  \eqref{vodd} we calculate from the definition of $H_{n_1, n_2}(t,q,d)$ that:
 \begin{enumerate}
 \item We have
 \begin{multline}\label{extreme1} 
 H_{n_1,n_2}(0,q,1) = \begin{cases} 
 0 & \text{ if } n_2>2 \\ 
 \frac{1}{2} H(-q) D(0,2n_1) & \text{ if } n_2=2 \\
 \frac{1}{2} H(-4q) D(0,n_1) \\ \hspace{16pt} - \delta_4(n_1,0) \frac{1}{2} H(-q) \left( D(0,n_1) - D(0,2n_1)\right) & \text{ if } n_2 =1.
 \end{cases} 
 \end{multline}
\item If $v$ even, then 
 \begin{multline}\label{extreme2} 
 H_{n_1,n_2}(\pm q^{1/2} , q,1) = 
 \begin{cases}
 0 & \text{ if } n_2>1 \\ 
 \frac{1}{2} H(-3q) D(\pm q^{1/2},n_1) & \text{ if } n_2 =1. 
 \end{cases} \end{multline}
\item If $v$ odd and $p=2$, then 
\begin{multline}\label{extreme3} H_{n_1,n_2}(\pm 2^{\frac{v+1}{2}}, q,1) = \begin{cases}
0 & \text{ if } n_2>1 \\ 
\frac{1}{2} H(-2q) D(\pm 2^{\frac{v+1}{2}},n_1) & \text{ if } n_2 =1.
\end{cases} \end{multline}
\item If $v$ odd and $p=3$, then 
 \begin{multline}\label{extreme4} 
 H_{n_1,n_2}(\pm 3^{\frac{v+1}{2}}, q,1) = \begin{cases}
0 & \text{ if } n_2>1 \\ 
\frac{1}{2} H(-q)D(\pm 3^{\frac{v+1}{2}},n_1) & \text{ if } n_2 =1.
\end{cases} 
\end{multline}
\end{enumerate}

We check that \eqref{lem12e1} is true by separating into cases $n_2>2, n_2=2,$ and $n_2=1$.

\vspace{12pt}
\noindent {\bf \emph{The case $n_2 > 2$.}}
\vspace{12 pt}

If $n_2 > 2$ then \eqref{extreme1}, \eqref{extreme2}, \eqref{extreme3}, and \eqref{extreme4} all vanish, but so does $\omega^*_{A}(q,1)$ by definition of $H^*_{n_1,n_2}(t,q,d)$.  Therefore we have verified \eqref{lem12e1} in the case that $n_2>2$.  
 
 \vspace{12pt}
\noindent {\bf \emph{The case $n_2 = 2$.}}
\vspace{12 pt}
 
 If $n_2=2$ then of \eqref{extreme1}, \eqref{extreme2}, \eqref{extreme3}, and \eqref{extreme4} only $H_{n_1,n_2}(0,q,1)=\frac{1}{2} H(-q) \delta_{2n_1}(q+1,0)$ can be non-zero.  If 
$\delta_{2n_1}(q+1,0) = 1$, then $q\equiv 3 \pmod*4$, and $v$ must be odd.   Then  Lemma \ref{cor728}, \eqref{omegalemproofeq1}, and $(n,p)=1$ imply that
 \begin{align*} 
 U(0,q)H_{n_1,n_2}(0,q,1)= & \frac{1}{2} h_w(-p) \sigma(p^{\frac{v-1}{2}}) U(0,q) \delta_{2n_1}(q+1,0) \\ 
 = &  \frac{1}{2} h_w(-p) \sum_{0\leq 2i <v } (p^i)^{k-1} U(0,q/p^{2i})  \delta_{2n_1}(p^iq/p^{2i}+p^{-i},0). 
 \end{align*}
The definition \eqref{H*2} of $H^*_{n_1,n_2}$ and the definition of $\omega^*_{A}$ imply that 
\begin{align} \label{n2calc}
U(0,q)H_{n_1,n_2}(0,q,1) = & \sum_{0\leq 2i <v } (p^i)^{k-1} \sum_{t^2<4q} U(t,q/p^{2i}) H_{n_1,n_2}^*(t,q/p^{2i},p^i) \nonumber \\
 = & \sum_{0\leq 2i <v } (p^i)^{k-1} \omega^*_{A}(q/p^{2i},p^i).
 \end{align}
Therefore we have verified \eqref{lem12e1} in the case that $n_2=2$.
 
 \vspace{12pt}
\noindent {\bf \emph{The case $n_2 = 1$ and $v$ even.}}
\vspace{12 pt}
 
 In this case, $q\equiv 1 \pmod*4$ and we have by Lemma \ref{cor728} and \eqref{extreme1} that 
 \est{ 
 H_{n_1,1}(0,q,1) =  \frac{1}{4}\left( \sigma(p^{v/2-1})  \left(1-\legen{-4}{p}\right) + p^{v/2}\right) D(0,n_1).
 } 
 This expression together with \eqref{H*1even} implies that 
 \es{\label{UH1}U(0,q) H_{n_1,1}(0,q,1) 
 =  \sum_{0\leq 2i < v} (p^i)^{k-1} U(0,q/p^{2i}) H^*_{n_1,1}(0,q/p^{2i},p^i) + \frac{1}{4} p^{v/2} U(0,q) D(0,n_1).
 }  
Likewise, if $t = \pm q^{1/2}$ and $v$ is even, we apply Lemma \ref{cor728} to \eqref{extreme2} to find that 
\est{ 
H_{n_1,1}(\pm q^{1/2},q,1) = \frac{1}{6}\left( \sigma(p^{v/2-1}) \left(1-\legen{-3}{p}\right) + p^{v/2}\right) D(\pm q^{1/2},n_1).
} 
By \eqref{H*1even} we have 
\begin{multline}\label{UH2} 
U(\pm q^{1/2},q) H_{n_1,1}(\pm q^{1/2},q,1) 
 =  \sum_{0\leq 2i < v} (p^i)^{k-1} U(\pm q^{1/2},q/p^{2i}) H^*_{n_1,1}(\pm q^{1/2},q/p^{2i},p^i) \\  
 + \frac{1}{6} p^{v/2} U(\pm q^{1/2},q) D(\pm q^{1/2},n_1).
 \end{multline} 
 Putting together \eqref{veven}, \eqref{UH1}, and \eqref{UH2} we verify \eqref{lem12e1} in the case that $v$ is even and $n_2=1$.  
 
\vspace{12pt}
\noindent {\bf \emph{The case $n_2 = 1$ and $v$ odd.}}
\vspace{12 pt}
 
In this case, we have by \eqref{extreme1} and Lemma \ref{cor728} that 
\est{
H_{n_1,1}(0,q,1) = \frac{1}{2}\sigma(p^{\frac{v-1}{2}}) \left( H(-4p)D(0,n_1) - \delta_4(n_1,0)  h_w(-p) \left(D(0,n_1) - D(0,2n_1)\right)\right).
} 
It follows from \eqref{H*1odd} and \eqref{omegalemproofeq1} that 
\begin{align}\label{UH3} 
U(0,q) H_{n_1,1}(0,q,1) 
 = & \sum_{0 \leq 2i <v } (p^i)^{k-1} U(0,q/p^{2i}) H_{n_1,1}^*(0,q/p^{2i},p^i).\end{align}
If $n_2=1$, $v$ is odd, and $p=2$ then by \eqref{extreme3} and Lemma \ref{cor728}, we have that 
 \est{  H_{n_1,n_2}(\pm 2^{\frac{v+1}{2}}, q,1) = \frac{1}{4} \sigma (2^{\frac{v-1}{2}}) D(\pm 2^{\frac{v+1}{2}},n_1). } 
This expression together with \eqref{H*1odd} and \eqref{omegalemproofeq1} imply that 
 \begin{align}\label{UH42}
 U(\pm 2^{\frac{v+1}{2}},q) H_{n_1,n_2}(\pm 2^{\frac{v+1}{2}}, q,1) 
 = &\sum_{0 \leq 2i <v } (p^i)^{k-1} U(\pm 2^{\frac{v+1}{2}},q/p^{2i}) H_{n_1,1}^*(\pm 2^{\frac{v+1}{2}},q/p^{2i},p^i).
 \end{align}
 
 Similarly, if $n_2=1$, $v$ is odd, and $p=3$ we have that 
 \es{\label{UH43} 
 U(\pm 3^{\frac{v+1}{2}},q) H_{n_1,n_2}(\pm 3^{\frac{v+1}{2}}, q,1) =\sum_{0 \leq 2i <v } (p^i)^{k-1} U(\pm 3^{\frac{v+1}{2}},q/p^{2i}) H_{n_1,1}^*(\pm 3^{\frac{v+1}{2}},q/p^{2i},p^i).
 }  
 Putting together \eqref{vodd} and \eqref{UH3} we verify \eqref{lem12e1} in the case that $p \neq 2,3$, $v$ is odd, and $n_2=1$.  If $p=2$, $v$ is odd, and $n_2=1$ we apply \eqref{vodd}, \eqref{UH3}, and \eqref{UH42} to verify \eqref{lem12e1}.  In the case $p=3$, $v$ is odd, and $n_2=1$ we apply \eqref{vodd}, \eqref{UH3}, and \eqref{UH43} to verify \eqref{lem12e1}.  We have now verified \eqref{lem12e1} in all cases.
 
 \vspace{12pt}
\noindent {\bf \emph{Verifying \eqref{lem12e2}.}}
\vspace{12 pt}
 
To verify this equation we return to \eqref{veven2} and \eqref{vodd2} and perform similar calculations to those given above.  

\vspace{12pt}
\noindent {\bf \emph{The case $n_2 > 2$.}}
\vspace{12 pt}
 
In this case, each of the terms \eqref{extreme1}, \eqref{extreme2}, \eqref{extreme3}, and \eqref{extreme4} vanish, but so do $H^*_{n_1,n_2}(t,q/p^2,p)$ and $\omega_{A}^*(q/p^2,p)$, so we have verified \eqref{lem12e2} in this case.

\vspace{12pt}
\noindent {\bf \emph{The case $n_2 = 2$.}}
\vspace{12 pt}
 
In this case, by \eqref{omegalemproofeq1} and \eqref{extreme1} 
\est{ 
p^{k-1} U(0,q/p^2) H_{n_1,n_2}(0,q/p^2,p) 
 =  p U(0,q) \frac{1}{2} H(-q/p^2) D(0,2n_1) 
 } 
 and by Lemma \ref{cor728} we have 
 \est{ 
 p\frac{1}{2} H(-q/p^2) = \frac{1}{2}p \sigma(p^{\frac{v-3}{2}}) h_w(-p) = \frac{1}{2} \left(\sigma(p^{\frac{v-1}{2}})-1\right) h_w(-p)  = \frac{1}{2}H(-q) - \frac{1}{2}h_w(-p).
 } 
 A similar argument to the one that gave \eqref{n2calc} implies that  
 \begin{align*} & \,pU(0,q)H(0,q/p^2,p) \\
 = & \, U(0,q) \frac{1}{2}H(-q) D(0,2n_1) - U(0,q) \frac{1}{2}h_q(-p)D(0,2n_1) \\ 
 = &\,  \sum_{0 \leq 2i<v} (p^i)^{k-1} \sum_{t^2<4q} U(t,q/p^{2i}) H_{n_1,2}^*(t,q/p^{2i},p^i) - \frac{1}{2}h_w(-p) U(0,q)  D(0,2n_1) \\
 = &\,  \sum_{0 < 2i<v} (p^i)^{k-1} \sum_{t^2<4q} U(t,q/p^{2i}) H_{n_1,2}^*(t,q/p^{2i},p^i),\end{align*}
 which verifies \eqref{lem12e2} in the case $n_2=2$.
 
 \vspace{12pt}
\noindent {\bf \emph{The case $n_2 = 1$.}}
\vspace{12 pt}

By \eqref{extreme1} and \eqref{omegalemproofeq1} we have 
\begin{multline*}
p^{k-1} U(0,q/p^2)H_{n_1,1}(0,q/p^2,p)  = pU(0,q) \left( \frac{1}{2} H(-4q/p^2)D(0,n_1) \right. \\ \left.- \delta_4(n_1,0) \frac{1}{2}H(-q/p^2) \left( D(0,n_1) -D(0,2n_1) \right)\right).
\end{multline*} 
Lemma \ref{cor728} implies that 
\est{p\frac{1}{2}H(-q/p^2) = \frac{1}{2}H(-q)-\frac{1}{2} h_w(-p) } 
and 
\est{ p\frac{1}{2} H(-4q/p^2) = \frac{1}{2}H(-4q)-
\begin{cases} 
\frac{1}{4} \left(1-\legen{-4}{p}\right)  & \text{ if } v \text{ even} \\ 
\frac{1}{2}H(-4p) & \text{ if } v \text{ odd.}
\end{cases}} 
Therefore, by \eqref{UH1} and \eqref{UH3} we have that  
\begin{multline*} 
p^{k-1} U(0,q/p^2)H_{n_1,1}(0,q/p^2,p) = \sum_{0\leq 2i < v} (p^i)^{k-1} U(0,q/p^{2i}) H^*_{n_1,1}(0,q/p^{2i},p^i) \\ 
+ \frac{1}{4} p^{v/2} U(0,q) D(0,n_1)   - U(0,q) D(0,n_1) 
\begin{cases} 
\frac{1}{4} \left(1-\legen{-4}{p}\right)  & \text{ if } v \text{ even } \\ \frac{1}{2}H(-4p) & \text{ if } v \text{ odd }\end{cases}  \\ + \delta_4(n_1,0) \frac{1}{2} h_w(-p) \left( D(0,n_1)  - D(0,2n_1)  \right).
\end{multline*} 
Applying \eqref{H*1even} or \eqref{H*1odd} to this expression gives
\begin{multline}\label{t0/p} 
p^{k-1} U(0,q/p^2)H_{n_1,1}(0,q/p^2,p) = \sum_{0< 2i < v} (p^i)^{k-1} U(0,q/p^{2i}) H^*_{n_1,1}(0,q/p^{2i},p^i) \\ 
+ \frac{1}{4} p^{v/2} U(0,q) D(0,n_1) .
\end{multline} 
 
 \vspace{12pt}
\noindent {\bf \emph{The case $n_2 = 1$ and $v$ even.}}
\vspace{12 pt}
 
By \eqref{omegalemproofeq1}, \eqref{extreme2}, and $(n_1,p)=1$ we have 
 \est{p^{k-1} 
 U(\pm q^{1/2}/p) H_{n_1,n_2}(\pm q^{1/2}/q,q/p^2,p) = pU(\pm q^{1/2},q) \frac{1}{2} H(-3q/p^2)D(\pm q^{1/2},n_1).} 
 By Lemma \ref{cor728} we have 
 \begin{align*} 
 p\frac{1}{2} H(-3q/p^2) = & \frac{1}{6} \left( p\sigma (p^{\frac{v}{2}-2})  \left(1-\legen{-3}{p}\right) +p^{v/2}\right) \\
 = & \frac{1}{6} \left( \left(\sigma(p^{\frac{v}{2}-1})-1\right) \left(1-\legen{-3}{p}\right) + p^{v/2}\right) \\ 
 = & \frac{1}{2} H(-3q) -\frac{1}{6} \left(1-\legen{-3}{p}\right).
 \end{align*}
 Using these last two equations we find by \eqref{UH2} that 
 \begin{multline*} 
 p^{k-1} U(\pm q^{1/2}/p) H_{n_1,n_2}(\pm q^{1/2}/q,q/p^2,p)\\  = \sum_{0 \leq 2i < v } (p^i)^{k-1} U(\pm q^{1/2},q/p^{2i}) H_{n_1,1}^*(\pm q^{1/2} , q/p^{2i},p^i)  
 + \frac{1}{6} p^{v/2} U( \pm q^{1/2},q) D(\pm q^{1/2},n_1) \\ - U(\pm q^{1/2},q) D(\pm  q^{1/2},n_1) \frac{1}{6} \left(1-\legen{-3}{p}\right).
 \end{multline*}
 By the definition \eqref{H*1even} of $H^*_{n_1,1}$ this simplifies to 
 \begin{multline}\label{tsqrtq/p} 
 p^{k-1} U(\pm q^{1/2}/p) H_{n_1,n_2}(\pm q^{1/2}/q,q/p^2,p) \\ = \sum_{0 < 2i < v } (p^i)^{k-1} U(\pm q^{1/2},q/p^{2i}) H_{n_1,1}^*(\pm q^{1/2} , q/p^{2i},p^i)  
 + \frac{1}{6} p^{v/2} U( \pm q^{1/2},q) D(\pm q^{1/2},n_1).
 \end{multline} 
 Putting together \eqref{vodd2}, \eqref{t0/p}, and \eqref{tsqrtq/p} verifies \eqref{lem12e2} when $v$ is even.
 
  \vspace{12pt}
\noindent {\bf \emph{The case $n_2 = 1$ and $v$ odd.}}
\vspace{12 pt}
 
 If $v$ is odd and $p=2$ then by $(n_1,2)=1$, \eqref{omegalemproofeq1}, and \eqref{extreme3} we have 
 \est{ 
 2^{k-1} U(\pm 2^{\frac{v-1}{2}},2^{v-2}) H_{n_1,1}(\pm 2^{\frac{v-1}{2}},2^{v-2},2) = 2U(\pm 2^{\frac{v+1}{2}},2^v) \frac{1}{2} H(-2q/2^2) D(\pm 2^{\frac{v+1}{2}},n_1).
 }  
We have by Lemma \ref{cor728} that 
\est{ 2\frac{1}{2}H(-2q/2^{2}) = \frac{1}{4} \left(\sigma(2^{\frac{v-1}{2}}) -1\right) = \frac{1}{2}H(-2q) -\frac{1}{4}.
}  
Therefore by \eqref{UH42} we have 
\begin{multline*}
2^{k-1} U(\pm 2^{\frac{v-1}{2}},2^{v-2}) H_{n_1,1}(\pm 2^{\frac{v-1}{2}},2^{v-2},2) \\ = \sum_{0\leq 2i < v} (p^i)^{k-1} U(\pm 2^{\frac{v+1}{2}},q/p^{2i}) H_{n_1,1}^*(\pm 2^{\frac{v+1}{2}},q/p^{2i},p^i) 
- \frac{1}{4} U(\pm 2^{\frac{v+1}{2}},2^{v}) D(2^{\frac{v+1}{2}},n_1).
\end{multline*} 
By the definition \eqref{H*1odd} of $H_{n_1,1}^*(t,q,d)$ this simplifies to 
\begin{multline}\label{final2}2^{k-1} 
U(\pm 2^{\frac{v-1}{2}},2^{v-2}) H_{n_1,1}(\pm 2^{\frac{v-1}{2}},2^{v-2},2) \\ = \sum_{0< 2i < v} (p^i)^{k-1} U(\pm 2^{\frac{v+1}{2}},q/p^{2i}) H_{n_1,1}^*(\pm 2^{\frac{v+1}{2}},q/p^{2i},p^i).
\end{multline}  
Similarly, if $v$ is odd and $p=3$ we have by \eqref{extreme3} and \eqref{UH43} that 
\begin{multline}\label{final3}3^{k-1} 
U(\pm 3^{\frac{v-1}{2}},3^{v-2}) H_{n_1,1}(\pm 3^{\frac{v-1}{2}},3^{v-2},3)\\ =  \sum_{0< 2i < v} (p^i)^{k-1} U(\pm 3^{\frac{v+1}{2}},q/p^{2i}) H_{n_1,1}^*(\pm 3^{\frac{v+1}{2}},q/p^{2i},p^i).
\end{multline}  
Combining \eqref{vodd2}, \eqref{t0/p}, \eqref{final2}, and \eqref{final3} verifies \eqref{lem12e2} when $v$ is odd.
\end{proof}

\section{Acknowledgements}
The authors thank Alina Cojocaru, Noam Elkies, Jordan Ellenberg, and Corentin Perret-Gentil for helpful conversations related to this project, and also the anonymous referee whose detailed report greatly improved this paper.

\end{document}